\documentclass{article}
\usepackage{enumitem}   
\usepackage{amsmath}
\usepackage{amssymb}
\usepackage{natbib}
\usepackage{amsthm}
\usepackage{graphicx}
\usepackage{dsfont}
\usepackage{color}
\usepackage{hyperref}
\usepackage{thmtools}

\usepackage{url}
\usepackage[ruled,vlined,linesnumbered,noend]{algorithm2e}
\usepackage{algpseudocode}
\usepackage[a4paper,margin=3cm]{geometry}
\declaretheorem[name=Theorem]{theorem}
\newtheorem{proposition}[theorem]{Proposition}
\newtheorem{lemma}[theorem]{Lemma}
\newtheorem{corollary}[theorem]{Corollary}
\newtheorem{definition}[theorem]{Definition}
\newtheorem{assumption}{Assumption}
\newtheorem{remark}[theorem]{Remark}

\def\gapf{{\Delta_f}}
\def\Kval{{K}}
\def\alphaval{{\alpha}}
\def\alphavalcont{{\alpha_T}}
\newcommand{\alphavalcontf}[1]{\alpha_{#1}}
\def\constalpha{{C_{\alphaval}}}
\def\Aconst{{\mathcal{A}_n}}
\def\Bconst{{\mathcal{B}_n}}
\def\R{{\mathbb{R}}}

\def\N{{\mathbb{N}}}

\def\1{\mathds{1}}
\def\cupp{c_{upp}}
\def\cinf{c_{inf}}

\newcommand{\A}{\mathbb{A}}
\def\Abar{\A_n}

\def\xcont{(x_t)_{t \in \R_+}}
\def\xz{(x_t,z_t)_{t \in \R_+}}

\def\xdisctilde{\{\tilde{x}_k \}_{k\in \N}}
\def\xcont{(x_t)_{t\ge 0}}
\def\mart{(M_t)_{t\ge 0}}
\def\tk{\{ T_k \}_{k\in \N}}

\def\Ccont{\mathcal{C}_n}

\def\Hconst{H(\alphaval)}
\def\xrescaled{x_s^\gamma}
\def\xrescaledprev{x_{s^-}^\gamma}
\def\zrescaled{z_s^\gamma}
\def\Nrescaled{N_s^\gamma}
\def\Mrescaled{M_s^\gamma}

\renewcommand{\P}{\mathbb{P}}
\newcommand{\bpar}[1]{\left(#1\right)}

\newcommand{\tkvar}[1]{\{ T_k \}_{k\in \{1,\dots,#1 \}}}
\newcommand{\E}[1]{\mathbb{E}\left[#1 \right]}

\newcommand{\norm}[1]{\left\lVert#1\right\rVert}
\newcommand{\dotprod}[1]{\left< #1\right>}

\newcommand{\ceil}[1]{\left\lceil {#1} \right\rceil}
\newcommand{\floor}[1]{\left\lfloor {#1} \right\rfloor}

\newcommand{\footremember}[2]{%
    \footnote{#2}
    \newcounter{#1}
    \setcounter{#1}{\value{footnote}}%
}
\newcommand{\footrecall}[1]{%
    \footnotemark[\value{#1}]%
} 
\newenvironment{sproof}{%
  \proof}{\endproof}
\newcommand{\off}[1]{}

\newcommand{\ind}{\perp\!\!\!\!\perp}
\newcommand{\bigO}{\mathcal{O}}

\def\argmin{\textup{argmin}\,}

\title{Continuized Nesterov Momentum Achieves the $\mathcal{O}(\varepsilon^{-7/4})$ Complexity in Smooth Nonconvex Optimization}
\author{Julien Hermant\footremember{corr_author}{corresponding author: julien.hermant@math.u-bordeaux.fr}\footremember{Bdx}{Univ. Bordeaux, Bordeaux INP, CNRS, IMB, UMR 5251, F-33400 Talence, France} \and Jean-François Aujol\footrecall{Bdx} \and Charles Dossal\footremember{Tls1}{IMT, Univ. Toulouse, INSA Toulouse, Toulouse, France} \and Lorick Huang\footrecall{Tls1} \and  Aude Rondepierre\footrecall{Tls1} \and Irène Waldspurger\footremember{irène}{CNRS, Univ. Paris Dauphine, Inria Mokaplan, France} }

\date{}

\begin{document}

\maketitle

\begin{abstract}%
For first-order optimization of non-convex functions with Lipschitz continuous gradient and Hessian, the best known complexity for reaching an 
$\varepsilon$-approximation of a stationary point is $\mathcal{O}(\varepsilon^{-7/4})$. The existing algorithms achieving this bound are based on momentum, but are always complemented with safeguard mechanisms that erase the accumulated momentum if a certain condition is violated. 
Whether such momentum-control mechanisms are fundamentally necessary has remained an open question. We show that randomizing the parameters enables to achieve this complexity in expectation when using momentum without any of such mechanisms, and we improve the numerical constant factor of the bound in the case of a large enough number of iterations. From an analysis perspective, we do so
leveraging the continuized method, that interprets the algorithm as a realization of a continuous-time stochastic differential equation (SDE) involving a Poisson process. We show that this SDE converges in probability to the Heavy Ball ordinary differential equation when the stepsize goes to zero, paralleling the behavior of more classical instances of Nesterov momentum. 
\end{abstract}


\section{Introduction}\label{sec:intro}
Many real-world problems can be formulated as the minimization of a function $f: \R^d\to \R $, that is, as $\min_{x \in \R^d} f(x)$. Yet, without structural assumptions - such as convexity of $f$ - computing a global minimizer is generally intractable \cite[Section 2.2]{danilova2022recent}. In this work, assuming $f$ is lower-bounded and differentiable, our goal is thus to find a stationary point of $f$, \textit{i.e.} approximate $x^\ast \in \R^d$ such that
$$\nabla f(x^\ast) = 0. $$ 
The rising computational cost and expanding data scales in modern applications-ranging from machine learning to signal processing - have made \textit{first-order optimization algorithms} widely used. These algorithms rely on gradient information to compute their iterates with gradient descent being a classic example. Among existing acceleration techniques designed to improve their convergence speed \cite{hinton2012rmsprop,kingma2014adam}, one of the most influential is \textit{momentum}, whose idea dates back to the seminal work of \cite{POLYAK19641}.

\noindent
\textbf{Nesterov momentum \:}
The \textit{Nesterov momentum} \eqref{eq:nm} algorithm \cite{nesterov1983method}, sometimes called \textit{Nesterov accelerated gradient}, is known to achieve optimal convergence bounds among first-order algoritfhms in various settings in convex optimization \cite{nemirovskij1983problem,nesterovbook}, see Appendix~\ref{app:convex_acceleration} for a more detailed discussion. There exist several ways to write Nesterov's momentum, see \citep{defazio2019curved} for a discussion in the strongly-convex case, or see \cite[Appendix B.2]{hermantgradient} in a more general setting.
Among these different formulations, we consider the following 

\begin{align}\label{eq:nm}\tag{NM}\left\{
    \begin{array}{ll}
        \tilde{y}_k &= \tilde{x}_k+ \alpha_k(\tilde{z}_{k} - \tilde{x}_k),  \\
        \tilde{x}_{k+1} &=  \tilde{y}_k  - \gamma \nabla f(\tilde{y}_k),\\
        \tilde{z}_{k+1} &=   \tilde{z}_{k} + \beta_k(\tilde{y}_k - \tilde{z}_{k}) - \gamma' \nabla f(\tilde{y}_k).
    \end{array}
\right.
\end{align} 

We note that $\xdisctilde$ in \eqref{eq:nm} reduces to gradient descent for the choice $\alpha_k = 0$, for all $k \in \N$. Because a wide range of applications is formulated as a non-convex optimization problem \cite{hardt2014understanding,candes2015phase,dauphin2014identifying,bhojanapalli2016global,ge2016matrix,ge2017no,li2018visualizing}, an important question is whether the benefits of momentum extend beyond convex optimization. Empirically, it is often observed that the use of momentum improves the speed of convergence for non-convex minimization tasks such as neural network training \cite{sutskever2013importance,he2016deep}. 
Theoretically, acceleration has been shown in some mild non-convex settings where finding a global minimizer remains tractable, see \textit{e.g.} \cite{hinder2020near,hermant2024study,gupta2024nesterov}. In this work, we only adopt \textit{smoothness} assumptions, which fall outside regimes where guarantees of global minimization can be obtained.

\noindent
\textbf{Lipschitz Gradient \:}
When the objective $f$ has a Lipschitz continuous gradient, finding an $\varepsilon$-stationary point, \textit{i.e.} $x\in \R^d$ such that $\norm{\nabla f(x)} \le \varepsilon$, can be achieved by gradient descent in at most $\bigO(\varepsilon^{-2})$ gradient evaluations \cite[ (1.2.22)]{nesterovbook}. This bound is worst-case optimal among first-order algorithms \cite{lowerboundI}. The momentum mechanism thus does not provably accelerate gradient descent, achieving a similar rate \cite{ghadimi2016accelerated}. This impossibility result suggests that the class of functions with merely Lipschitz-continuous gradients does not provide an adequate framework for understanding the acceleration mechanisms observed in practice. This motivates the study of higher-order smoothness assumptions.

\noindent
\textbf{Lipschitz Hessian \:} 
Assuming in addition that the Hessian is Lipschitz continuous, the aforementioned bound can be improved to $\tilde \bigO(\varepsilon^{-7/4})$ \cite{convexguilty,momentumsaddle}, up to some logarithmic factor further removed by \cite{li2023restarted} to achieve the bound $\bigO(\varepsilon^{-7/4})$.
 This class of functions therefore admits accelerated bounds, while still encompassing important non-convex problems. Examples include the symmetric low-rank matrix factorization problem, as long as the domain is a compact set \cite{jin2017escape}, or squared loss regression of a neural network whose activation functions are with Lipschitz first and second derivatives, \textit{e.g.} if using SoftPlus activations \citep[Proposition 2]{renaud2025provably}. 
 
The existing first-order algorithms achieving the $\bigO(\varepsilon^{-7/4})$ or $\tilde \bigO(\varepsilon^{-7/4})$ complexity rely on the following update as a core mechanism
 \begin{equation}\label{eq:nm_prime}\tag{NM'}
\left\{
\begin{alignedat}{2}
    &\tilde y_k = \tilde x_k + \alpha_k (\tilde x_k - \tilde x_{k-1}),  \\
    &\tilde x_{k+1} = \tilde x_k - \gamma \nabla f(\tilde y_k).
\end{alignedat}
\right.
\end{equation}
 \eqref{eq:nm_prime} can be described as a Nesterov momentum algorithm, with fewer degrees of freedom compared with the $3$-sequence version \eqref{eq:nm}. This can be seen as \eqref{eq:nm} can be written as a $2$-sequence form that reduces to \eqref{eq:nm_prime} for some parameter choices
 \citep[Proposition 11]{hermant2024study}, or by considering that the high-resolution limit-ODE associated to \eqref{eq:nm} and \eqref{eq:nm_prime} can be written in the same form \citep[Proposition 27]{hermant2025continuized}.    
 However, these methods achieving the improved complexity do not rely on \eqref{eq:nm_prime} only; they also need to perform a \textit{safety check} at each iteration. If it fails at iteration $k$,
 the \textbf{momentum built up so far is erased}, which is practically done by setting $\alpha_k =0$.
 At a conceptual level, these procedures can be summarized as follows:
 
 \begin{algorithm}[H]
\caption{Structure of existing algorithms achieving the $\bigO(\varepsilon^{-7/4})$ complexity}
\label{alg:nce}
\DontPrintSemicolon
\If{Safety check succeeds}{
    Run update rule \eqref{eq:nm_prime} 
}
\Else{
Erase accumulated momentum (+ potentially trigger alternative mechanisms)
}
\end{algorithm}
 
 In \cite{convexguilty,momentumsaddle}, the safety check monitors the level of non-convexity between consecutive iterates.  When it exceeds a given threshold:
 \[f(\tilde x_k) \le  f(\tilde y_k) + \dotprod{ \nabla f(\tilde y_k), \tilde x_k - \tilde y_k }- \frac{\ell}{2} \norm{\tilde x_k - \tilde y_k}^2, \quad \ell > 0, \]
accumulated momentum is erased, and a negative-curvature exploitation step is triggered as an alternative mechanism.
An alternative is provided in \cite{li2023restarted}, which employs a safety check based on the length of the trajectory residual. Exceeding a given threshold
 \[
        k \displaystyle\sum_{t=0}^{k-1}
        \|\tilde x_{k+1} - \tilde x_k\|^2 > B^2, \quad B> 0,
    \] it activates a restart mechanism, thus erasing the accumulated momentum.
    See detailed algorithmic descriptions and further discussion in Appendix~\ref{app:alg}.

 In both cases, failing the safety check triggers a \textbf{momentum reset}. Momentum resets, or momentum restarts, can be interesting. They have been shown to provide benefits in the convex setting, for example by enhancing practical behavior in some situations \cite{o2015adaptive,renaud2025provably}, or by helping in the design of parameter-free algorithms \cite{aujol2024parameter}. However, such mechanisms are not mandatory to achieve theoretical acceleration in the convex setting. More generally, mechanisms that restart or control momentum do not seem to be necessary for momentum methods to perform well in many practical non-convex problems. It may be somewhat contradicted by the existing results, which suggest that these momentum control are needed to ensure accelerated convergence. 
This raises the following question:
\begin{center}
    \textbf{Does momentum fundamentally requires control mechanisms to achieve the $\bigO(\varepsilon^{-7/4})$ complexity?}
\end{center}


In a recent result, \cite{okamura2024primitive} shows that in the case of the vanilla Heavy Ball ordinary differential equation (ODE)
\begin{equation}\label{eq:HB}\tag{HB}
    \ddot{x}_t + \alpha\dot{x}_t + \nabla f(x_t) = 0,
\end{equation}
a suitable averaging of its solution $\overline{x}_t$ verifies $ \min_{0\le t \le T} \norm{\nabla f(\overline{x}_t)} = \bigO(T^{-4/7})$, which in turn implies an $\varepsilon$-stationary solution is achieved in at most $\bigO(\varepsilon^{-7/4})$ execution time. Importantly, the Heavy Ball ODE can be viewed as a continuous-time analogue of momentum algorithms \cite{suboydcandes,siegel2021accelerated}. 
It means that this continuous-time momentum dynamic does not require such mechanisms, and naturally raises the question of whether a similar result can be established in the discrete setting. 
To the best of our knowledge, no such discrete counterpart is currently known.\footnote{In an independent work, \cite{ushiyama2026restart} use the Performance Estimation Problem framework to show that a deterministic discretization of \eqref{eq:HB} achieves this complexity without restarts. Yet, their algorithm still incorporates a momentum-control mechanism, that does not set its value to zero but reduces it when the velocity $\norm{\tilde x_{k+1}-\tilde x_k}$ exceeds a threshold.}

\noindent
\textbf{The continuized method\:}
Even, Berthier et al. \cite{even2021continuized} introduce the continuized Nesterov equations
\begin{align}\label{eq:nest_continuized}\tag{CNE}\left\{
    \begin{array}{ll}
        dx_t &= \eta(z_t-x_t)dt - \gamma \nabla f(x_{t^-}) dN_t, \\
        dz_t &= \eta'(x_t-z_t)dt - \gamma' \nabla f(x_{t^-}) dN_t,
    \end{array}
\right.
\end{align}
where $\eta,\eta',\gamma,\gamma'$ are real constants,  $t \in \R_+$ and $N_t$ is a Poisson process with intensity $dt$. The fundamental feature of this process is that it can be analyzed through continuous-time Lyapunov strategies using tools from stochastic calculus theory, while still yielding a computable algorithm that writes as \eqref{eq:nm}, with the specificity that the sequences $\{\alphaval_k\}_{k\in \N}$ and $\{\beta_k\}_{k\in \N}$ are random. These stochastic sequences are simple, in the sense that they can be readily generated at the beginning of the algorithm. Later, \cite{hermant2025continuized} argued that, in non-convex settings, this continuized system helps narrow the gap between convergence results derived for momentum ODEs, such as \eqref{eq:HB}, and those obtained for their algorithmic counterparts. Our motivation combines this perspective with the absence of a discrete counterpart to the result of Okamura et al.~\cite{okamura2024primitive}.

\subsection{Contributions}
We draw inspiration from the ODE-based analysis of \cite{okamura2024primitive} to analyze the solution of \eqref{eq:nest_continuized}, which yields our main result, stated as follows (see Theorem~\ref{thm:hess_lip} for the formal version).
\begin{theorem}[Informal version] Let $f$ be with Lipschitz gradient and Hessian, and let $\tilde \varepsilon \in (0,1/2)$. For a number of iterations $n$ large enough, \eqref{eq:nm} with a suitable stochastic parameterization outputs a point $\tilde x$
    $$ \E{\1_{\Abar}\norm{\nabla f(\tilde x)}}  = \bigO(n^{-4/7}),$$
    where $\Abar$ is a subset of the realizations that satisfies $\mathbb{P}(\Abar) \ge 1- \tilde \varepsilon$.
\end{theorem}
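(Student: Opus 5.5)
Working at the level of the continuous-time process is the natural route. I would analyze the solution of \eqref{eq:nest_continuized} with a heavy-ball-type parameterization: $\eta$ plays the role of the friction $\alpha$, $\gamma'$ is proportional to $\sqrt{\gamma}$, and $\gamma$ is small. The algorithm \eqref{eq:nm} with random $\alpha_k,\beta_k$ is then exactly the sequence $\tilde x_k = x_{T_k}$ read at the jump times $T_k$ of the Poisson process. The plan follows the argument of Okamura et al.~\cite{okamura2024primitive} for \eqref{eq:HB}, with It\^o's formula for jump processes in place of the chain rule.

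\textbf{Energy decrease.} I would first take the energy $E_t = f(x_t) + \frac{1}{2}\norm{v_t}^2$, where $v_t$ is a rescaled velocity proportional to $z_t - x_t$. Applying It\^o's formula to $E_t$ and compensating the Poisson jumps, I want a dissipation inequality of the form
\[
\E{E_T} + c\,\alphaval\,\E{\int_0^T \norm{v_t}^2\,dt} \le E_0 + \text{error terms}.
\]
The error terms are the jump corrections of order $\gamma^2 L\norm{\nabla f}^2$. They are absorbed using the descent obtained from the $\gamma\nabla f$ step, provided $\gamma \le 1/L$ and the parameters are tuned. This yields control of the trajectory length $\int\norm{v_t}^2$ in expectation.

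\textbf{Averaging with the Hessian-Lipschitz bound.} Over a window $[t, t+\tau]$, I would define the averaged point $\bar x$ as a weighted average of $x_s$ with weights chosen so that the velocity equation telescopes, as in Okamura et al. This gives
\[
\nabla f(\bar x) = \frac{1}{\tau}\int \nabla f(x_s)\,w(s)\,ds + O\Big(L_H \sup_s \norm{x_s - \bar x}^2\Big),
\]
by Taylor expansion with the $L_H$-Lipschitz Hessian. The integral term equals the boundary velocity terms divided by $\tau$, plus the martingale part of the jump noise, so both are controlled by $\int\norm{v}^2$. The displacement is bounded by $\tau\int\norm{v}^2$ via Cauchy--Schwarz.

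\textbf{Balancing and conversion to iterations.} Choosing $\tau \asymp T^{3/7}$ and taking the best window gives $\min\norm{\nabla f(\bar x)} = O(T^{-4/7})$. The output $\tilde x$ is defined from the discrete iterates as the corresponding average over the jump times, which is computable. Since $N_T$ concentrates around $T$, I would take $T\asymp n$. The event $\Abar$ collects the realizations on which $N_T\le n$ and the Poisson gaps in each window are regular, so that the discrete average is close to the continuous one. Chernoff bounds give $\P(\Abar)\ge 1-\tilde\varepsilon$ once $n$ is large. On $\Abar$ I would use $\E{\1_{\Abar} X}\le \E{X}$ for the nonnegative quantities, with Markov's inequality for the rest.

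\textbf{Main obstacle.} I expect the hardest step to be the averaging step. Unlike the ODE case, $x_t$ jumps, and the noise $\gamma\nabla f\,(dN_t - dt)$ enters both the gradient average and the displacement. I would first control its quadratic variation, which is of order $\gamma^2\int\norm{\nabla f}^2$ and hence small relative to the main terms, by Doob's inequality. Where pathwise suprema are needed, I would restrict to $\Abar$ rather than seek full-expectation bounds, which is why the statement carries the indicator $\1_{\Abar}$.
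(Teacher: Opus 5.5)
\textbf{Where the proposal breaks: the transfer from your continuous average to the computable output.} Your Hessian-Lipschitz and telescoping analysis is done for an average of $x_s$ with respect to $ds$. But the point you output is the average over the jump times, and you assert that on a regularity event ``the discrete average is close to the continuous one''. The only way to use that closeness is $\norm{\nabla f(\bar x^{\mathrm{disc}})-\nabla f(\bar x^{\mathrm{cont}})}\le L\norm{\bar x^{\mathrm{disc}}-\bar x^{\mathrm{cont}}}$. The difference $\bar x^{\mathrm{disc}}-\bar x^{\mathrm{cont}}$ is a \emph{first-order} sampling error of the trajectory over a window of about $1/\alpha$ jumps, and regular gaps do not remove it: with unit gaps it is already a Riemann-sum bias of the size of one step, $\eta\norm{z-x}$. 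Under the available budgets ($\E{\int_0^{T_n}\alpha\norm{x_s-z_s}^2ds}\le\gapf$, $\eta=\sqrt{1/(2L)}$, $\alpha\asymp n^{-1/7}$), $L$ times this error is only controlled at order $n^{-3/7}$, and worse once the Poisson fluctuations of the sample are included. That is far above $n^{-4/7}$. The Hessian-Lipschitz argument works because the gradient at an average is controlled at \emph{second} order in the within-window displacement. A Lipschitz transfer reintroduces a first-order term and loses the rate, and any finer comparison amounts to redoing the second-order analysis for the jump-time average itself.

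\textbf{What the paper does instead.} The paper never compares two averages. It defines the output directly as the Poisson average $\bar x_t=\int_0^t w_t(s)x_{s^-}dN_s$ with $w_t(s)\propto e^{\alpha s}$; at $t=T_k$ this is exactly $\sum_i\lambda_{i,k}\tilde y_i$. For this average, Lemma~\ref{lem:transfer_grad_derivative} gives the telescoping \emph{exactly}, with no martingale remainder.

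The price is that the weights and the normalization are random. After multiplying by $\Lambda_t$ and taking expectations, the second-order remainder contains products of trajectory quantities ($\norm{z_s-x_s}^2$, $\norm{\nabla f(x_{s^-})}^2$) with jump-time sums such as $\sum_{j\le i}e^{-\alpha(T_i-T_j)}(T_k-T_j)$, which depend on the same past jump times. The Lyapunov controls hold only in expectation, so pathwise suprema do not decouple these products. Markov's inequality does not either: it cannot give a constant independent of $n$ uniformly over all $n$ indices.

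The missing ingredient is Theorem~\ref{thm:high_proba_bound}. Block-Chernoff bounds $\cinf(i-j)-\cinf K\le T_i-T_j\le\cupp(i-j)+\cupp K$, with $K=\bigO(\log(n/\tilde\varepsilon))$, hold simultaneously for all $j\le i\le n$ on a single event $\Abar$. On that event each such sum is at most $\Ccont$ times its expectation, with $\Ccont$ bounded in $n$ because $\alpha K\to0$. Combined with the independence of future increments, this splits the expectations (Lemma~\ref{lem:hess_lip_4}). Lemma~\ref{lem:lower_bound_delta}, which gives $\int_0^{T_n}\Lambda_t\,dN_t\ge cn$ on $\Abar$, is then needed to pass from the weighted sum to the minimum. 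This is where $\1_{\Abar}$ comes from, not from $N_T\le n$ or from discrete/continuous closeness.

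\textbf{A possible repair.} If you want to keep deterministic $ds$-weights, you would have to \emph{output} the continuous average itself. That looks feasible, since between jumps the flow is explicit: $\eta'x+\eta z$ is constant and $x-z$ decays like $e^{-\alpha t}$. It changes the output rule, however, and it requires carrying the gradient-average martingale at leading order.

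\textbf{Smaller issues.}
\begin{itemize}
\item The friction is $\alpha=\eta+\eta'\asymp n^{-1/7}$, not $\eta$. The parameter $\eta$ is pinned to $\gamma'-\gamma=\sqrt{\gamma/2}$ by the cancellation of the cross term in the Lyapunov derivative.
\item The effective averaging window is therefore $1/\alpha\asymp n^{1/7}$, not $T^{3/7}$.
\item The martingale in the gradient average has quadratic variation $\int w^2\norm{\nabla f}^2$, with no $\gamma^2$ factor. It is of the same order as the leading term, not negligible.
\end{itemize}
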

It further implies that a stochastic parameterization of \eqref{eq:nm} is able to achieve a point that satisfies $ \E{\norm{\1_{\Abar}\nabla f(\tilde x)}} \le \varepsilon$ in at most $\bigO(\varepsilon^{-7/4})$ gradient evaluations,  without any momentum-reset mechanisms. We note that for $n$ large enough, the hidden numerical factor of our bound improves over the best existing bound from \cite{li2023restarted}.  Conceptually, our result indicates that \eqref{eq:nm} by itself can achieve the complexity $\bigO(\varepsilon^{-7/4})$, at the cost of randomizing parameters, and thus, of a result in expectation. We also show that under Lipschitz gradient only, our algorithm recovers in expectation the best existing complexity $\bigO(\varepsilon^{-2})$.

The presence of stochastic parameters in the algorithm raises the question of the conceptual relation between the continuized method and more classical, deterministic parameterizations of \eqref{eq:nm}. We show that in the vanishing stepsize limit, a time-rescaled version of \eqref{eq:nest_continuized} converges in probability to \eqref{eq:HB}.
This parallels the fact that classical deterministic parameterizations of \eqref{eq:nm} converge to the same ODE in this vanishing stepsize limit \cite{suboydcandes,shi2018understanding}.
In this sense, \eqref{eq:nest_continuized} may be seen as an alternative, stochastic way to discretize the Heavy Ball \eqref{eq:HB} ODE.  

\subsection{Related works}

\noindent
\textbf{First order acceleration under Lipschitz gradient and Hessian \:} The $\tilde \bigO(\varepsilon^{-7/4})$ complexity for first-order algorithms was first achieved by \cite{convexguilty}.
The proposed nested-loop algorithm combines \eqref{eq:nm_prime} with negative-curvature exploitation and the minimization of a regularized surrogate function. This approach is subsequently simplified in \cite{momentumsaddle}, where the algorithm reduces to \eqref{eq:nm_prime} combined with negative-curvature steps.
Replacing the negative curvature exploitation with a restart mechanism, \cite{li2023restarted} achieves the $\bigO(\varepsilon^{-7/4})$ bound, improving by a logarithmic factor. This restart-based line of work has since been extended to a parameter-free algorithm \cite{Marumo2022ParameterFreeAG}, a forward-backward version
\cite{renaud2025provably} and a universal algorithm under Hölder continuous Hessian \cite{marumo2025universal}. A different line of work, based on online-learning techniques rather than momentum, was initiated by \cite{cutkosky2023optimal}, who achieved the complexity bound $\tilde\bigO(\varepsilon^{-7/4})$ under the supplementary assumption that $f$ is Lipschitz. Their idea was later extended by \cite{jiang2025improved}, improving the dependence on $\varepsilon$ to $\bigO(d^{1/4}\varepsilon^{-13/8})$ at the cost of an explicit dimension dependence, without this supplementary Lipschitzness assumption on $f$. These online-learning based methods are somewhat intricate, relying on nested algorithms and/or nested loops, but they show that in some settings, alternatives to momentum can achieve this accelerated bound. The first known lower-bound on the number of gradient evaluations needed to find an $\varepsilon$-stationary point in this setting was $\bigO(\varepsilon^{-12/7})$ \cite{lowerboundII}, and was recently improved to $\bigO(\varepsilon^{-7/4})$ \cite{zhou2026sharp}, closing the gap with the upper-bound.

\noindent
\textbf{Second-order stationary points \:} In contrast to our setting,
several works have focused on second-order stationary points,
which under suitable assumptions correspond to local minimizers, see a formal definition in Appendix~\ref{app:second_order_critic}. With probability $1-\delta$, such a point is found in at most $\bigO(\varepsilon^{-7/4}\log(\frac{d}{\delta \varepsilon}))$ gradient and Hessian-vector product evaluations \cite{agarwal,carmon2018accelerated}, or with gradient evaluation only \cite{momentumsaddle,li2023restarted}.
 Procedures based on first-order information have been designed that transform algorithms for finding first-order stationary points into ones that find second-order stationary points \cite{xu2018first,allen2018neon2}. 
 Even though designing methods that achieve second-order stationary points efficiently is an important avenue of research, we do not consider this direction in this work.

\noindent
\textbf{Continuized Nesterov\:} The continuized Nesterov equations are introduced in \cite{even2021continuized}, motivated by the study of asynchronous algorithms. They show that the continuized Nesterov algorithm recovers, in expectation, the existing rate of the classic Nesterov Momentum for (strongly)-convex functions, further generalized to (strongly)-quasar convex functions \cite{wang2023continuizedaccelerationquasarconvex,hermant2025continuized}.   This method has known some extensions in the specific field of decentralized asynchronous algorithms
\cite{nabli2023dadao,nabli2023a2cid2}, but remains largely unexplored in optimization.
\section{Background}
\noindent
In this work, we fix an underlying probability space $(\Omega,\mathcal{F},\mathbb{P})$. We denote by $\mathcal{E}(1)$ the exponential distribution with parameter $1$, $\Gamma(k,1)$ the Gamma distribution with shape parameter $k$ and rate $1$ and $\mathcal{P}(\lambda)$ the Poisson distribution with parameter $\lambda$. For $\phi : \R \to \R$, $t \in \R$, we note $\phi(t^-) := \lim_{\substack{s < t \\s \to t}} \phi(s)$ the left-limit. $\norm{\; \cdot\;}$ denotes the Euclidean norm for vectors and $\norm{\; \cdot\;}_2$ denotes the spectral norm for matrices. We use the notations $f^\ast := \min_{x \in \R^d} f(x)$ and $\gapf :=f(x_0)-f^\ast $.

We consider the two following assumptions.
\begin{assumption}\label{ass:l_smooth}(Lipschitz gradient)
    $f$ is such that $ \forall x,y\in \R^d,~  \norm{\nabla f(x)-\nabla f(y)}\le L\norm{x-y}$.
\end{assumption}
\begin{assumption}\label{ass:hess_lip}(Lipschitz Hessian)
    $f$ is such that $ \forall x,y\in \R^d,~  \norm{\nabla^2 f(x)-\nabla^2 f(y)}_2\le L_2\norm{x-y}$.
\end{assumption}
For functions satisfying Assumption~\ref{ass:l_smooth} and/or Assumption~\ref{ass:hess_lip}, a critical point could be a non-global minimizer, a saddle point, or a maximizer.
 As mentioned in Section~\ref{sec:intro}, Assumption~\ref{ass:l_smooth} alone is sufficient to find an $\varepsilon$-stationary point with first-order methods \cite{nesterovbook,ghadimi2016accelerated}, but the complexity of gradient descent cannot be improved \cite{lowerboundI}. Considering  Assumptions~\ref{ass:l_smooth}-\ref{ass:hess_lip} together allows for such improvement.
In existing analysis \cite{momentumsaddle,li2023restarted}, Assumption~\ref{ass:hess_lip} enables working with local quadratic approximations, on which momentum is known to be effective \cite{o2019behavior}. Our analysis does not rely on such approximations.
\subsection{Continuized Nesterov Algorithm}
In this section, we briefly describe the process studied in this work, and the resulting algorithm. We refer to \cite[Section 3]{hermant2025continuized} for a more detailed presentation of the continuized framework.
An intuitive way to present the process is as follows: consider a sequence of random times $\tk$ such that $T_0 = 0$ and $T_{k+1}-T_k \overset{i.i.d}{\sim} \mathcal{E}(1)$ for $k \in \N$. For some constant parameters $\eta, \eta', \gamma, \gamma'$, we define a continuous-time process $\xz$ on each interval $(T_k,T_{k+1})$ as the solution of the following equation
\begin{align*}\left\{
    \begin{array}{ll}
        d{x}_t &= \eta(z_t-x_t)dt, \\
        d{z}_t &= \eta'(x_t-z_t)dt.
    \end{array}
\right.
\end{align*}
At $t=T_k$, the process jumps by performing gradient steps,
\begin{equation*}
    x_{T_k} = x_{T_k^{-}} - \gamma\nabla f(x_{T_k^-}), \quad z_{T_k} = z_{T_k^{-}} - \gamma'\nabla f(x_{T_k^-}).
\end{equation*}
This defines the \textit{continuized Nesterov equation}, where we assume $x_0 = z_0$.
It can be written in a more compact form as the following stochastic differential equation 
\begin{align}\label{eq:nest_continuized}\tag{CNE}\left\{
    \begin{array}{ll}
        dx_t &= \eta(z_t-x_t)dt - \gamma \nabla f(x_{t^-}) dN_t, \\
        dz_t &= \eta'(x_t-z_t)dt - \gamma' \nabla f(x_{t^-}) dN_t,
    \end{array}
\right.
\end{align}
where $dN_t = \sum_{k\ge 0} \delta_{T_k}(dt)$ is a Poisson point measure with intensity $dt$. It mixes the continuous component, the $dt$ factor, with the gradient component that acts at discrete random times, the $dN_t$ factor. Replacing $dN_t$ by $dt$ in \eqref{eq:nest_continuized} would yield a momentum ODE, close to the classic Heavy-Ball ODE, see \citep[Appendix E.2]{hermant2025continuized}.

A remarkable property of \eqref{eq:nest_continuized} is that, upon defining the sequences $\tilde{y}_k := x_{T_{k+1}^-}$, $\tilde{x}_k := x_{T_{k}}$ and $\tilde{z}_k := z_{T_{k}}$, $k \in \N$, it satisfies a recursive relation that takes the form of a Nesterov momentum algorithm, with stochastic parameters depending on the random times $\tk$. 
\begin{proposition}[\cite{hermant2025continuized}, Proposition 5]\label{prop:discretization_constant_param}
Let $\xz$ follow \eqref{eq:nest_continuized} with $\eta + \eta' > 0$ and with underlying jumping times $\tk$. Define $\tilde{y}_k := x_{T_{k+1}^-}$, $\tilde{x}_{k+1} := x_{T_{k+1}}$ and $\tilde{z}_{k+1} := z_{T_{k+1}}$ as evaluations of this process. Then, $(\tilde{y}_k,\tilde{x}_k, \tilde{z}_k)$ writes as a specific parameterization of \eqref{eq:nm}, that we call the \textit{continuized Nesterov algorithm} \eqref{alg:constant_param_det}, namely 
\begin{align}\label{alg:constant_param_det}\tag{CNA}\left\{
    \begin{array}{ll}
        \tilde{y}_k &=\tilde{x}_k +\frac{\eta}{\eta+\eta'}\bpar{1 -e_k } (\tilde{z}_{k} - \tilde{x}_k)  \\
        \tilde{x}_{k+1} &=  \tilde{y}_k  - \gamma \nabla f(\tilde{y}_k)\\
        \tilde{z}_{k+1} &=   \tilde{z}_{k} + \eta' \frac{1- e_k}{\eta' + \eta e_k}(\tilde{y}_k - \tilde{z}_{k}) - \gamma' \nabla f(\tilde{y}_k)
    \end{array}
\right.
\end{align}
where $e_k=e^{-(\eta+\eta')(T_{k+1}-T_k)}.$
\end{proposition}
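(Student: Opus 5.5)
Between jumps the dynamics are linear and autonomous, so the plan is to solve them explicitly on each inter-jump interval and then compose with the jump rule. On $(T_k,T_{k+1})$ we have $d(z_t-x_t) = -(\eta+\eta')(z_t-x_t)\,dt$ and $d(\eta' x_t+\eta z_t)=0$. Hence the weighted average $m := \frac{\eta' x_t + \eta z_t}{\eta+\eta'}$ is conserved (this uses $\eta+\eta'>0$), and the difference decays exponentially:
\[
z_t - x_t = e^{-(\eta+\eta')(t-T_k)}(\tilde z_k - \tilde x_k).
\]
Writing $x_t = m - \frac{\eta}{\eta+\eta'}(z_t-x_t)$ and $z_t = m + \frac{\eta'}{\eta+\eta'}(z_t-x_t)$, and starting from $(\tilde x_k,\tilde z_k)=(x_{T_k},z_{T_k})$, we get closed forms for $x_t$ and $z_t$ on the whole interval.

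Next I take the left limit at $T_{k+1}$, with $e_k = e^{-(\eta+\eta')(T_{k+1}-T_k)}$:
\[
\tilde y_k = x_{T_{k+1}^-} = \tilde x_k + \frac{\eta}{\eta+\eta'}(1-e_k)(\tilde z_k-\tilde x_k),
\]
which is the first line of \eqref{alg:constant_param_det}. Similarly,
\[
z_{T_{k+1}^-} = \tilde z_k - \frac{\eta'}{\eta+\eta'}(1-e_k)(\tilde z_k - \tilde x_k).
\]
The jump at $T_{k+1}$ evaluates the gradient at $x_{T_{k+1}^-}=\tilde y_k$. This gives $\tilde x_{k+1} = \tilde y_k - \gamma\nabla f(\tilde y_k)$ directly, and $\tilde z_{k+1} = z_{T_{k+1}^-} - \gamma'\nabla f(\tilde y_k)$.

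The only real step is rewriting $z_{T_{k+1}^-}$ in terms of $\tilde y_k - \tilde z_k$ rather than $\tilde z_k - \tilde x_k$. From the first formula,
\[
\tilde y_k - \tilde z_k = (\tilde x_k - \tilde z_k)\Big(1 - \tfrac{\eta(1-e_k)}{\eta+\eta'}\Big) = (\tilde x_k-\tilde z_k)\,\frac{\eta' + \eta e_k}{\eta+\eta'}.
\]
The denominator $\eta'+\eta e_k$ is nonzero generically, and this should be checked, or else the identity stated as valid whenever it is nonzero. Substituting, the correction to $\tilde z_k$ becomes
\[
\frac{\eta'(1-e_k)}{\eta+\eta'}(\tilde x_k-\tilde z_k) = \eta'\,\frac{1-e_k}{\eta'+\eta e_k}\,(\tilde y_k-\tilde z_k),
\]
which is the third line of \eqref{alg:constant_param_det}.

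I expect no serious obstacle; this is the cited result of \cite{hermant2025continuized}, and the only care needed is the change of variables just described. Finally, matching coefficients with \eqref{eq:nm} gives $\alpha_k = \frac{\eta}{\eta+\eta'}(1-e_k)$ and $\beta_k = \eta'\frac{1-e_k}{\eta'+\eta e_k}$.
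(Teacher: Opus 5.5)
Your proposal is correct. It follows the standard argument behind this result, which the paper does not reprove but imports from \cite{hermant2025continuized}: integrate the linear inter-jump dynamics explicitly through the conserved combination $\eta' x_t+\eta z_t$ and the exponentially decaying difference $z_t-x_t$, take the left limit at $T_{k+1}$, apply the jump, and rewrite the $z$-correction in terms of $\tilde y_k-\tilde z_k$. On your caveat: $\eta'+\eta e_k$ can vanish only when $\eta>0>\eta'$, and then only on the event $\{e_k=-\eta'/\eta\}$, which has probability zero because $T_{k+1}-T_k$ has a density, so \eqref{alg:constant_param_det} holds almost surely.
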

In words, \eqref{alg:constant_param_det} is an exact evaluation of a realization of \eqref{eq:nest_continuized}. Intuitively, exact access to the continuous process \eqref{eq:nest_continuized} is possible because gradient information is only queried at discrete times.
This direct correspondence between \eqref{alg:constant_param_det} and \eqref{eq:nest_continuized} is powerful: it enables convergence guarantees for a discrete algorithm to be derived via continuous-time analysis. This comes at the cost that even with deterministic gradients, the resulting algorithm is inherently stochastic, thus leading to non-deterministic convergence guarantees.

\noindent
\textbf{Practical implementation of \eqref{alg:constant_param_det}\:}
From a practical point of view, compared to a deterministic parameterization of \eqref{eq:nm}, the randomization procedure in \eqref{alg:constant_param_det} does not introduce any significant additional difficulty. This is because, for any $k \in \mathbb{N}$, the increments $T_{k+1} - T_k$ are independent and identically distributed following an exponential law of parameter $1$, and can therefore be easily and independently simulated. As a consequence, to run \eqref{alg:constant_param_det} with $n \in \N^\ast$ iterations, we can first generate at once $n$ random variable $\Delta_k \overset{i.i.d}{\sim} \mathcal{E}(1)$, and save the vector $\{\Delta_k \}_{k\in \{1,\cdots, n\}}$. Then, for fixed parameters $\eta, \eta', \gamma$, and $\gamma'$, the algorithm \eqref{alg:constant_param_det} can be implemented in the same way as \eqref{eq:nm}, using the sequences
\[
\alpha_k = \frac{\eta}{\eta+\eta'}\left(1 - e^{-(\eta+\eta')\Delta_k}\right),
\qquad
\beta_k = \eta'\frac{1 - e^{-(\eta+\eta')\Delta_k}}{\eta' + \eta e^{-(\eta+\eta')\Delta_k}},
\quad k=1,\dots,n.
\] 

\paragraph{Link between \eqref{alg:constant_param_det} and \eqref{eq:nm_prime}}

Former results that achieved the $\bigO(\varepsilon^{-7/4})$ complexity used a two-sequence formulation \eqref{eq:nm_prime} as their momentum component. Using \citep[Proposition 11]{hermant2024study}, we can rewrite \eqref{alg:constant_param_det} as a two-sequence algorithm:

\[
\left\{
\begin{array}{ll}
\tilde y_k
=
\tilde x_k
+
\displaystyle
\frac{(1-e_k)e_{k-1}}{1-e_{k-1}}
(\tilde x_k-\tilde x_{k-1})
+
\displaystyle
\frac{\eta}{\eta+\eta'}(1-e_k)
\left(
\frac{\gamma'}{\gamma}
-
1
-
\frac{(\eta+\eta')e_{k-1}}{\eta(1-e_{k-1})}
\right)
(\tilde x_k-\tilde y_{k-1}),
\\[12pt]
\tilde x_{k+1}=\tilde y_k-\gamma \nabla f(\tilde y_k),
\end{array}
\right.
\]
with $e_k=e^{-(\eta+\eta')(T_{k+1}-T_k)}.$ Compared with \eqref{eq:nm_prime}, there is a supplementary term, factor of $\tilde x_k- \tilde y_{k-1} = -\gamma \nabla f(\tilde y_{k-1})$.
This supplementary term appears in algorithms that get an improved constant in the convergence bound in the smooth strongly convex setting, such as the Optimized Gradient Method (OGM) \citep{kim2016optimized}. It also appears in results that achieves acceleration in the non-convex regime of strongly quasar-convex functions \citep[Proposition 4]{hermant2024study}.
With our choice of parameter in Theorem~\ref{thm:hess_lip}, this supplementary factor does not reduce to zero. It may indicate that algorithms of the form \eqref{eq:nm_prime} are unable to achieve the $\bigO(\varepsilon^{-7/4})$ complexity by their own, even if randomizing the parameter.

\section{Convergence Result under Lipschitz Gradient Only}\label{sec:l_smooth}
We derive a convergence result under Assumption~\ref{ass:l_smooth} only.
\begin{restatable}{proposition}{cvLsmooth}\label{thm:cv_L_smooth}
Assume $T_1,\dots,T_k$ are random variables such that $T_{i+1} - T_i$ are i.i.d. of law $\mathcal{E}(1)$, with convention $T_0 = 0$. Under Assumption~\ref{ass:l_smooth}, iterations of (\ref{alg:constant_param_det}) with $\gamma \le \frac{1}{L}$, $\gamma' = \gamma + \sqrt{\frac{\gamma}{2}}$, $\eta = \sqrt{\frac{\gamma}{2}}$ and $\eta' > -\eta$ verify
\begin{equation*}
    \mathbb{E}\left[\min_{0\le i \le k-1}\norm{\nabla f(\tilde y_i)}^2 \right]
    \le \frac{4}{\gamma}\frac{\gapf}{k}.
\end{equation*}
\end{restatable}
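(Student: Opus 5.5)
The plan is a Lyapunov argument on the continuous-time process \eqref{eq:nest_continuized}. Proposition~\ref{prop:discretization_constant_param} identifies the iterates $\tilde y_i = x_{T_{i+1}^-}$ with the process just before its jumps. I will use the energy
$$\phi(x,z) := f(x) + \tfrac{1}{2}\norm{x-z}^2 ,$$
and apply the Itô formula for jump processes to $E_t := \phi(x_t,z_t)$. Between jumps the flow is deterministic; at each $T_k$ there is a jump. Writing $g_t := \nabla f(x_{t^-})$, this gives
$$dE_t = \Big(\dotprod{\nabla f(x_t), \eta(z_t-x_t)} - (\eta+\eta')\norm{x_t-z_t}^2\Big)dt + \Big[\phi(x_{t^-}-\gamma g_t, z_{t^-}-\gamma' g_t) - \phi(x_{t^-},z_{t^-})\Big]dN_t .$$

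Next I bound the jump bracket. Assumption~\ref{ass:l_smooth} and $\gamma\le 1/L$ give the descent lemma, $f(x-\gamma g)-f(x)\le -\tfrac{\gamma}{2}\norm{g}^2$. The quadratic part changes because $x-z$ is shifted by $(\gamma'-\gamma)g=\eta g$ (using $\gamma'-\gamma=\sqrt{\gamma/2}=\eta$). This produces $\eta\dotprod{x-z,g}+\tfrac{\eta^2}{2}\norm{g}^2$.

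Then I compensate. Decompose $dN_t = dt + d\tilde N_t$ with $\tilde N$ the compensated Poisson martingale, and move the jump bracket into the drift. Since $x_{t^-}=x_t$ for a.e.\ $t$, the cross term $-\eta\dotprod{g,x-z}$ from the flow exactly cancels the $+\eta\dotprod{x-z,g}$ from the jump. This cancellation is the reason for the choices of $\eta$, $\gamma'$ and the coefficient $1/2$ in $\phi$. What remains is
$$dE_t \le \Big(-(\eta+\eta')\norm{x_t-z_t}^2 - \big(\tfrac{\gamma}{2}-\tfrac{\eta^2}{2}\big)\norm{g_t}^2\Big)dt + dM_t ,$$
where $M$ is a local martingale. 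With $\eta^2=\gamma/2$ the coefficient of $\norm{g_t}^2$ is $-\gamma/4$, and $\eta+\eta'>0$ by assumption. Hence $E_t + \tfrac{\gamma}{4}\int_0^t \norm{g_s}^2 ds$ is a local supermartingale.

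To reach the discrete statement I prefer to keep the $dN$ form for the gradient term rather than compensating it, i.e.\ to show that
$$E_t + \tfrac{\gamma}{4}\int_0^t \norm{g_s}^2 dN_s$$
is a local supermartingale. This holds because the difference between $\int \norm{g}^2 dN$ and $\int\norm{g}^2 ds$ is itself a compensated-Poisson local martingale. I then stop at the stopping time $T_k$. Three facts close the argument:
\begin{itemize}[label={}]
\item $\int_0^{T_k}\norm{g_s}^2dN_s=\sum_{i=0}^{k-1}\norm{\nabla f(\tilde y_i)}^2$;
\item $E_{T_k}\ge f^\ast$;
\item $E_0=f(x_0)$, since $x_0=z_0$.
\end{itemize}
Together they give $\tfrac{\gamma}{4}\E{\sum_{i=0}^{k-1}\norm{\nabla f(\tilde y_i)}^2}\le \gapf$. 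Bounding the minimum by the average then yields $\E{\min_{i}\norm{\nabla f(\tilde y_i)}^2}\le \tfrac{4\gapf}{\gamma k}$.

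The main technical obstacle is justifying optional stopping at $T_k$ for a process that is only a local martingale, since $\nabla f$ may be unbounded along the trajectory. I would first try a purely discrete workaround. Condition on $(\tilde x_i,\tilde z_i)$ and compute the expectation over the single exponential inter-arrival time, which reproduces the same one-step inequality $\E{\phi_{i+1}\mid\mathcal F_{T_i}}\le \phi_i-\tfrac{\gamma}{4}\norm{\nabla f(\tilde y_i)}^2$ in expectation, so that only finitely many steps are summed. Failing that, I would localize with $\tau_R=\inf\{t:\norm{x_t}+\norm{z_t}\ge R\}\wedge T_k$, use that $E$ is bounded below, and pass $R\to\infty$ by Fatou for the lower-bounded $E$ and monotone convergence for the sum.
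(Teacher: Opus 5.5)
Your proposal is correct and follows essentially the same route as the paper: the same Lyapunov function $f(x)+\frac12\norm{x-z}^2$, the same cancellation of the cross term via $\eta=\gamma'-\gamma=\sqrt{\gamma/2}$ leaving a $-\frac{\gamma}{4}\norm{\nabla f}^2$ drift, the same conversion of the gradient term to a $dN_s$ integral up to a centered martingale, stopping at $T_k$, and bounding the minimum by the average. The only difference is how stopping at $T_k$ is justified: the paper cites a stopping theorem (Theorem~\ref{thm:martingal_stopping}) for a nonnegative process bounded by $\gapf+U_t$, whereas your localization argument, or your one-step conditioning on the exponential inter-arrival time, makes this step self-contained (in the latter, $\norm{\nabla f(\tilde y_i)}^2$ should sit inside the conditional expectation, since it is not $\mathcal{F}_{T_i}$-measurable).
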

Proposition~\ref{thm:cv_L_smooth} implies that we find, in expectation, an $\varepsilon$-stationary point in at most $O(\varepsilon^{-2})$ gradient evaluations. Perhaps surprisingly, the parameter $\eta'$ can be chosen freely in $(-\eta,+\infty)$, without affecting the result. This is a new result for the continuized algorithm \eqref{alg:constant_param_det}, which shows it recovers the known bound of the classical Nesterov momentum algorithms in this setting, see \cite{ghadimi2016accelerated}. 
Proposition~\ref{thm:cv_L_smooth} is a consequence of the following bound, established using Lyapunov analysis for the continuous-time system \eqref{eq:nest_continuized}.
\begin{restatable}{lemma}{lsmooth}\label{lem:l_smooth}
If $\xz$ is solution of (\ref{eq:nest_continuized}) under Assumption~\ref{ass:l_smooth} with $\gamma \le \frac{1}{L}$, $\gamma' = \gamma + \sqrt{\frac{\gamma}{2}}$, $\eta = \gamma' - \gamma$ and $\eta' > -\eta$, we have
\begin{align}\label{eq:l_smooth_use_hess}
    \mathbb{E}\left[ \int_{0}^{t} (\eta + \eta')\norm{x_s - z_s}^2 + \frac{\gamma}{4}\norm{\nabla f(x_{s})}^2 ds \right]
    \le \mathbb{E}\left[  f(x_0)- f^\ast \right].
\end{align}
\end{restatable}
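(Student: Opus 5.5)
The plan is a Lyapunov argument for the jump SDE \eqref{eq:nest_continuized}, using the energy
\[
E_t := f(x_t) + \frac{1}{2}\norm{x_t - z_t}^2 .
\]
Since $x_0=z_0$, we have $E_0 = f(x_0)$, and $E_t \ge f^\ast$ for all $t$. So it suffices to show that the compensated drift of $E_t$ is bounded above by $-(\eta+\eta')\norm{x_t-z_t}^2 - \frac{\gamma}{4}\norm{\nabla f(x_t)}^2$, and then take expectations.

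\textbf{Step 1: Itô formula for the jump process.} For a $C^1$ function $\varphi(x,z)$, the Itô formula for processes driven by $dt$ and a Poisson measure gives
\[
d\varphi(x_t,z_t) = \dotprod{\nabla\varphi, \text{drift}}\,dt + \big[\varphi(x_{t^-}-\gamma g_t, z_{t^-}-\gamma' g_t) - \varphi(x_{t^-},z_{t^-})\big]\,dN_t ,
\]
where $g_t = \nabla f(x_{t^-})$. We then write $dN_t = dt + (dN_t - dt)$. The second part yields a local martingale.

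\textbf{Step 2: the continuous part.} Differentiating $E$ along the flow gives
\[
\eta\dotprod{\nabla f(x_t), z_t-x_t} + \dotprod{x_t-z_t,\ \eta(z_t-x_t)-\eta'(x_t-z_t)} = \eta\dotprod{\nabla f(x_t), z_t-x_t} - (\eta+\eta')\norm{x_t-z_t}^2 .
\]

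\textbf{Step 3: the jump part.} By the descent lemma under Assumption~\ref{ass:l_smooth} with $\gamma\le 1/L$,
\[
f(x-\gamma g)-f(x)\le -\frac{\gamma}{2}\norm{g}^2 .
\]
Since $\gamma'-\gamma=\eta$, the difference $x-z$ becomes $x-z+\eta g$ after a jump. Hence the quadratic term changes by
\[
\eta\dotprod{x-z,g}+\frac{\eta^2}{2}\norm{g}^2 .
\]

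\textbf{Step 4: combining.} Adding Steps 2 and 3, the cross terms $\eta\dotprod{g,z-x}$ and $\eta\dotprod{g,x-z}$ cancel; this is exactly why the weight $1/2$ on $\norm{x-z}^2$ and the choice $\eta=\gamma'-\gamma$ are used. Using $\eta^2=\gamma/2$, the gradient term becomes $(-\frac{\gamma}{2}+\frac{\gamma}{4})\norm{g}^2 = -\frac{\gamma}{4}\norm{g}^2$. We obtain
\[
E_t - E_0 \le -\int_0^t \Big[(\eta+\eta')\norm{x_s-z_s}^2 + \frac{\gamma}{4}\norm{\nabla f(x_{s^-})}^2\Big]ds + M_t ,
\]
with $M_t$ a local martingale. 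Since $x_{s^-}=x_s$ except at countably many times, $x_{s^-}$ may be replaced by $x_s$ inside the $ds$ integral. Taking expectations and using $E_t\ge f^\ast$ then gives \eqref{eq:l_smooth_use_hess}. The condition $\eta+\eta'>0$ keeps the first integrand nonnegative.

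\textbf{Main obstacle: the martingale step.} The delicate point is justifying $\mathbb{E}[M_t]=0$, since we have no a priori integrability of $\nabla f(x_s)$. I would localize with stopping times $\tau_R=\inf\{s:\norm{x_s}+\norm{z_s}\ge R\}\wedge T_R'$, where $T_R'$ is the time of the $R$-th jump. On $[0,\tau_R]$ all integrands are bounded, so the stopped compensated integral is a true martingale and the inequality holds with $t\wedge\tau_R$. Since $\tau_R\to\infty$ almost surely (there are finitely many jumps on bounded intervals and the flow between jumps is linear, hence nonexplosive), letting $R\to\infty$ with monotone convergence on the nonnegative integral yields the claim.
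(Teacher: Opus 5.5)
Your proposal is correct and matches the paper's proof step for step: the same Lyapunov function $f(x)+\frac{1}{2}\norm{x-z}^2$, the same split into drift and compensated jump parts, the descent lemma for $\gamma\le 1/L$, the cancellation of the cross terms via $\eta=\gamma'-\gamma$, and the resulting $-\frac{\gamma}{4}\norm{\nabla f}^2$ coefficient from $\eta^2=\gamma/2$. The one difference is that you justify $\mathbb{E}[M_t]=0$ by localization and monotone convergence (using $\eta+\eta'>0$), whereas the paper simply invokes the Itô formula of Even et al.\ stating that $M_t$ is a zero-mean martingale; your extra step is valid and makes the argument more self-contained.
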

\begin{sproof}

We set $\overline{x}_t= (x_t,z_t)$ as a concatenation of $\xcont$ and $(z_t)_{t\ge 0}$, which satisfies $d\overline{x}_t = \zeta(\overline{x}_t)dt + G(\overline{x}_t)dN(t)$, where
\begin{equation*}
    \zeta(\overline{x}_t) = \begin{pmatrix}
     \eta(z_t-x_t) \\ \eta'(x_t-z_t)
    \end{pmatrix},\quad G(\overline{x}_t) = \begin{pmatrix}
    -\gamma_t \nabla f(x_t) \\ -\gamma_t' \nabla f(x_t)
    \end{pmatrix}.
\end{equation*}
We set the Lyapunov function
\begin{equation*}
    \varphi(x,z) = f(x) + \frac{1}{2}\norm{x-z}^2.
\end{equation*}
Intuitively, one can stochastically derivate  $\varphi(\overline{x}_t)$ using an Itô formula, yielding the following relation
\[    \varphi(\overline{x}_t) = \varphi(\overline{x}_0) + \int_{0}^t \dotprod{\nabla \varphi(\overline{x}_s),\zeta(\overline{x}_s)}ds + \int_0^t \varphi(\overline{x}_s + G(\overline{x}_s)) - \varphi(\overline{x}_s)ds + M_t,\]
where $\mart$ is a martingale verifying $\E{M_t} = 0$ for all $t\ge 0$. Using Assumption~\ref{ass:l_smooth} and the parameter choice, we compute that
\[     \dotprod{\nabla \varphi(\overline{x}_s),\xi(\overline{x}_s)}  +    \varphi(\overline{x}_s+ G(\overline{x}_s)) - \varphi(\overline{x}_s)\le -(\eta + \eta')\norm{x_s - z_s}^2 - \frac{\gamma}{4}\norm{\nabla f(x_{s})}^2,\]
which yields
\begin{equation}\label{eq:l_smooth_lyap_dec}
    \int_{0}^{t} (\eta + \eta')\norm{x_s - z_s}^2 + \frac{\gamma}{4}\norm{\nabla f(x_{s})}^2 ds\le f(x_0)- f^\ast + M_t.
\end{equation}
One can conclude using $\E{M_t} = 0$ for all $t\ge 0$. See the complete proof in Appendix~\ref{app:l_smooth}.
\end{sproof}
From Lemma~\ref{lem:l_smooth}, it follows that $\E{\min_{s\in [0,t]}\norm{ \nabla f(x_{s})}^2} = \bigO(1/t)$. We note that a similar bound is obtained under Assumption~\ref{ass:l_smooth} for a continuized system in \cite[Proposition 6]{nabli2023a2cid2}, but with different dynamics from \eqref{eq:nest_continuized}, specifically adapted to asynchronous algorithms. 
With simple terms, because $\tilde{x}_{k}$ in \eqref{alg:constant_param_det} is exactly $x_{t}$ in \eqref{eq:nest_continuized} evaluated at the time $T_k$, Proposition~\ref{thm:cv_L_smooth} follows from evaluating \eqref{eq:l_smooth_lyap_dec} at $t=T_k$, from which will follow a convergence result for \eqref{alg:constant_param_det}, up to some technical considerations, see Appendix~\ref{app:l_smooth}.
Lemma~\ref{lem:l_smooth}, in particular \eqref{eq:l_smooth_lyap_dec}, is not only an intermediate result for Proposition~\ref{thm:cv_L_smooth}. It also serves as a key building block for proving our main theorem in the Lipschitz Hessian setting (Theorem~\ref{thm:hess_lip}).



\section{Main Results}
We state in Section~\ref{sec:cv_main} our acceleration result for \eqref{alg:constant_param_det}. In Section~\ref{sec:limit_ode}, we show that a time-rescaled version of the continuized system \eqref{eq:nest_continuized} converges in probability to the Heavy Ball ODE \eqref{eq:HB} as the stepsize $\gamma$ goes to zero. 

\subsection{Convergence Result}\label{sec:cv_main}
We show that \eqref{alg:constant_param_det} achieves the $\bigO(\varepsilon^{-7/4})$ complexity under Assumptions~\ref{ass:l_smooth}-\ref{ass:hess_lip}.
It may be seen as a discrete-time version of a convergence result that considers the Heavy Ball ODE \eqref{eq:HB} \cite{okamura2024primitive}.

         \begin{theorem}\label{thm:hess_lip}
            Let $\tk$ such that $T_0 = 0$, $T_{k+1} - T_k \overset{i.i.d}\sim \mathcal{E}(1)$.
Let $\tilde \varepsilon \in (0,1/2]$, and $\cupp \ge 1$ such that it satisfies $1-\tilde \varepsilon-\log(1-\tilde \varepsilon) = \cupp - \log(\cupp)$, which implies $\cupp \le 1+2\tilde \varepsilon$. Under Assumptions~\ref{ass:l_smooth}-\ref{ass:hess_lip}, consider the iterations of (\ref{alg:constant_param_det}) with $\gamma = \frac{1}{ L}$, $\gamma' = \frac{1}{ L} + \sqrt{\frac{1}{2L }}$, $\eta = \sqrt{\frac{1}{2L }}$, $\eta'= \alphaval-\sqrt{\frac{1}{2L }} $, where 
\[\alphaval = \left(\frac{\sqrt{3}\cdot 64\cupp e}{(1-\tilde \varepsilon)^2}\right)^{2/7}\bpar{\frac{L_2^2\gapf}{L^3n}}^{\frac{1}{7}}.\]
Then, if $n$ is large enough to ensure
$2((1+2\tilde \varepsilon)n)^{-1}< \alphaval \le 1$, we have
        \[\E{\1_{\Abar}\min_{ k \in \{1,\dots n \}}\norm{\nabla f(\overline{x}_k)}}\le \Bconst L^{\frac{2}{7}}L_2^{\frac{1}{7}} \gapf^{\frac{4}{7}} n^{-4/7},\]
        where $ \overline{x}_k = \sum_{i=0}^{k-1} \lambda_{i,k}\tilde{y}_i$ with $\lambda_{i,k} = \frac{ e^{\alphaval T_{i+1}}}{\sum_{j =1}^k  e^{\alphaval T_j}}$, $\mathbb{P}(\Abar) \ge  1-\tilde \varepsilon$, and $\Bconst$ is such that it is uniformly bounded with $n$ and $\lim_{n \to +\infty} \Bconst \le 9.2(1+2\tilde \varepsilon)^2\bpar{\frac{1+2\tilde \varepsilon}{(1-\tilde \varepsilon)^2}}^{1/7} $.
         \end{theorem}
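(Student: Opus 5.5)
The proof transfers the ODE argument of \cite{okamura2024primitive} to the continuized process \eqref{eq:nest_continuized}, and then reads it back on the iterates through Proposition~\ref{prop:discretization_constant_param}. With the chosen parameters we have $\eta+\eta'=\alphaval$. So between jumps $x_t-z_t$ decays like $e^{-\alphaval t}$, and at jumps $z$ moves by $-\gamma'\nabla f$ while $x$ moves by $-\gamma\nabla f$. This makes $v_t := \eta(z_t-x_t)$ behave like a velocity with friction $\alphaval$, driven by the jump measure $-\eta(\gamma'-\gamma)\nabla f(x_{t^-})dN_t = -\tfrac{1}{2L}\nabla f\, dN_t$. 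This is exactly a stochastic Heavy Ball with friction $\alphaval$.

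\textbf{Step 1 (averaging identity).} Integrating the $z$-equation against the weight $e^{\alphaval s}$ gives an exact formula: the weighted sum $\sum_{i<k} e^{\alphaval T_{i+1}}\nabla f(\tilde y_i)$ equals a boundary term in $x_{T_k}-z_{T_k}$ and $x_0-z_0=0$, multiplied by constants of order $\alphaval e^{\alphaval T_k}/\eta$, plus a term in $e^{\alphaval T_k}(x_{T_k}-x_0)$. I would make this precise using the variation-of-constants formula on each interval $(T_j,T_{j+1})$. Next, the Lipschitz Hessian gives $\norm{\nabla f(\overline{x}_k)-\sum_i\lambda_{i,k}\nabla f(\tilde y_i)}\le \frac{L_2}{2}\sum_i\lambda_{i,k}\norm{\tilde y_i-\overline{x}_k}^2$, the standard second-order Jensen-type estimate. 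Hence $\norm{\nabla f(\overline{x}_k)}$ is bounded by two quantities: (a) the norm of the weighted gradient average, which Step 1 controls by $\alphaval\norm{x-z}/(\text{normalizer})$ together with the displacement; and (b) $L_2$ times the weighted spread $\sum_i\lambda_{i,k}\norm{\tilde y_i-\overline{x}_k}^2$. I would restrict to a window $[T_{k_0},T_k]$ of length about $1/\alphaval$, because the weights $\lambda$ concentrate there. On this window both the spread and the displacement are bounded by $(1/\alphaval)\int\norm{v_s}^2ds$, by Cauchy–Schwarz.

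\textbf{Step 2 (energy budget).} Lemma~\ref{lem:l_smooth}, through \eqref{eq:l_smooth_lyap_dec}, gives $\E{\int_0^t \alphaval\norm{x_s-z_s}^2ds}\le\gapf$, so the total kinetic budget is at most $\gapf/\alphaval$. I would split $[0,T_n]$ into about $n\alphaval$ windows. Taking the minimum over windows is bounded by the average, so some window has kinetic energy $\lesssim \gapf/(n\alphaval^2)$. On that window the gradient bound has the form $C\big(\alphaval\sqrt{\cdot}\,+L_2\,\alphaval^{-1}\cdot\big)$ in the window energy, and I also need the Step 1 boundary term. Optimizing over $\alphaval$ gives $\alphaval\propto(L_2^2\gapf/(L^3n))^{1/7}$, and the bound becomes $n^{-4/7}$, matching the stated choice. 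The minimum over all $k$ is then at most the value at that window's endpoint. The $L$ scaling enters through $\eta=\sqrt{1/(2L)}$.

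\textbf{Step 3 (randomness).} The number of jumps in $[0,t]$ is Poisson. Since $T_n\sim\Gamma(n,1)$, I would define $\Abar$ as the event that $T_n\in[(1-\tilde\varepsilon)n,\cupp n]$, or a similar event on the Poisson count, via a Chernoff bound. The equation $1-\tilde\varepsilon-\log(1-\tilde\varepsilon)=\cupp-\log\cupp$ looks exactly like balancing the two Gamma tail exponents. The conditions on $\alphaval$ and $n$ guarantee at least one full window of length $1/\alphaval$. On $\Abar$, the deterministic time horizon is used in Step 2. Expectations of the random terms are handled either by bounding $\1_{\Abar}$ times a nonnegative quantity by the quantity itself, or by Cauchy–Schwarz, which explains why the result is stated for $\E{\1_{\Abar}\norm{\cdot}}$ rather than for the squared norm.

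\textbf{Main obstacle.} The main difficulty is Step 1 combined with the choice of window in Step 2. The averaging weights $e^{\alphaval T_{i+1}}$ are random and correlated with the trajectory, and the best window is selected a posteriori, so exchanging the minimum and the expectation must be done carefully. I would first try to bound $\min_k$ by an explicit weighted average over windows, with deterministic weights, of the window quantities. After that I would take expectations, using Lemma~\ref{lem:l_smooth} in its pathwise form \eqref{eq:l_smooth_lyap_dec} stopped at a bounded time, so that $\E{M_t}=0$ remains valid.
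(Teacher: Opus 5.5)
Your skeleton matches the paper's heuristics: an exact identity for the exponentially weighted gradient sum, the second-order Jensen estimate from the Lipschitz Hessian, the energy budget of Lemma~\ref{lem:l_smooth}, and the $\sqrt{\alphaval}$ versus $\alphaval^{-3}$ trade-off giving $\alphaval\propto n^{-1/7}$. But Step 3, which you treat as bookkeeping, is where the argument fails.

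The Lyapunov inequality bounds $\int\alphaval\norm{x_s-z_s}^2ds$ and $\int\frac{\gamma}{4}\norm{\nabla f(x_{s^-})}^2dN_s$ only in expectation; pathwise, it holds only up to a martingale. Yet every bound you would write multiplies these energies by factors that depend on the jump times: local counts $N_{\tau^-}-N_{\sigma^-}$, the normalizer $\int_0^t\alphaval e^{\alphaval(s-t)}dN_s$, and kernel sums such as $\sum_{j\le i}e^{-\alphaval(T_i-T_j)}$, $\sum_{j\le i}(i-j)e^{-\alphaval(T_i-T_j)}$, $\sum_{j\le i}(T_i-T_j)e^{-\alphaval(T_i-T_j)}$. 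These depend on the same past jump times as the trajectory, so an expectation of the form $\E{G\cdot E}$ does not split. Bounding $\1_{\Abar}\le 1$ leaves $G$ unbounded. Cauchy--Schwarz needs $\E{E^2}$, which the Lyapunov argument does not provide. An event constraining only $T_n$, or one global Poisson count, says nothing about the number of jumps in each window of length $1/\alphaval$.

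The missing ingredient is the paper's Theorem~\ref{thm:high_proba_bound}. Chernoff on blocks of length $\Kval=\lceil(\cinf-1-\log \cinf)^{-1}\log(2n/\tilde\varepsilon)\rceil$ plus a union bound gives an event $\Abar$ of probability at least $1-\tilde\varepsilon$. On it, $\cinf(i-j)-\cinf\Kval\le T_i-T_j\le\cupp(i-j)+\cupp\Kval$ holds for all $j\le i\le n$ simultaneously. This is where the relation defining $\cupp$ is used: it lets one block length serve both tails. On $\Abar$ the past-dependent kernels are at most $\Ccont$ times their expectations, with $\Ccont$ bounded in $n$ because $\alphaval\Kval\to0$. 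The future-dependent factors are then split off using independence of later increments from the trajectory up to $T_i$ (Lemma~\ref{lem:hess_lip_4}). Finally, a lower bound on $\int_0^{T_n}\Lambda_t\,dN_t$ valid on the same event (Lemma~\ref{lem:lower_bound_delta}) turns the weighted average into the minimum over $k$. This also settles your worry about selecting a window a posteriori.

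Steps 1--2 also contain errors. First, the position jumps too: $dx_t=\eta(z_t-x_t)dt-\gamma\nabla f(x_{t^-})dN_t$. So the spread is not controlled by $\alphaval^{-1}\int\norm{v_s}^2ds$ alone. You need $\norm{x_{\tau^-}-x_{\sigma^-}}^2\le 2(\tau-\sigma)\int_\sigma^\tau\eta^2\norm{z_s-x_s}^2ds+2(N_{\tau^-}-N_{\sigma^-})\int_\sigma^\tau\gamma^2\norm{\nabla f(x_{s^-})}^2dN_s$. The second piece is of the same order as the first, requires the gradient part of the Lyapunov bound, and is exactly where local jump counts enter.

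Second, the exact identity comes from the $x-z$ equation, not the $z$-equation, and has no displacement term: $\int_0^t e^{\alphaval s}\nabla f(x_{s^-})dN_s=e^{\alphaval t}(x_t-z_t)/(\gamma'-\gamma)$. Its right-hand side is a pointwise value at the endpoint, which a window's integrated energy does not control.

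Third, the weights $\lambda_{i,k}$ have exponential tails over the whole history, so restricting to one window of length $1/\alphaval$ discards terms. The paper avoids windows altogether. It multiplies the pointwise bound by $\Lambda_t=\bpar{\int_0^t\alphaval e^{\alphaval(s-t)}dN_s}^2$ and integrates $dN_t$ over $[0,T_n]$. It then applies Cauchy--Schwarz to the boundary term and Fubini to the spread, so each energy carries a kernel of expectation $O(\alphaval^{-4})$ (Lemma~\ref{lem:hess_lip_5}).

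Finally, the specific constant in $\alphaval$ and the limit $9.2(1+2\tilde\varepsilon)^2\bpar{\frac{1+2\tilde \varepsilon}{(1-\tilde \varepsilon)^2}}^{1/7}$ for $\Bconst$ come from the explicit $A_n$, $B_n$, $\Ccont$ and the minimization of $\sqrt{12C}+64\frac{\cupp e}{\cinf^2}C^{-3}$. An argument with unspecified constants would give the $n^{-4/7}$ rate, but not the theorem as stated.
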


A more general statement can be found in Theorem~\ref{thm:hess_lip_full}, in which we provide a result while relaxing the constraint of the value of $n$, to the cost of a less clean bound. From Theorem~\ref{thm:hess_lip}, we deduce that \eqref{alg:constant_param_det} outputs a point $\tilde x := \argmin_{ k \in \{1,\dots n \}}\norm{\nabla f(\overline{x}_k)}$ that verifies $\E{\1_{\Abar}\norm{\nabla f(\tilde x)}} \le \varepsilon$ in at most $\bigO\bpar{\frac{L^{1/2}L_2^{1/4}\Delta_f}{\varepsilon^{7/4}}}$ gradient evaluations, as long as $\varepsilon$ is small enough. We may remove the expectation and obtain a result with a given probability, the Markov property being a direct way to do so. 
The stochasticity induced by the process created significant challenges in the analysis, that we solved by deriving new concentration inequalities.
One of these establishes that for any $\tilde \varepsilon \in (0,1)$ and $n \in \N^\ast$, there exists $\Ccont$ such that with probability $1-\tilde \varepsilon$
\[\forall\,i \in \{1,\dots,n\}, \quad \sum_{j=1}^i e^{-\alphaval (T_i-T_j)}
\le \Ccont\,\E{\sum_{j=1}^i e^{-\alphaval (T_i-T_j)}},\]
where $\Ccont$ is uniformly upper-bounded with $n$ and $T_i-T_j \overset{i.i.d}{\sim} \Gamma(i-j,1)$. The set $\Abar$ arises from these inequalities, see details in Theorem~\ref{thm:high_proba_bound}. 
\paragraph{Numerical factor and asymptotic improvement}
The factor $\Bconst$, explicitly stated in \eqref{eq:b_n_value}, has a somewhat intricate finite-time expression. Choosing a $\tilde \varepsilon$ value close to zero increases the size of the set $\Abar$, on which our result holds, at the cost of a poorer constant $\Bconst$ in \textit{finite time}. However, asymptotically with $n$, the factors that deteriorate the bound when $\tilde \varepsilon$ is small vanish. So, in this asymptotic regime, we can choose $\tilde \varepsilon$ small enough to ensure for instance $\lim_{n \to +\infty} \Bconst \le 10$. Then for $\varepsilon$ small enough, from Theorem~\ref{thm:hess_lip} we deduce that we reach in expectation a $\varepsilon$-solution in at most $\frac{10^{7/4}L^{1/2}L_2^{1/4}\Delta_f}{\varepsilon^{7/4}} \approx \frac{56L^{1/2}L_2^{1/4}\Delta_f}{\varepsilon^{7/4}}$ gradient evaluations. The best existing bound \cite{li2023restarted} is $\frac{82^{7/4}L^{1/2}L_2^{1/4}\Delta_f}{\varepsilon^{7/4}}\approx \frac{2234L^{1/2}L_2^{1/4}\Delta_f}{\varepsilon^{7/4}}$, such that we considerably improve the numerical constant factor in the asymptotic regime.

\begin{remark}
     We note that as the analysis of \cite{okamura2024primitive} requires knowledge of the terminal time $T$ to tune parameters in the continuous setting, our result uses the final iteration $n$ to tune $\eta'$ in Theorem~\ref{thm:hess_lip}. There is a possibility that this dependence can be removed by using an iteration-dependent schedule of the form $\eta'_k = \bigO(k^{-1/7})-\sqrt{\gamma/2}$, at the cost of significant additional technical complexity. In a similar way, prior works achieving the $\bigO(\varepsilon^{-7/4})$ complexity involve $\varepsilon$-depending parameters  \cite{convexguilty,momentumsaddle,li2023restarted}, with the notable exception of the parameter-free restart method of \cite{Marumo2022ParameterFreeAG}, or the independent work \cite{ushiyama2026restart}.
\end{remark}

\subsection{Convergence of the continuized system to the Heavy Ball equation}\label{sec:limit_ode}
Compared with more classical momentum algorithms, the continuized method may seem somewhat atypical. In this section, we draw a link with a more classical object of the momentum literature: the Heavy Ball equation \eqref{eq:HB}.
In the $L$-smooth, convex/strongly convex case, \cite{even2021continuized} uses the heuristic $dN(\gamma^{-1/2}s) \approx \gamma^{-1/2}s$ in the limit $\gamma \to 0$ to argue that \eqref{eq:nest_continuized}, parameterized with values ensuring acceleration, converges to the Heavy Ball equation. We provide a rigorous statement in our setting. To do so, we want to consider \eqref{eq:nest_continuized} with $0<\gamma \le 1/L$, $\gamma' = \gamma+ \sqrt{\gamma/2}$, $\eta = \sqrt{\gamma/2}$, $\eta'= \alphavalcont-\sqrt{\gamma/2} $, where $\alphaval = \left(\frac{\sqrt{3}\cdot 64\cupp e}{(1-\tilde \varepsilon)^2}\right)^{2/7}\bpar{\frac{\gamma^3 L_2^2\gapf}{n}}^{\frac{1}{7}}$, and study the limit of this system as $\gamma$ goes to zero after a suitable time-rescaling. We recall that Theorem~\ref{thm:hess_lip} is obtained using these parameters, with the special choice $\gamma = \frac{1}{L}$. However, we will provide a modification on these parameters. Because $\alphaval$ is tuned to optimize discrete quantities, it involves $n \in \N$ that corresponds to a number of iterations. We did not derive a convergence result for \eqref{eq:nest_continuized} in continuous time, but it is reasonable to make the correspondence between the final iteration $n$ and a terminal time $T \in \R_{\ge 0}$, such that we replace $n$ by $T$ in the definition of $\alphaval$. 
     \begin{theorem}\label{thm:cv_hb}
     Let $\xz$ be solution of \eqref{eq:nest_continuized}, which we recall to be
 \begin{align}\label{eq:nest_continuized}\tag{CNE}\left\{
    \begin{array}{ll}
        dx_t &= \eta(z_t-x_t)dt - \gamma \nabla f(x_{t^-}) dN_t, \\
        dz_t &= \eta'(x_t-z_t)dt - \gamma' \nabla f(x_{t^-}) dN_t,
    \end{array}
\right.
\end{align}
where
$\gamma' = \gamma + (\gamma/2)^{\frac{1}{2}}$, $\eta =  (\gamma/2)^{\frac{1}{2}}$, $\eta'= \alphaval_{(\gamma/2)^{-1/2}T}- (\gamma/2)^{\frac{1}{2}} $, with
\[\alphavalcontf{(\gamma/2)^{-1/2}T} = \left(\frac{\sqrt{3}\cdot 64\cupp e}{(1- \tilde \varepsilon)^2}\right)^{2/7}\bpar{\frac{\gamma ^{3} L_2^2\gapf}{T(\gamma/2)^{-1/2}}}^{\frac{1}{7}}=K_{\tilde \varepsilon,\cupp}\bpar{\frac{L_2^2\gapf}{\sqrt{2}T}}^{\frac{1}{7}}\gamma^{1/2},\]
for some $T>0$, $\tilde \varepsilon \in (0,1/2]$ and $K_{\tilde \varepsilon,\cupp} =\left(\frac{\sqrt{3}\cdot 64\cupp e}{(1- \tilde \varepsilon)^2}\right)^{2/7} $.
Then, the time-rescaled process $\xrescaled := x_{ (\gamma/2)^{-1/2}s}$ converges uniformly in probability on $[0,T]$ to the Heavy Ball ODE as $\gamma \to 0$, namely 
\[\sup_{s \in [0,T]} \norm{X_s-\xrescaled} \overset{\mathbb{P}}{\to}_{\gamma \to 0} 0,\]
where
 \[\ddot X_s +  2^{3/7}K_{\tilde \varepsilon,\cupp}\bpar{\frac{L_2^2 \Delta_f}{T}}^{1/7}\dot X_s + \nabla f(X_s) = 0,\]
 as long as $x_0 = X_0$.
    \end{theorem}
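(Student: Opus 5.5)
We would prove this by rewriting the time-rescaled \eqref{eq:nest_continuized} as a stochastically perturbed first-order form of \eqref{eq:HB}, then closing a Gronwall estimate up to a localizing stopping time. Set $h:=(\gamma/2)^{1/2}$, so $\gamma=2h^2$, $\eta=h$ and $\gamma'-\gamma=h$. A direct computation shows $\alphavalcontf{(\gamma/2)^{-1/2}T}=c\,h$ with $c:=2^{3/7}K_{\tilde \varepsilon,\cupp}(L_2^2\gapf/T)^{1/7}$, since $(\sqrt 2)^{-1/7}\sqrt 2=2^{3/7}$. This $c$ is exactly the friction coefficient of the limiting ODE. We introduce the rescaled processes $\xrescaled:=x_{s/h}$ and $u^\gamma_s:=z_{s/h}-x_{s/h}$. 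We also write $\Nrescaled:=N_{s/h}$ and the compensated martingale $\Mrescaled:=N_{s/h}-s/h$. Substituting $t=s/h$ into \eqref{eq:nest_continuized} and subtracting the two lines (using $\eta+\eta'=ch$) gives
\begin{align*}
d\xrescaled &= u^\gamma_s\,ds-2h\nabla f(\xrescaledprev)\,ds-2h^2\nabla f(\xrescaledprev)\,d\Mrescaled,\\
du^\gamma_s &= -c\,u^\gamma_s\,ds-\nabla f(\xrescaledprev)\,ds-h\nabla f(\xrescaledprev)\,d\Mrescaled .
\end{align*}
The first-order form of the target ODE is $\dot X_s=U_s$, $\dot U_s=-cU_s-\nabla f(X_s)$, with $X_0=x_0$ and $U_0=0$ because $x_0=z_0$. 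This is the initial velocity we take for \eqref{eq:HB}. The rescaled system therefore coincides with this ODE, up to an $O(h)$ drift term and two martingale terms.

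Let $(X_s,U_s)$ be the ODE solution on $[0,T]$. It exists and is unique because $\nabla f$ is globally $L$-Lipschitz under Assumption~\ref{ass:l_smooth}, and it is continuous and hence bounded on $[0,T]$. Define $G:=\sup_{s\le T}\norm{\nabla f(X_s)}$ and the stopping time $\tau:=\inf\{s:\norm{\xrescaled-X_s}>1\}\wedge T$. Before $\tau$ we have $\norm{\nabla f(\xrescaledprev)}\le G+L$, which is a deterministic bound. Set $e_s:=\norm{\xrescaled-X_s}+\norm{u^\gamma_s-U_s}$. Subtracting the two systems and using the Lipschitz property of $\nabla f$ gives, for $s\le\tau$,
\[
e_s\le (1+c+L)\int_0^s e_r\,dr+2hT(G+L)+\sup_{r\le s\wedge \tau}\Big\|\int_0^r (h+2h^2)\nabla f(x^\gamma_{v^-})\,dM^\gamma_v\Big\|.
\]
The left-limit $x^\gamma_{s^-}$ differs from $\xrescaled$ only at jump times and only by $2h^2\norm{\nabla f}$, so it can be absorbed into the $O(h)$ term. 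For the martingale we apply Doob's $L^2$ inequality. The predictable quadratic variation of $\int \nabla f\,dM^\gamma$ up to $\tau$ is $\int_0^{\tau}\norm{\nabla f}^2\,d(s/h)\le T(G+L)^2/h$. Consequently
\[
\E{\sup_{r\le\tau}\Big\|\int_0^r (h+2h^2)\nabla f\,dM^\gamma\Big\|^2}\le 4(h+2h^2)^2\,T(G+L)^2/h=O(h).
\]
Gronwall then yields $\E{\sup_{s\le \tau}e_s}\le C(T,c,L,G)\sqrt h\to0$.

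Finally we remove the localization. On the event $\{\tau<T\}$ the jump of $\xrescaled$ at $\tau$ has size at most $2h^2(G+L)$, so $\sup_{s\le\tau}e_s\ge 1-2h^2(G+L)$ there. Markov's inequality gives $\mathbb{P}(\tau<T)\le \E{\sup_{s\le\tau}e_s}/(1-2h^2(G+L))\to0$. For any $\delta\in(0,1)$ we then have $\mathbb{P}(\sup_{s\le T}\norm{X_s-\xrescaled}>\delta)\le\mathbb{P}(\tau<T)+\mathbb{P}(\sup_{s\le\tau}e_s>\delta)\to0$, which is the claim.

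The step we expect to be the main obstacle is controlling the martingale term. The global energy bound of Lemma~\ref{lem:l_smooth} is too weak here: it gives $\int_0^T\norm{\nabla f(\xrescaled)}^2ds\le 2\gapf/h$, which makes the quadratic variation of $h\int\nabla f\,dM^\gamma$ of order one rather than vanishing. This is why we localize with $\tau$, which turns the gradient into a deterministically bounded quantity. The remaining care is the bookkeeping of left-limits and of the jumps in the definition of $\tau$.
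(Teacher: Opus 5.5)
Your proof is correct, but it controls the stochastic remainders differently from the paper. The paper does not localize. From the Lyapunov computation behind Lemma~\ref{lem:l_smooth} it extracts a \emph{pointwise-in-time} bound: since $\varphi(x,z)=f(x)+\tfrac12\norm{x-z}^2\ge f(x)$ and $\E{\varphi(x_t,z_t)}\le\varphi(x_0,z_0)=f(x_0)$, one has $\E{f(x_t)-f^\ast}\le\gapf$. Combined with $\norm{\nabla f(x)}^2\le 2L(f(x)-f^\ast)$, this gives $\sup_{t\ge0}\E{\norm{\nabla f(x_t)}^2}\le 2L\gapf$, uniformly in $\gamma$. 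Hence $\E{\int_0^T\norm{\nabla f(x_r^\gamma)}^2dr}\le 2LT\gapf$, instead of the $O(1/h)$ you get from the time-integrated bound. With this estimate, Cauchy--Schwarz handles the $O(\sqrt{\gamma})$ drift remainders. Doob's inequality plus the isometry, with $\dotprod{M^\gamma}_s=(\gamma/2)^{-1/2}s$, handles the martingale remainders, both in $L^2$. A pathwise Gr\"onwall on $\sup_{r\le s}\bpar{\norm{X_r-x_r^\gamma}+\norm{Z_r-z_r^\gamma}}$ then concludes.

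So the obstacle you flag is real only for the integrated energy estimate. The paper sidesteps it with the pointwise one, which makes its argument shorter given the Lyapunov machinery already in place. Your exit-time localization buys a self-contained proof that never uses the Lyapunov function, the lower-boundedness of $f$, or $\gamma\le 1/L$. It only needs the Lipschitz property of $\nabla f$ in a neighbourhood of the limit trajectory, so it would survive with merely locally Lipschitz gradients. It also gives an explicit $O(\sqrt{h})$ rate for the localized error, at the cost of the extra step $\mathbb{P}(\tau<T)\to0$. That step is simpler than you make it: on $\{\tau<T\}$, right-continuity already gives $\norm{x_\tau^\gamma-X_\tau}\ge 1$, so no jump-size correction is needed.
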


   See the proof in Appendix~\ref{app:proof_hb_cv}. This result indicates that we may view \eqref{eq:nest_continuized} as a specific, stochastic discretization of \eqref{eq:HB}. It provides a strong theoretical connection with more classical, deterministic parameterization of Nesterov momentum, which converge to the same kind of ODE \cite{suboydcandes,shi2018understanding}.
Empirically, we observe that in the small stepsize regime, the behavior of \eqref{alg:constant_param_det} is indeed very close to a deterministic version. On Figure~\ref{fig:placeholder}, we run \eqref{alg:constant_param_det} on a matrix factorization problem, that satisfies Assumptions \ref{ass:l_smooth} and \ref{ass:hess_lip} on a subspace of the domain, see \cite[Lemma 6]{jin2017escape} for admissible values of $L$ and $L_2$. We also run a deterministic instance of \eqref{eq:nm} by approximating the parameters of \eqref{alg:constant_param_det}, designed as follows. We have $\frac{\eta}{\eta+\eta'}\bpar{1 - e^{-(\eta+\eta')\Delta_k}} \approx \eta\Delta_k$ if $\eta + \eta'$ is small, while $\eta'\frac{1 - e^{-(\eta+\eta')\Delta_k}}{\eta' + \eta e^{-(\eta+\eta')\Delta_k}} \approx \frac{\eta'(\eta+\eta)\Delta_k}{\eta' + \eta(1-(\eta+\eta'))\Delta_k}$. Replacing roughly $\Delta_k$ by its expectation $1$, we then chose $\alphaval_k = \eta$ and $\beta_k = \frac{\eta'(\eta+\eta)}{\eta' + \eta(1-(\eta+\eta'))}$ as our deterministic sequences of parameters for \eqref{eq:nm}. We observe on Figure~\ref{fig:placeholder} that in the small stepsize regime, the continuized version behaves very similarly to the deterministic version, with function values curves being superposed. Behaviors start differentiating when increasing stepsize and choosing a more aggressive momentum parameter.    
\begin{figure}
    \centering
    
    \includegraphics[width=0.4\linewidth]{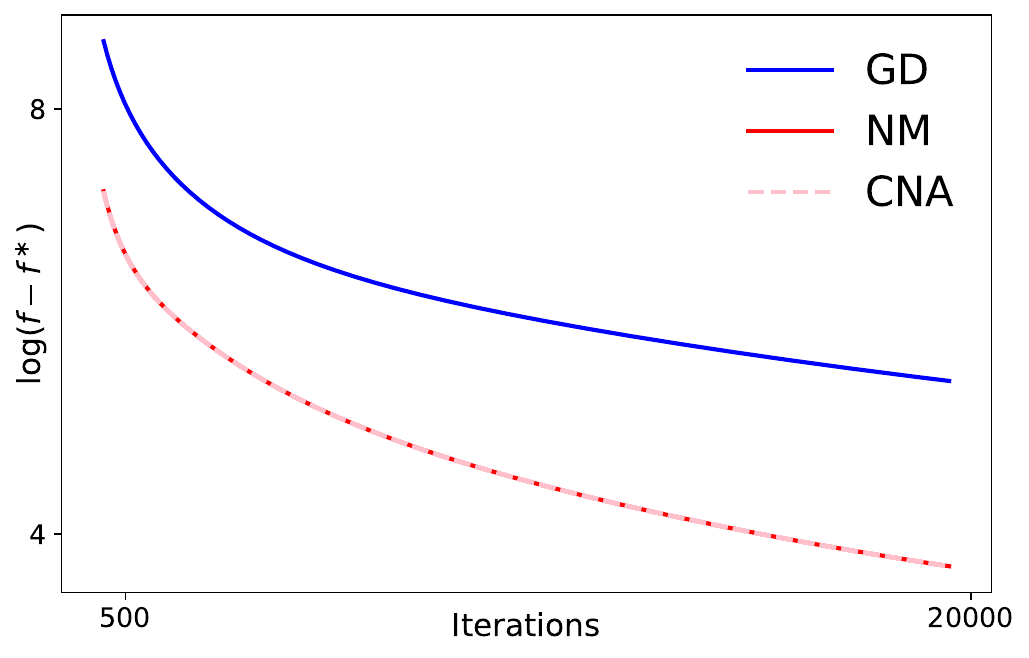}
    \includegraphics[width=0.405\linewidth]{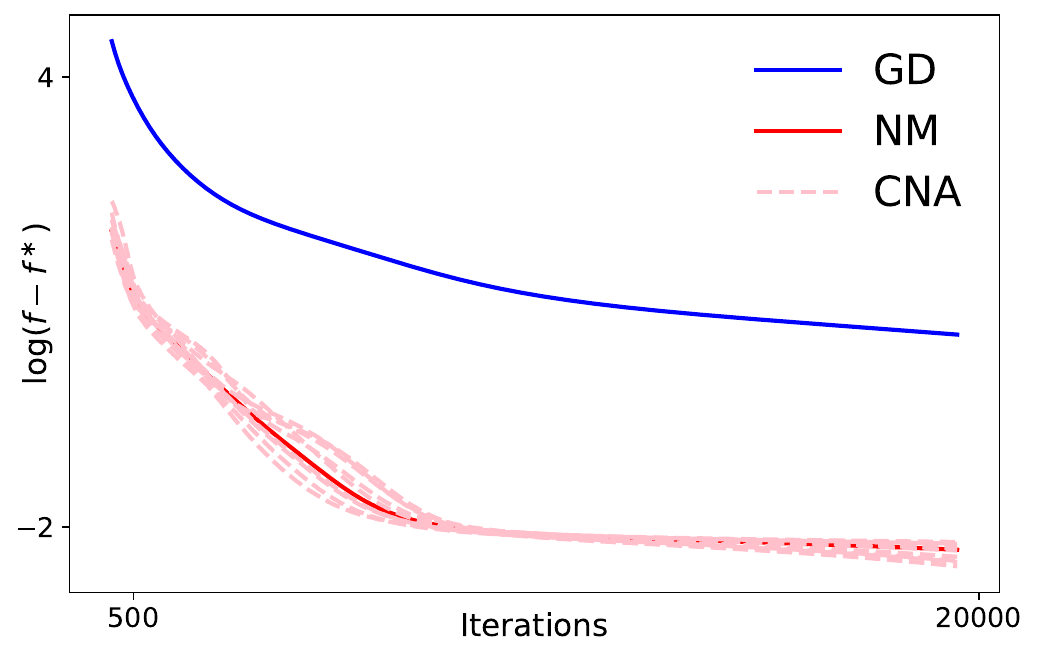}
    \caption{We plot one run of gradient descent, one run of a deterministic instance of \eqref{eq:nm}, and ten runs of \eqref{alg:constant_param_det} on a matrix factorization problem. On the left, we choose $\gamma$ according to the theoretical result of \cite[Lemma 6]{jin2017escape}. On the right, we chose a more aggressive stepsize and momentum parameter $\alphaval$, i.e. smaller. We note that because we pushed the parameters closer to the stability limit, one of the 10 runs of \eqref{alg:constant_param_det} diverged and is not plotted. }
    \label{fig:placeholder}
\end{figure}

\section{Overview of the proof of Theorem~\ref{thm:hess_lip}}\label{sec:proof}

     This section provides a proof outline of Theorem~\ref{thm:hess_lip}, with most technical details deferred to the appendix.
     We recall that $\xz$ are defined as follows
     \begin{align}\tag{CNE}\left\{
    \begin{array}{ll}
        dx_t &= \eta(z_t-x_t)dt - \gamma \nabla f(x_{t^-}) dN_t, \\
        dz_t &= \eta'(x_t-z_t)dt - \gamma' \nabla f(x_{t^-}) dN_t,
    \end{array}
\right.
\end{align}
with $dN_t =\sum_{k\ge 1} \delta_{T_k}(dt)$ for $\tk$ a sequence of random variables satisfying $T_{k+1}-T_k \overset{i.i.d}\sim \mathcal{E}(1)$, with convention $T_0=0$. Throughout the proof, we note $\alphaval := \eta + \eta'$ and use the shorthand notation $\int_0^t := \int_{[0,t]}$ to save space. Inspired from the averaging \cite[Equations (4)-(5)]{okamura2024primitive}, we consider a Poisson-average of the trajectory $x_t$
\begin{equation}\label{eq:poisson_average}
      \overline{x}_t := \int_{0}^t w_t(s)x_{s^-} dN_s,
\end{equation}
where we set $w_t(s) = \frac{\alphaval e^{\alphaval s}}{\int_0^t \alphaval e^{\alphaval s'}dN(s')}$. To readers unfamiliar with Poisson processes, we emphasize that Poisson integrals are stochastic sums. Namely, for a measurable function $\phi$,
\[\int_0^t \phi(s)dN_s = \sum_{i=1}^{N_t}\phi(T_i)\]
where $N_t = \sum_{k\ge 1} \mathds{1}_{T_k \le t} \sim \mathcal{P}(t)$. So, we can also write $\overline{x}_t= \sum_{i=1}^{N_t} w_t(T_i^-)x_{T_i^-}.$
In particular, at $t=T_n$, by definition $N_{T_n} = n$ and the number of terms in the sum is fixed, becoming $\sum_{i = 1}^{n} w_t(T_i^-)x_{T_i^-}$. 
To ensure $\int_0^t w_t(s)dN_s = 1$, we assume $N_t \ge 1$ (otherwise the integral is zero). This is not a restriction, because $N_t < 1$ would imply no jumps occurred, \textit{i.e.} the underlying algorithm has not started yet.

\paragraph{Plan of the Proof} Our proof strategy is the following, divided into seven steps.
\begin{enumerate}[label=\Roman*.]
    \item We establish some upper-bounds on trajectory-depending quantities, that follow from the Lyapunov control of Section~\ref{sec:l_smooth}, which uses the Lipschitz gradient property. These controls are used in step III and IV. 
    \item We upper-bound by three terms an expected weighed average $\E{\int_0^{T_n} \Lambda_t  \norm{\nabla f(\overline{x}_{t})}dN_t}$, for some weighting parameter $\Lambda_t$. It uses the Hessian Lipschitz property, and a stochastic integration by
part formula. 
    \item We bound the first of these three terms, by a quantity of the order $\bigO(\sqrt{\alphaval n})$. The main argument is the Cauchy-Schwartz inequality and our Lyapunov control from I.
    \item We bound with high probability the two remaining terms, by a quantity of the order $\bigO(\alphaval^{-3})$. This part is the most tedious of the proof. Apart from our Lyapunov control from I, it requires to use a concentration inequality on the underlying jump times $\tk$, thus a result holding on a subset of realization $\Abar$.
    \item We combine the bounds derived in III and IV, and show that with the choice $\alphaval = \bigO(n^{-1/7})$, we obtain a result of the form
    \[\E{\min_{t \in \{T_1,\dots,T_n \}} \norm{\nabla f(\overline{x}_t)} \int_0^{T_n}\bpar{\int_0^t \alphaval e^{\alphaval(s-t)}dN_s}^2 dN_t} = \bigO(n^{3/7}).\]
    \item We obtain a lower-bound of the form $\int_0^{T_n} \bpar{\int_0^t \alphaval e^{\alphaval(s-t)}dN_s}^2 dN_t \ge Cn$ for some constant $C<1$, that holds with high probability.
    \item We state our final convergence results. The final key argument it that although it involves the continuous-time process $\overline{x}_t$, $\arg\min_{t \in \{T_1,\dots,T_n \}}\norm{\nabla f(\overline{x}_t)} $,  can in fact be computed by \eqref{alg:constant_param_det}. 
\end{enumerate}

\paragraph{Difference and similarities in the analysis with Okamura et al. \cite{okamura2024primitive}} Some of our arguments are directly inspired from \cite{okamura2024primitive}. This is the case for our definition of averaging \eqref{eq:poisson_average}, that is essentially \cite[Equations (4)-(5)]{okamura2024primitive} with $ds$ replaced by $dN_s$. Our Part II is also directly inspired: Lemma~\ref{lem:hess_lip_2} is a continuized-adapted version of Lemma~1 in \cite{okamura2024primitive}, the difference in our analysis mainly coming from the Poisson process and the fact that \eqref{eq:nest_continuized} is a two-trajectory system, in contrast with the Heavy Ball equation \eqref{eq:HB}.  
Similarly, our Lemma~\ref{lem:transfer_grad_derivative} provides a stochastic integration by part, inspired by the integration by parts \cite[Equation 18]{okamura2024primitive}, but in our case the stochastic nature of the process requires some care when using such arguments. Beyond these directly-adaptable points, the rest of the analysis differs because the stochasticity induced by the process create non trivial difficulties. In particular, Theorem~\ref{thm:high_proba_bound} provides new concentrations inequalities that specifically address such difficulties, and is an essential argument of the proof.

\subsection{Part I - Lyapunov Controls from Section~\ref{sec:l_smooth}}\label{sec:proof_lyap_control}
In this first part, we establish bounds on trajectory-depending quantities that we can deduce from the analysis carried in Section~\ref{sec:l_smooth}.
Under Assumption~\ref{ass:l_smooth}, we proved in Lemma~\ref{lem:l_smooth} inequality \eqref{eq:l_smooth_lyap_dec}, namely
\[\int_{0}^{t} (\eta + \eta')\norm{x_s - z_s}^2 + \frac{\gamma}{4}\norm{\nabla f(x_{s})}^2 ds\le f(x_0)- f^\ast + M_t,\]
with $M_t$ a martingale such that $\E{M_{t}}=0$.
From this, we deduce
\begin{align}
      & \int_{0}^{t} (\eta + \eta')\norm{x_s - z_s}^2ds  \le  f(x_0)- f^\ast +M_t,\nonumber\\
       & \int_{0}^{t} \frac{\gamma}{4}\norm{\nabla f(x_{s^-})}^2 dN_s\le   f(x_0)- f^\ast +U_t,\label{eq:lyap_control_2_pre}\\
       &\int_{0}^{t} (\eta + \eta')\norm{x_{s^-} - z_{s^-}}^2dN_s\le  f(x_0)- f^\ast +V_t,\label{eq:lyap_control_3_pre}
\end{align}
where for some martingales $U_t$, $V_t$ with mean zero, \eqref{eq:lyap_control_2_pre} and \eqref{eq:lyap_control_3_pre} hold because $\int_{0}^{t} (\eta + \eta')\norm{x_s - z_s}^2ds  = \int_{0}^{t} (\eta + \eta')\norm{x_{s^-} - z_{s^-}}^2dN_s$ and $\int_{0}^{t} \frac{\gamma}{4}\norm{\nabla f(x_{s})}^2 ds = \int_{0}^{t} \frac{\gamma}{4}\norm{\nabla f(x_{s^-})}^2 dN_s$, up to martingales additive terms whose expectations are zero. Now, using Theorem~\ref{thm:martingal_stopping}, evaluating these bounds at $t=T_n$, they become
\begin{align}
      &\mathbb{E}\left[ \int_{0}^{T_n} (\eta + \eta')\norm{x_s - z_s}^2ds \right] \le \mathbb{E}\left[  f(x_0)- f^\ast \right], \label{eq:lyap_control_1}\\
       & \mathbb{E}\left[ \int_{0}^{T_n} \frac{\gamma}{4}\norm{\nabla f(x_{s^-})}^2 dN_s \right] \le \mathbb{E}\left[  f(x_0)- f^\ast \right],\label{eq:lyap_control_2}\\
       &\mathbb{E}\left[ \int_{0}^{T_n} (\eta + \eta')\norm{x_{s^-} - z_{s^-}}^2dN_s \right] \le \mathbb{E}\left[  f(x_0)- f^\ast \right].\label{eq:lyap_control_3}
\end{align}
The three inequalities \eqref{eq:lyap_control_1}, \eqref{eq:lyap_control_2}, \eqref{eq:lyap_control_3} will play a central role. A large part of the remainder of the proof is devoted to preparing the ground for the application of these inequalities.
\subsection{Part II - Using the Hessian Lipschitz Property to Obtain a Three Terms Upper-Bound}
We adapt the analysis of \eqref{eq:HB} carried by \citep{okamura2024primitive} to derive an analysis of \eqref{eq:nest_continuized}.
Ultimately, our goal is to obtain a control of the norm of the gradient of the average \eqref{eq:poisson_average}, namely of $\norm{\nabla f(\overline{x}_t)}$.
The Lipschitz Hessian property (Assumption~\ref{ass:hess_lip}) allows us to control the distance of $\nabla f(\overline{x}_t)$ to a quantity of interest, namely the average of the gradient.
\begin{lemma}\label{lem:hess_lip_2}
Let $\xz$ be solution of \eqref{eq:nest_continuized}. Under Assumption~\ref{ass:hess_lip}, we have
    \begin{align}
    &\norm{\int_0^t w_t(s)\nabla f(x_{s^-})dN_s - \nabla f(\overline{x}_t)}\nonumber \\
    &\le L_2\int_0^t \eta \norm{z_s-x_s}^2 \bpar{\int_s^t \int_0^s    w_t(\sigma)w_t(\tau)(\tau-\sigma)dN_\sigma dN_\tau }ds \label{eq:rewrite_0}\\
    &+L_2\int_0^t \gamma^2 \norm{\nabla f(x_{s^{-}})}^2 \bpar{\int_s^t \int_0^s   w_t(\sigma)w_t(\tau)(N_{\tau^-}-N_{\sigma^-}) dN_\sigma dN_\tau}dN_s. \label{eq:rewrite_1}
    \end{align}
\end{lemma}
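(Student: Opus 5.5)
We follow the Heavy Ball argument of Okamura et al.\ (their Lemma~1), adapted to the Poisson measure and to the two-trajectory structure of \eqref{eq:nest_continuized}. The plan has four steps: (i) reduce the difference of averaged gradients to a double sum of second-order Taylor remainders, (ii) bound each remainder by $L_2$ times a squared displacement, (iii) bound each displacement using the explicit dynamics, and (iv) exchange the order of integration.

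\textbf{Step (i): reduction to Taylor remainders.} Since $\int_0^t w_t(s)\,dN_s=1$ and $\overline{x}_t=\int_0^t w_t(\tau)x_{\tau^-}dN_\tau$, we have the identity
\[\int_0^t w_t(\sigma)\bigl(x_{\sigma^-}-\overline{x}_t\bigr)dN_\sigma=0 .\]
Write $\nabla f(x_{\sigma^-})-\nabla f(\overline{x}_t)=\nabla^2 f(\overline{x}_t)(x_{\sigma^-}-\overline{x}_t)+R_\sigma$. After averaging with weights $w_t(\sigma)$, the linear term vanishes by the identity above. What remains is a weighted average of the $R_\sigma$.

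\textbf{Step (ii): bounding the remainders.} Under Assumption~\ref{ass:hess_lip} the standard bound is $\norm{R_\sigma}\le \frac{L_2}{2}\norm{x_{\sigma^-}-\overline{x}_t}^2$. Rather than bounding $\norm{x_{\sigma^-}-\overline{x}_t}^2$ directly, we use the variance identity for weighted averages (this is the trick used by Okamura et al.):
\[\int w_t(\sigma)\norm{x_{\sigma^-}-\overline{x}_t}^2dN_\sigma=\tfrac12\iint w_t(\sigma)w_t(\tau)\norm{x_{\tau^-}-x_{\sigma^-}}^2dN_\sigma dN_\tau .\]
Restricting to the ordered pairs $\sigma<\tau$ removes the factor $\tfrac12$. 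This gives
\[\Bigl\|\int_0^t w_t(s)\nabla f(x_{s^-})dN_s-\nabla f(\overline{x}_t)\Bigr\|\le L_2\iint_{\sigma<\tau} w_t(\sigma)w_t(\tau)\norm{x_{\tau^-}-x_{\sigma^-}}^2dN_\sigma dN_\tau .\]

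\textbf{Step (iii): bounding the displacements.} Integrating \eqref{eq:nest_continuized} between $\sigma$ and $\tau$ gives
\[x_{\tau^-}-x_{\sigma^-}=\int_{[\sigma,\tau)}\eta(z_s-x_s)\,ds-\gamma\int_{[\sigma,\tau)}\nabla f(x_{s^-})\,dN_s .\]
We bound $\norm{a+b}^2\le 2\norm a^2+2\norm b^2$ and apply Cauchy--Schwarz to each piece:
\[\Bigl\|\int_\sigma^\tau \eta(z_s-x_s)ds\Bigr\|^2\le (\tau-\sigma)\int_\sigma^\tau \eta^2\norm{z_s-x_s}^2ds,\]
\[\Bigl\|\int_{[\sigma,\tau)}\gamma\nabla f\,dN_s\Bigr\|^2\le (N_{\tau^-}-N_{\sigma^-})\int_{[\sigma,\tau)}\gamma^2\norm{\nabla f(x_{s^-})}^2dN_s .\]

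\textbf{Step (iv): Fubini.} We exchange the order of integration, so that $s$ becomes the outer variable and $(\sigma,\tau)$ range over $\sigma\le s<\tau$, i.e.\ $\sigma\in[0,s]$ and $\tau\in(s,t]$. This yields exactly the inner kernels $\int_s^t\int_0^s w_t(\sigma)w_t(\tau)(\tau-\sigma)\,dN_\sigma dN_\tau$ and $\int_s^t\int_0^s w_t(\sigma)w_t(\tau)(N_{\tau^-}-N_{\sigma^-})\,dN_\sigma dN_\tau$ appearing in \eqref{eq:rewrite_0}--\eqref{eq:rewrite_1}.

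\textbf{Main obstacle: the constants.} The factor $2$ from splitting the square, the factor $\tfrac12$ from the Taylor remainder, and the factor $2$ from the variance identity must combine to give exactly $L_2$, with $\eta$ (not $\eta^2$) in front of $\norm{z_s-x_s}^2$. I expect this requires either a looser bound $\eta^2\le\eta$ (valid because $\eta=\sqrt{\gamma/2}\le1$), or a slightly sharper symmetric Taylor argument as in Okamura et al. If the constant does not close, I would first try using the ordered-pair symmetrization more carefully rather than the crude $\norm{a+b}^2\le 2\norm a^2+2\norm b^2$ split.

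A secondary point of care is the left limits and endpoint conventions in the Poisson integrals. An atom of $dN$ exactly at $s=\sigma$ or $s=\tau$ determines whether the count is $N_{\tau^-}-N_{\sigma^-}$ or $N_\tau-N_\sigma$, and the half-open interval $[\sigma,\tau)$ in step (iii) must be kept consistent with the $\sigma\le s<\tau$ region used in the Fubini step.
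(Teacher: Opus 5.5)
Your route is the paper's route step for step. You expand $\nabla f$ around $\overline{x}_t$, and the Hessian term vanishes because $\int_0^t w_t(\sigma)(x_{\sigma^-}-\overline{x}_t)\,dN_\sigma=0$. You bound the remainder by $\frac{L_2}{2}\norm{x_{\sigma^-}-\overline{x}_t}^2$ and use the variance identity to get an ordered double integral. You then split the displacement with $\norm{a+b}^2\le 2\norm{a}^2+2\norm{b}^2$, apply Cauchy--Schwarz to each piece, and use Tonelli to bring $s$ outside.

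The only thing wrong is your constant bookkeeping, and it is an arithmetic slip, not a real obstacle. The diagonal $\sigma=\tau$ contributes nothing, so $\frac12\iint_{[0,t]^2}=\iint_{\sigma<\tau}$ exactly. The variance identity therefore contributes a factor $1$, not $2$. The display at the end of your Step (ii) should read
\[
\Bigl\|\int_0^t w_t(s)\nabla f(x_{s^-})\,dN_s-\nabla f(\overline{x}_t)\Bigr\|\le \frac{L_2}{2}\iint_{\sigma<\tau} w_t(\sigma)w_t(\tau)\norm{x_{\tau^-}-x_{\sigma^-}}^2\,dN_\sigma\, dN_\tau .
\]
The factor $2$ from the split in Step (iii) then gives exactly $L_2$ in front of both terms; the paper keeps that crude split. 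As you wrote it, your chain yields $2L_2$. The repair is just this arithmetic, not a sharper Taylor argument. Note also that $\eta^2\le\eta$ would do nothing for the $\gamma^2$ term.

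On $\eta$ versus $\eta^2$: your Step (iii) correctly produces $\eta^2$. That is exactly what the paper's proof establishes and what it uses afterwards: Term 2 in Lemma~\ref{lem:hess_lip_3} carries $\alpha^2\eta^2L_2$. The lone $\eta$ in the displayed statement is a typo. Do not lean on $\eta^2\le\eta$: the lemma puts no constraint on the parameters. Even in the paper's regime, $\eta=\sqrt{\gamma/2}\le 1$ needs $\gamma\le 2$, which $\gamma\le 1/L$ does not guarantee.

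Your endpoint handling is fine. The paper uses the convention $\int_s^t:=\int_{[s,t]}$ and enlarges $[\sigma,\tau)$ to $[\sigma,\tau]$ before swapping integrals. Your region $\tau\in(s,t]$ is contained in it, and all kernels are nonnegative, so your version is slightly tighter and implies the stated one.
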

See the proof in Appendix~\ref{app:lem_1}.
The quantities $\norm{z_s-x_s}^2$ and $\norm{\nabla f(x_{s^-})}^2$ appearing in \eqref{eq:rewrite_0} and \eqref{eq:rewrite_1} are interesting. Indeed, as showed in Section~\ref{sec:proof_lyap_control}, we can bound the expectation of integrals of these quantities.

In contrast, we do not have a direct control of $\int_0^t w_t(s)\nabla f(x_{s^-})dN_s$. However, because of the choice of averaging $w_s(\cdot)$ and the definition of \eqref{eq:nest_continuized}, a stochastic integration by parts formula reduces it to a more tractable quantity.
\begin{lemma}\label{lem:transfer_grad_derivative}
If $\xz$ is solution of \eqref{eq:nest_continuized}, for $t > 0$ we have
\begin{equation}
     \int_0^t w_t(s)\nabla f(x_{s^{-}})dN_s = -\frac{w_t(t)(z_t-x_t)}{\gamma'- \gamma}.
\end{equation}
\end{lemma}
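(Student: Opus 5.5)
The identity is pathwise: it follows from integrating a scalar linear SDE satisfied by the difference $u_t := z_t - x_t$. Throughout, $\alphaval = \eta+\eta'$, and $N_t\ge 1$, so that $D_t := \int_0^t \alphaval e^{\alphaval s'}dN_{s'}>0$ and $w_t$ is well defined. We also use $\gamma'-\gamma = \sqrt{\gamma/2} \neq 0$, so the division in the statement makes sense.

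\textbf{Step 1: dynamics of $u_t$.} Subtracting the two lines of \eqref{eq:nest_continuized} gives
\[
du_t = \bpar{\eta'(x_t-z_t) - \eta(z_t-x_t)}dt - (\gamma'-\gamma)\nabla f(x_{t^-})dN_t = -\alphaval u_t\,dt - (\gamma'-\gamma)\nabla f(x_{t^-})dN_t .
\]
This is a linear equation with a pure-jump forcing term. Concretely, on each interval $(T_k,T_{k+1})$ we have $\dot u_t = -\alphaval u_t$, and at each $T_k$ we have $u_{T_k} - u_{T_k^-} = -(\gamma'-\gamma)\nabla f(x_{T_k^-})$.

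\textbf{Step 2: integrating factor.} Consider $e^{\alphaval s}u_s$. Since $s\mapsto e^{\alphaval s}$ is continuous and of finite variation, the product rule for finite-variation càdlàg paths has no cross-jump correction. Hence
\[
d\bpar{e^{\alphaval s}u_s} = e^{\alphaval s}\bpar{du_s + \alphaval u_s ds} = -(\gamma'-\gamma)e^{\alphaval s}\nabla f(x_{s^-})dN_s .
\]
To avoid invoking any stochastic calculus, this can be checked by hand. Between jumps, $\frac{d}{ds}(e^{\alphaval s}u_s)=0$. At $T_k$, the jump of $e^{\alphaval s}u_s$ equals $e^{\alphaval T_k}$ times the jump of $u$. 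Integrating over $[0,t]$ and using $u_0 = z_0-x_0 = 0$ gives
\[
e^{\alphaval t}u_t = -(\gamma'-\gamma)\int_0^t e^{\alphaval s}\nabla f(x_{s^-})dN_s = -(\gamma'-\gamma)\sum_{i=1}^{N_t}e^{\alphaval T_i}\nabla f(x_{T_i^-}).
\]

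\textbf{Step 3: normalize.} Multiply both sides by $\alphaval/\bpar{(\gamma'-\gamma)D_t}$. The right-hand side becomes
\[
-\int_0^t \frac{\alphaval e^{\alphaval s}}{D_t}\nabla f(x_{s^-})dN_s = -\int_0^t w_t(s)\nabla f(x_{s^-})dN_s .
\]
The left-hand side becomes $\frac{\alphaval e^{\alphaval t}}{D_t}\,\frac{u_t}{\gamma'-\gamma} = \frac{w_t(t)(z_t-x_t)}{\gamma'-\gamma}$. Rearranging gives the claimed identity.

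The only delicate point is Step 2: justifying the product rule, and in particular the absence of a bracket term, for the jump process. The piecewise verification between and at the jump times $T_k$ settles it, since there are almost surely finitely many jumps on $[0,t]$.
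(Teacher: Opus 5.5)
Your proof is correct and follows the paper's route. The paper also subtracts the two lines of \eqref{eq:nest_continuized}, multiplies by the integrating factor $\alpha e^{\alpha s}$, recovers $\alpha e^{\alpha t}(x_t-z_t)$ by integration by parts (Protter's product rule, which obliges it to argue separately that $x_t$ is a semimartingale), and then normalizes by $\int_0^t \alpha e^{\alpha s}dN_s$. Your pathwise check, that $e^{\alpha s}(z_s-x_s)$ is constant between jumps and jumps by $-(\gamma'-\gamma)e^{\alpha T_k}\nabla f(x_{T_k^-})$ at each $T_k$, justifies that same step more elementarily and avoids the semimartingale machinery.
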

See the proof in Appendix~\ref{app:by_part_int}.
Combining Lemmas~\ref{lem:hess_lip_2} and \ref{lem:transfer_grad_derivative} and using a triangular inequality yields the following upper-bound 
\begin{equation*}
    \begin{aligned}
    \norm{\nabla f(\overline{x}_t)}
    &\le \frac{w_t(t)\norm{z_t-x_t}}{\gamma'-\gamma}\\
    &+ L_2\int_0^t \eta \norm{z_s-x_s}^2 \bpar{\int_s^t \int_0^s    w_t(\sigma)w_t(\tau)(\tau-\sigma)dN_\sigma dN_\tau }ds \\
    &+L_2\int_0^t \gamma^2 \norm{\nabla f(x_{s^{-}})}^2 \bpar{\int_s^t \int_0^s   w_t(\sigma)w_t(\tau)(N_{\tau^-}-N_{\sigma^-}) dN_\sigma dN_\tau}dN_s. 
    \end{aligned}
\end{equation*}
This bound is nonetheless not sufficient to use our controls of Section~\ref{sec:proof_lyap_control}; in particular, we do not have a bound involving the non-integrated quantity $\norm{z_t-x_t}$. So, we integrate between $t=0$ and $t=T_n$ and take the expectation on the above inequality, after multiplication of each side by a suitable quantity.

\begin{lemma}\label{lem:hess_lip_3}
Noting $\Lambda_t := \bpar{\int_0^t \alphaval e^{\alphaval(s-t)}dN_s}^2 $, for $\xz$ solution of \eqref{eq:nest_continuized} under Assumption~\ref{ass:hess_lip}, we have
\begin{equation}
    \begin{aligned}\label{eq:hessLip_eq_1}
        &\E{\int_0^{T_n}\Lambda_{t} \norm{\nabla f(\overline{x}_{t})}dN_t}\le \frac{1}{\gamma'-\gamma}\underbrace{\E{\int_0^{T_n}\Lambda_{t}w_{t}({t})\norm{z_{t}-x_{t}}dN_t}}_{\text{Term 1}} \\
    &+\alphaval^2\eta^2L_2\underbrace{\E{\int_0^{T_n}  \norm{z_s-x_s}^2 \bpar{\int_s^{T_n}\int_s^t \int_0^s    e^{\alphaval(\tau-t)}e^{\alphaval(\sigma-t)}(\tau-\sigma) dN_\sigma dN_\tau dN_t}ds}}_{\text{Term 2}}\\
    &+\alphaval^2\gamma^2L_2 \underbrace{\E{\int_0^{T_n}  \norm{\nabla f(x_{s^-})}^2  \bpar{\int_s^{T_n}\int_s^t \int_0^s    e^{\alphaval(\tau-t)}e^{\alphaval(\sigma-t)}(N_{\tau^-}-N_{\sigma^-})dN_\sigma  dN_\tau dN_t}dN_s}}_{\text{Term 3}}.
    \end{aligned}
\end{equation}
\end{lemma}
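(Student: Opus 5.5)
The plan is to start from the pointwise three-term bound displayed just before the statement, which comes from combining Lemma~\ref{lem:hess_lip_2}, Lemma~\ref{lem:transfer_grad_derivative} and the triangle inequality. This bound holds for every $t>0$ with $N_t\ge 1$, and hence at every jump time $t=T_k$. I will multiply it by $\Lambda_t\ge 0$, integrate against $dN_t$ over $[0,T_n]$, and take expectations. The first term immediately gives $\frac{1}{\gamma'-\gamma}\,$Term~1. The remaining work is to rewrite the other two terms so that the weights $w_t$ disappear in favour of the explicit exponentials.

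The key identity is the following. With $D_t:=\int_0^t \alphaval e^{\alphaval s'}dN_{s'}$ we have $w_t(\sigma)=\alphaval e^{\alphaval\sigma}/D_t$ and $\Lambda_t = e^{-2\alphaval t}D_t^2$. It follows that
\[
\Lambda_t\, w_t(\sigma)w_t(\tau)=\alphaval^2 e^{\alphaval(\sigma-t)}e^{\alphaval(\tau-t)}.
\]
This is precisely why $\Lambda_t$ was chosen: it cancels the random normalisation $D_t^{-2}$, and no expectation of a ratio is ever needed.

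For the second term, multiplying by $\Lambda_t$ and integrating in $dN_t$ gives
\[
\alphaval^2\eta L_2\int_0^{T_n}\!\int_0^t \norm{z_s-x_s}^2\Bigl(\int_s^t\!\int_0^s e^{\alphaval(\tau-t)}e^{\alphaval(\sigma-t)}(\tau-\sigma)\,dN_\sigma dN_\tau\Bigr)ds\,dN_t .
\]
I then apply Tonelli, which is valid because everything is nonnegative, to swap the $ds$ and $dN_t$ integrals on the region $\{0\le s\le t\le T_n\}$. This turns the outer integral into $\int_0^{T_n} ds$ and the inner one into $\int_s^{T_n} dN_t$, which is the form of Term~2. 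The third term is handled in the same way, except that the swap is between $dN_s$ and $dN_t$, giving Term~3. Since these are pathwise identities between nonnegative sums and integrals, the swaps are harmless. One caveat: the pointwise bound carries a factor $\eta$, while the statement has $\eta^2$. I would check this against the constant in Lemma~\ref{lem:hess_lip_2}, since one $\eta$ may come from the drift $dx=\eta(z-x)dt$ and the other from Hessian-Lipschitz expansion. The safe route is to use whatever power the lemma actually yields, possibly re-deriving Lemma~\ref{lem:hess_lip_2} with the constant made explicit.

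The main obstacle is measure-theoretic care rather than estimation. First, the diagonal contributions of $dN_t$ at $t=s$ must be treated correctly, using left limits ($x_{s^-}$, $N_{\tau^-}$) and closed or open endpoints consistently with the convention $\int_0^t=\int_{[0,t]}$. Second, integrating up to the random time $T_n$ requires every integrand to be measurable and nonnegative so that Tonelli applies pathwise. No martingale argument is needed at this stage: the inequality holds almost surely and is then simply integrated in expectation. Expectations of possibly infinite quantities are allowed on both sides, and their finiteness is deferred to Parts III and IV.
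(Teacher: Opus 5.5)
Your proposal is correct and follows essentially the same route as the paper's proof. Multiply the pointwise three-term bound by $\Lambda_t$ and use $\Lambda_t w_t(\sigma)w_t(\tau)=\alpha^2 e^{\alpha(\sigma-t)}e^{\alpha(\tau-t)}$ to remove the random normalisation; then swap the $s$ and $t$ integrals by Tonelli on $\{0\le s\le t\le T_n\}$ and take expectations. On your caveat: the appendix proof of Lemma~\ref{lem:hess_lip_2} does yield $\eta^2$, so the $\eta$ in its main-text statement is a typo. Both factors of $\eta$ come from the Cauchy--Schwarz bound $\|\int_\sigma^\tau \eta(z_s-x_s)\,ds\|^2\le(\tau-\sigma)\int_\sigma^\tau\eta^2\|z_s-x_s\|^2\,ds$ on the drift, not from the Hessian-Lipschitz expansion.
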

See the proof in Appendix~\ref{app:lem_2}. Section~\ref{sec:part_3} is devoted to the control of Term 1. Section~\ref{sec:part_4} is devoted to the control of Term 2 and Term 3, these two terms sharing a similar analysis.

\subsection{Part III: Control of Term 1}\label{sec:part_3}

We obtain the following control of Term 1 in \eqref{eq:hessLip_eq_1}.
\begin{lemma}\label{lem:term_1_control}
     Under Assumptions~\ref{ass:l_smooth} and \ref{ass:hess_lip}, let $\xz$ be solution of \eqref{eq:nest_continuized} with $\gamma \le \frac{1}{L}$, $\gamma' = \gamma + \sqrt{\frac{\gamma}{2 }}$, $\eta = \sqrt{\frac{\gamma}{2 }}$ and $\eta' = \alphaval - \eta$ for $\alphaval > 0$. Then, we have
     \[\mathbb{E}\left[\int_0^{T_n}\bpar{\int_0^t \alphaval e^{\alphaval(s-t)}dN_s}^2\norm{\frac{w_t(t)(z_{t} - x_{t})}{\gamma' - \gamma}}dN_t \right] \le \sqrt{\alphaval n}\sqrt{\frac{A_n}{\gamma}}\sqrt{\gapf},\]
 with     $A_n = 12\bpar{1+\frac{3}{2}\alphaval}(1+2\alphaval)$.
\end{lemma}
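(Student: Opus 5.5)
We first simplify the integrand. Since $w_t(t) = \alphaval e^{\alphaval t}/\int_0^t \alphaval e^{\alphaval s}dN_s$, setting $S_t := \int_0^t \alphaval e^{\alphaval(s-t)}dN_s$ gives $\Lambda_t = S_t^2$ and $w_t(t) = \alphaval/S_t$. The integrand of Term 1 is therefore
\[\frac{\alphaval}{\gamma'-\gamma} S_t \norm{z_t-x_t}.\]
With the choice $\gamma'-\gamma = \eta = \sqrt{\gamma/2}$, the prefactor equals $\alphaval\sqrt{2/\gamma}$.

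Next we apply Cauchy--Schwarz with respect to the counting measure $dN_t$ on $[0,T_n]$, and then in expectation:
\[\E{\int_0^{T_n} S_t\norm{z_t-x_t}dN_t}\le \E{\int_0^{T_n} S_t^2\norm{z_t-x_t}^2dN_t}^{1/2}\,\E{N_{T_n}}^{1/2},\]
where $\E{N_{T_n}} = n$. This produces the $\sqrt{n}$ factor.

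The remaining task is to bound $\E{\int_0^{T_n} S_t^2\norm{z_t-x_t}^2 dN_t}$ by roughly $\frac{A_n}{2\alphaval}\gapf$, so that the total is $\alphaval\sqrt{2/\gamma}\sqrt{n}\sqrt{A_n\gapf/(2\alphaval)} = \sqrt{\alphaval n A_n\gapf/\gamma}$. This is where the Lyapunov controls \eqref{eq:lyap_control_1}--\eqref{eq:lyap_control_3} enter, with $\eta+\eta'=\alphaval$. They give $\E{\int_0^{T_n}\norm{x_{s^-}-z_{s^-}}^2dN_s}\le \gapf/\alphaval$. Two issues separate this from what we need.

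\begin{enumerate}
\item The integrand is evaluated at $t$, not $t^-$. At a jump, $z_t-x_t = z_{t^-}-x_{t^-} - (\gamma'-\gamma)\nabla f(x_{t^-})$. Using $\norm{a+b}^2\le 2\norm{a}^2+2\norm{b}^2$, the gradient part is controlled by \eqref{eq:lyap_control_2}, since $(\gamma'-\gamma)^2 = \gamma/2$ matches the $\gamma/4$ weight up to a constant.
\item The weight $S_t^2$ is random and unbounded.
\end{enumerate}

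For the weight, we write $S_t = \alphaval + \alphaval\sum_{T_j<t}e^{\alphaval(T_j-t)}$ at a jump time $t$; the first term comes from the jump at $t$ itself. Rather than bound $S_t$ pathwise, we use independence of the Poisson increments. The process $S_{t^-}$ depends on the past, but since $\norm{z_{t^-}-x_{t^-}}^2$ is predictable, we can pass to the compensator $dN_t\to dt$. Conversely, a mismatch of future jumps against the past can be handled by expanding $S_t^2$ as a double integral and exchanging the order of integration. In that form each past quantity $\norm{z_s-x_s}^2$ is weighted by $\E{\int_s^{\infty}\alphaval^2 e^{2\alphaval(s-t)}\cdots}$ over future jumps, which by independence is a deterministic factor of the form $\alphaval(1+c\alphaval)$. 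This is consistent with the factors $(1+\tfrac32\alphaval)(1+2\alphaval)$ appearing in $A_n$.

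Carrying out this exchange carefully is the main obstacle. The specific difficulties are the diagonal terms of the double Poisson integral, which produce the extra $\alphaval$ corrections; the upper limit $T_n$, which is a stopping time rather than a fixed horizon, handled via Theorem~\ref{thm:martingal_stopping} by extending the outer integral to $\infty$ since all integrands are nonnegative; and tracking constants to land exactly on $A_n = 12(1+\tfrac32\alphaval)(1+2\alphaval)$. I would first attempt the cruder route $S_t^2\le 2\alphaval^2 + 2(S_{t^-})^2$ and then compute $\E{(S_{t^-})^2\mid\text{future}}$ by the compensator argument.
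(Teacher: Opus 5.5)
\textbf{There is a genuine gap: the Cauchy--Schwarz split you chose leaves a quantity your sketch does not control.} Your reduction of the integrand to $\frac{\alphaval}{\gamma'-\gamma}S_t\norm{z_t-x_t}$ is correct, and so is your handling of the $t$ versus $t^-$ issue.

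Pairing $S_t\norm{z_t-x_t}$ against $1$ forces you to bound $\E{\int_0^{T_n}S_t^2\norm{z_t-x_t}^2dN_t}$. At $t=T_i$, both $S_t^2$ and $\norm{z_t-x_t}^2$ are functions of the same jump times $T_1,\dots,T_i$, and they are coupled. Indeed, by Lemma~\ref{lem:transfer_grad_derivative}, $z_t-x_t=-(\gamma'-\gamma)\alphaval^{-1}S_t\int_0^tw_t(s)\nabla f(x_{s^-})dN_s$, so a burst of recent jumps tends to inflate both factors at once.

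Your proposed decouplings do not work here:
\begin{itemize}
\item Expanding $S_{T_i}^2=\alphaval^2\sum_{j,k\le i}e^{-\alphaval(T_i-T_j)}e^{-\alphaval(T_i-T_k)}$ and exchanging the sums leaves the trajectory factor at the \emph{latest} index $i$. It is therefore not independent of the increments $T_i-T_j$ multiplying it. The pattern ``past trajectory quantity times future increments'' is the structure of Terms 2 and 3, not of Term 1.
\item Passing to the compensator only replaces $dN_t$ by $dt$ and keeps the same coupling. The Lyapunov inequality \eqref{eq:l_smooth_lyap_dec} controls only the unweighted integral of $\norm{z_s-x_s}^2$.
\item Bounding $\norm{z_t-x_t}^2$ through the gradients by Jensen would discard that Lyapunov control and lose a factor $\alphaval^{-1}$, so Term 1 would be of order $\sqrt{n}$ rather than $\sqrt{\alphaval n}$. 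It would also still need a decoupling of the kind the paper only obtains on $\Abar$, whereas this lemma is unconditional.
\end{itemize}

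\textbf{The missing idea is to split the product the other way,} separating the pure jump-time factor from the trajectory factor:
\[\E{\int_0^{T_n}S_t\norm{z_t-x_t}dN_t}\le\sqrt{\E{\int_0^{T_n}\Lambda_t\,dN_t}}\,\sqrt{\E{\int_0^{T_n}\norm{z_t-x_t}^2dN_t}}.\]

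\begin{itemize}
\item The first factor involves no trajectory at all. Lemma~\ref{prop:calc_sto_comput_1}-(i) gives $\E{\int_0^{T_n}\Lambda_t\,dN_t}\le(1+\frac{3}{2}\alphaval)(1+2\alphaval)n$. This is where both $\sqrt{n}$ and the $\alphaval$-corrections in $A_n$ come from, not from a future-weight computation.
\item The second factor is exactly what your point 1 handles. By \eqref{eq:lyap_control_3} and \eqref{eq:lyap_control_2} it is at most
\[2\gapf/\alphaval+2(\gamma'-\gamma)^2\cdot 4\gapf/\gamma=2\gapf(\alphaval^{-1}+2)\le 6\gapf/\alphaval.\]
The last step uses $\alphaval\le 1$, which the paper also assumes tacitly.
\end{itemize}

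Multiplying by $\alphaval\sqrt{2/\gamma}$ gives $\sqrt{\alphaval n}\sqrt{A_n\gapf/\gamma}$ with $A_n=12(1+\frac{3}{2}\alphaval)(1+2\alphaval)$.
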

\begin{sproof}

   The main idea is to use Cauchy-Schwartz inequality and our controls from Section~\ref{sec:proof_lyap_control}. 
Indeed, using the notation $\Lambda_t := \bpar{\int_0^t \alphaval e^{\alphaval(s-t)}dN_s}^2 $ and the definition $w_t(t) = \frac{ \alphaval e^{\alphaval t}}{\int_0^t \alphaval e^{\alphaval s}dN_s}$, Term 1 becomes
\begin{equation}
    \begin{aligned}\label{eq:hess_bound_1_bis}
              \mathbb{E}\left[\int_0^{T_n}\Lambda_{t}\norm{\frac{w_t(t)(z_{t} - x_{t})}{\gamma' - \gamma}}dN_t \right]&=\mathbb{E}\left[\int_0^{T_n} \frac{\alphaval}{\gamma' - \gamma}\bpar{\int_0^{t} \alphaval e^{\alphaval(s-t)}dN_s}\norm{z_{t} - x_{t}}dN_t\right]\\
    &\le \frac{\alphaval}{\gamma' - \gamma}\sqrt{\mathbb{E}\left[ \int_0^{T_n} \Lambda_{t} dN_t\right]}\sqrt{\mathbb{E}\left[\int_0^{T_n}\norm{z_{t} - x_{t}}^2dN_t \right]},
    \end{aligned}
\end{equation}
where we used the Cauchy-Schwarz inequality. $\sqrt{\mathbb{E}\left[ \int_0^{T_n} \Lambda_{t} dN_t\right]}$ can be computed explicitly. For the other term, we still cannot use our controls from Section~\ref{sec:proof_lyap_control}. Before, we use that
\begin{equation*}
    \begin{aligned}
        \int_0^{T_n}\norm{z_{t} - x_{t}}^2dN_t  &= \int_0^{T_n}\norm{z_{t^-} - x_{t^-} - (\gamma'-\gamma)\nabla f(x_{t^-})}^2dN_t \\
        &\le 2\int_0^{T_n}\norm{z_{t^-} - x_{t^-}}^2dN_t + 2(\gamma'-\gamma)^2\int_0^{T_n}\norm{\nabla f(x_{t^-})}^2dN_t  
    \end{aligned}
\end{equation*}
We note here that this step is necessary because our control \eqref{eq:lyap_control_3_pre} holds when integrating $\norm{z_{t^-} - x_{t^-}}^2$, not $\norm{z_{t} - x_{t}}^2$. The reason is that the relation $\int_0^t \phi(s)dN_s = \int_0^t \phi(s)ds + M_t$ for some centered martingale $\mart$ holds as long as $\phi$ is a \textit{previsible} process with respect to $N$, namely if $\phi(t)$ is measurable with respect to the filtration $\{N_s\}_{0\le s <t}$. While $\norm{z_{t^-} - x_{t^-}}^2$ is previsible, $\norm{z_{t} - x_{t}}^2$ is not.\end{sproof}
See Appendix~\ref{app:term_1} for the full proof.
\subsection{Part IV: Control of Term 2 and Term 3}\label{sec:part_4}
Compared with Term 1, the remaining Terms 2 and 3 from \eqref{eq:hessLip_eq_1} are more challenging. When using the classic deterministic \eqref{eq:HB} equation, the analogue of the triple integrals inside the parentheses are deterministic, and can be directly computed \cite{okamura2024primitive}.
In our case, if the expectation of these triple integrals can be computed, we have to deal with the expectation of the full expression. To do so, we are faced with the lack of independence between the trajectory-dependent factors and these triple integrals. 

To illustrate this, we consider Term 3, namely
\[\int_0^{T_n}  \norm{\nabla f(x_{s^-})}^2  \bpar{\int_s^{T_n}\int_s^t \int_0^s    e^{\alphaval(\tau-t)}e^{\alphaval(\sigma-t)}(N_{\tau^-}-N_{\sigma^-})dN_\sigma  dN_\tau dN_t}dN_s.\]
It can be rearranged as
\[\int_0^{T_n}  \bpar{\int_s^{T_n}\int_s^t  e^{\alphaval(\tau-t)}e^{\alphaval(s-t)} \norm{\nabla f(x_{s^-})}^2 \int_0^s  e^{\alphaval(\sigma-s)}(N_{\tau^-}-N_{\sigma^-}) dN_\sigma dN_\tau dN_t}dN_s.\]
Writing it as a sum, it becomes

\begin{align}\label{eq:exp_separate_2:1:illustrate}
\sum_{i=1}^n \sum_{l=i}^n \sum_{k=i}^l e^{\alphaval(T_k-T_l)}e^{\alphaval(T_i-T_l)} \norm{\nabla f(x_{T_i^-})}^2\sum_{j=1}^i e^{\alphaval (T_j-T_i)}(k-j).
\end{align}
Taking the expectation, we have
\begin{equation*}
    \begin{aligned}
        \E{\eqref{eq:exp_separate_2:1:illustrate}} &= \E{\sum_{i=1}^n \sum_{l=i}^n \sum_{k=i}^l e^{\alphaval(T_k-T_l)}e^{\alphaval(T_i-T_l)} \norm{\nabla f(x_{T_i^-})}^2\sum_{j=1}^i e^{\alphaval (T_j-T_i)}(k-j)}\\
        &= \sum_{i=1}^n \sum_{l=i}^n \sum_{k=i}^l \E{e^{\alphaval(T_k-T_l)}e^{\alphaval(T_i-T_l)} \norm{\nabla f(x_{T_i^-})}^2\sum_{j=1}^i e^{\alphaval (T_j-T_i)}(k-j)}
    \end{aligned}
\end{equation*}
Importantly, the increments of $\{ T_k\}_{k\in \N}$ are independent, namely for any integers $1 \le \ell_0< \ell_1$, we have $T_{\ell_1} - T_{\ell_0} \ind \phi(T_1,\cdots, T_{\ell_0})$, for any measurable function $\phi$. In our case, as $l,k \ge i$, we have $T_k-T_l \ind \norm{\nabla f(x_{T_i^-})}$ and $T_i-T_l \ind  \norm{\nabla f(x_{T_i^-})}$. So, we deduce
\[\E{\eqref{eq:exp_separate_2:1:illustrate}} = \sum_{i=1}^n \sum_{l=i}^n \sum_{k=i}^l \E{e^{\alphaval(T_k-T_l)}e^{\alphaval(T_i-T_l)}}\E{\norm{\nabla f(x_{T_i^-})}^2\sum_{j=1}^i e^{\alphaval (T_j-T_i)}(k-j)}\]
The challenge we face here is that $\nabla f(x_{T_i^-})$ and $\sum_{j=1}^i e^{\alphaval (T_j-T_i)}(k-j)$ are not independent. We address this difficulty in the next section.

\subsubsection{Splitting the expectations}
In the previous section, we showed that we arrive at a point where trajectory-dependent factors are multiplied with some trajectory-independent terms, from which they are no independent, which prevents to separate the expectations. Precisely, the problematic trajectory-independent factors we obtain are
$$\sum_{j=1}^i (T_i-T_j)e^{-\alphaval (T_i-T_j)}, \;  \sum_{j=1}^i e^{-\alphaval (T_i-T_j)}, \; \sum_{j=1}^i (i-j)e^{-\alphaval (T_i-T_j)},\quad i \in \{1,\dots,n\}.$$
Our solution is to show that with high probability, these terms can be upper-bounded by their expectation multiplied by a constant factor. The trickiest part is to find a probability and a constant that do not become trivial as $n$ grows. In particular, it cannot work by directly using the Markov property.



\begin{restatable}{theorem}{maintheorem}\label{thm:high_proba_bound}
Let $n \in \N^\ast$, $\cinf, \varepsilon \in (0,1)$, $\alphaval \in (0,1)$. Let $\Kval  = \ceil{(\cinf - 1 - \log(\cinf))^{-1}
\log\bpar{2\frac{n}{\varepsilon}}}$, and $\cupp \ge 1$ be chosen such that
$\cupp -\log(\cupp) = \cinf - \log(\cinf)$.
Let 
\[\Ccont = 32(4+4C_n+C_n^2)\frac{\cupp e}{\cinf^2}, \text{ where }~ C_n =  \cinf \alphaval \Kval .\] 
There exists a set $\Abar \subset \Omega$ of realizations satisfying $\mathbb{P}(\Abar) \ge 1-\varepsilon$ such that the three following assertions hold
\[
\begin{array}{r l l l}
(1) & \forall\,i \in \{\Kval +1,\dots,n\},
&\sum_{j=1}^i (T_i-T_j)e^{-\alphaval (T_i-T_j)}
&\le \Ccont\,\E{\sum_{j=1}^i (T_i-T_j)e^{-\alphaval (T_i-T_j)}}, \\[6pt]
(2) & \forall\,i \in \{1,\dots,n\},
&\sum_{j=1}^i e^{-\alphaval (T_i-T_j)}
&\le \Ccont\,\E{\sum_{j=1}^i e^{-\alphaval (T_i-T_j)}}, \\[6pt]
(3)& \forall\,i \in \{1,\dots,n\},
&\sum_{j=1}^i (i-j)e^{-\alphaval (T_i-T_j)}
&\le \Ccont\,\E{\sum_{j=1}^i (i-j)e^{-\alphaval (T_i-T_j)}}.
\end{array}
\]
\end{restatable}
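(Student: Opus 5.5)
The plan is to control all three sums through a single good event on the Gamma-distributed gaps $T_i-T_j$, and then compare each realized sum with its expectation term by term. Write $m=i-j$, so that $T_i-T_j\sim\Gamma(m,1)$. The expectations are explicit:
\[
\E{e^{-\alphaval (T_i-T_j)}}=(1+\alphaval)^{-m},\qquad \E{(T_i-T_j)e^{-\alphaval (T_i-T_j)}}=m(1+\alphaval)^{-m-1}.
\]

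\textbf{Defining the good event.} Let $\Abar$ be the event on which, for every pair $1\le j\le i\le n$ with $m=i-j\ge \Kval$, the lower deviation bound $T_i-T_j\ge \cinf m$ holds. Gamma tails (a Chernoff bound) give
\[
\P(\Gamma(m,1)\le \cinf m)\le e^{-m(\cinf-1-\log \cinf)}.
\]
Rather than union bounding over all $n^2$ pairs, the plan is to use a geometric-sum bound over $m\ge \Kval$ for each $i$, followed by a union bound over $i$. The choice $\Kval=\ceil{(\cinf-1-\log\cinf)^{-1}\log(2n/\varepsilon)}$ makes each individual probability at most $\varepsilon/(2n)$, and the geometric factor is handled with the constants. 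If needed, the event can be further intersected with an upper-deviation event $T_i-T_j\le \cupp m$. This is where $\cupp$, defined through $\cupp-\log\cupp=\cinf-\log\cinf$, enters: it makes the upper-tail rate equal to the lower one. That event would control $(T_i-T_j)$ factors in assertion (1), and the remaining $\varepsilon/2$ of the probability budget is used for it.

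\textbf{Bounding each sum on $\Abar$.} Each sum is split into near terms ($m<\Kval$) and far terms ($m\ge \Kval$).

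For the near terms, I bound $e^{-\alphaval(T_i-T_j)}\le 1$. Their total contribution is then at most $\Kval$ for (2) and $\Kval^2/2$ for (3). Since $(1+\alphaval)^{-m}\ge e^{-\alphaval m}$ and $\alphaval \Kval$ is tied to $C_n=\cinf\alphaval\Kval$, these are bounded by a factor of order $(1+C_n)$ or $(1+C_n)^2$ times the expectation. The reason is that the expected sums are of order $\min(i,1/\alphaval)$ and $\min(i^2,\alphaval^{-2})$, respectively.

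For the far terms, on $\Abar$ I use $e^{-\alphaval (T_i-T_j)}\le e^{-\alphaval\cinf m}$. This gives geometric sums $\sum_m e^{-\alphaval\cinf m}\lesssim 1/(\cinf\alphaval)$ and $\sum_m m e^{-\alphaval\cinf m}\lesssim 1/(\cinf\alphaval)^2$. These are compared with the expectations, which are of order $1/\alphaval$ and $1/\alphaval^2$ (using $\alphaval<1$, so $(1+\alphaval)^{-1}\ge 1/2$ and $\log(1+\alphaval)\ge \alphaval/2$ or similar). This comparison produces the factor $1/\cinf^2$ and the numerical constants; the $e$ and $\cupp$ factors arise from bounds like $m e^{-\alphaval\cinf m}$ together with the upper deviation in (1).

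For assertion (1), the factor $(T_i-T_j)$ must be controlled on the far terms. I would use either $xe^{-\alphaval x}\le \frac{1}{\alphaval e}$ combined with monotonicity for $x\ge 1/\alphaval$, or the upper deviation $T_i-T_j\le \cupp m$. The restriction $i\ge \Kval+1$ ensures that the expectation is not too small relative to the near terms: for small $i$, the expected value $\sum m(1+\alphaval)^{-m-1}$ is tiny when the near terms contain $T_i-T_j$ factors, which are not bounded on $\Abar$.

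\textbf{Main obstacle.} I expect the hard part to be making the comparison uniform in $i$ and $n$: the realized near and far contributions must match the expectation in both regimes $i\lesssim 1/\alphaval$ and $i\gg 1/\alphaval$. Handling this is what forces the $(2+C_n)^2=4+4C_n+C_n^2$ structure. My first attempt would be to treat the two regimes by an explicit case split on whether $i\le 2/\alphaval$.
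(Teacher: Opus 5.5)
Your overall architecture matches the paper's: one good event on the gaps, a near/far split at $i-j=\Kval$, and comparison with lower bounds of the form $\min\{i,\alphaval^{-1}\}$ and $\min\{i^2,\alphaval^{-2}\}$ on the expectations. The gap is in the probability estimate for your event. You control $T_i-T_j\ge\cinf m$ for all $m\ge\Kval$ by a union bound over $m$. For fixed $i$ this gives a failure probability up to $e^{-\Kval\rho}/(1-e^{-\rho})$, with $\rho=\cinf-1-\log\cinf$, hence $\varepsilon/(2(1-e^{-\rho}))$ after the union over $i$.

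That factor cannot be ``handled with the constants''. It multiplies the failure probability, the statement fixes both $\Kval$ and the level $1-\varepsilon$, and $\Ccont$ plays no role in $\mathbb{P}(\Abar)$. Since $1-e^{-\rho}<1$, the bound always exceeds $\varepsilon/2$. Moreover $\rho\approx(1-\cinf)^2/2$ for $\cinf$ near $1$, which is the regime $\cinf=1-\tilde\varepsilon$ of Theorem~\ref{thm:hess_lip}, so the bound is then typically larger than $1$.

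The missing idea is the block device of Proposition~\ref{prop:min_gamma}. Apply Chernoff only to the $\bigO(n)$ windows $T_{j+\Kval}-T_j$ of length exactly $\Kval$, each failing with probability at most $\varepsilon/(2n)$, so at most $\varepsilon/2$ per tail. Then chain complete blocks, dropping the residual block for the lower bound and padding it to a full block for the upper bound. This gives, for all $1\le j\le i\le n$,
\[
\cinf(i-j)-\cinf\Kval\;\le\; T_i-T_j\;\le\;\cupp(i-j)+\cupp\Kval .
\]
The additive slack costs nothing on the far terms: the reindexing $\ell=i-j-\Kval$ gives the same geometric sums. Your slack-free event could alternatively be rescued by Ville's maximal inequality for the martingale $m\mapsto(1+\lambda)^m e^{-\lambda(T_i-T_{i-m})}$ with $\lambda=\cinf^{-1}-1$, which removes the factor $(1-e^{-\rho})^{-1}$. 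That, however, is not the argument you give.

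Second, assertion (1) needs an upper bound on $T_i-T_j$ for the near terms $i-j<\Kval$ even when $i\ge\Kval+1$, and your plan provides none. A slack-free bound $T_i-T_j\le\cupp m$ cannot hold with high probability for small $m$: already $\mathbb{P}(T_{j+1}-T_j>\cupp)=e^{-\cupp}$, a constant, over $n$ indices. The fallback $xe^{-\alphaval x}\le 1/(\alphaval e)$ only bounds the near terms by $\Kval/(\alphaval e)$. Write $H_0^i:=\sum_{j\le i}(T_i-T_j)e^{-\alphaval(T_i-T_j)}$; at $i=\Kval+1\le\alphaval^{-1}$ one has $\E{H_0^i}\le\Kval^2$. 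The ratio is then of order $\cinf/C_n$, which is unbounded as $C_n\to 0$, while $\Ccont$ stays bounded.

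In the paper, the upper half of the block event gives $T_i-T_j\le T_{j+\Kval}-T_j\le\cupp\Kval$. The near terms therefore contribute at most $\cupp\Kval^2=\cupp C_n^2/(\alphaval\cinf)^2$. Adding the far-term bound $\cupp\bigl(4/(\alphaval\cinf)^2+4\Kval/(\alphaval\cinf)\bigr)$ gives exactly $\cupp(4+4C_n+C_n^2)/(\alphaval\cinf)^2$. This is compared with $\E{H_0^i}\ge\frac{e^{-1}}{32}\min\{i^2,\alphaval^{-2}\}$, using $H_0^i\le\cupp i^2$ in the other regime. That is where $4+4C_n+C_n^2$ comes from; no case split on $i\le 2/\alphaval$ is needed.
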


Section~\ref{app:proof_high_proba} is devoted to the complete proof of Theorem~\ref{thm:high_proba_bound}. 
One of its key step is to use concentration inequalities to obtain that with a high probability that does not depend on $n$, we ensure both an upper and lower-bound on the increments $T_i-T_j$, of the form
    \[ \cinf \E{T_i-T_j } - \cinf \Kval  \le  T_i-T_j \le \cupp \E{T_i-T_j }+\cupp \Kval,\]
where $\Kval = \bigO(\log(n/\varepsilon))$. Precisely, the set $\Abar$ is the set on which the above holds. These bounds then allow to relate the sums with their expectations.
    
Thanks to Theorem~\ref{thm:high_proba_bound}, with high probability 
one can fully separate the expectation of the factor that depend on the trajectory from those who do not. This is formalized in the following result.

\begin{lemma}\label{lem:hess_lip_4}
We assume the same setting as Theorem~\ref{thm:high_proba_bound}.


\begin{enumerate}
    \item Noting $A_k := \int_{[T_k,T_n]}\int_{[T_k, t]}  \int_{[0,T_k]}e^{\alphaval(T_k-t)}e^{\alphaval(\tau-t)}e^{\alphaval(\sigma - T_k)}(\tau - \sigma)dN_\sigma dN_\tau dN_t $, we have
\begin{equation}
\begin{aligned}\label{eq:exp_separate_1}
    &\E{\1_{\Abar} \int_0^{T_n} \eta^2 \norm{z_s-x_s}^2 \bpar{\int_s^{T_n}\int_s^t \int_0^s    e^{\alphaval(\tau-t)}e^{\alphaval(\sigma-t)}(\tau-\sigma) dN_\sigma dN_\tau dN_t}ds}\\
   & \le \Ccont \max_{k\in \{1,\dots,n\}}\E{A_k}  \E{\int_0^{T_n} \eta^2 \norm{z_s-x_s}^2 ds}
\end{aligned}
\end{equation} 
\item Noting $B_k := \int_{T_k}^{T_n}\int_{T_k}^t     \int_0^{T_k}e^{\alphaval(\tau-t)}e^{\alphaval(T_k-t)}e^{\alphaval(\sigma - T_k)}(N_{\tau^-}-N_{\sigma^-})dN_\sigma  dN_\tau dN_t$, we have
\begin{equation}
    \begin{aligned}\label{eq:exp_separate_2}
        &\E{\1_{\Abar} \int_0^{T_n} \gamma^2 \norm{\nabla f(x_{s^-})}^2  \bpar{\int_s^{T_n}\int_s^t \int_0^s    e^{\alphaval(\tau-t)}e^{\alphaval(\sigma-t)}(N_{\tau^-}-N_{\sigma^-})dN_\sigma  dN_\tau dN_t}dN_s}\\
        &\le  \Ccont\max_{k\in \{1,\dots,n\}}\E{B_k}\E{\int_0^{T_n} \gamma^2 \norm{\nabla f(x_{s^-})}^2  dN_s}
    \end{aligned}
\end{equation}
\end{enumerate}
\end{lemma}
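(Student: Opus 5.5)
We prove item 2 first, since it is a pure Poisson sum, and then reduce item 1 to the same mechanism by an extra compensator step. Writing the inner triple integral as a sum, as in \eqref{eq:exp_separate_2:1:illustrate}, the left-hand side of \eqref{eq:exp_separate_2} equals
\[
\E{\1_{\Abar}\sum_{i=1}^n \gamma^2\norm{\nabla f(x_{T_i^-})}^2 \sum_{l=i}^n\sum_{k=i}^l e^{\alphaval(T_k-T_l)}e^{\alphaval(T_i-T_l)}\sum_{j=1}^i e^{-\alphaval(T_i-T_j)}(k-j)}.
\]
We split $k-j=(k-i)+(i-j)$. The past factor $\sum_{j\le i}e^{-\alphaval(T_i-T_j)}(k-j)$ then becomes $(k-i)S^{(2)}_i+S^{(3)}_i$, where $S^{(2)}_i,S^{(3)}_i$ are the sums in assertions (2) and (3) of Theorem~\ref{thm:high_proba_bound}. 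On $\Abar$, each is at most $\Ccont$ times its expectation. Since all remaining factors are nonnegative, we drop $\1_{\Abar}$ after this replacement. This yields a deterministic multiple of $\E{\norm{\nabla f(x_{T_i^-})}^2 F_i}$, where $F_i:=\sum_{l\ge i}\sum_{k=i}^l e^{\alphaval(T_k-T_l)}e^{\alphaval(T_i-T_l)}\,(\cdot)$ depends only on the increments after $T_i$.

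The key independence fact is that $x_{T_i^-}$ is a function of $x_0$ and $T_1,\dots,T_i$, while $F_i$ depends only on $T_{i+1}-T_i,\dots,T_n-T_i$. The expectation therefore factorizes as $\E{\norm{\nabla f(x_{T_i^-})}^2}\E{F_i}$. Next we recombine the deterministic pieces: $\E{F_i}$ (with weights $(k-i)$ and $1$) multiplied by $\E{S^{(2)}_i}$ and $\E{S^{(3)}_i}$ again equals $\E{B_i}$. The reason is that $B_i$ factorizes the same way, its past part $\sum_{j\le i}e^{-\alphaval(T_i-T_j)}(k-j)$ being independent of its future part. Hence the left side is at most $\Ccont\sum_i \E{B_i}\,\E{\gamma^2\norm{\nabla f(x_{T_i^-})}^2}\le \Ccont\max_k\E{B_k}\,\E{\int_0^{T_n}\gamma^2\norm{\nabla f(x_{s^-})}^2dN_s}$.

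For item 1 the outer integral is in $ds$, not $dN_s$, which creates an extra difficulty. Decompose $[0,T_n]=\bigcup_k[T_{k-1},T_k)$. For $s\in[T_{k-1},T_k)$ the inner integrals over $[0,s]$ and $[s,T_n]$ coincide with those over $[0,T_{k-1}]$ and $[T_k,T_n]$. The only $s$-dependence beyond $\norm{z_s-x_s}^2$ is through the factor $e^{\alphaval(\sigma-t)}=e^{\alphaval(\sigma-s)}e^{\alphaval(s-t)}$ and through $\tau-\sigma$. We handle this by bounding $e^{\alphaval(s-t)}\le e^{\alphaval(T_k-t)}$ and $e^{\alphaval(\sigma-s)}\le e^{\alphaval(\sigma-T_{k-1})}$, and then splitting $\tau-\sigma=(\tau-T_k)+(T_k-T_{k-1})+(T_{k-1}-\sigma)$.

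The past pieces $\sum_{j<k}e^{-\alphaval(T_{k-1}-T_j)}$ and $\sum_{j<k}(T_{k-1}-T_j)e^{-\alphaval(T_{k-1}-T_j)}$ are controlled on $\Abar$ by assertions (2) and (1) of Theorem~\ref{thm:high_proba_bound}. For indices $i\le \Kval$, where (1) is not stated, we would need a crude separate bound, e.g.\ $(T_i-T_j)e^{-\alphaval(T_i-T_j)}\le (e\alphaval)^{-1}$, compared against the expectation; this may require inflating $\Ccont$, and it is the main obstacle.

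We must also verify that the middle terms involving $T_k-T_{k-1}$ and the time $s$ recombine to at most $\E{A_k}$. Here $\norm{z_s-x_s}^2$ for $s\in(T_{k-1},T_k)$ is a deterministic function of the state at $T_{k-1}$ and of $s-T_{k-1}$; by the linear ODE between jumps it is in fact nonincreasing. It therefore depends on the past together with the current interarrival time, and is independent of $T_{k+1}-T_k,\dots$. After separating expectations in the same way as for item 2, the remaining factors recombine to $\max_k\E{A_k}\,\E{\int_0^{T_n}\eta^2\norm{z_s-x_s}^2ds}$.
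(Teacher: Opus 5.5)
Your proof of item 2 follows the paper's argument and is correct: the sum representation, the split $k-j=(k-i)+(i-j)$, assertions (2)--(3) of Theorem~\ref{thm:high_proba_bound} on $\Abar$, independence of the post-$T_i$ increments from $x_{T_i^-}$, and recombination into $\E{B_i}$.

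Item 1 has a genuine gap, at exactly the step you assert without proof, namely that the pieces ``recombine to at most $\E{A_k}$''. The integrand $e^{\alpha(\tau-t)}e^{\alpha(\sigma-t)}(\tau-\sigma)$ does not depend on $s$. For $s\in(T_{k-1},T_k)$ the inner triple integral is exactly the one over $\sigma\in[0,T_{k-1}]$ and $\tau,t\in[T_k,T_n]$. Two of your steps create problems:
\begin{itemize}
\item Inserting $s$ through $e^{\alpha(\sigma-t)}=e^{\alpha(\sigma-s)}e^{\alpha(s-t)}$ and bounding the two factors separately loses a factor $e^{\alpha(T_k-T_{k-1})}$.
\item Splitting $\tau-\sigma$ at $T_{k-1}$ leaves the current interarrival time $T_k-T_{k-1}$ multiplying $U_{k-1}:=\int_{(T_{k-1},T_k)}\eta^2\norm{z_s-x_s}^2ds$. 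This factor is not independent of $U_{k-1}$, and no assertion of Theorem~\ref{thm:high_proba_bound} controls it.
\end{itemize}
These terms do not recombine. Between jumps $d(x_t-z_t)=-\alpha(x_t-z_t)dt$, so $U_{k-1}=\eta^2\norm{z_{T_{k-1}}-x_{T_{k-1}}}^2\,\frac{1-e^{-2\alpha(T_k-T_{k-1})}}{2\alpha}$ is increasing in $T_k-T_{k-1}$. By positive association this gives $\E{U_{k-1}(T_k-T_{k-1})}\ge\E{U_{k-1}}$. In $\E{A_k}$, the corresponding piece carries only the factor $\E{(T_k-T_{k-1})e^{-\alpha(T_k-T_{k-1})}}=(1+\alpha)^{-2}$. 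Your decomposition therefore cannot produce $\Ccont\max_k\E{A_k}$; at best it yields extra terms that must be absorbed into a larger constant.

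The missing idea is to anchor the past at the right endpoint $T_k$ rather than at $T_{k-1}$.
\begin{itemize}
\item Since the integrand is nonnegative, enlarge the $\sigma$-range from $[0,T_{k-1}]$ to $[0,T_k]$; this is the paper's extension of the $j$-sum to $i+1$. It gives the pathwise bound $U_{k-1}A_k$.
\item Write $A_k=F^{(1)}_kH_1^k+F^{(0)}_kH_0^k$, where $H_1^k=\sum_{j\le k}e^{-\alpha(T_k-T_j)}$ and $H_0^k=\sum_{j\le k}(T_k-T_j)e^{-\alpha(T_k-T_j)}$ are functions of $T_1,\dots,T_k$. The interarrival time $T_k-T_{k-1}$ is thus absorbed into factors that assertions (1)--(2) control on $\Abar$.
\item Set $F^{(0)}_k:=\sum_{l=k}^n\sum_{m=k}^l e^{\alpha(T_m-T_l)}e^{\alpha(T_k-T_l)}$, and let $F^{(1)}_k$ be the same sum with the extra weight $T_m-T_k$. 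Both depend only on increments after $T_k$, so they are independent of $U_{k-1}$.
\item This gives $\E{\1_{\Abar}U_{k-1}A_k}\le\Ccont\E{U_{k-1}}\E{A_k}$ with no loss.
\end{itemize}

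Separately, you leave open the indices $k\le\Kval$, where assertion (1) is unavailable, so item 1 is not proved even after this repair. The paper treats these indices by bounding $H_0^k\le\cupp\Kval^2$ on $\Abar$ (Proposition~\ref{prop:maj_H_0}) and comparing with $\Ccont\E{H_0^k}$ through Lemma~\ref{lem:exp_low_bound_H_0}. That comparison, as written there, uses $\min\{k^2,\alpha^{-2}\}\ge\min\{\Kval^2,\alpha^{-2}\}$, which holds only for $k$ of order $\Kval$ or larger. For $k\ll\Kval$ this case needs an additional argument in any complete proof.
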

The proof of Lemma~\ref{lem:hess_lip_4} uses the ideas introduced in the beginning of Section~\ref{sec:part_4}, such as writing the integrals as sums (see \eqref{eq:exp_separate_2:1:illustrate}). See Appendix~\ref{app:lem_3} for the complete proof.
\subsubsection{Final Control}
Thanks to Lemma~\ref{lem:hess_lip_4}, we see that we can control separately the expectations of trajectory-dependent factors, and the factors $\max_{k\in \{1,\dots,n\}}\E{A_k}$ and $\max_{k\in \{1,\dots,n\}}\E{B_k}$. The first can be dealt with using the Lyapunov controls of Section~\ref{sec:proof_lyap_control}. For the two other terms, we note that
we can write the quantities $A_k$ and $B_k$ as sums, for instance in the case of $B_k$
\begin{align*}
\int_s^{T_n}\int_s^t \int_0^s    e^{\alphaval(\tau-t)}e^{\alphaval(\sigma-t)}(N_{\tau^-}-N_{\sigma^-})dN_\sigma  dN_\tau dN_t=\sum_{i=k}^n \sum_{j=1}^k\sum_{l=k}^i e^{2\alphaval(T_l-T_i)}e^{\alphaval(T_j-T_l)}(l-j).
\end{align*}
The expectation of the terms in this sum can be computed using basic properties of Gamma laws. Indeed, as $T_l-T_i \sim \Gamma(l-i,1)$, we recognize that $\E{e^{2\alphaval(T_l-T_i)}}$ is the characteristic function of a gamma law evaluated at $2 \alphaval$, whose value is $(1+2\alphaval)^{-(l-i)}$. The terms appearing in $\E{A_k}$ are not significantly more difficult. Then, $\max_{k\in \{2,\dots,n\}}\E{A_k}$ and $\max_{k\in \{2,\dots,n\}}\E{B_k}$ can be upper-bounded with basic algebraic manipulations, yielding the following bounds.

\begin{lemma}\label{lem:hess_lip_5}
    Consider $A_k$ and $B_k$ as defined in Lemma~\ref{lem:hess_lip_4}. Then
    \begin{enumerate} 
        \item $\max_{k\in \{2,\dots,n\}}\E{A_k} \le \frac{(1+\alphaval)(1+2\alphaval)}{\alphaval^4}.$
        \item $\max_{k\in \{1,\dots,n\}}\E{B_k}  \le \frac{(1+\alphaval)^2(1+2\alphaval)}{\alphaval^4}.$
    \end{enumerate}
\end{lemma}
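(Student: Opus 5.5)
The plan is to write $A_k$ and $B_k$ as finite sums over the jump times, as in Section~\ref{sec:part_4}. Each summand then factors into functions of independent Gamma increments, so its expectation can be computed exactly. Bounding the finite sums by geometric series should then give both inequalities, uniformly in $k$ and $n$.

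\textbf{Step 1 (sum representation).} Since $dN$ charges exactly the points $T_1,\dots,T_n$ on $[0,T_n]$, the integration variables run over jump indices $\sigma=T_j$, $\tau=T_l$, $t=T_i$ with $1\le j\le k\le l\le i\le n$. This gives
\[A_k=\sum_{i=k}^n\sum_{l=k}^i\sum_{j=1}^k e^{\alphaval(T_k-T_i)}e^{\alphaval(T_l-T_i)}e^{\alphaval(T_j-T_k)}(T_l-T_j).\]
For $B_k$, we have $N_{T_l^-}-N_{T_j^-}=(l-1)-(j-1)=l-j$, so the factor $(T_l-T_j)$ is replaced by the deterministic integer $(l-j)$. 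I will check the boundary terms ($j=k$, $l=k$) carefully, since the intervals are closed. These terms only contribute nonnegative pieces, which are included in the bounds below anyway.

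\textbf{Step 2 (exact expectations).} Set $G_1=T_i-T_l\sim\Gamma(i-l,1)$, $G_2=T_l-T_k\sim\Gamma(l-k,1)$ and $G_3=T_k-T_j\sim\Gamma(k-j,1)$. These are independent because they are sums over disjoint blocks of i.i.d.\ $\mathcal E(1)$ increments. The exponent equals $-2\alphaval G_1-\alphaval G_2-\alphaval G_3$, and $T_l-T_j=G_2+G_3$. For $G\sim\Gamma(m,1)$ we use
\[\E{e^{-aG}}=(1+a)^{-m},\qquad \E{Ge^{-aG}}=m(1+a)^{-m-1}.\]
Writing $p=i-l$, $q=l-k$, $r=k-j$ and $x=(1+\alphaval)^{-1}$, the summand of $\E{A_k}$ becomes
\[(1+2\alphaval)^{-p}\,x^{q+r}\,\frac{q+r}{1+\alphaval}.\]
The summand of $\E{B_k}$ is similar: $(1+2\alphaval)^{-p}x^{q+r}(q+r)$.

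\textbf{Step 3 (geometric sums).} We extend all ranges to $p,q,r\ge0$, which only increases the sums since every term is nonnegative. We have $\sum_{p\ge0}(1+2\alphaval)^{-p}=\frac{1+2\alphaval}{2\alphaval}$. Using $\sum_q x^q=\frac{1+\alphaval}{\alphaval}$ and $\sum_q qx^q=\frac{1+\alphaval}{\alphaval^2}$, we get
\[\sum_{q,r\ge0}(q+r)x^{q+r}=\frac{2(1+\alphaval)^2}{\alphaval^3}.\]
For $A_k$ this gives
\[\frac{1+2\alphaval}{2\alphaval}\cdot\frac{2(1+\alphaval)}{\alphaval^3}=\frac{(1+\alphaval)(1+2\alphaval)}{\alphaval^4}.\]
For $B_k$ the extra factor $1+\alphaval$ is kept, giving $\frac{(1+\alphaval)^2(1+2\alphaval)}{\alphaval^4}$. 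Both bounds are independent of $k$, so they also hold for the maxima.

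\textbf{Main obstacle.} The computation itself is routine. The one delicate point is Step 1: matching the closed intervals $[T_k,t]$ and $[0,T_k]$, and the left limits $N_{\tau^-}$, to the correct index ranges and values. I also need to confirm that the exponents are arranged so that each independent increment appears with a single coefficient ($2\alphaval$ or $\alphaval$). I would handle this by writing the ordering $j\le k\le l\le i$ explicitly before taking expectations.
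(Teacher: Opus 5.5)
Your proposal is correct and follows essentially the same route as the paper. The shared steps are: write $A_k$, $B_k$ as sums over jump indices $1\le j\le k\le l\le i\le n$; use independence of the Gamma increments together with $\E{e^{-aG}}=(1+a)^{-m}$ and $\E{Ge^{-aG}}=m(1+a)^{-m-1}$; then extend to infinite geometric series. The only cosmetic difference is that you evaluate $\sum_{q,r\ge 0}(q+r)x^{q+r}$ by factoring into single sums, whereas the paper substitutes $d=l-j$ and bounds the multiplicity of each $d$ by $d+1$; both give the same value $2(1+\alpha)^2/\alpha^3$.
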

See the proof in Appendix~\ref{app:lem_5}. Combining Lemma~\ref{lem:hess_lip_5} with our controls from Section~\ref{sec:proof_lyap_control}, we finally obtain the following bound on Terms 2 and 3.
\begin{lemma}\label{lem:term_2_3_control}
Assume the same setting as in Theorem~\ref{thm:high_proba_bound}.
      Under Assumptions~\ref{ass:l_smooth} and \ref{ass:hess_lip}, let $\xz$ be solution of \eqref{eq:nest_continuized} with $\gamma \le \frac{1}{L}$, $\gamma' = \gamma + \sqrt{\frac{\gamma}{2 }}$, $\eta = \sqrt{\frac{\gamma}{2 }}$ and $\eta' = \alphaval - \eta$ for $\alphaval \in (0,1)$. Then, we have
      \begin{align*}
           &\alphaval^2\eta^2L_2\E{\1_{\Abar}\int_0^{T_n}  \norm{z_s-x_s}^2 \bpar{\int_s^{T_n}\int_s^t \int_0^s    e^{\alphaval(\tau-t)}e^{\alphaval(\sigma-t)}(\tau-\sigma) dN_\sigma dN_\tau dN_t}ds}\\
    &+\alphaval^2\gamma^2L_2 \E{\1_{\Abar}\int_0^{T_n}  \norm{\nabla f(x_{s^-})}^2  \bpar{\int_s^{T_n}\int_s^t \int_0^s    e^{\alphaval(\tau-t)}e^{\alphaval(\sigma-t)}(N_{\tau^-}-N_{\sigma^-})dN_\sigma  dN_\tau dN_t}dN_s}\\
    &\le\frac{\Ccont L_2 \gamma}{\alphaval^3}B_n\gapf,
      \end{align*}

with $B_n := \frac{1}{2}(1+\alphaval)(1+2\alphaval)\bpar{1 + \alphaval8(1+\alphaval)}$.      
\end{lemma}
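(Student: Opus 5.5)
The plan is to bound the two terms separately and then add them. For each term we combine Lemma~\ref{lem:hess_lip_4} (separation of expectations on $\Abar$), Lemma~\ref{lem:hess_lip_5} (bounds on $\max_k\E{A_k}$ and $\max_k\E{B_k}$), and the Lyapunov controls \eqref{eq:lyap_control_1}--\eqref{eq:lyap_control_2} of Part~I.

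For the first term (Term 2 restricted to $\Abar$), apply item 1 of Lemma~\ref{lem:hess_lip_4}. It gives the upper bound
\[
\alphaval^2L_2\,\Ccont\,\max_k\E{A_k}\,\E{\int_0^{T_n}\eta^2\norm{z_s-x_s}^2ds}.
\]
Since $\eta+\eta'=\alphaval$, inequality \eqref{eq:lyap_control_1} gives $\E{\int_0^{T_n}\norm{z_s-x_s}^2ds}\le \gapf/\alphaval$. With $\eta^2=\gamma/2$, this makes the trajectory factor at most $\gamma\gapf/(2\alphaval)$.

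Lemma~\ref{lem:hess_lip_5} only bounds $\E{A_k}$ for $k\ge 2$, so $k=1$ needs a separate argument. I expect this to be the main, if minor, obstacle. For $k=1$ the inner integral over $[0,T_1]$ only contains $\sigma=T_1$, so $\tau-\sigma=\tau-T_1$. I would compute $\E{A_1}$ directly by the same Gamma-moment calculation and check that it obeys the same bound. Alternatively, I would check that item (1) of Theorem~\ref{thm:high_proba_bound} is used in Lemma~\ref{lem:hess_lip_4} only where that bound holds. Granting $\max_k\E{A_k}\le(1+\alphaval)(1+2\alphaval)/\alphaval^4$, the first term is at most
\[
\frac{\Ccont L_2\gamma\gapf}{\alphaval^3}\cdot\frac{(1+\alphaval)(1+2\alphaval)}{2}.
\]

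For the second term (Term 3), apply item 2 of Lemma~\ref{lem:hess_lip_4} and item 2 of Lemma~\ref{lem:hess_lip_5}. This gives the upper bound
\[
\alphaval^2L_2\,\Ccont\,\frac{(1+\alphaval)^2(1+2\alphaval)}{\alphaval^4}\,\gamma^2\,\E{\int_0^{T_n}\norm{\nabla f(x_{s^-})}^2dN_s}.
\]
By \eqref{eq:lyap_control_2}, the last expectation is at most $4\gapf/\gamma$. The second term is therefore at most
\[
\frac{\Ccont L_2\gamma\gapf}{\alphaval^3}\cdot 4\alphaval(1+\alphaval)^2(1+2\alphaval).
\]

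Summing the two bounds and factoring out $\frac12(1+\alphaval)(1+2\alphaval)$ gives
\[
\frac{\Ccont L_2\gamma\gapf}{\alphaval^3}\cdot\frac{1}{2}(1+\alphaval)(1+2\alphaval)\bigl(1+8\alphaval(1+\alphaval)\bigr),
\]
which is exactly the claimed $B_n$.

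One technical point remains. Lemma~\ref{lem:hess_lip_4} is stated with the prefactors $\eta^2$ and $\gamma^2$ inside the expectations, and these are deterministic constants. So multiplying by $\alphaval^2L_2$ and pulling these constants out of the expectations is legitimate.
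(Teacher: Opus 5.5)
Your proposal is correct and matches the paper's proof essentially step for step: you apply Lemma~\ref{lem:hess_lip_4} and Lemma~\ref{lem:hess_lip_5} to each term, then \eqref{eq:lyap_control_1} and \eqref{eq:lyap_control_2} with $\eta^2=\gamma/2$, and you use the same factorization to obtain $B_n$. Your $k=1$ concern is harmless. The appendix version, Lemma~\ref{lem:comput_sto_4}, already states the bound on $\E{A_k}$ for all $k\in\{1,\dots,n\}$, since the Gamma-moment computation is uniform in $k$. Moreover, the proof of Lemma~\ref{lem:hess_lip_4} only produces $\max_{i\ge 1}\E{A_{i+1}}$, that is, $k\ge 2$.
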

See the proof in Appendix~\ref{app:term_2_3}.
\subsection{Part V: Tuning $\alphaval$}\label{sec:part_5}


Combining Lemma~\ref{lem:hess_lip_3}, and the controls provided by Lemma~\ref{lem:term_1_control} (Term 1) and Lemma~\ref{lem:term_2_3_control} (Terms 2 and 3), we obtain that under the statement of Theorem~\ref{thm:hess_lip}, we have
       \begin{equation}
           \begin{aligned}\label{eq:almost_final}
                   \E{\1_{\Abar}\int_0^{T_n}\bpar{\int_0^t \alphaval e^{\alphaval(s-t)}dN_s}^2 \norm{\nabla f(\overline{x}_t)}dN_t} &\le \sqrt{\alphaval n}\sqrt{\frac{A_n}{\gamma}}\sqrt{\gapf} \\
    &+\frac{\Ccont L_2 \gamma}{\alphaval^3}B_n\gapf
           \end{aligned}
       \end{equation}
with, $A_n := 12\bpar{1+\frac{3}{2}\alphaval}(1+2\alphaval)$ and $B_n := \frac{1}{2}(1+\alphaval)(1+2\alphaval)\bpar{1 + \alphaval8(1+\alphaval)}$.

   Because we integrate with respect to $dN_t$ on $[0,T_n]$, we have
   \[\min_{t \in \{T_1,\dots,T_n \}} \norm{\nabla f(\overline{x}_t)} \int_0^{T_n}\bpar{\int_0^t \alphaval e^{\alphaval(s-t)}dN_s}^2 dN_t\le \int_0^{T_n}\bpar{\int_0^t \alphaval e^{\alphaval(s-t)}dN_s}^2 \norm{\nabla f(\overline{x}_t)}dN_t.\]
     Therefore, from \eqref{eq:almost_final}, it follows that
           \begin{equation}
           \begin{aligned}\label{eq:almost_final_bis}
            & \E{\1_{\Abar}\min_{t \in \{T_1,\dots,T_n \}} \norm{\nabla f(\overline{x}_t)} \int_0^{T_n}\bpar{\int_0^t \alphaval e^{\alphaval(s-t)}dN_s}^2 dN_t}\\
                  &\le \E{\1_{\Abar}\int_0^{T_n}\bpar{\int_0^t \alphaval e^{\alphaval(s-t)}dN_s}^2 \norm{\nabla f(\overline{x}_t)}dN_t} \\
                  &\overset{\eqref{eq:almost_final}}{\le} \sqrt{\alphaval n}\sqrt{\frac{A_n}{\gamma}}\sqrt{\gapf}+\frac{\Ccont L_2 \gamma}{\alphaval^3}B_n\gapf \\
                  &=n^{3/7}\bpar{\sqrt{A_n\constalpha L^{1+a_1}L_2^{a_2}\gapf^{1+a_3}} + B_n\constalpha^{-3} \Ccont L^{-1-3a_1}L_2^{1-3a_2} \gapf^{1-3a_3}},
    \end{aligned}
       \end{equation}
 where in the last inequality, we chose $\gamma= 1/L$ and defined 
\[\alphaval = \constalpha n^{-1/7}L^{a_1}L_2^{a_2}\Delta_f^{a_3},\]
where $\constalpha  >0$, $a_1,a_2,a_3 \in \R$ are defined such that $\alphaval \le 1$. The choice $\alphaval = \bigO(n^{-1/7})$ is derived from a trade-off between $\sqrt{\alphaval}$ and $\alphaval^{-3}$. It is consistent with the choice made in \cite{okamura2024primitive}, in the case of studying the Heavy Ball \eqref{eq:HB} ODE.

We exhibit two admissible choices of $\alphaval$, subject to the constrain $\alphaval \le 1$. We note that in general, this forces to set a condition on the value of the number of iteration $n$, which may depend on $L$, $L_2$ or $\Delta_f$.
If one wants to avoid such conditions, one can choose $C_\alpha = 1$, $a_1=a_2=a_3 = 0$.

\begin{corollary}[Choice I]\label{cor:choice1}
   Let $\alphaval = n^{-1/7} \le 1$. In this case, we have
       \begin{equation}
           \begin{aligned}\label{eq:corr_1}
            & \E{\1_{\Abar}\min_{t \in \{T_1,\dots,T_n \}} \norm{\nabla f(\overline{x}_t)} \int_0^{T_n}\bpar{\int_0^t \alphaval e^{\alphaval(s-t)}dN_s}^2 dN_t}\\
                  &\le16(1+2n^{-1/7})^4\bpar{1 + 8(1+2n^{-1/7})} (4+4C_n+C_n^2)\frac{\cupp e}{\cinf^2}\bpar{\sqrt{ L\gapf} + L^{-1}L_2 \gapf} n^{3/7},
    \end{aligned}
       \end{equation}
with $C_n =  \cinf n^{-1/7} \ceil{(\cinf - 1 - \log(\cinf))^{-1}
\log\bpar{2\frac{n}{\varepsilon}}}.$
\end{corollary}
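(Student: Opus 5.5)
The corollary follows from specializing \eqref{eq:almost_final_bis} to $\gamma = 1/L$, $\constalpha = 1$, $a_1=a_2=a_3=0$, i.e. $\alphaval = n^{-1/7}$. The plan is to substitute this choice into the right-hand side of \eqref{eq:almost_final} and bound each numerical factor by a common envelope. The hypothesis $\alphaval = n^{-1/7}\le 1$ is automatic for $n\ge 1$, and it places us in the regime $\alphaval\in(0,1]$ required by Lemmas~\ref{lem:term_1_control} and \ref{lem:term_2_3_control}. Theorem~\ref{thm:high_proba_bound} formally asks for $\alphaval<1$; the case $n=1$ is degenerate and would be handled by continuity or by noting that the lemmas' proofs only use $\alphaval\le 1$.

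First I substitute into the two terms. Term 1 becomes $\sqrt{n^{6/7}}\sqrt{A_n L\gapf} = n^{3/7}\sqrt{A_n}\sqrt{L\gapf}$. Terms 2--3 become $\Ccont L_2 L^{-1}\alphaval^{-3}B_n\gapf = n^{3/7}\Ccont B_n L^{-1}L_2\gapf$. Both therefore carry the factor $n^{3/7}$, and it remains to show
\[\max\bigl(\sqrt{A_n},\,\Ccont B_n\bigr)\le 16(1+2n^{-1/7})^4\bpar{1+8(1+2n^{-1/7})}(4+4C_n+C_n^2)\frac{\cupp e}{\cinf^2}.\]
For the second term, write $\Ccont = 32(4+4C_n+C_n^2)\cupp e/\cinf^2$. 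Then use $(1+\alphaval)\le(1+2\alphaval)$ to get
\[B_n\le \tfrac12(1+2\alphaval)^2\bpar{1+8\alphaval(1+2\alphaval)}\le \tfrac12(1+2\alphaval)^4\bpar{1+8(1+2\alphaval)},\]
using $\alphaval\le1$. Hence $\Ccont B_n$ is bounded by $16(1+2\alphaval)^4(1+8(1+2\alphaval))(4+4C_n+C_n^2)\cupp e/\cinf^2$, which is exactly the stated constant.

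For the first term, note that $A_n = 12(1+\tfrac32\alphaval)(1+2\alphaval)\le 12(1+2\alphaval)^2$, so $\sqrt{A_n}\le \sqrt{12}(1+2\alphaval)\le 4(1+2\alphaval)$. This is trivially dominated by the same constant, since $4+4C_n+C_n^2\ge 4$, $\cupp\ge1$, $e>1$ and $\cinf<1$ make the constant at least $16\cdot 9\cdot 4\cdot e$. Summing the two bounds gives the factor $(\sqrt{L\gapf}+L^{-1}L_2\gapf)$. Finally, $C_n = \cinf\alphaval\Kval$ with $\Kval$ from Theorem~\ref{thm:high_proba_bound} gives the stated expression for $C_n$.

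The only delicate point is bookkeeping. I must confirm that \eqref{eq:almost_final} is valid with $\1_{\Abar}$ on the left, since Term 1's bound is unconditional and $\1_{\Abar}\le 1$. I must also check that the inequality $\alphaval\le1$ suffices to collapse $B_n$ into the $(1+2\alphaval)^4$ form. No genuine obstacle is expected; everything reduces to elementary inequalities on $\alphaval\in(0,1]$.
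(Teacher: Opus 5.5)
Your proof is correct and is essentially the paper's argument: specialize \eqref{eq:almost_final_bis} to $\gamma=1/L$ and $\alphaval=n^{-1/7}$, then absorb the numerical factors into the stated constant. The only difference is cosmetic: the paper merges the two terms using $\sqrt{A_n}\le 4\sqrt{3}B_n$ and $\Ccont\ge 1$, whereas you bound each coefficient separately by the final constant, which reaches the stated expression more transparently.
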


    Otherwise, if we set a condition of $n$, we can obtain a cleaner result.
    \begin{corollary}[Choice II]\label{cor:choice2}
    Let \[\alphaval =  \left(\frac{\sqrt{3}\cdot 64\cupp e}{\cinf^2}\right)^{2/7}\bpar{\frac{L_2^2\gapf}{Ln}}^{\frac{1}{7}}.\] If $\alphaval \le 1$, we have
     \begin{equation*}
           \begin{aligned}
            & \E{\1_{\Abar}\min_{t \in \{T_1,\dots,T_n \}} \norm{\nabla f(\overline{x}_t)} \int_0^{T_n}\bpar{\int_0^t \alphaval e^{\alphaval(s-t)}dN_s}^2 dN_t}\\
                  &\le (1 + C_n + C_n^2/4)\Hconst 7\,6^{-6/7}\,12^{3/7}
64^{1/7}\bpar{\frac{\cupp e}{\cinf^2}}^{1/7}\bpar{\frac{\cupp e}{\cinf^2}}^{1/7}L^{\frac{2}{7}}L_2^{\frac{1}{7}} \gapf^{\frac{4}{7}} n^{3/7},
    \end{aligned}
    \end{equation*}
    with $C_n =  \cinf \alphaval \ceil{(\cinf - 1 - \log(\cinf))^{-1}
\log\bpar{2\frac{n}{\varepsilon}}}$ and $\Hconst =(1+\alphaval)(1+2\alphaval)(1+\alphaval 8 (1+\alphaval))$.
\end{corollary}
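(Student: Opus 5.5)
This corollary is a specialization of the bound \eqref{eq:almost_final_bis}, which follows from Lemma~\ref{lem:hess_lip_3}, Lemma~\ref{lem:term_1_control} and Lemma~\ref{lem:term_2_3_control}, with $\gamma=1/L$. I will substitute the prescribed value of $\alphaval$ into
\[
\sqrt{\alphaval n}\sqrt{A_n L\gapf}+\frac{\Ccont L_2}{L\alphaval^3}B_n\gapf .
\]
Write $\alphaval = c\,(L_2^2\gapf/(Ln))^{1/7}$ with $c=\bigl(\sqrt{3}\cdot 64\cupp e/\cinf^2\bigr)^{2/7}$. The powers of $L$, $L_2$, $\gapf$ and $n$ then combine as follows. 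In the first term, $\sqrt{\alphaval n}\sqrt{L\gapf}=c^{1/2}L^{3/7}L_2^{1/7}\gapf^{4/7}n^{3/7}$. In the second term, $L_2L^{-1}\alphaval^{-3}\gapf=c^{-3}L^{-4/7}L_2^{1/7}\gapf^{4/7}n^{3/7}$. I need to double-check the exponent of $L$ against the stated $L^{2/7}$, since $\gamma'-\gamma=\sqrt{\gamma/2}$ contributes factors of $L^{1/2}$. I suspect the authors' normalization absorbs this, and I will track it carefully rather than assume it.

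Both terms are then of the form $(\text{constant})\times L^{\cdot}L_2^{1/7}\gapf^{4/7}n^{3/7}$, so it remains to bound the constants. From $\alphaval\le1$, I have $A_n=12(1+\tfrac32\alphaval)(1+2\alphaval)\le 12\Hconst$, and $B_n=\tfrac12\Hconst$. From Theorem~\ref{thm:high_proba_bound}, $\Ccont=32(4+4C_n+C_n^2)\frac{\cupp e}{\cinf^2}=128(1+C_n+C_n^2/4)\frac{\cupp e}{\cinf^2}$. Pulling out the common factor $(1+C_n+C_n^2/4)\Hconst$ is legitimate because this factor is at least $1$. Also $\sqrt{\Hconst}\le\Hconst$ since $\Hconst\ge1$.

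The bound then takes the form
\[
(1+C_n+C_n^2/4)\,\Hconst\,\bigl(\sqrt{12}\,c^{1/2}+64\,\tfrac{\cupp e}{\cinf^2}\,c^{-3}\bigr)\times(\ldots).
\]
The constant $c$ is chosen so that the trade-off $a c^{1/2}+b c^{-3}$ is optimized: the minimizer satisfies $c^{7/2}=6b/a$. Evaluating at the optimum gives $a c^{1/2}+bc^{-3}=\tfrac76\,a^{6/7}(6b)^{1/7}$. With $a=\sqrt{12}$ and $b=64\,\cupp e/\cinf^2$, this yields the factor $7\cdot6^{-6/7}12^{3/7}64^{1/7}(\cupp e/\cinf^2)^{1/7}$. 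I expect the second power $(\cupp e/\cinf^2)^{1/7}$ in the statement to come from bounding the non-optimal choice of $c$, which was defined with $\sqrt3$ rather than exactly $6b/a$, together with the $\Ccont$ prefactor, up to some slack.

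The main obstacle is purely bookkeeping: matching the exact numerical constants and the power of $L$ in the statement. I will recompute the optimal $c$ explicitly, compare it with the given $c$, and bound each of the two resulting terms separately by the displayed constant.
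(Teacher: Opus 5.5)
Your route is the paper's: specialize \eqref{eq:almost_final_bis} at $\gamma=1/L$, use $A_n\le 12\Hconst$, $B_n=\Hconst/2$ and $\Ccont=128(1+C_n+C_n^2/4)\cupp e/\cinf^2$, pull out $(1+C_n+C_n^2/4)\Hconst\ge 1$, and optimize $\sqrt{12}\,c^{1/2}+64\frac{\cupp e}{\cinf^2}c^{-3}$. However, the $L$-exponent mismatch you flagged is a real obstruction, and you leave it unresolved. Your exponent arithmetic is correct: with $\alphaval=c\,(L_2^2\gapf/(Ln))^{1/7}$ the two terms scale like $L^{3/7}$ and $L^{-4/7}$, so no bookkeeping can produce $L^{2/7}$. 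There is also no hidden normalization to absorb the difference. The factors coming from $\gamma'-\gamma=\sqrt{\gamma/2}$ already sit inside Lemma~\ref{lem:term_1_control} and Lemma~\ref{lem:term_2_3_control}; they are the $\sqrt{A_n/\gamma}$ and $L_2\gamma$ appearing in \eqref{eq:almost_final_bis}.

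The missing step is the exponent balancing the paper carries out. Write $\alphaval=C_\alpha n^{-1/7}L^{a_1}L_2^{a_2}\gapf^{a_3}$ and require both terms to carry the same powers. This forces $(a_1,a_2,a_3)=(-3/7,2/7,1/7)$, i.e. $\alphaval=c\,(L_2^2\gapf/(L^3n))^{1/7}$. So the $Ln$ in the displayed statement is a typo for $L^3n$; the $L^3n$ choice is the one in Theorem~\ref{thm:hess_lip}, in Theorem~\ref{thm:hess_lip_full}(ii), and at the end of the paper's proof. With it, the two terms become $c^{1/2}$ and $c^{-3}$, respectively, times $L^{2/7}L_2^{1/7}\gapf^{4/7}n^{3/7}$.

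Your account of the constant is also wrong. Take $a=\sqrt{12}$ and $b=64\cupp e/\cinf^2$. Then $6b/a=\sqrt3\cdot 64\cupp e/\cinf^2$, because $6/\sqrt{12}=\sqrt3$. So the prescribed $c$ is exactly the minimizer $(6b/a)^{2/7}$, not a perturbation of it. The bracket therefore equals exactly $7\cdot 6^{-6/7}12^{3/7}64^{1/7}(\cupp e/\cinf^2)^{1/7}$, with a single power. Nothing in the argument generates the second factor $(\cupp e/\cinf^2)^{1/7}$ in the statement; the paper's own derivation and Theorem~\ref{thm:hess_lip_full} carry only one such factor. You obtain the stated, weaker bound simply because $\cupp e/\cinf^2\ge e>1$. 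With these two corrections your argument is complete and coincides with the paper's proof.
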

The proofs of Corollary~\ref{cor:choice1} and \ref{cor:choice2} are in Appendix~\ref{app:part_5}.

    \subsection{Part VI: Control of the double Poisson integral factor}
    Up to now, we managed to get an upper-bound of the form
    \begin{equation}\label{eq:sketch:high_prob_0}
         \E{\1_{\Abar}\min_{t \in \{T_1,\dots,T_n \}} \norm{\nabla f(\overline{x}_t)} \int_0^{T_n}\bpar{\int_0^t \alphaval e^{\alphaval(s-t)}dN_s}^2 dN_t} = \bigO(n^{3/7}),
    \end{equation}
    where we recall that by Theorem~\ref{thm:high_proba_bound}, $\Abar$ is such that $\mathbb{P}(\Abar) \ge 1-\varepsilon$.
    As we chose $\alphaval = \bigO(n^{-1/7})$, we have (see Lemma~\ref{prop:calc_sto_comput_1}-$(ii)$) 
    \[ \E{\int_0^{T_n}\bpar{\int_0^t \alphaval e^{\alphaval(s-t)}dN_s}^2 dN_t}=\bigO(n),\]
    which heuristically suggests that $ \E{\1_{\Abar}\min_{t \in \{T_1,\dots,T_n \}} \norm{\nabla f(\overline{x}_t)}} = \bigO(n^{-4/7}).$ However, the two factors inside the expectations in \eqref{eq:sketch:high_prob_0} are not independent, as they both depend on the $n$ first jump times. We therefore cannot separate the expectation. To circumvent this difficulty, we use the following result.
    \begin{lemma}\label{lem:lower_bound_delta}
Let  $n\in \N^\ast$, $\cinf  , \varepsilon \in (0,1)$, $\alphaval \in (0,1]$ and   $\Kval  = \ceil{(\cinf -1-\log(\cinf ))^{-1}\log\bpar{2\frac{n}{\varepsilon}}}$. Let $\cupp \ge 1$ be chosen such that $\cinf -\log(\cinf ) = \cupp-\log(\cupp)$ and $C_n =  \cinf \alphaval \Kval$. 
Then, with the same set $\Abar$ as in Theorem~\ref{thm:high_proba_bound}, we have
    \[\1_{\Abar}\int_0^{T_n}\bpar{\int_0^t \alphaval e^{\alphaval(s-t)}dN_s}^2dN_t \ge\1_{\Abar}\frac{n}{\cupp^2}e^{-2\frac{\cupp}{\cinf}C_n}
\left(1-\frac{2}{\cupp \alphaval n}\right).\]
\end{lemma}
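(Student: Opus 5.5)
The plan is to work realization by realization on $\Abar$ and turn the double Poisson integral into an explicit sum. Since $N$ jumps exactly at $T_1,\dots,T_n$ on $[0,T_n]$,
\[
\int_0^{T_n}\bpar{\int_0^t \alphaval e^{\alphaval(s-t)}dN_s}^2dN_t=\sum_{i=1}^n \alphaval^2\bpar{\sum_{j=1}^i e^{-\alphaval(T_i-T_j)}}^2 .
\]
It therefore suffices to lower-bound each inner sum $\sum_{j=1}^i e^{-\alphaval(T_i-T_j)}$ deterministically on $\Abar$, which needs an \emph{upper} bound on the increments $T_i-T_j$.

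\textbf{Step 1: bound the increments.} I will use the description of $\Abar$ from the construction in Theorem~\ref{thm:high_proba_bound}. There $\Abar$ is exactly the set on which, for all $1\le j\le i\le n$,
\[
T_i-T_j\le \cupp\,\E{T_i-T_j}+\cupp \Kval=\cupp(i-j)+\cupp\Kval .
\]
The case $j=i$ is trivial. Writing $m=i-j$ and $q:=e^{-\alphaval\cupp}$, this gives on $\Abar$
\[
\sum_{j=1}^i e^{-\alphaval(T_i-T_j)}\ge e^{-\alphaval\cupp\Kval}\sum_{m=0}^{i-1}q^m=e^{-\alphaval\cupp\Kval}\frac{1-q^i}{1-q}.
\]
Since $C_n=\cinf\alphaval\Kval$, we have $\alphaval\cupp\Kval=\frac{\cupp}{\cinf}C_n$, so squaring produces exactly the factor $e^{-2\frac{\cupp}{\cinf}C_n}$ in the statement.

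\textbf{Step 2: elementary inequalities.} First, $1-q=1-e^{-\alphaval\cupp}\le\alphaval\cupp$, hence $\alphaval/(1-q)\ge 1/\cupp$. This gives
\[
\alphaval^2\bpar{\sum_{j\le i}e^{-\alphaval(T_i-T_j)}}^2\ge \frac{1}{\cupp^2}e^{-2\frac{\cupp}{\cinf}C_n}(1-q^i)^2 .
\]
Next, $(1-q^i)^2\ge 1-2q^i$, so summing over $i$,
\[
\sum_{i=1}^n(1-q^i)^2\ge n-\frac{2q}{1-q}=n-\frac{2}{e^{\alphaval\cupp}-1}\ge n-\frac{2}{\alphaval\cupp}=n\left(1-\frac{2}{\cupp\alphaval n}\right).
\]
Multiplying Steps 1 and 2 and then by $\1_{\Abar}$ yields the claim. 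Off $\Abar$ both sides vanish.

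\textbf{Main obstacle.} The only nontrivial point is Step 1: making sure the set $\Abar$ built in Theorem~\ref{thm:high_proba_bound} really contains the uniform upper deviation bound $T_i-T_j\le\cupp(i-j)+\cupp\Kval$ for all pairs $j\le i\le n$. This should come from the Gamma concentration (Chernoff) bound with rate $\cupp-1-\log\cupp=\cinf-1-\log\cinf$, combined with a union bound; this is what fixes $\Kval$ through $\log(2n/\varepsilon)$.

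If $\Abar$ were instead defined only through assertions (1)–(3) of that theorem, I would fall back on its proof and take that concentration event itself as $\Abar$. The remaining computations are routine.
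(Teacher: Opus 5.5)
Your proposal is correct and matches the paper's proof step for step. It uses the same rewriting of the double Poisson integral as $\sum_{i}\alphaval^2\bigl(\sum_{j\le i}e^{-\alphaval(T_i-T_j)}\bigr)^2$ and the same upper deviation bound $T_i-T_j\le \cupp(i-j)+\cupp\Kval$, which the paper's Proposition~\ref{prop:min_gamma} builds into the definition of $\Abar$, so the concern you raised is settled. The geometric sum with ratio $e^{-\cupp\alphaval}$ is also the same, as are the elementary inequalities $1-e^{-x}\le x$, dropping the $q^{2i}$ terms, and $e^x-1\ge x$.
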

The proof of Lemma~\ref{lem:lower_bound_delta} borrows arguments used to prove Theorem~\ref{thm:high_proba_bound}, see full proof in Appendix~\ref{app:delta}.
\subsection{Part VII: Final Results}
We now state our final result. There is a result with our first choice $\alphaval = n^{-1/7}$, that (almost) does not require any restriction on $n$, at the cost of a less clean bound. A second result holds with a more careful choice of $\alphaval$, but it induces a contraint on $n$ that has to be large enough.
\begin{theorem}[Complete Version]\label{thm:hess_lip_full}
Let $\tk$ such that $T_0 = 0$, $T_{k+1} - T_k \overset{i.i.d}\sim \mathcal{E}(1)$.
Let $\cinf  , \varepsilon \in (0,1)$, and $\cupp \ge 1$ such that it satisfies $\cinf-\log(\cinf) = \cupp - \log(\cupp)$. 

Under Assumptions~\ref{ass:l_smooth}-\ref{ass:hess_lip}, consider the iterations of (\ref{alg:constant_param_det}) with $\gamma = \frac{1}{ L}$, $\gamma' = \frac{1}{ L} + \sqrt{\frac{1}{2L }}$, $\eta = \sqrt{\frac{1}{2L }}$, $\eta'= \alphaval-\sqrt{\frac{1}{2L }} $.
\begin{enumerate}[label=(\roman*)]
    \item Let $\alphaval = n^{-1/7}$. If $n\ge 3$, we have
    \[ \E{\1_{\Abar}\min_{ k \in \{1,\dots n \}}\norm{\nabla f(\overline{x}_k)}} \le \Aconst\bpar{\sqrt{ L\gapf} + L^{-1}L_2 \gapf}n^{-4/7},\]
    where 
    \[\Aconst := 16(1+2n^{-1/7})^4\bpar{1 + 8(1+2n^{-1/7})} (4+4C_n+C_n^2)\bpar{1-\frac{2}{\cupp n^{6/7}}}^{-1}\frac{\cupp^3}{\cinf^2} e^{1+2\frac{\cupp}{\cinf}C_n},\]
    with $C_n =  \cinf \alphaval \ceil{(\cinf - 1 - \log(\cinf))^{-1}
\log\bpar{2\frac{n}{\varepsilon}}}$.
    \item Let $\alphaval = \left(\frac{\sqrt{3}\cdot 64\cupp e}{\cinf^2}\right)^{2/7}\bpar{\frac{L_2^2\gapf}{L^3n}}^{\frac{1}{7}}$. If $n$ is large enough to ensure $2 (\cupp n)^{-1}< \alphaval \le 1$, then we have
        \[\E{\1_{\Abar}\min_{ k \in \{1,\dots n \}}\norm{\nabla f(\overline{x}_k)}}\le \Bconst L^{\frac{2}{7}}L_2^{\frac{1}{7}} \gapf^{\frac{4}{7}} n^{-4/7},\]
        with
        \[\Bconst :=(1 + C_n + C_n^2/4)\Hconst 7\,6^{-6/7}\,12^{3/7}
(64e)^{1/7}\bpar{1-\frac{2}{\cupp \alphaval n}}^{-1}\cupp^2\bpar{\frac{\cupp}{\cinf^2}}^{1/7}e^{2\frac{\cupp}{\cinf}C_n},\]
 with $C_n =  \cinf \alphaval \ceil{(\cinf - 1 - \log(\cinf))^{-1}
\log\bpar{2\frac{n}{\varepsilon}}}$ and $\Hconst =(1+\alphaval)(1+2\alphaval)(1+\alphaval 8 (1+\alphaval))$.
\end{enumerate} 
In both statements, $ \overline{x}_k = \sum_{i=0}^{k-1} \lambda_{i,k}\tilde{y}_i$ with $\lambda_{i,k} = \frac{ e^{\alphaval T_{i+1}}}{\sum_{j =1}^k  e^{\alphaval T_j}}$, and $\mathbb{P}(\Abar) \ge 1-\varepsilon$.
\end{theorem}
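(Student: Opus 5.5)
The proof of Theorem~\ref{thm:hess_lip_full} is assembled from Corollaries~\ref{cor:choice1}--\ref{cor:choice2} and Lemma~\ref{lem:lower_bound_delta}, plus a translation from continuous-time to algorithmic quantities. Write
\[
D_n := \int_0^{T_n}\bpar{\int_0^t \alphaval e^{\alphaval(s-t)}dN_s}^2 dN_t
\qquad\text{and}\qquad
m_n := \min_{t \in \{T_1,\dots,T_n\}}\norm{\nabla f(\overline{x}_t)} .
\]
Corollary~\ref{cor:choice1} (case (i)) or Corollary~\ref{cor:choice2} (case (ii)) gives $\E{\1_{\Abar} m_n D_n} \le R_n\, n^{3/7}$ for an explicit $R_n$. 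Lemma~\ref{lem:lower_bound_delta} gives, on $\Abar$, the deterministic lower bound $D_n \ge c_n n$ with
\[
c_n = \cupp^{-2}e^{-2\frac{\cupp}{\cinf}C_n}\bpar{1-\tfrac{2}{\cupp\alphaval n}} .
\]
This $c_n$ is a constant, not a random variable. When it is positive, $\1_{\Abar} m_n \le \1_{\Abar} m_n D_n/(c_n n)$ holds pointwise. Taking expectations then gives
\[
\E{\1_{\Abar}m_n}\le \frac{R_n}{c_n}\, n^{-4/7}.
\]
Since $c_n$ is deterministic, the dependence between $m_n$ and $D_n$ that blocked a naive splitting of expectations is no longer an issue.

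Positivity of $c_n$ is exactly the hypothesis. In case (ii) it is the assumption $2(\cupp n)^{-1}<\alphaval$. In case (i), with $\alphaval=n^{-1/7}$, it reads $2/(\cupp n^{6/7})<1$, which holds for $n\ge 3$ since $\cupp\ge1$ and $3^{6/7}>2$. The constants are then bookkeeping. In case (i), multiplying the constant in \eqref{eq:corr_1} by $\cupp^2 e^{2\frac{\cupp}{\cinf}C_n}(1-\frac{2}{\cupp n^{6/7}})^{-1}$ reproduces $\Aconst$, using $\frac{\cupp e}{\cinf^2}\cdot \cupp^2 = \frac{\cupp^3}{\cinf^2}e$. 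In case (ii), the same multiplication applied to Corollary~\ref{cor:choice2} gives $\Bconst$. I would double-check one point there: with $\gamma=1/L$ the $\alphaval$ in the theorem carries $L^3$ while Corollary~\ref{cor:choice2} writes $L$. The powers of $\cupp e/\cinf^2$ should collapse to the stated $\cupp^2(\cupp/\cinf^2)^{1/7}(64e)^{1/7}$ factor.

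The remaining step, which I expect to be the main conceptual point, is to identify $\overline{x}_{T_k}$ with the algorithmic average $\overline{x}_k$. By \eqref{eq:poisson_average} at $t=T_k$, we have $N_{T_k}=k$ and
\[
\overline{x}_{T_k}=\sum_{i=1}^k \frac{e^{\alphaval T_i}}{\sum_{j=1}^k e^{\alphaval T_j}}\, x_{T_i^-}.
\]
Proposition~\ref{prop:discretization_constant_param} gives $x_{T_i^-}=\tilde y_{i-1}$. Reindexing $i\to i+1$ yields exactly $\sum_{i=0}^{k-1}\lambda_{i,k}\tilde y_i$, so that $m_n=\min_{k\in\{1,\dots,n\}}\norm{\nabla f(\overline{x}_k)}$. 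Hence the bound concerns a point computable from the iterates of \eqref{alg:constant_param_det} and the saved increments $\Delta_k$. The parameter constraints $\alphaval\in(0,1)$, needed for Theorem~\ref{thm:high_proba_bound}, and $\eta'=\alphaval-\eta>-\eta$ hold under the stated conditions. Finally, $\mathbb{P}(\Abar)\ge 1-\varepsilon$ is inherited from Theorem~\ref{thm:high_proba_bound}.
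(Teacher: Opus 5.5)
Your proposal is correct and matches the paper's proof. Like the paper, you identify $\overline{x}_{T_k}$ with the algorithmic average via $x_{T_i^-}=\tilde y_{i-1}$, then divide the bound of Corollary~\ref{cor:choice1} or~\ref{cor:choice2} by the deterministic lower bound of Lemma~\ref{lem:lower_bound_delta} on $\Abar$; positivity of $1-\frac{2}{\cupp\alphaval n}$ is exactly the stated condition on $n$. The two discrepancies you flag are typos in the statement of Corollary~\ref{cor:choice2}: the exponent choice $a_1=-3/7$ forces $L^3$, and the optimization produces only one factor $(\cupp e/\cinf^2)^{1/7}$. With those corrections, your bookkeeping reproduces $\Bconst$ exactly.
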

While the constant $\varepsilon$ determines the size of the set $\Abar$, the role of $\cinf$ and $\cupp$ might be more difficult to grasp. These two constants appear during the proof of Theorem~\ref{thm:high_proba_bound}. Choosing $\cinf$
close to one, which makes $\cupp$ close to $1$, makes $C_n$ increase. Choosing $\cinf\ll 1$, inducing $\cupp \gg 1$, reduces $C_n$ but still hurts the bound because of factors such as $\cupp^2$. 

\begin{proof}
Corollary~\ref{cor:choice1} and \ref{cor:choice2} provide bounds that involve $\min_{t \in \{T_1,\dots,T_n \}} \norm{\nabla f(\overline{x}_t)}$, which is about the continuous-time process $\overline{x}_t$. We have to make sure that the bound will be achievable by our algorithm, precisely, we have to show that the point $\arg \min_{t=\{T_1,\dots,T_n\}}\norm{\nabla f(\overline{x}_t)}$ can be outputted by \eqref{alg:constant_param_det}. We have
\begin{align*}
    \overline{x}_k : = \overline{x}_{T_k} = \int_{[0,T_k]}w_{T_k}(s)x_{s^-} dN_s = \sum_{i=1}^k w_{T_k}(T_i)x_{T_i^-} =  \sum_{i=0}^{k-1} w_{T_k}(T_{i+1})\tilde y_i,
\end{align*}
using $x_{T_i^-} = \tilde y_{i-1}$ by Proposition~\ref{prop:discretization_constant_param}. Also, recall that $w_t(s) = \frac{e^{\alphaval s}}{\int_0^t e^{\alphaval s}dN_s}$, such that we fix 
\begin{align*}
     \lambda_{i,k} := w_{T_k}(T_{i+1}) =  \frac{ e^{\alphaval T_{i+1}}}{\sum_{j =1}^k  e^{\alphaval T_j}}.
\end{align*}
So, for $ \overline{x}_k = \sum_{i=0}^{k-1} \lambda_{i,k}\tilde{y}_i$, $\lambda_{i,k} = \frac{ e^{\alphaval T_{i+1}}}{\sum_{j =1}^k  e^{\alphaval T_j}}$, we have
\[\min_{t \in \{T_1,\dots,T_n \}} \norm{\nabla f(\overline{x}_t)} = \min_{ k \in \{1,\dots n \}}\norm{\nabla f(\overline{x}_k)}.\]


    Then, the result is a direct combination of the bounds provided Corollary~\ref{cor:choice1} and \ref{cor:choice2} with Lemma~\ref{lem:lower_bound_delta}. We just have to make sure that $\left(1-\frac{2}{\cupp \alphaval n}\right) > 0$. In the case $\alphaval = n^{-1/7}$, it holds as long as $n\ge 3$. More generally it is verified as long as $\alphaval > \frac{2}{\cupp n}$. In the case $\alphaval = \left(\frac{\sqrt{3}\cdot 64\cupp e}{\cinf^2}\right)^{2/7}\bpar{\frac{L_2^2\gapf}{Ln}}^{\frac{1}{7}}$, it induces the following constraint on $n$
        \[n > \left(\frac{\sqrt{3}\cdot 64\cupp e}{\cinf^2}\right)^{-1/3}\bpar{\frac{L_2^2\gapf}{L}}^{-\frac{1}{6}} 2^{7/6}\cupp^{-7/6}.\]
\end{proof}

We can conclude with the proof of Theorem~\ref{thm:hess_lip}, which can be seen as a corollary of Theorem~\ref{thm:hess_lip_full}.
\begin{proof}[Proof of Theorem~\ref{thm:hess_lip}]
It is a direct application of $(ii)$ in Theorem~\ref{thm:hess_lip_full}, fixing $\cinf = 1- \tilde \varepsilon$. Then, recalling the expression of $\Bconst$
\begin{equation}\label{eq:b_n_value}
    \Bconst :=(1 + C_n + C_n^2/4)\Hconst 7\,6^{-6/7}\,12^{3/7}
(64e)^{1/7}\bpar{1-\frac{2}{\cupp \alphaval n}}^{-1}\cupp^2\bpar{\frac{\cupp}{(1- \tilde \varepsilon)^2}}^{1/7}e^{2\frac{\cupp}{1- \tilde \varepsilon}C_n},
\end{equation}
 with $C_n =  (1- \tilde \varepsilon) \alphaval \ceil{(-\tilde \varepsilon - \log(1- \tilde \varepsilon))^{-1}
\log\bpar{2\frac{n}{\tilde \varepsilon}}}$ and $\Hconst =(1+\alphaval)(1+2\alphaval)(1+\alphaval 8 (1+\alphaval))$. As $\alphaval = \bigO(1/n^{1/7})$, we have that $C_n \to_{n \to +\infty} 0$, $\Hconst \to_{n \to +\infty} 1$, and $\bpar{1-\frac{2}{\cupp \alphaval n}}^{-1} \to_{n \to + \infty} 1$. So,
\[\lim_{n \to +\infty}\Bconst=7\,6^{-6/7}\,12^{3/7}
(64e)^{1/7}\cupp^2\bpar{\frac{\cupp}{(1- \tilde \varepsilon)^2}}^{1/7}\le 9.2\cupp^2\bpar{\frac{\cupp}{(1- \tilde \varepsilon)^2}}^{1/7}.\]

It remains to show the bound $\cupp \le 1+2\tilde \varepsilon$, which is done in Lemma~\ref{lem:cupp}.

\end{proof}
\section{Conclusion}
In this work, we show that \eqref{eq:nm} with randomized parameter achieves the $\bigO(\varepsilon^{-7/4})$ complexity with high probability, demonstrating that it is not necessary to safeguard momentum with reset mechanisms. Prior to this work, the lack of necessity of such mechanisms was known in the continuous case \cite{okamura2024primitive}, but not in the discrete case.
It was argued in \citep{hermant2025continuized} that in general, the continuized method can tighten the gap between result derived for ODEs--such as \eqref{eq:HB}--and for algorithms. In this work, we provide a new concrete example for the class of functions with Lipschitz gradient and Hessian.


In another line of work, we note that \citep{convexguilty} showed that for functions with Lipschitz gradient, Hessian and third derivative, one can further improve the complexity to $\bigO(\varepsilon^{-5/3})$. The underlying algorithm is arguably complicated, but still based on \eqref{eq:nm_prime} as a core component. It is of interest whether simpler algorithms--such as \eqref{alg:constant_param_det}--can achieve this complexity, even with a certain probability.

\section*{Acknowledgment}
This work was supported by PEPR PDE-AI and ANR SOS2ID (grant ANR-24-CE40-3786). We thank Raphaël Berthier for helpful discussions on the limit of the continuized system in the vanishing stepsize regime.

\bibliographystyle{plain} 
\bibliography{bib}

\appendix

\section{Complementary details to Section~\ref{sec:intro}}\label{app:intro}

\subsection{Accelerated gradient algorithms under convexity}\label{app:convex_acceleration}
A seminal work \cite{POLYAK19641} shows that the heavy ball momentum algorithm
$$ x_{n+1} = x_n + \alphaval (x_n-x_{n-1}) - \beta \nabla f(x_n)$$ can significantly improve over the convergence speed of gradient descent when minimizing strongly convex quadratic functions. \cite{nesterov1983method,nesterov2004introductory} further shows \eqref{eq:nm} ensures an acceleration result for $L$-smooth (strongly)-convex functions. As mentioned in Section~\ref{sec:intro}, the corresponding bounds are optimal among all first-order algorithms \cite{nemirovskij1983problem,nesterovbook}. There exists a rich literature building upon these contributions. Far from being exhaustive, it includes new interpretations of the algorithms \cite{suboydcandes,AllenZhuOrecchia2017}, extensions to the non-smooth case \textit{e.g.} via composite proximal optimization \cite{beck2009fast,chambolle2015convergence, attouch2018convergence}, extensions to the stochastic gradient case \cite{ghadimi2016accelerated,allen2018katyusha, hermantgradient}, and acceleration results under relaxed geometrical assumptions \cite{hinder2020near,aujdossrondPL,aujol2019optimal}. To add some nuance, it is worth noting that even in the convex setting, inertial methods do not necessarily ensure accelerated convergence \cite{goujaud2022optimal,goujaud2023provable}.

\subsection{Second-order stationary points}\label{app:second_order_critic}
We provide a rigorous statement of the notion of second-order $\varepsilon$-stationary point, mentioned in our introductory section. We then briefly comment on it.
\begin{definition}[Second-order $\varepsilon$-stationary point, \cite{nesterov2006cubic}]\label{def:second_order}
    For a $L_2$-Lipschitz Hessian function $f$, $x$ is an $\varepsilon$-second-order stationary point if
    $$ \norm{\nabla f(x)} \le \varepsilon, \quad \lambda_{\min} (\nabla^2 f(x)) \ge -\sqrt{\varepsilon L_2}.$$
\end{definition}
A second-order $\varepsilon$-stationary point approximates a second-order stationary point, namely a point $x^\ast$ such that 
    $$ \norm{\nabla f(x^\ast)} = 0, \quad \lambda_{\min} (\nabla^2 f(x^\ast)) \ge 0.$$
In particular, such points are ensured to avoid \textit{strict saddle-points}, that is, stationary points such that $\lambda_{\min} (\nabla^2 f(x^\ast)) < 0$. The 1d function $x \mapsto x^3$ is a simple example of a function that has a second-order stationary point (in $x =0$) that is not a local minimizer, thus called \textit{non-strict saddle-points}. Fortunately, several works indicate that many applications are prevented from such pathological stationary points \cite{choromanska2015loss,kawaguchi2016deep,bandeira2016low,mei2017solving,boumal2016non,bhojanapalli2016global,ge2016matrix,ge2017no}.

\subsection{Detailed formulations of existing safety check and alternative mechanisms}\label{app:alg}
For completeness, we give detailed versions of existing algorithms that achieve the complexity $\bigO(e^{-7/4})$. Algorithm~\ref{alg:jin} is a detailed version of an algorithm that performs a negative curvature step \cite{momentumsaddle}. Algorithm~\ref{alg:ragd-nc} is a detailed version of an algorithm that uses a restart mechanism \cite{li2023restarted}. We make a few remarks:
\begin{itemize}
    \item We chose to state Algorithm~\ref{alg:jin} as the version from \cite{momentumsaddle} rather than the earlier one from \cite{convexguilty}, because of the significant length and trickiness of the latter.
    \item In its original statement, Algorithm~\ref{alg:jin} also involves a "stochastic perturbation step", used to find an $\varepsilon$-second-order stationary point (Definition~\ref{def:second_order}). Because we do not study this concept, we avoided this step in our presentation. 
    \item In the convergence result associated with Algorithm~\ref{alg:ragd-nc}
\cite[Theorem~1]{li2023restarted}, the restart threshold is set to
$B=\sqrt{\varepsilon/L_2}$, where $\varepsilon$ denotes the target accuracy and
$L_2$ is the Lipschitz constant of the Hessian. As noted by the authors, this value
of $B$ can be very small in practice, causing the algorithm to restart at nearly
every iteration. In such cases, the method effectively reduces to standard
gradient descent, which eliminates the practical benefit of momentum. This
behavior is confirmed by numerical experiments, showing little to no empirical
acceleration due to the near absence of momentum steps.

To address this issue, the authors propose an alternative restart scheme
\cite[Algorithm~2]{li2023restarted}, in which restarts are allowed to occur less
frequently. While keeping $B=\sqrt{\varepsilon/L_2}$,
the user is permitted to initialize a larger value $B_0$, potentially satisfying
$B_0 \gg B$. The restart condition is then replaced by
\[
k \sum_{t=0}^{k-1} \|x^{t+1}-x^t\|^2 > \max\{B^2, B_0^2\},
\]
and $B_0$ is gradually decreased until $B_0 \le B$, at which point the algorithm
reduces to Algorithm~\ref{alg:ragd-nc}. It is shown
\cite[Theorem~2]{li2023restarted} that this modification introduces an additional
$\mathcal{O}\!\left(L^{1/2}\varepsilon^{-1/4}L_2^{1/4}\log\!\left(\tfrac{L_2 B_0}{\varepsilon}\right)\right)$
term in the gradient complexity, which does not significantly degrade the overall
rate. This modification shows great empirical behaviour, enabling acceleration over gradient descent. However, this alternative scheme requires additional function evaluations,
leading to a total complexity of $\mathcal{O}(\varepsilon^{-3/2})$ function
queries.

        \end{itemize}

\begin{algorithm}[t]
\caption{Negative curvature exploitation and Nesterov momentum - Detailed version \cite{momentumsaddle}}
\label{alg:jin}
\KwIn{$x_0, \eta, \theta, \gamma, s$}

$v_0 \leftarrow 0\;$

\For{$t = 0,1,\dots$}{

    $y_{t} \leftarrow x_{t} + (1-\theta) v_t$\;
    
    $x_{t+1} \leftarrow y_t - \eta \nabla f (y_t)$\;
    
    $v_{t+1} \leftarrow x_{t+1} - x_t $\;

\If{$f(x_t) \le  f(y_t) + \dotprod{ \nabla f(y_t), x_t - y_t }- \frac{\gamma}{2} \norm{x_t - y_t}^2$}{$(x_{t+1}, v_{t+1}) \leftarrow $ Negative-Curvature-Exploitation($x_t, v_t, s$)}
}

\SetKwBlock{Begin}{Negative-Curvature-Exploitation$\left(x_t, v_t, s\right)$}{end sous-algorithme A}
\Begin{
\If{$\norm{v_t} \ge s$}{
$x_{t+1} \leftarrow x_t$}
\Else{
$\delta = s\cdot v_t/\norm{v_t}$

$x_{t+1} \leftarrow \argmin_{x \in \{x_t + \delta, x_t - \delta\}} f(x)$
}

    \Return{($x_{t+1}, 0)$}\;
}
\end{algorithm}

\begin{algorithm}[t]
\caption{Restarted Nesterov Momentum - Detailed version \cite{li2023restarted}}
\label{alg:ragd-nc}

\DontPrintSemicolon

\KwIn{$x^{-1} = x^{0} $, $B>0$, $K$, $\eta$,  $\theta$.}

$k \leftarrow 0$\;

\While{$k < K$}{
    $y^{k} \leftarrow x^{k} + (1-\theta)(x^{k} - x^{k-1})$\;

    $x^{k+1} \leftarrow y^{k} - \eta \nabla f(y^{k})$\;
    $k \leftarrow k+1$\;

    \If{$
        k \displaystyle\sum_{t=0}^{k-1}
        \|x^{t+1} - x^{t}\|^2 > B^2
    $}{
        $x^{-1} \leftarrow x^{k}$,\;
        $x^{0} \leftarrow x^{k}$,\;
        $k \leftarrow 0$\;
    }
}

$K_0 \leftarrow
{\arg\min}_{\lfloor K/2\rfloor \le k \le K-1}
\|x^{k+1} - x^{k}\|
$\;

$\hat{y} \leftarrow
\dfrac{1}{K_0+1}
\sum_{k=0}^{K_0} y^{k}
$\;

\Return{$\hat{y}$}\;

\end{algorithm}
\section{Proof of Lemma~\ref{lem:l_smooth} and Proposition~\ref{thm:cv_L_smooth}}\label{app:l_smooth}
Under Assumption~\ref{ass:l_smooth}, we have this classical descent lemma \citep{nesterov2004introductory}.
\begin{lemma}\label{lem:descent_sgc}
Under Assumption~\ref{ass:l_smooth}, if $\gamma \le \frac{1}{L }$, we have

\begin{equation*}
    f(x-\gamma \nabla f(x)) - f(x) \leq \gamma\left(\frac{L }{2}\gamma - 1 \right) \lVert  \nabla f(x) \rVert^2.
\end{equation*}
\end{lemma}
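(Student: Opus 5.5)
The statement to prove is the descent lemma (Lemma~\ref{lem:descent_sgc}). I will derive it from the standard quadratic upper bound implied by Assumption~\ref{ass:l_smooth}, evaluated at the gradient step.

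\textbf{Step 1: quadratic upper bound.} For all $x,y\in\R^d$,
\[
f(y)\le f(x)+\dotprod{\nabla f(x),y-x}+\frac{L}{2}\norm{y-x}^2 .
\]
To obtain it, write $f(y)-f(x)=\int_0^1\dotprod{\nabla f(x+\theta(y-x)),y-x}\,d\theta$ by the fundamental theorem of calculus along the segment. Subtracting $\dotprod{\nabla f(x),y-x}$ leaves the integral of $\dotprod{\nabla f(x+\theta(y-x))-\nabla f(x),y-x}$. By Cauchy--Schwarz and Assumption~\ref{ass:l_smooth}, this integrand is at most $L\theta\norm{y-x}^2$. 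Since $\int_0^1\theta\,d\theta=1/2$, the bound follows.

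\textbf{Step 2: apply it to the gradient step.} Take $y=x-\gamma\nabla f(x)$, so that $y-x=-\gamma\nabla f(x)$. Then
\[
f(x-\gamma\nabla f(x))-f(x)\le -\gamma\norm{\nabla f(x)}^2+\frac{L\gamma^2}{2}\norm{\nabla f(x)}^2=\gamma\Big(\frac{L}{2}\gamma-1\Big)\norm{\nabla f(x)}^2 ,
\]
which is exactly the claimed inequality.

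The hypothesis $\gamma\le 1/L$ is not needed for the inequality itself. It only ensures that the right-hand side is at most $-\frac{\gamma}{2}\norm{\nabla f(x)}^2$, which is the form used later in the Lyapunov computation. There is no real obstacle; the only care needed is justifying the integral representation, which holds because $f$ is $C^1$.
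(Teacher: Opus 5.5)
Your proof is correct. The paper does not prove this lemma itself; it cites it as the classical descent lemma from Nesterov's textbook. Your argument is exactly the standard proof behind that citation: the quadratic upper bound from Assumption~\ref{ass:l_smooth}, obtained via the integral form along the segment, evaluated at $y = x-\gamma\nabla f(x)$. You are also right that $\gamma \le 1/L$ is not needed for the inequality itself; it only ensures the right-hand side is at most $-\frac{\gamma}{2}\norm{\nabla f(x)}^2$, which is how the bound is used in the proof of Lemma~\ref{lem:l_smooth}.
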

To perform Lyapunov analysis, our tool is the following Itô formula, which intuitively allows stochastic derivation.
\begin{proposition}[\cite{even2021continuized}, Proposition 2]\label{prop:sto_calc_abridged}
Let $\varphi : \mathbb{R}^d \to \mathbb{R}$ be a smooth function.
Let $x_t \in \R^d$ be a solution of 
\begin{equation*}
    dx_t = \zeta(x_t)dt + G(x_{t^-})dN_t
\end{equation*}
where $\zeta : \R^d \to \R^d$ is a locally Lipschitz function and $G : \R^d \to \R^d$ is measurable.  Then,
\begin{equation}\label{eq:prop_calc_sto_core_abbriged}
    \varphi(x_t) = \varphi(x_0) + \int_{0}^t \dotprod{\nabla \varphi(x_s),\zeta(x_s)}ds + \int_{[0,t]}  \varphi(x_s + G(x_s)) - \varphi(x_s)ds + M_t,
\end{equation}

where $M_t$ is a martingale such that $\E{M_t}=0$, $\forall t\ge 0$.

\end{proposition}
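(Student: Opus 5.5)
The plan is to prove the formula pathwise first and only then take expectations. Fix a realization and let $T_1<T_2<\dots$ be the jump times of $N$. There are almost surely finitely many of them on $[0,t]$, since $N_t\sim\mathcal P(t)$.

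\emph{Between jumps.} On each interval $(T_k,T_{k+1})$ the process solves the ODE $\dot x_s=\zeta(x_s)$, and this solution is unique because $\zeta$ is locally Lipschitz. Since $\varphi$ is smooth, the classical chain rule gives
\[\varphi(x_{T_{k+1}^-})-\varphi(x_{T_k})=\int_{T_k}^{T_{k+1}}\dotprod{\nabla\varphi(x_s),\zeta(x_s)}ds.\]
\emph{At a jump.} At $s=T_k$ the state moves from $x_{T_k^-}$ to $x_{T_k^-}+G(x_{T_k^-})$. The corresponding increment of $\varphi$ is $H(x_{T_k^-})$, where $H(x):=\varphi(x+G(x))-\varphi(x)$.

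\emph{Telescoping.} Summing these contributions over all intervals and jumps in $[0,t]$ yields the exact pathwise identity
\[\varphi(x_t)=\varphi(x_0)+\int_0^t\dotprod{\nabla\varphi(x_s),\zeta(x_s)}ds+\int_{[0,t]}H(x_{s^-})\,dN_s .\]

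\emph{Compensation.} Next we replace $dN_s$ by $ds$ in the last integral. Define $M_t:=\int_{[0,t]}H(x_{s^-})(dN_s-ds)$. Because $x_{s^-}=x_s$ except at countably many times, we have $\int_0^t H(x_{s^-})ds=\int_0^t H(x_s)ds$, which recovers \eqref{eq:prop_calc_sto_core_abbriged}. It remains to show that $M$ is a mean-zero martingale. The integrand $s\mapsto H(x_{s^-})$ is left-continuous and adapted to the natural filtration of $N$, so it is predictable. The compensated process $\tilde N_s=N_s-s$ is a martingale. Standard stochastic integration for point processes then makes $M$ a local martingale. Concretely, localize with $\tau_m:=\inf\{s:\ N_s\ge m\ \text{or}\ \norm{x_s}\ge m\}$. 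On $[0,\tau_m]$ the integrand is bounded, and the stopped integral is a true martingale with $\E{M_{t\wedge\tau_m}}=0$. This follows from the compensator identity $\E{\int_0^{t\wedge\tau_m}\phi_s\,dN_s}=\E{\int_0^{t\wedge\tau_m}\phi_s\,ds}$ for bounded predictable $\phi$, which can be checked directly on elementary predictable integrands using independent exponential increments.

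\emph{Main obstacle: removing the localization.} To pass from $M_{t\wedge\tau_m}$ to $M_t$ with $\E{M_t}=0$ we need the integrability condition $\E{\int_0^t|H(x_s)|ds}<\infty$. Under it, dominated convergence as $m\to\infty$ gives $\E{M_t}=0$, and $M$ is a true martingale. This condition is implicit in the statement as given. I would verify it in the setting where the proposition is used: there $G$ and $\zeta$ have linear growth (Assumption~\ref{ass:l_smooth}) and $\varphi$ has at most quadratic growth (e.g.\ $\varphi(x,z)=f(x)+\frac12\norm{x-z}^2$). Between jumps, Grönwall's inequality bounds the growth of $x$. At each jump $\norm{x}$ is multiplied by at most a constant factor $(1+c)$. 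Hence $\sup_{s\le t}\norm{x_s}\le C e^{ct}(1+c)^{N_t}(1+\norm{x_0})$. Since $N_t$ is Poisson, $\E{(1+c)^{2N_t}}<\infty$, so $\sup_{s\le t}|H(x_s)|$ is integrable. This growth-and-moment estimate is the step needing the most care; the rest is bookkeeping.
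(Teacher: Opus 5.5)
The paper gives no proof of this proposition; it imports it from \cite{even2021continuized}, where it is the It\^o formula for a process driven by a Poisson point measure. Your argument is the standard derivation of that formula, so the route is essentially the same: chain rule along the flow between jump times, jump increments $H(x_{T_k^-})$, telescoping to $\int_{[0,t]}H(x_{s^-})\,dN_s$, then compensating $dN_s$ by $ds$.

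What you add is the observation that the stated hypotheses do not make $M$ a mean-zero martingale. That observation is correct and worth keeping. Take dimension one with $\zeta=0$, $G(x)=x$, $\varphi=\exp$ and $x_0=1$, so that $x_t=2^{N_t}$. Given $N_t=k$, the event $t-T_k\ge t/(k+1)$ has conditional probability at least $e^{-1}$. On that event, $M_t\le e^{2^k}-\frac{t}{k+1}\bigl(e^{2^{k+1}}-e^{2^k}\bigr)$, hence $\E{M_t^-}=+\infty$ because the Poisson weights cannot compensate $e^{2^{k+1}}$. Your growth-and-Poisson-moment check of $\E{\int_0^t|H(x_s)|ds}<\infty$ for $\varphi(x,z)=f(x)+\frac12\norm{x-z}^2$ under Assumption~\ref{ass:l_smooth} is sound. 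It is exactly what Lemma~\ref{lem:l_smooth} needs.

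There are two small slips.
\begin{itemize}
\item With $G$ merely measurable, $s\mapsto H(x_{s^-})$ need not be left-continuous. It is still predictable, being a Borel function of the left-continuous adapted process $x_{s^-}$.
\item The integrand need not be bounded on $[0,\tau_m]$ unless $G$ is locally bounded.
\end{itemize}
Both slips are harmless in the paper's setting, where $G$ is continuous. In general you can skip the localization altogether. The compensator identity extends to nonnegative predictable integrands by monotone convergence, which gives $\E{\int_0^t|H(x_{s^-})|\,dN_s}=\E{\int_0^t|H(x_s)|\,ds}<\infty$. Any predictable integrand with this property integrates against $dN_s-ds$ to a true martingale.
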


We first restate Lemma~\ref{lem:l_smooth}.
\lsmooth*
\begin{proof}
 
Let $\overline{x}_t= (x_t,z_t)$ where $\xz$ satisfies \eqref{eq:nest_continuized}. It satisfies $d\overline{x}_t = \zeta(\overline{x}_t)dt + G(\overline{x}_t)dN(t)$, where
\begin{equation*}
    \zeta(\overline{x}_t) = \begin{pmatrix}
     \eta(z_t-x_t) \\ \eta'(x_t-z_t)
    \end{pmatrix},\quad G(\overline{x}_t) = \begin{pmatrix}
    -\gamma_t \nabla f(x_t) \\ -\gamma_t' \nabla f(x_t)
    \end{pmatrix}
\end{equation*}
We apply Proposition~\ref{prop:sto_calc_abridged} to $\varphi(\overline{x}_t)$, where
\begin{equation*}
    \varphi(x,z) = f(x) + \frac{1}{2}\norm{x-z}^2,
\end{equation*}
inducing
\begin{equation}\label{eq:prop_calc_sto_lsmooth}
    \varphi(\overline{x}_t) = \varphi(\overline{x}_0) + \int_{0}^t \dotprod{\nabla \varphi(\overline{x}_s),\zeta(\overline{x}_s)}ds + \int_0^t  \varphi(\overline{x}_s + G(\overline{x}_s)) - \varphi(\overline{x}_s)ds + M_t,
\end{equation}
where $M_t$ is a martingale.

\paragraph{Computations} 
We compute $\dotprod{\nabla \varphi(\overline{x}_s),\zeta(\overline{x}_s)} $ and $ \varphi(\overline{x}_s+ G(\overline{x}_s)) - \varphi(\overline{x}_s)$.
We have 
\begin{equation*}
    \frac{\partial \varphi}{\partial x} = \nabla f(x) + x-z,\quad  \frac{\partial \varphi}{\partial z} = z - x.
\end{equation*}
So, 
\begin{align}
    \dotprod{\nabla \varphi(\overline{x}_s),\zeta(\overline{x}_s)} &=\dotprod{\nabla f(x_{s}) + x_s - z_s,\eta(z_s-x_s)} + \dotprod{z_s-x_s,\eta'(x_s - z_s)} \nonumber\\
    &= \eta \dotprod{\nabla f(x_{s}),z_s-x_s} - \bpar{\eta + \eta'}\norm{x_s-z_s}^2 \label{eq:lem_l_smooth:1}.
\end{align}
Also, 
\begin{equation}
    \begin{aligned}  \label{eq:lem_l_smooth:2}
                 \varphi(\overline{x}_s+ G(\overline{x}_s)) - \varphi(\overline{x}_s) &= f(x_s - \gamma \nabla f(x_s)) - f(x_s)\\
   &+ \frac{1}{2}\bpar{\norm{x_s - \gamma \nabla f(x_s) - (z_s - \gamma'\nabla f(x_s)}^2 - \norm{x_s-z_s}^2}\\
    &=f(x_s - \gamma \nabla f(x_s)) - f(x_s)  + \frac{(\gamma' - \gamma)^2}{2}\norm{\nabla f(x_s)}^2 \\
    &+ (\gamma' - \gamma)\dotprod{\nabla f(x_s),x_s - z_s}\\
    &\le \bpar{(\gamma' - \gamma)^2 - \gamma(2-L \gamma )}\frac{1}{2}\norm{\nabla f(x_{s})}^2\\
    &+ (\gamma' - \gamma)\dotprod{\nabla f(x_{s}),x_s - z_s}.
    \end{aligned}
\end{equation}
The last inequality uses Lemma~\ref{lem:descent_sgc}, assuming $\gamma \le \frac{1}{ L}$.
We combine \eqref{eq:lem_l_smooth:1} and \eqref{eq:lem_l_smooth:2}
\begin{equation}
    \begin{aligned}\label{eq:lem_l_smooth_final}
                &\dotprod{\nabla \varphi(\overline{x}_s),\zeta(\overline{x}_s)}  +    \varphi(\overline{x}_s+ G(\overline{x}_s)) - \varphi(\overline{x}_s) \\
    &\le (\eta - (\gamma' - \gamma))\dotprod{\nabla f(x_{s^-}),z_s-x_s} - (\eta + \eta')\norm{x_s - z_s}^2 \\
    &+  \bpar{(\gamma' - \gamma)^2 - \gamma(2-L \gamma)}\frac{1}{2}\norm{\nabla f(x_{s})}^2 
    \end{aligned}
\end{equation}

\paragraph{Parameter tuning}
We now choose the parameters driving \eqref{eq:nest_continuized} such that we obtain the desired result.
We set $\eta =  \gamma' - \gamma$ to cancel scalar product. Then fixing $ \gamma' = \gamma + \sqrt{\frac{\gamma}{2 }}$ ensures 
$$(\gamma' - \gamma)^2 - \gamma(2-L \gamma) \le \frac{\gamma}{2} - \gamma = -\frac{\gamma}{2}.$$ The latter holds because $\gamma \le \frac{1}{ L} \Rightarrow -\gamma(2-L  \gamma) \le - \gamma$. We thus have
\begin{equation}\label{eq:L_smooth_eq1}
     \dotprod{\nabla \varphi(\overline{x}_s),\zeta(\overline{x}_s)}  +    \varphi(\overline{x}_s+ G(\overline{x}_s)) - \varphi(\overline{x}_s)\le -(\eta + \eta')\norm{x_s - z_s}^2 - \frac{\gamma}{4}\norm{\nabla f(x_{s})}^2.
\end{equation}
\paragraph{Conclusion}
Combining \eqref{eq:L_smooth_eq1}, \eqref{eq:lem_l_smooth_final} and \eqref{eq:prop_calc_sto_lsmooth},
we get
\begin{align}
     &\varphi(\overline{x}_t) \le \varphi(\overline{x}_0)  + \int_{0}^{t} -(\eta + \eta')\norm{x_s - z_s}^2 - \frac{\gamma}{4}\norm{\nabla f(x_{s})}^2 ds + M_{t} \nonumber \\
     \Rightarrow&\int_{0}^{t} (\eta + \eta')\norm{x_s - z_s}^2 + \frac{\gamma}{4}\norm{\nabla f(x_{s})}^2 ds\le f(x_0)- f^\ast + M_t ,\label{eq:l_smooth_lyap_dec_proof}
\end{align} using $\varphi(\overline{x}_0) - \varphi(\overline{x}_t) \le f(x_0)-\min_x f(x)$ and that $x_0 = z_0$. It remains to take expectation on \eqref{eq:l_smooth_lyap_dec_proof}
\begin{align*}
    \mathbb{E}\left[ \int_{0}^{t} (\eta + \eta')\norm{x_s - z_s}^2 + \frac{\gamma}{4}\norm{\nabla f(x_{s})}^2 ds \right] \le \mathbb{E}\left[  f(x_0)- f^\ast \right],
\end{align*}
where we used $\mathbb{E}[M_t] = \mathbb{E}[M_0] = 0$.
\end{proof}

We now prove the discrete version of the result, which we first restate.
\cvLsmooth*

\begin{proof}

From \eqref{eq:l_smooth_lyap_dec_proof}, we have
\begin{equation*}
    \int_{0}^{t}  \frac{\gamma}{4}\norm{\nabla f(x_{s})}^2 ds\le f(x_0) - f^\ast + M_t.
\end{equation*}
Also, we have
\begin{equation}\label{eq:poisson_mart}
\int_0^t \frac{\gamma}{4}\norm{\nabla f(x_{s})}^2 ds  = \int_0^t \frac{\gamma}{4}\norm{\nabla f(x_{s^{-}})}^2 dN_s + \int_0^t \frac{\gamma}{4}\norm{\nabla f(x_{s^{-}})}^2 (ds - dN_s),
    \end{equation}
where $\int_0^t \frac{\gamma}{4}\norm{\nabla f(x_{s^{-}})}^2 (ds - dN_s)$ is a centered martingale.
 Then, there exists a centered martingale $U_t$ such that 
\begin{equation*}
    \int_0^t \frac{\gamma}{4}\norm{\nabla f(x_{s^{-}})}^2 dN_s \le 
    \gapf + U_t.
\end{equation*}
We use the following stopping theorem.
\begin{theorem}[\cite{hermant2025continuized}, Theorem 6]\label{thm:martingal_stopping}
    Let $(\varphi_t)_{t\in \R_+}$ be a nonnegative process with cadlag trajectories, such that it verifies
    $$ \varphi_t \le K_0 + M_t, $$
    for some positive random variable $K_0$, some martingale $\mart$ with $M_0 = 0$. Then, for an almost surely finite stopping time $\tau$, one has 
$$ \E{\varphi_\tau} \le \E{K_0}.$$
\end{theorem}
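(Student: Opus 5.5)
The plan is to reduce to bounded stopping times, where Doob's optional sampling theorem applies, and then pass to the limit using Fatou's lemma, which the nonnegativity of $(\varphi_t)$ makes available. If $\E{K_0} = +\infty$ there is nothing to prove, so we assume throughout that $K_0$ is integrable.

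\textbf{Step 1 (truncation).} For $m \in \N$, set $\tau_m := \tau \wedge m$. This is a bounded stopping time, and the inequality $\varphi_t \le K_0 + M_t$ holds pathwise for every $t$, so in particular
\[
\varphi_{\tau_m} \le K_0 + M_{\tau_m}.
\]

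\textbf{Step 2 (optional sampling).} I will take $(M_t)$ to be right-continuous; it is càdlàg in all our applications, being built from càdlàg trajectories and compensated Poisson integrals, or else one passes to a càdlàg modification. Doob's optional sampling theorem for right-continuous martingales and bounded stopping times then gives that $M_{\tau_m}$ is integrable with
\[
\E{M_{\tau_m}} = \E{M_0} = 0.
\]
Since $\varphi_{\tau_m} \ge 0$ and the right-hand side of Step 1 is integrable, taking expectations yields
\[
\E{\varphi_{\tau_m}} \le \E{K_0} \quad \text{for every } m \in \N.
\]

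\textbf{Step 3 (limit $m \to \infty$).} Because $\tau < \infty$ almost surely, for almost every $\omega$ we have $\tau_m(\omega) = \tau(\omega)$ for all $m \ge \tau(\omega)$. Hence $\varphi_{\tau_m} \to \varphi_\tau$ almost surely, and in fact the sequence is eventually constant, so no continuity of $\varphi$ is needed here. Since $\varphi_{\tau_m} \ge 0$, Fatou's lemma gives
\[
\E{\varphi_\tau} \le \liminf_{m\to\infty} \E{\varphi_{\tau_m}} \le \E{K_0},
\]
which is the claim.

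\textbf{Main obstacle.} The only delicate point is the application of optional sampling in continuous time in Step 2. This requires the regularity of the paths of $M$, namely right-continuity, and the boundedness of the stopping time, which is exactly why we truncate at $m$. It is also the reason the statement asks for an almost surely finite $\tau$ and a nonnegative $\varphi$: these two hypotheses are what make Fatou's lemma applicable in Step 3 without any uniform integrability of $(M_{\tau_m})_{m\in\N}$.
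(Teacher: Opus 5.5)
The paper does not reprove this statement; it imports it from \cite{hermant2025continuized}. Your argument is correct and is the standard proof of this kind of result: optional sampling at the bounded times $\tau\wedge m$, then Fatou's lemma using $\varphi\ge 0$ and $\tau<\infty$ a.s.

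The one place you are slightly quick is the passage to a c\`adl\`ag modification $\tilde M$. There, the bound $\varphi_t\le K_0+\tilde M_t$ holds only almost surely for each fixed $t$, and the right-continuity of $\varphi$ and $\tilde M$ (through rational times) is what makes it hold simultaneously for all $t$, hence at $\tau\wedge m$. That step, together with the measurability of $\varphi_\tau$, is where the c\`adl\`ag hypothesis on $\varphi$ is actually used.
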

Applying Theorem~\ref{thm:martingal_stopping} with $\tau = T_k$, $\varphi_t =  \int_{0}^{t}  \frac{\gamma}{4}\norm{\nabla f(x_{s^-})}^2 dN_s$, $K_0 = \gapf$ and $M_t = U_t$, we obtain
\begin{equation*}
    \mathbb{E}\left[\int_0^{T_k} \norm{\nabla f(x_{s^{-}})}^2dN_s\right] \le \frac{4}{\gamma }\gapf.
\end{equation*}
Finally, we have 
\begin{equation*}
    \int_0^{T_k} \norm{\nabla f(x_{s^{-}})}^2dN_s = \sum_{i = 1}^k \norm{\nabla f(x_{T_i^{-}})}^2 = \sum_{i = 1}^k \norm{\nabla f(\tilde{y}_{i-1})}^2 \ge k \min_{0\le i \le k-1}\norm{\nabla f(\tilde{y}_i)}^2,
\end{equation*}
thus the result.
\end{proof}

\section{Proof of Theorem~\ref{thm:cv_hb}}\label{app:proof_hb_cv}

 Recall $\gamma' = \gamma + (\gamma/2)^{\frac{1}{2}}$, $\eta =  (\gamma/2)^{\frac{1}{2}}$, $\eta'_T= \alphavalcontf{(\gamma/2)^{-1/2}T}- (\gamma/2)^{\frac{1}{2}} $. From the proof of Lemma~\ref{lem:l_smooth}, we have
\begin{align*}
    \mathbb{E}\left[ f(x_t)-f^\ast \right]
    \le \mathbb{E}\left[  f(x_0)- f^\ast \right].
\end{align*}
Also, by $L$-smoothness we have $\norm{\nabla f(x)}^2 \le 2L(f(x)-f^\ast)$ for any $x\in \R^d$ \cite[Section 1.2.3]{nesterov2004introductory}, such that for any $t \ge 0$,
\begin{align*}
    \mathbb{E}\left[\norm{\nabla f(x_t)}^2 \right]
    \le 2L(f(x_0)- f^\ast).
\end{align*}
Taking the $\sup$ on $t$, we get
\begin{align}\label{eq:uniform_bound}
    \sup_{t > 0}\mathbb{E}\left[\norm{\nabla f(x_t)}^2 \right]
    \le 2L(f(x_0)- f^\ast).
\end{align}
We define the rescaled process $(\xrescaled,\zrescaled) := (x_{ (\gamma/2)^{-1/2}s},z_{ (\gamma/2)^{-1/2}s})$ and $\Nrescaled := N_{  (\gamma/2)^{-1/2}s}$ on the interval $s \in [0,T]$. This means that the process $\xcont$ has to be define on the interval of time $s \in [0,(\gamma/2)^{-1/2}T]$, which justifies the choice of terminal time in $\alphavalcontf{(\gamma/2)^{-1/2}T} $.  We also have that $\Mrescaled := \Nrescaled -  (\gamma/2)^{-1/2}s$ is a martingale,  whose predictable quadratic variation is
\begin{equation}\label{eq:quad_var}
    \dotprod{\Mrescaled} =  (\gamma/2)^{-1/2}s.
\end{equation}
The rescaled process satisfies
\begin{align*}
    \left\{
    \begin{array}{ll}
        d\xrescaled &= \eta(\gamma/2)^{-1/2}(\zrescaled-\xrescaled)ds -  (\gamma/2)^{1/2}  \nabla f(\xrescaledprev) ds -  \gamma  \nabla f(\xrescaledprev) d\Mrescaled, \\
        d\zrescaled &= \eta'(\gamma/2)^{-1/2}(\xrescaled-\zrescaled)ds -  \gamma' (\gamma/2)^{-1/2} \nabla f(\xrescaledprev)ds-  \gamma'  \nabla f(\xrescaledprev)d\Mrescaled,
    \end{array}
\right.
\end{align*}
which becomes, once the parameters are replaced by their dependence in $\gamma$
\begin{align*}
    \left\{
    \begin{array}{ll}
        d\xrescaled &= (\zrescaled-\xrescaled)ds -  (\gamma/2)^{1/2}  \nabla f(\xrescaledprev) ds -  \gamma  \nabla f(\xrescaledprev) d\Mrescaled, \\
        d\zrescaled &= ( 2^{\frac{3}{7}}K_{\tilde \varepsilon,\cupp}\bpar{\frac{L_2^2 \Delta_f}{T}}^{1/7} -1)(\xrescaled-\zrescaled)ds -  ((2\gamma)^{1/2}  +1) \nabla f(\xrescaledprev)ds-  \gamma'  \nabla f(\xrescaledprev)d\Mrescaled,
    \end{array}
\right.
\end{align*}
By \eqref{eq:uniform_bound}, we deduce
\begin{equation}\label{eq:change_variable}
\begin{aligned}
        \mathbb{E}\left[ \int_{0}^{t}\norm{\nabla f(\xrescaled)}^2 ds \right] &=\int_{0}^{t} \mathbb{E}\left[ \norm{\nabla f(\xrescaled)}^2\right] ds \\ &\le t \gapf.
\end{aligned}
\end{equation}
 In the following step, we show that some of the component converges uniformly to zero in probability as $\gamma$ goes to zero.

\textbf{Step 1.}
We have
\[\E{\sup_{s \in [0,T]} \norm{ \gamma^{1/2}  \int_{0}^s \nabla f(x_r^\gamma)dr}^2} \le\E{\sup_{s \in [0,T]}  \gamma s  \int_{0}^s \norm{\nabla f(x_r^\gamma)}^2dr} \le \E{  \gamma T  \int_{0}^T \norm{\nabla f(x_r^\gamma)}^2dr},  \]
where we used Cauchy-Schwartz. Using \eqref{eq:change_variable}, we obtain 
\[\E{\sup_{s \in [0,T]} \norm{ \gamma^{1/2}  \int_{0}^s \nabla f(x_r^\gamma)dr}^2} \le  \gamma T^2 \gapf \to_{\gamma \to 0} 0,\]
which implies that $\sup_{s \in [0,T]} \norm{ \gamma^{1/2}  \int_{0}^s \nabla f(\xrescaled)ds}$ converges to zero in probability. 

Then, by Doob's inequality, we get
\[\E{\sup_{s \in [0,T]} \norm{ \gamma\int_{0}^s \nabla f(x_r^\gamma)dM_r^\gamma}^2} \le 4\E{ \norm{ \gamma\int_{0}^T \nabla f(x_r^\gamma)dM_r^\gamma}^2}.\]
By an isometry theorem and \eqref{eq:quad_var}, we deduce 
\[4\E{ \norm{ \gamma\int_{0}^T \nabla f(x_r^\gamma)dM_r^\gamma}^2}= 4\E{  \gamma^2  \int_{0}^T \norm{\nabla f(x_r^\gamma)}^2d\dotprod{M_r^\gamma}} = 4\E{  \gamma^{3/2}  \int_{0}^T \norm{\nabla f(x_r^\gamma)}^2dr}.  \]
Using again \eqref{eq:change_variable}, we obtain 
\[\E{\sup_{s \in [0,T]} \norm{ \gamma\int_{0}^s \nabla f(x_r^\gamma)dM_r^\gamma}^2} \le \gamma^{3/2}T\gapf \to_{\gamma \to 0} 0, \]
which implies $\sup_{s \in [0,T]} \norm{ \gamma\int_{0}^s \nabla f(x_r^\gamma)dM_r^\gamma}\to_{\gamma \to 0} 0$ in probability.

There remains one term, for which we use the same arguments.
\[\E{\sup_{s \in [0,T]} \norm{ \gamma'\int_{0}^s \nabla f(x_r^\gamma)dM_r^\gamma}^2} \le 4\E{  \gamma'^2  \int_{0}^s \norm{\nabla f(x_r^\gamma)}^2d\dotprod{M_r^\gamma}} =4 \E{  \gamma'^2 \gamma^{-1/2}  \int_{0}^T \norm{\nabla f(x_r^\gamma)}^2dr}.  \]
Using \eqref{eq:change_variable}, it becomes
\[\E{\sup_{s \in [0,T]} \norm{ \gamma'\int_{0}^s \nabla f(x_r^\gamma)dM_r^\gamma}^2} \le 4\gamma'^2 \gamma^{-1/2}T\gapf \le 4\gamma^{1/2}\gapf \to_{\gamma \to 0} 0.\]
Which implies $\sup_{s \in [0,T]} \norm{ \gamma'\int_{0}^s \nabla f(\xrescaled)d\Mrescaled} \to 0$ in probability.

\textbf{Step 2.} Now, we define
\begin{align}\left\{
    \begin{array}{ll}
        dX_s &= (Z_s-X_s)ds, \\
        dZ_s &= (2^{\frac{3}{7}}K_{\tilde \varepsilon,\cupp}\bpar{\frac{L_2^2 \Delta_f}{T}}^{1/7}-1)(X_s-Z_s)ds - \nabla f(X_{s}) ds,
    \end{array}
\right.
\end{align}
We want to show that the rescaled process converges in probability to this system with $\gamma \to 0$. We note $\Delta X_s^{\gamma} := X_s-\xrescaled$ and $\Delta Z_s^{\gamma} := Z_s-\zrescaled$. We have
\[\Delta X_s^{\gamma} = \int_0^s (\Delta Z_r^{\gamma}-\Delta X_r^{\gamma})dr + R_{X,\gamma}(s),  \]
where $R_{X,\gamma}(s) := -(\gamma/2)^{1/2}  \int_{0}^s \nabla f(\xrescaled)ds-\gamma\int_{0}^s \nabla f(\xrescaled)d\Mrescaled$, and 
\[\Delta Z_s^{\gamma} =A\int_0^s (\Delta X_r^{\gamma}-\Delta Z_r^{\gamma})dr - \int_0^s (\nabla f(X_r)-\nabla f(x_r^\gamma))dr + R_{Z,\gamma}(s),  \]
denoting $A:=(2^{\frac{3}{7}}K_{\tilde \varepsilon,\cupp}\bpar{\frac{L_2^2 \Delta_f}{T}}^{1/7}-1)$ and $ R_{(Z,\gamma)(s)}=-(2\gamma)^{1/2}  \int_{0}^s \nabla f(\xrescaled)ds - \gamma'\int_{0}^s \nabla f(\xrescaled)d\Mrescaled$. Using several time the triangular inequality, and $\Vert \int \cdot ds \Vert \le \int \Vert \cdot \Vert ds$, we get
\[\Vert \Delta X_s^{\gamma}\Vert \le\int_0^s (\Vert \Delta Z_r^{\gamma} \Vert + \Vert\Delta X_r^{\gamma})\Vert)dr + \Vert R_{X,\gamma}(s) \Vert. \]
Similarly, 
\[\Vert \Delta Z_s^{\gamma}\Vert \le A\int_0^s (\Vert \Delta Z_r^{\gamma} \Vert + \Vert\Delta X_r^{\gamma})\Vert)dr +\int_0^s \Vert \nabla f(X_r)-\nabla f(x_r^\gamma)\Vert dr + \Vert R_{Z,\gamma}(s) \Vert, \]
and using also that $\nabla f$ is $L$-Lipschitz
\[\Vert \Delta Z_s^{\gamma}\Vert \le A\int_0^s (\Vert \Delta Z_r^{\gamma} \Vert + \Vert\Delta X_r^{\gamma})\Vert)dr + L\int_0^s \Vert  \Delta X_r^\gamma\Vert dr + \Vert R_{Z,\gamma}(s) \Vert. \]
Using the rough inequality $\Vert \Delta X_r^\gamma\Vert \le  \Vert \Delta Z_r^\gamma\Vert+\Vert \Delta X_r^\gamma\Vert$, we get
\begin{equation}
    \begin{aligned}
        \Vert \Delta X_s^\gamma\Vert + \Vert \Delta Z_s^{\gamma}\Vert \le (A + L) \int_0^s (\Vert \Delta Z_r^{\gamma} \Vert + \Vert\Delta X_r^{\gamma})\Vert)dr  + \Vert R_{X,\gamma}(s) \Vert + \Vert R_{Z,\gamma}(s) \Vert.
    \end{aligned}
\end{equation}
We take the $\sup$ on $[0,s]$, denoting $D_\gamma(r) := \sup_{s \in [0,r]} (  \Vert \Delta X_s^\gamma\Vert + \Vert \Delta Z_s^{\gamma}\Vert)$, and use Fatou's Lemma to obtain
\[D_\gamma(s) \le (A+L)\int_0^s D_\gamma(r)dr + \sup_{s \in [0,T]}\Vert R_{X,\gamma}(s) \Vert + \sup_{s \in [0,T]}\Vert R_{Z,\gamma}(s) \Vert.\]
Using the Grönwall Lemma, we deduce
\[D_\gamma(T) \le e^{(A+L)T}\bpar{\sup_{s \in [0,T]}\Vert R_{X,\gamma}(s) \Vert + \sup_{s \in [0,T]}\Vert R_{Z,\gamma}(s) \Vert}. \]
Yet, from Step 1 we can deduce
\[\sup_{s \in [0,T]}\Vert R_{X,\gamma}(s) \Vert + \sup_{s \in [0,T]}\Vert R_{Z,\gamma}(s) \Vert \to_{\gamma \to 0} 0\]
in probability. We can conclude that in probability, $D_\gamma(T) \to_{\gamma \to 0} 0$, or 
\[\sup_{s \in [0,T]} \norm{X_s-\xrescaled} \overset{\mathbb{P}}{\to}_{\gamma \to 0} 0, \quad \sup_{s \in [0,T]} \norm{Z_s-\zrescaled} \overset{\mathbb{P}}{\to}_{\gamma \to 0} 0.\]
To conclude, we use \cite[Proposition 27]{hermant2025continuized}, to deduce that $X_s$ can be rewritten as the solution of the following ODE
 \[\ddot X_s +  2^{\frac{3}{7}}K_{\tilde \varepsilon,\cupp}\bpar{\frac{L_2^2 \Delta_f}{T}}^{1/7}\dot X_s + \nabla f(X_s) = 0.\]

\section{Proof of the Lemmas stated in Section~\ref{sec:proof}}
We prove formally all the Lemmas used in Section~\ref{sec:proof} to prove Theorem~\ref{thm:hess_lip}. 

\subsection{Proof of Lemma~\ref{lem:hess_lip_2}}\label{app:lem_1}
  The proof follows similar steps as for Lemma~1 in \cite{okamura2024primitive}. We show the following inequalities
    \begin{align}
        &\norm{\int_0^t w_t(s)\nabla f(x_{s^-})dN_s - \nabla f(\overline{x}_t)} \nonumber\\
        &\le \frac{L_2}{2}\int_0^t w_t(s)\norm{x_{s^-}-\overline{x}_{t}}^2 dN_s\label{eq:hess:lem1:1}\\
        &= \frac{L_2}{2}\int\int_{0\le \sigma \le \tau \le t} w_t(\sigma)w_t(\tau)\norm{x_{\tau^-} - x_{\sigma^-}}^2 dN_\sigma dN_\tau\label{eq:hess:lem1:2}\\
        &\le  L_2\int_0^t \eta^2 \norm{z_s-x_s}^2 \bpar{\int_s^t \int_0^s    w_t(\sigma)w_t(\tau)(\tau-\sigma)dN_\sigma dN_\tau}ds\nonumber\\
    &+L_2\int_0^t \gamma^2 \norm{\nabla f(x_{s^{-}})}^2 \bpar{\int_s^t \int_0^s    w_t(\sigma)w_t(\tau)(N_{\tau^-}-N_{\sigma^-})dN_\sigma dN_\tau}dN_s.\label{eq:hess:lem1:3}
    \end{align}

\paragraph{Inequality \eqref{eq:hess:lem1:1}.}
Using Taylor expansion of the gradient, we have

\begin{align*}
   & \nabla f(x_{s^-})-\nabla f(\overline{x}_t) \\
   &= \int_0^1 \nabla^2 f((1-\sigma)\overline{x}_t + \sigma x_{s^-})(x_{s^-}-\overline{x}_t)d\sigma\\
    &= \nabla^2 f(\overline{x}_t)(x_{s^-}-\overline{x}_t) + \int_0^1 (\nabla^2 f((1-\sigma)\overline{x}_t + \sigma x_{s^-})- \nabla^2 f(\overline{x}_t))(x_{s^-}-\overline{x}_t)d\sigma.
\end{align*}
Multiply both sides by $w_t(s)$ and integrate with respect to $dN(\cdot)$ on $s\in [0,t]$
\begin{equation}
    \begin{aligned}\label{eq:hess:lem:1_bis}
         &   \int_0^t w_t(s)(\nabla f(x_{s^-})-\nabla f(\overline{x}_t))dN_s\\
         &= \int_0^t w_t(s)\nabla^2 f(\overline{x}_t)(x_{s^-} - \overline{x}_t)dN_s\\
    &+ \int_0^t w_t(s)\int_0^1 (\nabla^2 f((1-\sigma)\overline{x}_t + \sigma x_{s^-})-\nabla^2 f(\overline{x}_t))(x_{s^-}-\overline{x}_t)d\sigma dN_s.
    \end{aligned}
\end{equation}
Because $ \int_0^t w_t(s)dN_s = 1$, we have
\begin{equation*}
    \int_0^t w_t(s)(\nabla f(x_{s^-})-\nabla f(\overline{x}_t))dN_s =  \int_0^t w_t(s)\nabla f(x_{s^-})dN_s - \nabla f(\overline{x}_t),
\end{equation*}
and because $\overline{x}_t = \int_0^t w_t(s)x_{s^-}dN_s$, we get
\begin{equation*}
\int_0^t w_t(s)\nabla^2 f(\overline{x}_t)(x_{s^-} - \overline{x}_t)dN_s = 0.
\end{equation*}
Also,
\begin{align*}
    &\norm{\int_0^t w_t(s)\int_0^1 (\nabla^2 f((1-\sigma)\overline{x}_t + \sigma x_{s^-})-\nabla^2 f(\overline{x}_t))(x_{s^-}-\overline{x}_t)d\sigma dN_s}\\ 
    &\le \int_0^t w_t(s)\int_0^1 \norm{\nabla^2 f((1-\sigma)\overline{x}_t + \sigma x_{s^-})-\nabla^2 f(\overline{x}_t)}_2\norm{x_{s^-}-\overline{x}_t}d\sigma dN_s\\
    &\le \int_0^t w_t(s)\int_0^1 \sigma L_2\norm{x_{s^-}-\overline{x}_t}^2d\sigma dN_s\quad (\text{Assumption}~\ref{ass:hess_lip})\\
    &=\frac{L_2}{2}\int_0^t w_t(s)\norm{x_{s^-}-\overline{x}_t}^2 dN_s.
\end{align*}
Finally, taking the norm, \eqref{eq:hess:lem:1_bis} becomes
\begin{equation*}
    \norm{\int_0^t w_t(s)\nabla f(x_{s^-})dN_s - \nabla f(\overline{x}_t)} \le \frac{L_2}{2}\int_0^t w_t(s)\norm{x_{s^-}-\overline{x}_t}^2 dN_s.
\end{equation*}
\paragraph{Equality \eqref{eq:hess:lem1:2}.}

We have

    \begin{align}
    \norm{\int_0^t w_t(\sigma)x_{\sigma^-} dN_\sigma}^2 &= \dotprod{\int_0^t w_t(\sigma)x_{\sigma^-} dN_\sigma,\int_{0}^t w_t(\tau)x_{\tau^-} dN_\tau}\nonumber\\
    &= \int \int_{[0,t]^2} w_t(\sigma)w_t(\tau)\dotprod{x_{\sigma^-},x_{\tau^-}}dN_\sigma dN_\tau. \label{eq:adapt_lemma}
\end{align}

Also
\begin{equation*}
\begin{aligned}
    &\int\int_{0\le \sigma \le \tau \le t} w_t(\sigma)w_t(\tau)\norm{x_{\tau^-} - x_{\sigma^-}}^2 dN_\sigma dN_\tau\\
    &= \frac{1}{2}\int\int_{[0,t]^2} w_t(\sigma)w_t(\tau)\norm{x_{\tau^-} - x_{\sigma^-}}^2 dN_\sigma dN_\tau\\
    &= \int_0^t w_t(\sigma)\norm{x_{\sigma^-}}^2 dN_\sigma - \int\int_{[0,t]^2} w_t(\sigma)w_t(\tau)\dotprod{x_{\sigma^-},x_{\tau^-}} dN_\sigma dN_\tau
    \end{aligned}
    \end{equation*}    
    where the last equality is obtained by developing the squared norm, and because $$\int\int_{[0,t]^2} w_t(\sigma)w_t(\tau)\norm{x_{\tau^-}}^2 dN_\sigma dN_\tau =\int_0^tw_t(\tau)\norm{x_{\tau^-}}^2dN_\tau\int_0^tw_t(\sigma)dN_\sigma = \int_0^tw_t(\tau)\norm{x_{\tau^-}}^2dN_\tau.$$ Then,
    \begin{align*}
    &\int_0^t w_t(\sigma)\norm{x_{\sigma^-}}^2 dN_\sigma - \int\int_{[0,t]^2} w_t(\sigma)w_t(\tau)\dotprod{x_{\sigma^-},x_{\tau^-}} dN_\sigma dN_\tau\\
    &=\int_0^t w_t(\sigma)\norm{x_{\sigma^-}}^2 dN_\sigma  - \norm{\int_0^t w_t(\sigma)x_{\sigma^-} dN_\sigma}^2 , \quad \text{using (\ref{eq:adapt_lemma})}\\
    &=\int_0^t w_t(\sigma)\norm{x_{\sigma^-}}^2 dN_\sigma - \norm{\overline{x}_t}^2\\
    &=\int_0^t w_t(\sigma)\norm{x_{\sigma^-}}^2 dN_\sigma - 2\norm{\overline{x}_t}^2 + \norm{\overline{x}_t}^2\\
    &=\int_0^t w_t(\sigma)\norm{x_{\sigma^-}}^2 dN_\sigma  - 2\dotprod{\int_0^t w_t(\sigma)x_{\sigma^-} dN_\sigma,\underbrace{\int_0^t w_t(\sigma)x_{\sigma^-} dN_\sigma}_{=\overline{x}_t} } \\
    &+ \norm{\overline{x}_t}^2\underbrace{\int_0^t w_t(\sigma)dN_\sigma}_{=1 } \\
    &=\int_0^t w_t(\sigma)\norm{x_{\sigma^-}}^2 dN_\sigma -2\int_0^t w_t(\sigma)\dotprod{x_{\sigma^-},\overline{x}_t}dN_\sigma +\int_0^t w_t(\sigma)\norm{\overline{x}_t}^2dN_\sigma \\
    &=\int_0^t w_t(\sigma)\norm{x_{\sigma^-} - \overline{x}_t}^2dN_\sigma.
\end{align*}
\paragraph{Inequality \eqref{eq:hess:lem1:3}.}

By definition of $\xcont$, for all $\sigma \in [0,t]$ we have
\begin{equation*}
    x_t = x_{\sigma^-} + \int_{\sigma}^t \eta(z_s-x_s)ds - \int_{[\sigma,t]} \gamma \nabla f(x_{s^{-}})dN_s
\end{equation*}
Taking the left-limit at $t$, it becomes
\begin{equation*}
    x_{t^-} = x_{\sigma^{-}} + \int_\sigma^t \eta(z_s-x_s)ds - \int_{[\sigma,t)} \gamma \nabla f(x_{s^{-}})dN_s
\end{equation*}
For $0 \le \sigma \le \tau \le t$, using $\norm{a-b}^2 \le 2\norm{a}^2 + 2\norm{b}^2$ for $a,b\in\R^d$ and Cauchy-Schwarz inequality, we have
\begin{equation}
\begin{aligned}\label{eq:hess:lem1:3_bis}
    \norm{x_{\tau^-} - x_{\sigma^-}}^2 &= \norm{\int_\sigma^\tau  \eta(z_s-x_s)ds - \int_{[\sigma,\tau)} \gamma \nabla f(x_{s^{-}})dN_s}^2\\
    &\le 2\norm{\int_\sigma^\tau  \eta(z_s-x_s)ds}^2+ 2 \norm{\int_{[\sigma,\tau)} \gamma \nabla f(x_{s^{-}})dN_s}^2\\
    &\le 2 \bpar{\int_\sigma^\tau \norm{ \eta(z_s-x_s)}ds}^2 + 2 \bpar{\int_{[\sigma,\tau)}\norm{ \gamma \nabla f(x_{s^{-}})}dN_s}^2\\
    &\le 2 \bpar{\int_\sigma^\tau \norm{ \eta(z_s-x_s)}^2ds}\bpar{\int_\sigma^\tau ds} + 2\bpar{\int_{[\sigma,\tau)} dN_s}\bpar{\int_{[\sigma,\tau)}\norm{ \gamma \nabla f(x_{s^{-}})}^2dN_s}\\
    &\le 2(\tau - \sigma)\bpar{\int_\sigma^\tau \eta^2\norm{ (z_s-x_s)}^2ds} + 2(N_{\tau^-}-N_{\sigma^-})\int_{\sigma}^\tau \gamma^2\norm{\nabla f(x_{s^{-}})}^2dN_s.
\end{aligned}
\end{equation}
In the last inequality, we used $\int_{[\sigma,\tau)}\norm{ \gamma \nabla f(x_{s^{-}})}^2dN_s \le \int_{[\sigma,\tau]}\norm{ \gamma \nabla f(x_{s^{-}})}^2dN_s$, recalling our notation  $\int_{\sigma}^{\tau}dN_s = \int_{[\sigma,\tau]}$.
Then,  we have
\begin{align*}
    &\int\int_{0\le \sigma \le \tau \le t}w_t(\sigma)w_t(\tau)\norm{x_{\tau^-} - x_{\sigma^-}}^2 dN_\sigma dN_\tau\\
    &\overset{\eqref{eq:hess:lem1:3_bis}}{\le} \int\int_{0\le \sigma \le \tau \le t} w_t(\sigma)w_t(\tau)2(\tau - \sigma)\bpar{\int_\sigma^\tau \eta^2\norm{ z_s-x_s}^2ds} dN_\sigma dN_\tau\\
    &+\int\int_{0\le \sigma \le \tau \le t} w_t(\sigma)w_t(\tau)2(N_{\tau^-}-N_{\sigma^-})\bpar{\int_\sigma^\tau \gamma^2\norm{\nabla f(x_{s^{-}})}^2dN_s} dN_\sigma dN_\tau\\
    &= \int\int_{0\le \sigma \le s \le \tau \le t} w_t(\sigma)w_t(\tau)2(\tau - \sigma)\eta^2\norm{ z_s-x_s}^2dsdN_\sigma dN_\tau\\
    &+\int\int_{0\le \sigma \le s \le \tau \le t} w_t(\sigma)w_t(\tau)2(N_{\tau^-}-N_{\sigma^-}) \gamma^2\norm{\nabla f(x_{s^{-}})}^2dN_s dN_\sigma dN_\tau\\
    &=2\int_0^t \eta^2 \norm{z_s-x_s}^2 \bpar{\int_s^t \int_0^s   w_t(\sigma)w_t(\tau)(\tau-\sigma)dN_\sigma dN_\tau}ds\\
    &+2\int_0^t \gamma^2 \norm{\nabla f(x_{s^{-}})}^2 \bpar{\int_s^t \int_0^s   w_t(\sigma)w_t(\tau)(N_{\tau^-}-N_{\sigma^-})  dN_\sigma dN_\tau}dN_s.
\end{align*}

\subsection{Proof of Lemma~\ref{lem:transfer_grad_derivative}}\label{app:by_part_int}
We set   $\tilde w(s) = \alphaval e^{\alphaval s}$, such that $s \mapsto \tilde w(s)$ is differentiable, satisfying
\begin{equation*}
    \frac{d \tilde w(s)}{ds} = \alphaval \tilde w(s).
\end{equation*}
     Summing the two lines of \eqref{eq:nest_continuized} and using $\eta+\eta' = \alphaval$, we get
\begin{align}\label{lem:eq:by_part_1}
    dx_s - dz_s =  \alphaval(z_s-x_s)ds - (\gamma - \gamma')\nabla f(x_{s^-})dN_s.
    \end{align}
    We multiply each side of \eqref{lem:eq:by_part_1} by $\tilde w(s)$ and use $d \tilde w(s) = \alphaval \tilde w(s)ds$ to get
    \begin{align*}
 \tilde w(s)(dx_s - dz_s) &=\tilde  \alphaval w(s)(z_s-x_s)ds -\tilde w(s)(\gamma - \gamma')\nabla f(x_{s^-})dN_s\\
 &= (z_s-x_s)d \tilde w(s) -\tilde w(s)(\gamma - \gamma')\nabla f(x_{s^-})dN_s.
\end{align*}
By rearranging, we obtain
\begin{align}\label{lem:eq:by_part_2}
 \tilde w(s)(dx_s - dz_s) + (x_s-z_s)d\tilde w(s) =-\tilde w(s)(\gamma - \gamma')\nabla f(x_{s^-})dN_s.
\end{align}
We write \eqref{lem:eq:by_part_2} in an integral form, such that
\begin{equation}\label{lem:eq:by_part_3}
    \int_0^t \tilde w(s)(dx_s - dz_s) + \int_0^t (x_s-z_s)d\tilde w(s) =-(\gamma-\gamma') \int_0^t \tilde w(s)\nabla f(x_{s^{-}})dN_s
\end{equation}
We use the following integration by part formula for stochastic integrals, see a precise statement in \cite[p.83]{protter2012stochastic}.
\begin{corollary}[\cite{protter2012stochastic}]\label{cor:protter}
    Let $X$, $Y$ be semimartingales with at least one of $X$ or $Y$ continuous. Then
    \begin{equation*}
        X_t Y_t = X_0Y_0 + \int_0^t X_{s^-}dY_s + \int_0^t Y_{s^-}dX_s. 
    \end{equation*}
\end{corollary}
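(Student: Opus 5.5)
The plan is to derive the statement from the general product rule for semimartingales and then show that the bracket term vanishes. I will prove it in the form needed in Appendix~\ref{app:by_part_int}, where one factor is $\tilde w(s)=\alphaval e^{\alphaval s}$. That factor is deterministic, $C^1$, and hence continuous with finite variation on compacts.

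This continuous finite-variation case is the one I intend to prove, and it is not a restriction I can drop. For two general continuous semimartingales the bracket does not vanish: two independent-copy-equal Brownian motions $X=Y=W$ give $[W,W]_t=t$. So the statement should be read with the continuous factor also of finite variation. That is the setting of the cited result and of our application.

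\textbf{Step 1 (telescoping along partitions).} Fix a sequence of partitions $0=t_0<\dots<t_m=t$ whose mesh goes to zero. Write $\Delta_i X = X_{t_{i+1}}-X_{t_i}$ and $\Delta_i Y = Y_{t_{i+1}}-Y_{t_i}$. The algebraic identity
\[
X_tY_t-X_0Y_0=\sum_i X_{t_i}\,\Delta_i Y+\sum_i Y_{t_i}\,\Delta_i X+\sum_i \Delta_i X\,\Delta_i Y
\]
holds exactly for every partition.

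\textbf{Step 2 (limits of the three sums).} The two left-point Riemann sums converge in probability, uniformly on compacts, to $\int_0^t X_{s^-}dY_s$ and $\int_0^t Y_{s^-}dX_s$. This is the standard Riemann-sum approximation theorem for stochastic integrals of càglàd integrands (Protter, Ch.~II). The last sum converges to $[X,Y]_t$, which yields the general product rule
\[
X_tY_t=X_0Y_0+\int_0^t X_{s^-}dY_s+\int_0^t Y_{s^-}dX_s+[X,Y]_t .
\]

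\textbf{Step 3 (killing the bracket).} This is the only step that requires care. Take $X$ continuous with finite variation. Then
\[
\sum_i(\Delta_i X)^2\le \max_i|\Delta_i X|\cdot \mathrm{TV}(X;[0,t]),
\]
and the maximum tends to $0$ by uniform continuity of $X$ on $[0,t]$, so $[X,X]=0$. The Kunita--Watanabe inequality $|[X,Y]_t|\le [X,X]_t^{1/2}[Y,Y]_t^{1/2}$ then gives $[X,Y]\equiv 0$, which is the claimed formula.

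In our setting there is also a more elementary pathwise route. The process $x_s-z_s$ is itself of finite variation on $[0,t]$, being a piecewise ODE solution with finitely many jumps. The formula can therefore be checked directly with the Lebesgue--Stieltjes product rule, in which the jump contributions $\Delta\tilde w\,\Delta(x-z)$ vanish because $\tilde w$ has no jumps.
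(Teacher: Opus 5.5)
Your proof is correct, and your caveat is the substantive point. As printed, the statement is false for two general continuous semimartingales: $X=Y=W$ gives $W_t^2=2\int_0^t W_s\,dW_s+t$. So the continuous factor must also be of finite variation.

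The paper gives no proof of its own. It only cites Protter, and in the proof of Lemma~\ref{lem:transfer_grad_derivative} it justifies the application solely by noting that $\tilde w$ is deterministic and continuous. The conclusion there still holds, because $\tilde w(s)=\alpha e^{\alpha s}$ is $C^1$ and hence of finite variation, which is exactly the case you prove.

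Your first route is the standard derivation behind the cited result, and it allows an arbitrary semimartingale partner. It runs through the exact telescoping identity and ucp convergence of the left-point Riemann sums. It then uses $[X,X]=0$ for continuous finite-variation $X$, and Kunita--Watanabe to get $[X,Y]=0$.

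Your pathwise route is more elementary and fully sufficient for the paper's use. $N$ has finitely many jumps on $[0,t]$ and $(x,z)$ follows a linear ODE between jumps, so $x-z$ is pathwise c\`adl\`ag of finite variation. The Lebesgue--Stieltjes product rule then gives the identity path by path, with no limits in probability. It also makes the paper's separate check that $x-z$ is a semimartingale unnecessary.
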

We apply Corollary~\ref{cor:protter} to $X_t = x_t - z_t$ and $Y_t =\tilde w(t)$, such that $X_0Y_0 = 0$, and so
\begin{equation}\label{lem:eq:by_part_4}
     \int_0^t \tilde w(s)(dx_s - dz_s) + \int_0^t (x_s-z_s)d\tilde w(s) =\tilde w(t)(x_t-z_t).
\end{equation}
We combine \eqref{lem:eq:by_part_3} with \eqref{lem:eq:by_part_4}, and multiply both sides by $\Big(\int_0^t \alphaval e^{\alphaval s}dN_s\Big)^{-1}$ to conclude. Note that we can use Corollary~\ref{cor:protter} as $t \mapsto \tilde w(t)$ is deterministic and continuous, and $x_t - z_t$ is a semimartingale as a difference of semimartingales. To see that $\xcont$ is a semi martingale, remark that it writes
\begin{align*}
    x_t &= x_0 + \eta \int_0^t(z_s-x_s)ds - \gamma\int_0^t  \nabla f(x_{s^-})dN_s\\
    &=x_0 + \underbrace{\int_0^t(\eta (z_s-x_s) - \gamma   \nabla f(x_{s}) )ds}_{\text{adapted + continuous + bounded variation}} - \underbrace{\gamma\int_0^t  \nabla f(x_{s^-})(dN_s-ds)}_{\text{martingale}},
\end{align*}
which is a semimartingale by definition. The same argument holds for $(z_t)_{t\ge 0}$.
\subsection{Proof of Lemma~\ref{lem:hess_lip_3}}\label{app:lem_2}
From Lemma~\ref{lem:hess_lip_2}, we have
\begin{align}
     \norm{\nabla f(\overline{x}_t)} &\le \norm{\int_0^t w_t(s)\nabla f(x_{s^-})dN_s}\nonumber\\
     &+  L_2\int_0^t \eta^2 \norm{z_s-x_s}^2 \bpar{\int_s^t \int_0^s    w_t(\sigma)w_t(\tau)(\tau-\sigma)dN_\sigma dN_\tau}ds \nonumber\\
     &+L_2\int_0^t \gamma^2 \norm{\nabla f(x_{s^{-}})}^2 \bpar{\int_s^t \int_0^s    w_t(\sigma)w_t(\tau)(N_{\tau^-}-N_{\sigma^-})dN_\sigma dN_\tau}dN_s \nonumber\\
     &= \norm{\frac{w_t(t)(z_t-x_t)}{\gamma' - \gamma}}+  L_2\int_0^t \eta^2 \norm{z_s-x_s}^2 \bpar{\int_s^t \int_0^s    w_t(\sigma)w_t(\tau)(\tau-\sigma)dN_\sigma dN_\tau}ds \nonumber\\
     &+L_2\int_0^t \gamma^2 \norm{\nabla f(x_{s^{-}})}^2 \bpar{\int_s^t \int_0^s    w_t(\sigma)w_t(\tau)(N_{\tau^-}-N_{\sigma^-})dN_\sigma dN_\tau}dN_s, \nonumber
\end{align}
where the equality is because of Lemma~\ref{lem:transfer_grad_derivative}.
Multiplying both sides by $\Lambda_t := \bpar{\int_0^t \alphaval e^{\alphaval(s-t)}dN_s}^2$ and integrating on $[0,T_n]$ with respect to $dN_t$, we get 
    \begin{align*}
    &\int_0^{T_n}\Lambda_{t} \norm{\nabla f(\overline{x}_{t})}dN_t \le \int_0^{T_n}\Lambda_{t}\norm{\frac{w_{t}(t)(z_{t}-x_{t})}{\gamma' - \gamma}}dN_t \\
    &+L_2\int_0^{T_n}\Lambda_{t}\int_0^t \eta^2 \norm{z_s-x_s}^2 \bpar{\int_s^t \int_0^s    w_{t}(\sigma)w_{t}(\tau)(\tau-\sigma)dN_\sigma dN_\tau}ds dN_t\\
    &+L_2\int_0^{T_n}\Lambda_{t}\int_0^t \gamma^2 \norm{\nabla f(x_{s^-})}^2 \bpar{\int_s^t \int_0^s   w_{t}(\sigma)w_{t}(\tau)(N_{\tau^-}-N_{\sigma^-})dN_\sigma dN_\tau }dN_s dN_t\\
    &\overset{\text{(i)}}{=} \int_0^{T_n}\Lambda_{t}\norm{\frac{w_{t}(t)(z_{t}-x_{t})}{\gamma' - \gamma}}dN_{t} \\
    &+\alphaval^2 L_2\int_0^{T_n}\int_0^{t} \eta^2 \norm{z_s-x_s}^2 \bpar{\int_s^t \int_0^s    e^{\alphaval(\tau-t)}e^{\alphaval(\sigma-t)}(\tau-\sigma)dN_\sigma dN_\tau}ds dN_t\\
    &+\alphaval^2L_2\int_0^{T_n}\int_0^t \gamma^2 \norm{\nabla f(x_{s^-})}^2 \bpar{\int_s^t \int_0^s    e^{\alphaval(\tau-t)}e^{\alphaval(\sigma-t)}(N_{\tau^-}-N_{\sigma^-})dN_\sigma dN_\tau}dN_s dN_t
    \\
    &\overset{\text{(ii)}}{=} \int_0^{T_n}\Lambda_{t}\norm{\frac{w_{t}(t)(z_{t}-x_{t})}{\gamma' - \gamma}}dN_t \\
    &+\alphaval^2L_2\int_0^{T_n} \eta^2 \norm{z_s-x_s}^2 \bpar{\int_s^{T_n}\int_s^t \int_0^s    e^{\alphaval(\tau-t)}e^{\alphaval(\sigma-t)}(\tau-\sigma) dN_\sigma dN_\tau dN_t}ds\\
    &+\alphaval^2L_2\int_0^{T_n} \gamma^2 \norm{\nabla f(x_{s^-})}^2  \bpar{\int_s^{T_n}\int_s^t \int_0^s    e^{\alphaval(\tau-t)}e^{\alphaval(\sigma-t)}(N_{\tau^-}-N_{\sigma^-})dN_\sigma  dN_\tau dN_t}dN_s.
\end{align*}

 Equality (i) holds as $\Lambda_{t} = \bpar{\int_0^{t} \alphaval e^{\alphaval(s-t)}dN_s}^2 = \bpar{\int_0^{t} \alphaval e^{\alphaval s}dN_s}^2 e^{-2\alphaval t}$, such that 
 $$\Lambda_{t} w_{t}(\sigma)w_{t}(\tau)= \bpar{\int_0^{t} \alphaval e^{\alphaval s}dN_s}^2 e^{-2\alphaval t}\frac{\alphaval e^{\alphaval \sigma}}{\int_0^{t} \alphaval e^{\alphaval s}dN_s}\frac{\alphaval e^{\alphaval \tau}}{\int_0^{t} \alphaval e^{\alphaval s}dN_s}   =\alphaval^2 e^{-2\alphaval t} e^{\alphaval \sigma}e^{\alphaval \tau}.$$ Equality (ii) holds by Tonelli, enabling to swap the integration order of $s$ and $t$. We conclude by taking the expectation.

\subsection{Proof of Lemma~\ref{lem:term_1_control}}\label{app:term_1}

Recalling $w_t(t) = \frac{ \alphaval e^{\alphaval t}}{\int_0^t \alphaval e^{\alphaval s}dN_s}$, one has
\begin{align*}
    \norm{\frac{w_{t}(t)(z_{t}-x_{t})}{\gamma' - \gamma}}  &= \frac{\alphaval e^{\alphaval t}}{(\gamma' - \gamma)}\frac{1}{ \int_0^{t} \alphaval e^{\alphaval s}dN_s}\norm{z_{t} - x_{t}}\\
   &=\frac{\alphaval} {(\gamma' - \gamma)}\frac{1}{ \int_0^{t} \alphaval e^{\alphaval (s-t)}dN_s}\norm{z_{t} - x_{t}}
\end{align*}
So, using the notation $\Lambda_t := \bpar{\int_0^t \alphaval e^{\alphaval(s-t)}dN_s}^2 $, we have
\begin{equation}
    \begin{aligned}\label{eq:hess_bound_1_bis}
              \mathbb{E}\left[\int_0^{T_n}\Lambda_{t}\norm{\frac{w_t(t)(z_{t} - x_{t})}{\gamma' - \gamma}}dN_t \right]&=\mathbb{E}\left[\int_0^{T_n} \frac{\alphaval}{\gamma' - \gamma}\bpar{\int_0^{t} \alphaval e^{\alphaval(s-t)}dN_s}\norm{z_{t} - x_{t}}dN_t\right]\\
    &\le \frac{\alphaval}{\gamma' - \gamma}\sqrt{\mathbb{E}\left[ \int_0^{T_n} \Lambda_{t} dN_t\right]}\sqrt{\mathbb{E}\left[\int_0^{T_n}\norm{z_{t} - x_{t}}^2dN_t \right]},
    \end{aligned}
\end{equation}
where we used the Cauchy-Schwarz inequality. 
We have
\begin{equation*}
    \begin{aligned}
        \int_0^{T_n}\norm{z_{t} - x_{t}}^2dN_t  &= \int_0^{T_n}\norm{z_{t^-} - x_{t^-} - (\gamma'-\gamma)\nabla f(x_{t^-})}^2dN_t \\
        &\le 2\int_0^{T_n}\norm{z_{t^-} - x_{t^-}}^2dN_t + 2(\gamma'-\gamma)^2\int_0^{T_n}\norm{\nabla f(x_{t^-})}^2dN_t  
    \end{aligned}
\end{equation*}
Then, from \eqref{eq:lyap_control_2}, we have
\begin{equation}\label{eq:lem:hess_lip_3:3_bisbis}
\E{\int_0^{T_n}\norm{\nabla f(x_{t^-})}^2dN_t} \le \frac{4}{\gamma}\gapf,
\end{equation} and from \eqref{eq:lyap_control_3}, we have
\begin{equation}\label{eq:lem:hess_lip_3:3_bis}
     \E{\int_0^{T_n}  \norm{z_{t^-}-x_{t^-}}^2 dN_t} \le  \frac{1}{\eta + \eta'}\gapf =\frac{1}{\alphaval}\gapf  .
\end{equation}
Combining \eqref{eq:lem:hess_lip_3:3_bisbis} and \eqref{eq:lem:hess_lip_3:3_bis}, we obtain
\begin{equation}
    \begin{aligned}\label{eq:correct}
\sqrt{\mathbb{E}\left[\int_0^{T_n}\norm{z_{t} - x_{t}}^2dN_t \right]} &\le \sqrt{2\gapf} \sqrt{\frac{1}{\alphaval} + \frac{4(\gamma'-\gamma)^2}{\gamma}} \\
&\le  \sqrt{2\alphaval^{-1}\gapf} \sqrt{1+ \frac{4(\gamma'-\gamma)^2}{\gamma}},
    \end{aligned}
\end{equation}
the last inequality using $\alphaval^{-1}\ge 1$. Note that we assume $\gamma' = \gamma + \sqrt{\gamma/2}$, such that $\frac{(\gamma'-\gamma)^2}{\gamma} = \frac{1}{2}$, and $\sqrt{1+ \frac{4(\gamma'-\gamma)^2}{\gamma}} = \sqrt{1+2} = \sqrt{3}$.
Combining this with \eqref{eq:hess_bound_1_bis}, Lemma~\ref{prop:calc_sto_comput_1}-(i) and \eqref{eq:correct}, we deduce

\begin{align}
     \mathbb{E}\left[\int_0^{T_n}\Lambda_t\norm{\frac{w_t(t)(z_{t^-} - x_{t^-})}{\gamma' - \gamma}}dN_t \right] &\le \frac{\alphaval}{\gamma'-\gamma}\sqrt{3\bpar{1+\frac{3}{2}\alphaval}(1+2\alphaval)n}\sqrt{2\frac{\gapf}{\alphaval}}\nonumber\\
     &=\sqrt{2\alphaval n}\sqrt{\frac{6\bpar{1+\frac{3}{2}\alphaval}(1+2\alphaval)}{\gamma}}\sqrt{\gapf}\nonumber\\
     &=\sqrt{\alphaval n}\sqrt{\frac{A_n}{\gamma}}\sqrt{\gapf},\label{eq:lem:hess_lip_3:a}
\end{align}
where we used $\gamma' = \gamma + \sqrt{\frac{\gamma}{2}}$ and defined $A_n = 12\bpar{1+\frac{3}{2}\alphaval}(1+2\alphaval)$.

 \subsection{Proof of Lemma~\ref{lem:hess_lip_4}}\label{app:lem_3}
 We prove each statement separately.
 \paragraph{Equation \eqref{eq:exp_separate_1}}
 We have
\begin{align*}
    &\int_0^{T_n} \eta^2 \norm{z_s -x_s}^2  \bpar{\int_s^{T_n}\int_s^t \int_0^s   e^{\alphaval(\tau-t)}e^{\alphaval(\sigma-t)}(\tau-\sigma) dN_\sigma dN_\tau  dN_t}ds \\
&= \sum_{i = 0}^{n-1}\int_{(T_i,T_{i+1})} \eta^2 \norm{z_s -x_s}^2  \bpar{\int_s^{T_n}\int_s^t \int_0^s    e^{\alphaval(\tau-t)}e^{\alphaval(\sigma-t)}(\tau-\sigma) dN_\sigma dN_\tau dN_t}ds,
\end{align*}
where we used that integration with respect to $ds$ on a singleton is zero. 

Then, noting $U_i := \int_{(T_i,T_{i+1})} \eta^2 \norm{z_s -x_s}^2  ds$, we have
\begin{align}
   & \sum_{i = 0}^{n-1}\int_{(T_i,T_{i+1})} \eta^2 \norm{z_s -x_s}^2  \bpar{\int_s^{T_n}\int_s^t \int_0^s    e^{\alphaval(\tau-t)}e^{\alphaval(\sigma-t)}(\tau-\sigma) dN_\sigma dN_\tau dN_t}ds \nonumber\\
   &\overset{(i)}{=}\sum_{i = 0}^{n-1}\left(\int_{(T_i,T_{i+1})} \eta^2 \norm{z_s -x_s}^2  ds\right)\bpar{\int_{T_{i+1}}^{T_n} \int_{T_{i+1}}^t  \int_0^{T_i} e^{\alphaval(\tau-t)}e^{\alphaval(\sigma-t)}(\tau-\sigma)dN_\sigma dN_\tau dN_t} \nonumber\\
      &\overset{(ii)}{=}\sum_{i = 1}^{n-1}\left(\int_{(T_i,T_{i+1})} \eta^2 \norm{z_s -x_s}^2  ds\right)\bpar{\int_{T_{i+1}}^{T_n} \int_{T_{i+1}}^t  \int_0^{T_i} e^{\alphaval(\tau-t)}e^{\alphaval(\sigma-t)}(\tau-\sigma)dN_\sigma dN_\tau dN_t} \nonumber\\
      &=\sum_{i = 1}^{n-1}U_i  \bpar{\int_{T_{i+1}}^{T_n} \int_{T_{i+1}}^t  \int_0^{T_i} e^{\alphaval(\tau-t)}e^{\alphaval(\sigma-t)}(\tau-\sigma)dN_\sigma dN_\tau dN_t} \nonumber\\
   &= \sum_{i = 1}^{n-1}\bpar{\int_{T_{i+1}}^{T_n} \int_{T_{i+1}}^t \int_0^{T_i}  e^{\alphaval(\tau-t)}e^{\alphaval(\sigma-t)}(\tau-\sigma) U_i dN_\sigma dN_\tau dN_t} ,\label{eq:exp_separate_1:1}
\end{align}
where (i) uses the fact that $\int_s^{T_n}\int_s^t \int_0^s    e^{\alphaval(\tau-t)}e^{\alphaval(\sigma-t)}(\tau-\sigma) dN_\sigma dN_\tau dN_t$ is constant for $s \in (T_i,T_{i+1})$, as no new jump occurs on this interval. (ii) uses that $\int_0^{T_0} \cdot ~ dN_t = \int_0^0\cdot~ dN_t = 0$, as almost surely no jumps occur at $t = 0$.
Because we integrate over $[0,T_n]$ with Poisson integrals, we can write \eqref{eq:exp_separate_1:1} as a sum
\begin{equation}\label{eq:exp_separate_1:2}
    \eqref{eq:exp_separate_1:1} = \sum_{i = 1}^{n-1} \sum_{l=i+1}^n \sum_{k=i+1}^l \sum_{j=1}^ie^{\alphaval(T_k-T_l)}  e^{\alphaval (T_j-T_l)}(T_k-T_j)U_i.
\end{equation}
We multiply each side of \eqref{eq:exp_separate_1:2} by $\1_{\Abar}$, and take the expectation
\begin{equation*}
    \begin{aligned}
        \E{ \1_{\Abar} \cdot \eqref{eq:exp_separate_1:2}} &=  \E{ \1_{\Abar}   \sum_{i = 1}^{n-1} \sum_{l=i+1}^n \sum_{k=i+1}^l \sum_{j=1}^i e^{\alphaval(T_k-T_l)}  e^{\alphaval (T_j-T_l)}(T_k-T_j)U_i}\\
        &=\sum_{i = 1}^{n-1} \sum_{l=i+1}^n \sum_{k=i+1}^l  \E{ \1_{\Abar} e^{\alphaval(T_k-T_l)} e^{\alphaval(T_{i+1}-T_l)}\sum_{j=1}^ie^{\alphaval (T_j-T_{i+1})}(T_k-T_j)U_i}\\
        &\overset{(i)}\le \sum_{i = 1}^{n-1} \sum_{l=i+1}^n \sum_{k=i+1}^l  \E{ \1_{\Abar} e^{\alphaval(T_k-T_l)} e^{\alphaval(T_{i+1}-T_l)}\sum_{j=1}^{i+1}e^{\alphaval (T_j-T_{i+1})}(T_k-T_j)U_i}\\
        &= \sum_{i = 1}^{n-1} \sum_{l=i+1}^n \sum_{k=i+1}^l  \E{e^{\alphaval(T_k-T_l)}e^{\alphaval(T_{i+1}-T_l)} U_i \1_{\Abar} \sum_{j=1}^{i+1}e^{\alphaval (T_j-T_{i+1})}(T_k-T_{i+1})}\\
        &+ \sum_{i = \Kval }^{n-1} \sum_{l=i+1}^n \sum_{k=i+1}^l  \E{ e^{\alphaval(T_k-T_l)}e^{\alphaval(T_{i+1}-T_l)} U_i \1_{\Abar} \sum_{j=1}^{i+1}e^{\alphaval (T_j-T_{i+1})}(T_{i+1}-T_j)}\\
        &+\sum_{i = 1}^{\Kval -1} \sum_{l=i+1}^n \sum_{k=i+1}^l  \E{ e^{\alphaval(T_k-T_l)}e^{\alphaval(T_{i+1}-T_l)} U_i \sum_{j=1}^{i+1}e^{\alphaval (T_j-T_{i+1})}(T_{i+1}-T_j)}.
    \end{aligned}
\end{equation*}
(i) uses that $\sum_{j=1}^i a_j \le \sum_{j=1}^{i+1}a_j$, as long as $a_j \ge0$ for any $j\in \{1,\dots,i+1 \}$. The last line uses $\1_{\Abar} \le 1$.
Now, we use Proposition~\ref{prop:maj_H_0} to deal with the third term, and Theorem~\ref{thm:high_proba_bound} to deal with the first and second term, such that
\begin{equation}
    \begin{aligned}\label{eq:exp_separate_1:4_pre}
         \E{ \1_{\Abar} \cdot \eqref{eq:exp_separate_1:2}} & \le  \sum_{i = 1}^{n-1} \sum_{l=i+1}^n \sum_{k=i+1}^l  \E{ e^{\alphaval(T_{i+1}-T_l)}e^{\alphaval(T_k-T_l)} (T_k-T_{i+1})U_i \Ccont\E{\sum_{j=1}^{i+1}e^{\alphaval (T_j-T_{i+1})}}}\\
        &+\sum_{i = \Kval }^{n-1} \sum_{l=i+1}^n \sum_{k=i+1}^l  \E{ e^{\alphaval(T_{i+1}-T_l)}e^{\alphaval(T_k-T_l)}U_i \Ccont\E{\sum_{j=1}^{i+1}e^{\alphaval (T_j-T_{i+1})}(T_{i+1}-T_j)}}\\
        &+\sum_{i = 1}^{\Kval -1} \sum_{l=i+1}^n \sum_{k=i+1}^l  \E{ e^{\alphaval(T_k-T_l)}e^{\alphaval(T_{i+1}-T_l)} U_i \cupp \Kval ^2}\\
        &\le \Ccont \sum_{i = 1}^{n-1} \sum_{l=i+1}^n \sum_{k=i+1}^l  \E{ e^{\alphaval(T_{i+1}-T_l)}e^{\alphaval(T_k-T_l)} (T_k-T_{i+1})U_i }\E{\sum_{j=1}^{i+1}e^{\alphaval (T_j-T_{i+1})}}\\
        &+ \Ccont\sum_{i = \Kval }^{n-1} \sum_{l=i+1}^n \sum_{k=i+1}^l  \E{ e^{\alphaval(T_{i+1}-T_l)}e^{\alphaval(T_k-T_l)}U_i }\E{\sum_{j=1}^{i+1}e^{\alphaval (T_j-T_{i+1})}(T_{i+1}-T_j)}\\
        &+\sum_{i = 1}^{\Kval -1} \sum_{l=i+1}^n \sum_{k=i+1}^l  \E{ e^{\alphaval(T_k-T_l)}e^{\alphaval(T_{i+1}-T_l)} U_i }\cupp \Kval ^2\\
        &\overset{(i)}{\le} \Ccont \sum_{i = 1}^{n-1} \sum_{l=i+1}^n \sum_{k=i+1}^l  \E{ e^{\alphaval(T_{i+1}-T_l)}e^{\alphaval(T_k-T_l)} (T_k-T_{i+1})U_i }\E{\sum_{j=1}^{i+1}e^{\alphaval (T_j-T_{i+1})}}\\
        &+ \Ccont\sum_{i = 1}^{n-1} \sum_{l=i+1}^n \sum_{k=i+1}^l  \E{ e^{\alphaval(T_{i+1}-T_l)}e^{\alphaval(T_k-T_l)}U_i }\E{\sum_{j=1}^{i+1}e^{\alphaval (T_j-T_{i+1})}(T_{i+1}-T_j)}.    \end{aligned}
\end{equation}
 (i) uses that for $i\ge \Kval $ and by Lemma~\ref{lem:exp_low_bound_H_0}, we have
\[
\E{\sum_{j=1}^{i+1}e^{\alphaval (T_j-T_{i+1})}(T_{i+1}-T_j)} 
\ge
\frac{e^{-1}}{32}
\min\!\left\{(i+1)^2,\frac1{\alphaval^2} \right\} \ge \frac{e^{-1}}{32}
\min\!\left\{ \Kval ^2,\frac1{\alphaval^2} \right\}  \ge \frac{\cinf^2e^{-1}}{32}\Kval ^2,
\]
the last inequality using the assumption $\alphaval^{-1} \ge \cinf \Kval $. So, we deduce 
\[\E{\sum_{j=1}^{i+1}e^{\alphaval (T_j-T_{i+1})}(T_{i+1}-T_j)} \ge \frac{\cinf^2e^{-1}}{32\cupp } \cupp \Kval ^2 \ge \Ccont^{-1} \cupp \Kval ^2,\]
which implies
\[\cupp \Kval ^2 \le \Ccont \E{\sum_{j=1}^{i+1}e^{\alphaval (T_j-T_{i+1})}(T_{i+1}-T_j)} .\]
Then, $U_i$ depends on $\tkvar{i}$, as it depends on $x_s$ and $z_s$ for $s\in(T_i,T_{i+1})$, which depend on $\tkvar{i}$. Also, $U_i$ depends on the length of the interval $(T_i,T_{i+1})$, such that $U_i$ also depends on $T_{i+1}$. So, we have that $U_i$ only depends on $\tkvar{i+1}$. Because the increments of $\{ T_k\}_{k\in \N}$ are independent, because $k,l \ge i+1$, we have $T_{i+1}-T_l \ind U_i$ and $T_k-T_l \ind  U_i$. So, \eqref{eq:exp_separate_1:4_pre} becomes 

\begin{equation}
    \begin{aligned}\label{eq:exp_separate_1:4}
         \E{\1_{\Abar} \cdot \eqref{eq:exp_separate_1:2}}
        &\le \Ccont \sum_{i = 1}^{n-1} \sum_{l=i+1}^n \sum_{k=i+1}^l  \E{ e^{\alphaval(T_{i+1}-T_l)}e^{\alphaval(T_k-T_l)} (T_k-T_{i+1})}\E{U_i }\E{\sum_{j=1}^{i+1}e^{\alphaval (T_j-T_{i+1})}}\\
        &+ \Ccont\sum_{i = \Kval }^{n-1} \sum_{l=i+1}^n \sum_{k=i+1}^l  \E{ e^{\alphaval(T_{i+1}-T_l)}e^{\alphaval(T_k-T_l)}}\E{U_i }\E{\sum_{j=1}^{i+1}e^{\alphaval (T_j-T_{i+1})}(T_{i+1}-T_j)}\\
        &= \Ccont \sum_{i = 1}^{n-1} \E{U_i } \sum_{l=i+1}^n \sum_{k=i+1}^l  \E{ e^{\alphaval(T_{i+1}-T_l)}e^{\alphaval(T_k-T_l)} (T_k-T_{i+1})}\E{\sum_{j=1}^{i+1}e^{\alphaval (T_j-T_{i+1})}}\\
        &+ \Ccont\sum_{i = \Kval }^{n-1} \E{U_i } \sum_{l=i+1}^n \sum_{k=i+1}^l  \E{ e^{\alphaval(T_{i+1}-T_l)}e^{\alphaval(T_k-T_l)}}\E{\sum_{j=1}^{i+1}e^{\alphaval (T_j-T_{i+1})}(T_{i+1}-T_j)}\\
        &\overset{(i)}{=} \Ccont \sum_{i = 1}^{n-1} \E{U_i } \sum_{l=i+1}^n \sum_{k=i+1}^l  \E{ e^{\alphaval(T_{i+1}-T_l)}e^{\alphaval(T_k-T_l)} (T_k-T_{i+1})\sum_{j=1}^{i+1}e^{\alphaval (T_j-T_{i+1})}}\\
        &+ \Ccont\sum_{i = 1}^{n-1} \E{U_i } \sum_{l=i+1}^n \sum_{k=i+1}^l  \E{ e^{\alphaval(T_{i+1}-T_l)}e^{\alphaval(T_k-T_l)}\sum_{j=1}^{i+1}e^{\alphaval (T_j-T_{i+1})}(T_{i+1}-T_j)}\\
        &=\Ccont\sum_{i = 1 }^{n-1} \E{U_i } \E{\sum_{l=i+1}^n \sum_{k=i+1}^l   e^{\alphaval(T_{i+1}-T_l)}e^{\alphaval(T_k-T_l)}\sum_{j=1}^{i+1}e^{\alphaval (T_j-T_{i+1})}(T_k-T_j)}\\
    \end{aligned}
\end{equation}
where in (i) we used the independence of the increments of the $\{ T_k\}_{k\in \N}$.
Finally, writing the sums as Poisson integrals, noting 
\begin{align*}
    A_{i+1} &:= \sum_{l=i+1}^n \sum_{k=i+1}^l   e^{\alphaval(T_{i+1}-T_l)}e^{\alphaval(T_k-T_l)}\sum_{j=1}^{i+1}e^{\alphaval (T_j-T_{i+1})}(T_k-T_j) \\
    &= \int_{[T_{i+1},T_n]}\int_{[T_{i+1}, t]} e^{\alphaval(T_{i+1}-t)}e^{\alphaval(\tau-t)} \int_{[0,T_{i+1}]}e^{\alphaval(\sigma - T_{i+1})}(\tau - \sigma)dN_\sigma dN_\tau dN_t ,
\end{align*} and recalling  $U_i := \int_{(T_i,T_{i+1})} \eta^2 \norm{z_s -x_s}^2  ds$, \eqref{eq:exp_separate_1:4} becomes

\begin{equation*}
    \begin{aligned}
         \E{\1_{\Abar} \cdot \eqref{eq:exp_separate_1:2}}
       &\le \Ccont \sum_{i = 1}^{n-1} \E{\int_{(T_i,T_{i+1})} \eta^2 \norm{z_s -x_s}^2  ds } \E{A_{i+1}}\\
       &\le \Ccont\max_{i \in \{1,\dots,n-1 \}} \E{A_{i+1}}\sum_{i = 1}^{n-1} \E{\int_{(T_i,T_{i+1})} \eta^2 \norm{z_s -x_s}^2  ds }\\
       &\le \Ccont\max_{i \in \{1,\dots,n \}} \E{A_i}\E{\int_0^{T_n} \eta^2 \norm{z_s -x_s}^2  ds}.
    \end{aligned}
\end{equation*}

\paragraph{Equation \eqref{eq:exp_separate_2}}
\begin{equation}
\begin{aligned}\label{eq:exp_separate_2:1}
    &\int_0^{T_n} \gamma^2 \norm{\nabla f(x_{s^-})}^2  \bpar{\int_s^{T_n}\int_s^t \int_0^s    e^{\alphaval(\tau-t)}e^{\alphaval(\sigma-t)}(N_{\tau^-}-N_{\sigma^-})dN_\sigma  dN_\tau dN_t}dN_s \\
        &= \int_0^{T_n}  \bpar{\int_s^{T_n}\int_s^t  e^{\alphaval(\tau-t)}e^{\alphaval(s-t)}\gamma^2 \norm{\nabla f(x_{s^-})}^2 \int_0^s  e^{\alphaval(\sigma-s)}(N_{\tau^-}-N_{\sigma^-}) dN_\sigma dN_\tau dN_t}dN_s
\end{aligned}
\end{equation}
Because we integrate over $[0,T_n]$, \eqref{eq:exp_separate_2:1} can be written as a sum
\begin{align}
(\ref{eq:exp_separate_2:1}) = \sum_{i=1}^n \sum_{l=i}^n \sum_{k=i}^l e^{\alphaval(T_k-T_l)}e^{\alphaval(T_i-T_l)}\gamma^2 \norm{\nabla f(x_{T_i^-})}^2\sum_{j=1}^i e^{\alphaval (T_j-T_i)}(k-j).
\end{align}
We used that $N_{T_k^-} - N_{T_j^-} = (k-1)-(j-1)$.
Multiplying by $\1_{\Abar}$ and taking expectation, we have
\begin{equation*}
    \begin{aligned}
        \E{\1_{\Abar} \cdot \eqref{eq:exp_separate_2:1}} &= \E{\1_{\Abar}\sum_{i=1}^n \sum_{l=i}^n \sum_{k=i}^l e^{\alphaval(T_k-T_l)}e^{\alphaval(T_i-T_l)}\gamma^2 \norm{\nabla f(x_{T_i^-})}^2\sum_{j=1}^i e^{\alphaval (T_j-T_i)}(k-j)}\\
        &= \sum_{i=1}^n \sum_{l=i}^n \sum_{k=i}^l \E{\1_{\Abar}e^{\alphaval(T_k-T_l)}e^{\alphaval(T_i-T_l)}\gamma^2 \norm{\nabla f(x_{T_i^-})}^2\sum_{j=1}^i e^{\alphaval (T_j-T_i)}(k-j)}\\
        &=\sum_{i=1}^n \sum_{l=i}^n \sum_{k=i}^l \E{e^{\alphaval(T_k-T_l)}e^{\alphaval(T_i-T_l)}\gamma^2 \norm{\nabla f(x_{T_i^-})}^2(k-i)\1_{\Abar}\sum_{j=1}^i e^{\alphaval (T_j-T_i)}}\\
        &+\sum_{i=1}^n \sum_{l=i}^n \sum_{k=i}^l \E{e^{\alphaval(T_k-T_l)}e^{\alphaval(T_i-T_l)}\gamma^2 \norm{\nabla f(x_{T_i^-})}^2\1_{\Abar}\sum_{j=1}^i e^{\alphaval (T_j-T_i)}(i-j)}
    \end{aligned}
\end{equation*}
Now, Theorem~\ref{thm:high_proba_bound} ensures that there exists a constant $\Ccont$, such that we have 
\begin{equation}
    \begin{aligned}\label{eq:exp_separate_2:5}
         \E{\1_{\Abar} \cdot \eqref{eq:exp_separate_2:1}}  &\le\sum_{i=1}^n \sum_{l=i}^n \sum_{k=i}^l \E{e^{\alphaval(T_k-T_l)}e^{\alphaval(T_i-T_l)}\gamma^2 \norm{\nabla f(x_{T_i^-})}^2 \Ccont(k-i)\E{\sum_{j=1}^i e^{\alphaval (T_j-T_i)}}}\\
         &+ \sum_{i=1}^n \sum_{l=i}^n \sum_{k=i}^l \E{e^{\alphaval(T_k-T_l)}e^{\alphaval(T_i-T_l)}\gamma^2 \norm{\nabla f(x_{T_i^-})}^2\Ccont \E{\sum_{j=1}^i e^{\alphaval (T_j-T_i)}(i-j)}}\\
         &\le \Ccont \sum_{i=1}^n \sum_{l=i}^n \sum_{k=i}^l \E{e^{\alphaval(T_k-T_l)}e^{\alphaval(T_i-T_l)}\gamma^2 \norm{\nabla f(x_{T_i^-})}^2 }(k-i)\E{\sum_{j=1}^i e^{\alphaval (T_j-T_i)}}\\
         &+  \Ccont\sum_{i=1}^n \sum_{l=i}^n \sum_{k=i}^l \E{e^{\alphaval(T_k-T_l)}e^{\alphaval(T_i-T_l)}\gamma^2 \norm{\nabla f(x_{T_i^-})}^2}\E{\sum_{j=1}^i e^{\alphaval (T_j-T_i)}(i-j)}.
    \end{aligned}
\end{equation}

Now, because the increments of $\{ T_k\}_{k\in \N}$ are independent, as $l,k \ge i$, we have $T_k-T_l \ind \norm{\nabla f(x_{T_i^-})}$ and $T_i-T_l \ind  \norm{\nabla f(x_{T_i^-})}$. So, \eqref{eq:exp_separate_2:5} becomes
\begin{equation}
    \begin{aligned}\label{eq:exp_separate_2:6}
         \E{\1_{\Abar} \cdot\eqref{eq:exp_separate_2:1}} &\le  \Ccont \sum_{i=1}^n \sum_{l=i}^n \sum_{k=i}^l \E{e^{\alphaval(T_k-T_l)}e^{\alphaval(T_i-T_l)}}\gamma^2\E{ \norm{\nabla f(x_{T_i^-})}^2 }(k-i)\E{\sum_{j=1}^i e^{\alphaval (T_j-T_i)}}\\
           &+\Ccont\sum_{i=1}^n \sum_{l=i}^n \sum_{k=i}^l \E{e^{\alphaval(T_k-T_l)}e^{\alphaval(T_i-T_l)}\gamma^2} \E{ \norm{\nabla f(x_{T_i^-})}^2}\E{\sum_{j=1}^i e^{\alphaval (T_j-T_i)}(i-j)}\\
           &=\Ccont \sum_{i=1}^n \gamma^2\E{ \norm{\nabla f(x_{T_i^-})}^2 } \sum_{l=i}^n \sum_{k=i}^l \E{e^{\alphaval(T_k-T_l)}e^{\alphaval(T_i-T_l)}}(k-i)\E{\sum_{j=1}^i e^{\alphaval (T_j-T_i)}}\\
           &+\Ccont\sum_{i=1}^n \gamma^2 \E{ \norm{\nabla f(x_{T_i^-})}^2}\sum_{l=i}^n \sum_{k=i}^l \E{e^{\alphaval(T_k-T_l)}e^{\alphaval(T_i-T_l)}}\E{\sum_{j=1}^i e^{\alphaval (T_j-T_i)}(i-j)}\\
           &\overset{(i)}=\Ccont\sum_{i=1}^n \gamma^2 \E{ \norm{\nabla f(x_{T_i^-})}^2} \E{\sum_{l=i}^n \sum_{k=i}^l e^{\alphaval(T_k-T_l)}e^{\alphaval(T_i-T_l)}\sum_{j=1}^i e^{\alphaval (T_j-T_i)}(k-j)},
    \end{aligned}
\end{equation}
where (i) used 
the independence property of the increments of $\tk$.
Rewriting \eqref{eq:exp_separate_2:6} as Poisson integrals, we note 
\begin{align*}
    B_i &:= \sum_{l=i}^n \sum_{k=i}^l e^{\alphaval(T_k-T_l)}e^{\alphaval(T_i-T_l)}\sum_{j=1}^i e^{\alphaval (T_j-T_i)}(k-j)\\
    &= \int_{T_i}^{T_n}\int_{T_i}^t     e^{\alphaval(\tau-t)}e^{\alphaval(T_i-t)}\int_0^{T_i}(N_{\tau^-}-N_{\sigma^-})e^{\alphaval(\sigma - T_i)}dN_\sigma  dN_\tau dN_t,
\end{align*}
 such that
\begin{equation*}
    \begin{aligned}
         \E{\1_{\Abar} \cdot\eqref{eq:exp_separate_2:1}} &\le \Ccont\sum_{i=1}^n \gamma^2 \E{ \norm{\nabla f(x_{T_i^-})}^2}\E{B_i}\\
            &\le \Ccont \max_{i\in \{1,\dots,n\}}\E{B_i}\sum_{i=1}^n\gamma^2 \E{ \norm{\nabla f(x_{T_i^-})}^2}\\
         &= \Ccont  \max_{i\in \{1,\dots,n\}}\E{B_i}\E{\int_0^{T_n}\gamma^2 \norm{\nabla f(x_{s^-})}^2dN_s}
    \end{aligned}
\end{equation*}

\subsection{Proof of Lemma~\ref{lem:hess_lip_5}}\label{app:lem_5}
We recall 
\[A_k := \int_{[T_k,T_n]}\int_{[T_k, t]}  \int_{[0,T_k]}e^{\alphaval(T_k-t)}e^{\alphaval(\tau-t)}e^{\alphaval(\sigma - T_k)}(\tau - \sigma)dN_\sigma dN_\tau dN_t, \]
\[ B_k := \int_{T_k}^{T_n}\int_{T_k}^t     \int_0^{T_k}e^{\alphaval(\tau-t)}e^{\alphaval(T_k-t)}e^{\alphaval(\sigma - T_k)}(N_{\tau^-}-N_{\sigma^-})dN_\sigma  dN_\tau dN_t.\]
We will split the proof in several sub lemmas. We first bound $\max_{k\in \{1,\dots,n\}}\E{B_k}$.
\begin{lemma}\label{lem:comput_sto_3}
We have
\begin{align*}
     &\max_{k \in \{1,\dots,n\}} \E{B_k} \le  \frac{(1+\alphaval)^2(1+2\alphaval)}{\alphaval^4}
\end{align*}
\end{lemma}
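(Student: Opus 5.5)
The plan is to compute $\E{B_k}$ explicitly after rewriting $B_k$ as a finite sum, and then bound it uniformly in $k$ by extending every finite sum to an infinite geometric-type series.

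\textbf{Step 1: write $B_k$ as a sum.} As in the proof of Lemma~\ref{lem:hess_lip_4}, the Poisson integrals over $[0,T_n]$ become sums over jump times. Using $N_{T_m^-}-N_{T_j^-}=m-j$, we get
\[
B_k=\sum_{l=k}^n\sum_{m=k}^l\sum_{j=1}^k e^{\alphaval(T_m-T_l)}e^{\alphaval(T_k-T_l)}e^{\alphaval(T_j-T_k)}(m-j).
\]

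\textbf{Step 2: regroup into disjoint increments.} We have $e^{\alphaval(T_m-T_l)}e^{\alphaval(T_k-T_l)}=e^{-2\alphaval(T_l-T_m)}e^{-\alphaval(T_m-T_k)}$. Since $j\le k\le m\le l$, the three increments $T_l-T_m$, $T_m-T_k$ and $T_k-T_j$ are independent. They have laws $\Gamma(l-m,1)$, $\Gamma(m-k,1)$ and $\Gamma(k-j,1)$, and $\E{e^{-\theta G}}=(1+\theta)^{-p}$ for $G\sim\Gamma(p,1)$. Hence each term has expectation
\[
(1+2\alphaval)^{-(l-m)}(1+\alphaval)^{-(m-k)}(1+\alphaval)^{-(k-j)}\,(m-j).
\]

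\textbf{Step 3: sum with nonnegative terms.} Set $p=l-m$, $q=m-k$, $r=k-j$, so that $m-j=q+r$. All terms are nonnegative, so we can enlarge the ranges to $p,q,r\ge 0$; this gives a bound independent of $k$ and $n$. The $p$-sum is bounded by $\sum_{p\ge0}(1+2\alphaval)^{-p}=\frac{1+2\alphaval}{2\alphaval}$. With $\rho=(1+\alphaval)^{-1}$, the $(q,r)$-sum is
\[
\sum_{q,r\ge0}\rho^{q+r}(q+r)=2\Big(\sum_{q}q\rho^q\Big)\Big(\sum_r\rho^r\Big)=\frac{2\rho}{(1-\rho)^3}=\frac{2(1+\alphaval)^2}{\alphaval^3},
\]
using $1-\rho=\alphaval/(1+\alphaval)$. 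Multiplying the two factors gives exactly $\frac{(1+\alphaval)^2(1+2\alphaval)}{\alphaval^4}$ for every $k$, and therefore for the maximum over $k\in\{1,\dots,n\}$.

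The main point requiring care is Step 2. One must check that the exponential factors genuinely factor over non-overlapping increments, which holds because of the ordering $j\le k\le m\le l$. One must also check that the deterministic weight $(m-j)$ is compatible with this, which it is since it splits as $q+r$. After that, the remaining work is routine summation of geometric series.
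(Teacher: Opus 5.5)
Your proof is correct and follows essentially the same route as the paper. Both rewrite $B_k$ as a triple sum over jump indices and factor the expectation over disjoint Gamma increments. Both then bound the outermost geometric sum by $\frac{1+2\alpha}{2\alpha}$ and reduce the remainder to $\sum_{d\ge 0} d(d+1)(1+\alpha)^{-d} = \frac{2(1+\alpha)^2}{\alpha^3}$. Your split $m-j=q+r$ with $2\bigl(\sum_q q\rho^q\bigr)\bigl(\sum_r \rho^r\bigr)$ is the same series as the paper's substitution $d=l-j$, which counts at most $d+1$ admissible values of $j$.
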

\begin{proof}

Let $k \le n$. 
\begin{align*}
B_k&= \int_{T_k}^{T_n}\int_{T_k}^t \int_0^{T_k}    e^{\alphaval(\tau-t)}e^{\alphaval(\sigma-t)}(N_{\tau^-}-N_{\sigma^-})dN_\sigma  dN_\tau dN_t\\
       &=\sum_{i=k}^n \sum_{j=1}^k\sum_{l=k}^ie^{\alphaval(T_l - T_i)}e^{\alphaval(T_j-T_i)}(l-j)\\
       &=\sum_{i=k}^n \sum_{j=1}^k\sum_{l=k}^i e^{2\alphaval(T_l-T_i)}e^{\alphaval(T_j-T_l)}(l-j).
\end{align*}

We take expectation, use $T_l-T_i \ind T_j-T_l$ and Lemma~\ref{lem:exp_stopping_time}
\begin{align}
   \E{B_k} = & \E{\sum_{i=k}^n\sum_{j=1}^k\sum_{l=k}^i e^{2\alphaval(T_l-T_i)}e^{\alphaval(T_j-T_l)}(l-j)}\\
    &= \sum_{i=k}^n \sum_{j=1}^k\sum_{l=k}^i
    \E{e^{2\alphaval(T_l-T_i)}}\E{e^{\alphaval(T_j-T_l)}}(l-j)\label{eq:divergence_bis} \\
    &=\sum_{i=k}^n \sum_{j=1}^k\sum_{l=k}^i
    (1+2\alphaval)^{l-i}(1+\alphaval)^{j-l}(l-j)\nonumber\\
    &=\sum_{\substack{i,j,l \text{ s.t.}\\ 1\le j\le k\le l \le i \le n}}
    (1+2\alphaval)^{l-i}(1+\alphaval)^{j-l}(l-j)\nonumber\\
    &=\sum_{\substack{j,l \text{ s.t.}\\ 1\le j\le k\le l \le n}}
   (1+\alphaval)^{j-l}(l-j)\sum_{i=l}^n (1+2\alphaval)^{l-i}\nonumber
\end{align}
We have $\sum_{i=l}^n (1+2\alphaval)^{l-i}\le \sum_{i=l}^{+\infty} (1+2\alphaval)^{l-i} = 1+\frac{1}{2\alphaval} $. We plug this inequality in the above, such that
    \begin{equation}
        \begin{aligned}
\E{B_k} \le \bpar{1+\frac{1}{2\alphaval}} \sum_{\substack{j,l \text{ s.t.}\\ 1\le j\le k\le l \le n}}
   (1+\alphaval)^{j-l}(l-j)
        \end{aligned}
    \end{equation}
Now, we do the change of variable $(j,l) \to (j,l-j):=(j,d)$, such that
    \begin{equation}
        \begin{aligned}
\E{B_k} \le \bpar{1+\frac{1}{2\alphaval}} \sum_{d=0}^{n-1}\sum_{j=\max{\{1, k-d \}}}^{\min \{k,n-d\}}
   (1+\alphaval)^{-d}d.
        \end{aligned}
    \end{equation}
Indeed, for a fixed $k$, we both have $1\le j\le k$, and $k\le l \le n \Rightarrow k-d\le l-d\overset{(i)}{=} j \le n-d$, where $(i)$ is due to the change of variable. Then, as $\sum_{j=\max{\{1, k-d \}}}^{\min \{k,n-d\}}
   (1+\alphaval)^{-d}d \le \sum_{j=k-d}^{k}
   (1+\alphaval)^{-d}d$, we deduce
\begin{equation}
    \begin{aligned}
        \E{B_k} &\le \bpar{1+\frac{1}{2\alphaval}} \sum_{d=0}^{n-1}\sum_{j=k-d}^{k}
   (1+\alphaval)^{-d}d\\
   &\le \bpar{1+\frac{1}{2\alphaval}} \sum_{d=0}^{n-1}
   (1+\alphaval)^{-d}d(d+1)\\
   &\le \bpar{1+\frac{1}{2\alphaval}} \sum_{d=0}^{+\infty}
   (1+\alphaval)^{-d}d(d+1)\\
   &= \bpar{1+\frac{1}{2\alphaval}}\frac{2}{\alphaval^3}(1+\alphaval)^2\\
   &= \frac{(1+\alphaval)^2(1+2\alphaval)}{\alphaval^4}.
    \end{aligned}
\end{equation}

\end{proof}
We now turn to $\max_{k\in \{1,\dots,n\}}\E{A_k}$.
The computations are similar, thanks to the following elementary result.
\begin{lemma}\label{lem:gamma_law_deriv}
    If $T_n \sim \Gamma(n,1)$, then
\begin{equation*}
    \E{T_n e^{-\alphaval T_n}} =  \frac{n}{(1+\alphaval)^{n+1}}
\end{equation*}
\end{lemma}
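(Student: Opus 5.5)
The claim is that for $T_n \sim \Gamma(n,1)$ one has $\E{T_n e^{-\alphaval T_n}} = n(1+\alphaval)^{-(n+1)}$. I would prove it by differentiating the Laplace transform of the Gamma law with respect to its argument. The first step is to recall, as already used in the proof of Lemma~\ref{lem:comput_sto_3}, that for $\lambda > -1$
\[
\phi(\lambda) := \E{e^{-\lambda T_n}} = \int_0^{+\infty} \frac{t^{n-1}}{(n-1)!} e^{-(1+\lambda)t}\,dt = (1+\lambda)^{-n}.
\]
This follows from the density of $\Gamma(n,1)$ and the substitution $u=(1+\lambda)t$. Equivalently, write $T_n$ as a sum of $n$ independent $\mathcal{E}(1)$ variables and multiply their individual transforms $(1+\lambda)^{-1}$.

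The second step is to differentiate $\phi$ under the expectation:
\[
\phi'(\lambda) = -\E{T_n e^{-\lambda T_n}}.
\]
Computing the derivative directly gives $\phi'(\lambda) = -n(1+\lambda)^{-n-1}$. Evaluating at $\lambda=\alphaval$ then yields the claim.

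The only point requiring care is justifying the interchange of derivative and expectation. Fix a neighbourhood $[\alphaval-\delta,\alphaval+\delta]$ with $\alphaval-\delta>-1$. On it, $|T_n e^{-\lambda T_n}| \le T_n e^{-(\alphaval-\delta)T_n}$, which is integrable against the Gamma density since $1+\alphaval-\delta>0$. Dominated convergence then applies.

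To avoid the interchange altogether, one can compute directly:
\[
\E{T_n e^{-\alphaval T_n}} = \int_0^\infty \frac{t^{n}}{(n-1)!}e^{-(1+\alphaval)t}dt = \frac{n!}{(n-1)!}(1+\alphaval)^{-(n+1)},
\]
using $\int_0^\infty t^n e^{-ct}dt = n!/c^{n+1}$. This gives the result in one line, and I would present this direct computation as the proof.
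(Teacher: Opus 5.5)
Your proposal is correct. Your first route, differentiating $\lambda\mapsto\mathbb{E}[e^{-\lambda T_n}]=(1+\lambda)^{-n}$ and evaluating at $\lambda=\alphaval$, is exactly the paper's argument, and you add the dominated-convergence justification for the interchange, which the paper omits. The direct computation $\int_0^\infty t^n e^{-(1+\alphaval)t}\,dt/(n-1)! = n(1+\alphaval)^{-(n+1)}$ that you would actually present is an equally valid, more elementary one-line alternative that avoids the interchange entirely.
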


\begin{proof}
    Let $\varphi(\alphaval) = \E{e^{-\alphaval T_n}}$ the moment-generating function of $T_n$. As $T_n \sim \Gamma(n,1)$, we have $\phi(\alphaval) =(1+\alphaval)^{-n} \Rightarrow \phi'(\alphaval) =- n(1+\alphaval)^{-n-1}$. 
    So
\begin{equation*}
    \phi (\alphaval) = -\E{T_ne^{-\alphaval T_n}} \Rightarrow \E{T_ne^{-\alphaval T_n}} =  \frac{n}{(1+\alphaval)^{n+1}}.
\end{equation*}
\end{proof}

\begin{lemma}\label{lem:comput_sto_4}
We have
\begin{align*}
     &\max_{k \in \{1,\dots,n\}} \E{A_k} \le \frac{(1+\alphaval)(1+2\alphaval)}{\alphaval^4}
\end{align*}
\end{lemma}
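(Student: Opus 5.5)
Fix $k\le n$. We follow the same route as for Lemma~\ref{lem:comput_sto_3}. The Poisson integrals in $A_k$ become finite sums over jump times:
\[
A_k=\sum_{i=k}^n\sum_{l=k}^i\sum_{j=1}^k e^{\alphaval(T_k-T_i)}e^{\alphaval(T_l-T_i)}e^{\alphaval(T_j-T_k)}(T_l-T_j).
\]
Combining exponents gives $e^{2\alphaval(T_l-T_i)}e^{\alphaval(T_j-T_l)}$, the same structure as for $B_k$. The difference is the weight: $T_l-T_j$ replaces $l-j$. Since $j\le l\le i$, we write $T_l-T_i=-(T_i-T_l)$ and $T_j-T_l=-(T_l-T_j)$. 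These two increments are independent, with laws $\Gamma(i-l,1)$ and $\Gamma(l-j,1)$. The only factor tied to the weight is $T_l-T_j$, which sits on the second increment. Hence each summand's expectation factorizes as
\[
\E{e^{-2\alphaval(T_i-T_l)}}\,\E{(T_l-T_j)e^{-\alphaval(T_l-T_j)}}=(1+2\alphaval)^{l-i}\,\frac{l-j}{(1+\alphaval)^{l-j+1}},
\]
using the Gamma moment-generating function and Lemma~\ref{lem:gamma_law_deriv}. So $\E{A_k}$ equals the $\E{B_k}$ sum multiplied by the extra factor $(1+\alphaval)^{-1}$.

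We then repeat the algebra from Lemma~\ref{lem:comput_sto_3}. First, the sum over $i\ge l$ is bounded by the geometric series $\sum_{i\ge l}(1+2\alphaval)^{l-i}=\frac{1+2\alphaval}{2\alphaval}$. Second, we change variables to $d=l-j$; for fixed $d$ there are at most $d+1$ admissible $j$. Third, we use $\sum_{d\ge0}d(d+1)(1+\alphaval)^{-d}=\frac{2(1+\alphaval)^2}{\alphaval^3}$. Together these give
\[
\E{A_k}\le\frac{1}{1+\alphaval}\cdot\frac{1+2\alphaval}{2\alphaval}\cdot\frac{2(1+\alphaval)^2}{\alphaval^3}=\frac{(1+\alphaval)(1+2\alphaval)}{\alphaval^4},
\]
uniformly in $k$, which is the claim.

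The main point to check is the independence used in the factorization. The random factor $T_l-T_j$ multiplies the exponentials, so we must confirm it is measurable with respect to the increments on $(T_j,T_l]$ only. The other exponential, $e^{-2\alphaval(T_i-T_l)}$, depends only on the increments on $(T_l,T_i]$. These index ranges are disjoint because $j\le l\le i$, so the i.i.d.\ exponential increments give independence. The degenerate cases $l=j$ or $i=l$ contribute zero or trivial factors and are covered by the same formulas. Swapping expectation and finite sums is immediate by nonnegativity.
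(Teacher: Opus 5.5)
Your proposal is correct and follows the paper's proof. Like the paper, you write $A_k$ as the same triple sum as $B_k$ with weight $T_l-T_j$ in place of $l-j$, factor the expectation using independence of the increments, and apply Lemma~\ref{lem:gamma_law_deriv} to gain a factor $(1+\alphaval)^{-1}$ over the bound of Lemma~\ref{lem:comput_sto_3}. You also justify the independence step more explicitly than the paper does.
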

\begin{proof}
The proof is almost identical to the one of Lemma~\ref{lem:comput_sto_3}. The only difference lies in \eqref{eq:divergence_bis}, where $l-j$ is replaced by $T_l - T_j$.  
    Then, a direct application of Lemma~\ref{lem:gamma_law_deriv} gives 
$\E{T_n e^{-\alphaval T_n}}  = (1+\alphaval)^{-1}\E{ne^{-\alphaval T_n}}$ for some $T_n\sim \Gamma(n,1)$.
\end{proof}

\subsection{Proof of Lemma~\ref{lem:term_2_3_control}}\label{app:term_2_3}

\paragraph{1. Bounding Term 2.}
Using Lemma~\ref{lem:hess_lip_4} and Lemma~\ref{lem:hess_lip_5}, we have
\begin{equation}
    \begin{aligned}\label{eq:lem:hess_lip_3:2}
        &\E{\1_{\Abar} \alphaval^2L_2\int_0^{T_n} \eta^2 \norm{z_s-x_s}^2 \bpar{\int_s^{T_n}\int_s^t \int_0^s    e^{\alphaval(\tau-t)}e^{\alphaval(\sigma-t)}(\tau-\sigma)dN_\sigma dN_\tau  dN_t}ds} \\
        &\le \Ccont L_2 \frac{(1+\alphaval)(1+2\alphaval)}{\alphaval^2}  \E{\int_0^{T_n} \eta^2 \norm{z_s-x_s}^2 ds}.
    \end{aligned}
\end{equation}
From \eqref{eq:lyap_control_1}, we have
\begin{equation}\label{eq:lem:hess_lip_3:3}
     \E{\int_0^{T_n}  \norm{z_s-x_s}^2 ds}  \le  \frac{1}{\eta + \eta'}\gapf =\frac{1}{\alphaval}\gapf  .
\end{equation}
So, combining \eqref{eq:lem:hess_lip_3:2} and \eqref{eq:lem:hess_lip_3:3}, we deduce 
\begin{equation}
    \begin{aligned}\label{eq:lem:hess_lip_3:b}
        &\E{\1_{\Abar} \alphaval^2L_2\int_0^{T_n} \eta^2 \norm{z_s-x_s}^2 \bpar{\int_s^{T_n}\int_s^t \int_0^s    e^{\alphaval(\tau-t)}e^{\alphaval(\sigma-t)}(\tau-\sigma)dN_\sigma dN_\tau  dN_t}ds} \\
        &\le \Ccont L_2 \eta^2\frac{(1+\alphaval)(1+2\alphaval)}{\alphaval^3} \gapf\\
        &=\Ccont L_2 \gamma\frac{(1+\alphaval)(1+2\alphaval)}{2\alphaval^3} \gapf,
    \end{aligned}
\end{equation}
where the last equality uses $\eta = \sqrt{\frac{\gamma}{2}}$.
\paragraph{2. Bounding Term 3.}
Using Lemma~\ref{lem:hess_lip_4} and Lemma~\ref{lem:hess_lip_5}, we have
\begin{equation}
     \begin{aligned}\label{eq:lem:hess_lip_3:4}
          &\E{\1_{\Abar}\alphaval^2L_2\int_0^{T_n} \gamma^2 \norm{\nabla f(x_{s^-})}^2  \bpar{\int_s^{T_n}\int_s^t \int_0^s    e^{\alphaval(\tau-t)}e^{\alphaval(\sigma-t)}(N_{\tau^-}-N_{\sigma^-})dN_\sigma  dN_\tau dN_t}dN_s} \\
        &\le \Ccont L_2\frac{(1+\alphaval)^2(1+2\alphaval)}{\alphaval^2}\E{\int_0^{T_n} \gamma^2 \norm{\nabla f(x_{s^-})}^2  dN_s}
     \end{aligned}
\end{equation}
From \eqref{eq:lyap_control_2}, we have
\begin{equation}\label{eq:lem:hess_lip_3:5}
    \mathbb{E}\left[\int_0^{T_n} \norm{\nabla f(x_{s^{-}})}^2dN_s\right] \le \frac{4}{\gamma }\gapf.
\end{equation}
So, combining \eqref{eq:lem:hess_lip_3:4} and \eqref{eq:lem:hess_lip_3:5}, we deduce 
\begin{equation}
     \begin{aligned}\label{eq:lem:hess_lip_3:c}
          &\E{\1_{\Abar}\alphaval^2L_2\int_0^{T_n} \gamma^2 \norm{\nabla f(x_{s^-})}^2  \bpar{\int_s^{T_n}\int_s^t \int_0^s    e^{\alphaval(\tau-t)}e^{\alphaval(\sigma-t)}(N_{\tau^-}-N_{\sigma^-})dN_\sigma  dN_\tau dN_t}dN_s} \\
        &\le 4\Ccont L_2 \gamma \frac{(1+\alphaval)^2(1+2\alphaval)}{\alphaval^2}\gapf.
     \end{aligned}
\end{equation}
Combining \eqref{eq:lem:hess_lip_3:b} and \eqref{eq:lem:hess_lip_3:c}, we obtain the desired result.
\subsection{Proofs of Section~\ref{sec:part_5}}\label{app:part_5}
    
\paragraph{Proof of Corollary~\ref{cor:choice1}}
    From \eqref{eq:almost_final_bis}
                   \begin{equation}
           \begin{aligned}\label{eq:corr_1}
            & \E{\1_{\Abar}\min_{t \in \{T_1,\dots,T_n \}} \norm{\nabla f(\overline{x}_t)} \int_0^{T_n}\bpar{\int_0^t \alphaval e^{\alphaval(s-t)}dN_s}^2 dN_t}\\
                  &=n^{3/7}\bpar{\sqrt{A_n L\gapf} + B_n \Ccont L^{-1}L_2 \gapf}\\
                  &\le 4\sqrt{3} B_n \Ccont\bpar{\sqrt{ L\gapf} + L^{-1}L_2 \gapf} n^{3/7}
    \end{aligned}
       \end{equation}
where we use $\Ccont \ge 1$ and $\sqrt{A_n} = 2\sqrt{3}\sqrt{(1+\frac{3}{2}\alphaval)(1+2\alphaval)} \le2\sqrt{3}(1+2\alphaval) \le 2\sqrt{3}(1+2\alphaval)(1+\alphaval)\bpar{1 + \alphaval8(1+\alphaval)}=4\sqrt{3}B_n$.  Using $\alphaval = n^{-1/7}$, we have
\[ B_n \Ccont= 2\sqrt{3}(1+2n^{-1/7})(1+n^{-1/7})\bpar{1 + n^{-1/7}8(1+n^{-1/7})} 32(4+4C_n+C_n^2)\frac{\cupp e}{\cinf^2},\]
with $C_n =  \cinf n^{-1/7} \Kval $.

    \paragraph{Proof of Corollary~\ref{cor:choice2} }

From \eqref{eq:almost_final_bis}
           \begin{equation}
           \begin{aligned}\label{eq:corr_2:1}
            & \E{\1_{\Abar}\min_{t \in \{T_1,\dots,T_n \}} \norm{\nabla f(\overline{x}_t)} \int_0^{T_n}\bpar{\int_0^t \alphaval e^{\alphaval(s-t)}dN_s}^2 dN_t}\\
                  &\le n^{3/7}\bpar{\sqrt{A_n\constalpha L^{1+a_1}L_2^{a_2}\gapf^{1+a_3}} + B_n\constalpha^{-3} \Ccont L^{-1-3a_1}L_2^{1-3a_2} \gapf^{1-3a_3}},
    \end{aligned}
       \end{equation}
        with, $A_n := 12\bpar{1+\frac{3}{2}\alphaval}(1+2\alphaval)$ and $B_n := \frac{1}{2}(1+\alphaval)(1+2\alphaval)\bpar{1 + \alphaval8(1+\alphaval)}$. 
    We want to find $a_1,a_2,a_3$ such that the exponents in the left and right term are the same.  Therefore, we solve
\[
\left\{
\begin{aligned}
\frac{1+a_1}{2} &= -1-3a_1,\\
\frac{a_2}{2} &= 1-3a_2,\\
\frac{1+a_3}{2} &= 1-3a_3,
\end{aligned}
\right.
\qquad \Longrightarrow \qquad
(a_1,a_2,a_3)=\left(-\frac{3}{7},\frac{2}{7},\frac{1}{7}\right).
\]
Then, \eqref{eq:corr_2:1} becomes
                  \begin{equation}
           \begin{aligned}\label{eq:corr_2:2}
            & \E{\1_{\Abar}\min_{t \in \{T_1,\dots,T_n \}} \norm{\nabla f(\overline{x}_t)} \int_0^{T_n}\bpar{\int_0^t \alphaval e^{\alphaval(s-t)}dN_s}^2 dN_t}\\
                  &\le n^{3/7}\bpar{\sqrt{ A_n\bpar{1+\frac{3}{2}}\constalpha } +B_n\constalpha^{-3} \Ccont}L^{\frac{2}{7}}L_2^{\frac{1}{7}} \gapf^{\frac{4}{7}}\\
                  &\overset{(i)}= \bpar{\sqrt{ A_n\constalpha } +B_n\constalpha^{-3} \Big(32\frac{\cupp e}{\cinf^2}(4 + 4C_n + C_n^2)\Big)}L^{\frac{2}{7}}L_2^{\frac{1}{7}} \gapf^{\frac{4}{7}}n^{3/7}\\
                  &\overset{(ii)}\le (1 + C_n + C_n^2/4)\bpar{\sqrt{ A_n\constalpha } + 128B_n\frac{\cupp e}{\cinf^2}\constalpha^{-3}}L^{\frac{2}{7}}L_2^{\frac{1}{7}} \gapf^{\frac{4}{7}}n^{3/7}\\
                  & \overset{(iii)}\le (1 + C_n + C_n^2/4)\Hconst\bpar{\sqrt{ 12\constalpha } + 64\frac{\cupp e}{\cinf^2}\constalpha^{-3}}L^{\frac{2}{7}}L_2^{\frac{1}{7}} \gapf^{\frac{4}{7}}n^{3/7}.
    \end{aligned}
    \end{equation}
    In $(i)$ we use the definition of $\Ccont$, namely $\Ccont = 32(4+4C_n+C_n^2)\frac{\cupp e}{\cinf^2}$. In $(ii)$ we use $(4 + 4C_n + C_n^2)=4(1+C_n+C_n^2/4)$ and $(1+C_n+C_n^2/4) \ge 1$.  In $(iii)$, replace $A_n$ and $B_n$ by their values and we set $\Hconst := \max \{ \sqrt{(1+\frac{3}{2}\alphaval)(1+2\alphaval)}, (1+\alphaval)(1+2\alphaval)(1+\alphaval 8 (1+\alphaval)) \} = (1+\alphaval)(1+2\alphaval)(1+\alphaval 8 (1+\alphaval))$.
    We then optimize on $\constalpha$. Note that we keep implicit the dependence on $\constalpha$ in $\Hconst$, and do not consider it in the optimization of $\constalpha$. Denoting $a =12,$ $b =64\frac{\cupp e}{\cinf^2}$, we want to minimize
    \[\sqrt{a\constalpha} + b\constalpha^{-3}.\]
    The function $\phi: t \mapsto \sqrt{a t} + bt^{-3}$ is derivable on $\R_+^\ast$, and $\phi'(t) = \frac{\sqrt{a}}{2\sqrt{t}} - \frac{3b}{t^4} $. It follows that $\phi$ reaches its minimum on $\R_+^\ast$ at the point
    \[\phi'(t) = 0 \Leftrightarrow t =\bpar{\frac{6b}{\sqrt{a}}}^{2/7}.\]
    Then, the minimum of $\phi$, denoted $\phi_{\min}$, is 
    \begin{align*}
\phi_{\min}
&= \phi\bpar{\bpar{\frac{6b}{\sqrt{a}}}^{2/7}} \\
&= \sqrt{a\left(\frac{6b}{\sqrt{a}}\right)^{2/7}} + \frac{b}{\left(\frac{6b}{\sqrt{a}}\right)^{6/7}} \\
&= a^{1/2}\cdot 6^{1/7} b^{1/7} a^{-1/14}
+ b \cdot a^{3/7} 6^{-6/7} b^{-6/7} \\
&= 6^{1/7} a^{3/7} b^{1/7}
+ 6^{-6/7} a^{3/7} b^{1/7} \\
&= \left(6^{1/7} + 6^{-6/7}\right)a^{3/7} b^{1/7} \\
&= 7\,6^{-6/7}\,a^{3/7} b^{1/7}.\\
\end{align*}
Then, choosing $\constalpha := \bpar{\frac{6b}{\sqrt{a}}}^{2/7}$, and replacing
 $a$ and $b$ by their values, we obtain 
\[\constalpha = \left(\frac{6b}{\sqrt{a}}\right)^{2/7}
= \left(\frac{6 \cdot 64\frac{\cupp e}{\cinf^2}}{\sqrt{12}}\right)^{2/7}
= \left(\frac{\sqrt{3}\cdot 64\cupp e}{\cinf^2}\right)^{2/7},\]
which in turn implies
\[\sqrt{ A_n\constalpha } +64\frac{\cupp e}{\cinf^2}\constalpha^{-3} =7\,6^{-6/7}\,12^{3/7}
64^{1/7}\bpar{\frac{\cupp e}{\cinf^2}}^{1/7}. \]
Finally, we deduce that with the choice
\[\alphaval = \left(\frac{\sqrt{3}\cdot 64\cupp e}{\cinf^2}\right)^{2/7}\bpar{\frac{L_2^2\gapf}{L^3 n}}^{1/7},\]
we obtain
      \begin{equation}
           \begin{aligned}\label{eq:corr_2:3}
            & \E{\min_{t \in \{T_1,\dots,T_n \}} \norm{\nabla f(\overline{x}_t)} \int_0^{T_n}\bpar{\int_0^t \alphaval e^{\alphaval(s-t)}dN_s}^2 dN_t}\\
                  &\le (1 + C_n + C_n^2/4)\Hconst 7\,6^{-6/7}\,12^{3/7}
64^{1/7}\bpar{\frac{\cupp e}{\cinf^2}}^{1/7}\bpar{\frac{\cupp e}{\cinf^2}}^{1/7}L^{\frac{2}{7}}L_2^{\frac{1}{7}} \gapf^{\frac{4}{7}} n^{3/7}\\
&\approx 9.2(1 + C_n + C_n^2/4)\Hconst\bpar{\frac{\cupp}{\cinf^2}}^{1/7}L^{\frac{2}{7}}L_2^{\frac{1}{7}} \gapf^{\frac{4}{7}} n^{3/7}.
    \end{aligned}
    \end{equation}

\subsection{Proof of Lemma~\ref{lem:lower_bound_delta}}\label{app:delta}


We note
\[
\Delta_n
:=
\int_0^{T_n}\left(\int_0^t \alphaval e^{\alphaval(s-t)}\,dN_s\right)^2 dN_t
=
\sum_{i=1}^n \left(\sum_{j=1}^i \alphaval e^{-\alphaval(T_i-T_j)}\right)^2.
\]

By Proposition~\ref{prop:min_gamma}, on the set $\Abar$, we have that for all
\(j\le i\le n\),
\[
T_i-T_j \le \cupp(i-j)+\cupp \Kval .
\]
Hence,
\[\1_{\Abar}
\Delta_n
\ge
\1_{\Abar}\alphaval^2 \sum_{i=1}^n
\left(\sum_{j=1}^i e^{-\cupp\alphaval(i-j+\Kval )}\right)^2
=
\1_{\Abar}\alphaval^2 e^{-2\cupp\alphaval \Kval }
\sum_{i=1}^n
\left(\sum_{j=1}^i e^{-\cupp\alphaval(i-j)}\right)^2.
\]

Set \(r:=e^{-\cupp\alphaval}\). Then
\[
\sum_{j=1}^i e^{-\cupp\alphaval(i-j)}
=
\sum_{\ell=0}^{i-1} r^\ell
=
\frac{1-r^i}{1-r}.
\]
Therefore,
\[
\1_{\Abar}\Delta_n
\ge
\1_{\Abar}\frac{\alphaval^2}{(1-r)^2}e^{-2\cupp\alphaval \Kval }
\sum_{i=1}^n (1-r^i)^2.
\]

Using \(1-e^{-x}\le x\) for all \(x\ge 0\), we get
\[
\frac{\alphaval^2}{(1-r)^2}
=
\frac{\alphaval^2}{(1-e^{-\cupp\alphaval})^2}
\ge
\frac{\alphaval^2}{(\cupp\alphaval)^2}
=
\frac{1}{\cupp^2}.
\]
Moreover,
\[
e^{-2\cupp\alphaval \Kval }
=
e^{-2\frac{\cupp}{\cinf}C_n}.
\]

Hence,
\[
\1_{\Abar}\Delta_n
\ge
\1_{\Abar}\frac{1}{\cupp^2}e^{-2\frac{\cupp}{\cinf}C_n}
\sum_{i=1}^n (1-r^i)^2.
\]

Now,
\[
\sum_{i=1}^n (1-r^i)^2
=
n-2\sum_{i=1}^n r^i+\sum_{i=1}^n r^{2i}
\ge
n-2\sum_{i=1}^\infty r^i
=
n-\frac{2r}{1-r}
=
n-\frac{2}{e^{\cupp\alphaval}-1}.
\]
Therefore,
\[
\1_{\Abar}\Delta_n
\ge
\1_{\Abar}\frac{1}{\cupp^2}e^{-2\frac{\cupp}{\cinf}C_n}
\left(n-\frac{2}{e^{\cupp\alphaval}-1}\right).
\]

Finally, since \(e^x-1\ge x\) for all \(x\ge 0\),
\[
\frac{2}{e^{\cupp\alphaval}-1}
\le
\frac{2}{\cupp\alphaval},
\]
such that 
\[
\1_{\Abar}\Delta_n
\ge
\1_{\Abar}\frac{1}{\cupp^2}e^{-2\frac{\cupp}{\cinf}C_n}
\left(n-\frac{2}{\cupp \alphaval}\right)=\1_{\Abar}\frac{n}{\cupp^2}e^{-2\frac{\cupp}{\cinf}C_n}
\left(1-\frac{2}{\cupp \alphaval n}\right) .
\]

\section{Proof of Theorem~\ref{thm:high_proba_bound}}\label{app:proof_high_proba}
We prove Theorem~\ref{thm:high_proba_bound} in this section, which we restate now.
\maintheorem*

\begin{sproof}
Define 
$$H_0^i :=\sum_{j=1}^i (T_i-T_j)e^{-\alphaval (T_i-T_j)}, \; H_1^i := \sum_{j=1}^i e^{-\alphaval (T_i-T_j)}, \;H_2^i := \sum_{j=1}^i (i-j)e^{-\alphaval (T_i-T_j)}.$$
The proof follows the same five-step strategy for each of the three sums
$H_0^i$, $H_1^i$, $H_2^i$.

\medskip\noindent\textbf{Step 1 --- Choice of block size $\Kval $.}
Since $T_i - T_j = \sum_{k=j}^{i-1}\Delta_k$ with $\Delta_k
\overset{\mathrm{i.i.d.}}{\sim}\mathcal{E}(1)$, the Chernov inequality
controls deviations of a sum of \emph{fixed} length $\Kval $ from its
expectation, whose value is $\Kval $. However, we need a bound valid for all gap lengths $i-j$
simultaneously. The key device is to fix a block size $\Kval $ (whose value
will be determined in Step~2) and decompose each sum into consecutive
blocks of size $\Kval $ plus a residual block.

\medskip\noindent\textbf{Step 2 --- Concentration of $T_i-T_j$
(Proposition~\ref{prop:min_gamma}).}
Applying Chernov to each block of size $\Kval $, combined with a union bound
over the $O(n)$ possible block starting points, gives with probability
$1-\varepsilon$, for all $1 \le j \le i \le n$:
\[
  \cinf(i-j) - \cinf \Kval  \;\le\; T_i - T_j \;\le\; \cupp(i-j) + \cupp \Kval .
\]
The value $\Kval  = \ceil{(\cinf-1-\log\cinf)^{-1}\log(2n/\varepsilon)}$
is chosen to make the failure probability of each block at most
$\varepsilon/(2n)$, so that using a union bound implies a probability over all the blocks of $\varepsilon/2$, that does not grows with $n$.

\medskip\noindent\textbf{Step 3 --- High-probability upper-bounds on
$H_k^i$ (Propositions~\ref{prop:maj_H_1}--\ref{prop:maj_H_2}--\ref{prop:maj_H_0}).}
Substituting the bounds of Step~2 into each sum yields
upper-bounds with high probability on $\{H_k^i\}_{k\in \{0,1,2\}}$. These upper-bound involve terms that depend on $K$.

\medskip\noindent\textbf{Step 4 --- lower-bounds on expectations
(Lemmas~\ref{lem:exp_low_bound_H_0}--\ref{lem:exp_low_bound_H_2}--\ref{lem:exp_low_bound_H_1}).}
Computing the characteristic function of the Gamma distribution,
$\mathbb{E}[H_k^i]$ is computed exactly in closed form, then
lower-bounded 
by quantity whose form matches those of the upper-bound in Step~3.

\medskip\noindent\textbf{Step 5 --- Conclusion
(Section~\ref{app:combine}).}
The upper-bounds from Step 3 involve terms that depend on $K$, while the lower-bounds from Step 4 do not. We define $C_n$ such that $K = \bigO(C_n/\alphaval)$, allowing to combine these bounds. We obtain that the sums $H_k^i$ are upper-bounded by their expectations, up to constants involving $C_n$. As long as $\alphaval = n^{-\beta}$ for some $\beta > 0$, the constant $C_n$ will be uniformly upper-bounded with $n$, and will verify $C_n \to_{n \to +\infty} 0$. 

\end{sproof}

\subsection{Upper-bounding the sums}\label{app:upper}

A key result for our proof is based on a rather classical concentration inequality called the Chernov inequality. We use it in the case of gamma laws, see \textit{e.g.} \citep[Section 2.4]{boucheron2003concentration}.
\begin{lemma}[Chernov inequality]\label{lem:chernov}\notag
    For $k \in \N^\ast$, let $T_k \sim \Gamma(k,1)$, $0<\cinf \le 1 \le \cupp$. Then,
    \begin{enumerate}[label=(\roman*)]
        \item $\P(T_k \le \cinf k)\le e^{-( \cinf-1-\log( \cinf))k}.$
        \item $\P(T_k \ge \cupp k)\le e^{-( \cupp-1-\log( \cupp))k}.$
    \end{enumerate}
\end{lemma}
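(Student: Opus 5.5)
This is the standard Cramér--Chernoff bound, specialised to the Gamma law through its moment generating function. Write $T_k=\sum_{i=1}^k \Delta_i$ with $\Delta_i \overset{i.i.d.}{\sim}\mathcal{E}(1)$. Then for $\lambda<1$ we have $\E{e^{\lambda T_k}}=(1-\lambda)^{-k}$; this is the same Gamma moment generating function used in Lemma~\ref{lem:gamma_law_deriv}, evaluated at $-\lambda$. Both tails follow from Markov's inequality applied to an exponential of $T_k$, followed by an optimisation over the free parameter.

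For (ii), fix $\lambda\in[0,1)$ and apply Markov's inequality to $e^{\lambda T_k}$:
\[\P(T_k\ge \cupp k)\le e^{-\lambda \cupp k}(1-\lambda)^{-k}=\exp\bigl(-k(\lambda\cupp+\log(1-\lambda))\bigr).\]
The exponent $\lambda\cupp+\log(1-\lambda)$ is concave in $\lambda$. It is maximised at $\lambda^\ast=1-1/\cupp$, which lies in $[0,1)$ because $\cupp\ge1$. At this point the exponent equals $\cupp-1-\log\cupp$, which gives (ii).

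For (i), fix $\lambda\ge0$ and apply Markov's inequality to $e^{-\lambda T_k}$:
\[\P(T_k\le\cinf k)=\P\bigl(e^{-\lambda T_k}\ge e^{-\lambda\cinf k}\bigr)\le e^{\lambda\cinf k}(1+\lambda)^{-k}=\exp\bigl(-k(\log(1+\lambda)-\lambda\cinf)\bigr).\]
This exponent is maximised at $\lambda^\ast=1/\cinf-1$, which is nonnegative because $0<\cinf\le1$. At this point the exponent is $-\log\cinf-1+\cinf$, which is exactly (i).

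The only points needing care are checking that each optimiser lies in the admissible range of $\lambda$, so that the moment generating function is finite, and handling the boundary case $\cinf=\cupp=1$. In that case $\lambda^\ast=0$ and the bound is the trivial bound $1$, so both statements still hold. I expect no real obstacle; the argument is routine. If one prefers to avoid the computation, both bounds can instead be cited directly as the Gamma (sub-gamma) Cramér transform in \citep[Section 2.4]{boucheron2003concentration}.
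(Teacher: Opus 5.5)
Your proof is correct and matches the paper's route. The paper gives no argument of its own: it cites the Cramér--Chernoff bound for Gamma variables from \citep[Section 2.4]{boucheron2003concentration}, which is exactly your computation (Markov's inequality on $e^{\pm\lambda T_k}$ with the Gamma moment generating function, optimised at $\lambda^\ast=1-1/\cupp$ and $\lambda^\ast=1/\cinf-1$), including the check that each optimiser lies in the admissible range.
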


\begin{proposition}\label{prop:min_gamma}
    Let $\varepsilon,\cinf \in (0,1)$, $n > 0$, and  $\Kval  := \ceil{(\cinf-1-\log\cinf)^{-1}\log(2n/\varepsilon)}$. Let $\cupp \ge 1$ be chosen such that $\cupp$ such that $\cupp- \log(\cupp) = \cinf - \log(\cinf)$. Let the set $\Abar$ such that we have for any  $1 \le j \le i \le n$
    \[ \cinf (i-j) - \cinf \Kval  \le  T_i-T_j \le \cupp (i-j)+\cupp \Kval.\]
    Then, we have $\mathbb{P}(\Abar) \ge 1-\varepsilon$.
\end{proposition}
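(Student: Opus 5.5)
We need to show that, with probability at least $1-\varepsilon$, for all $1\le j\le i\le n$ we have $\cinf(i-j)-\cinf\Kval \le T_i-T_j\le \cupp(i-j)+\cupp\Kval$. The plan is a block decomposition combined with Lemma~\ref{lem:chernov} and a union bound. Write $T_i-T_j=\sum_{k=j}^{i-1}\Delta_k$ with $\Delta_k\overset{i.i.d.}{\sim}\mathcal{E}(1)$. Since $\cupp-\log\cupp=\cinf-\log\cinf$, both Chernov exponents coincide with $c:=\cinf-1-\log\cinf>0$. For a block of length $\Kval$ starting at index $m$, $S_m:=T_{m+\Kval}-T_m\sim\Gamma(\Kval,1)$. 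Lemma~\ref{lem:chernov} then gives
\[
\P(S_m\le \cinf\Kval)+\P(S_m\ge\cupp\Kval)\le 2e^{-c\Kval}\le \frac{\varepsilon}{n},
\]
by the choice $\Kval\ge c^{-1}\log(2n/\varepsilon)$.

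Rather than all $O(n)$ shifts, I will use only the fixed grid of starting points $m\in\{1,1+\Kval,1+2\Kval,\dots\}$ below $n$. There are at most $\lceil n/\Kval\rceil\le n$ such blocks. Define $\Abar$ as the event that every grid block satisfies $\cinf\Kval< S_m<\cupp\Kval$. By the union bound, $\P(\Abar^c)\le n\cdot\varepsilon/(2n)\cdot 2=\varepsilon$, since each block fails with probability at most $\varepsilon/n$ as computed above. The calibration of $\Kval$ gives exactly this factor: each tail is at most $\varepsilon/(2n)$. Alternatively, one can count each tail separately and union-bound over at most $n$ blocks.

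On $\Abar$, fix $j\le i$. Let $a$ be the first grid point $\ge j$ and $b$ the last grid point with $b\le i$, and assume $a\le b$. Then $[a,b)$ is a union of $(b-a)/\Kval$ full grid blocks, with $a-j<\Kval$ and $i-b<\Kval$.

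For the upper bound, $T_i-T_j$ is at most the sum over the full grid blocks covering $[j,i)$. These are the blocks from the grid point $\le j$ to the first grid point $\ge i$. Their number $m$ satisfies $m\Kval\le (i-j)+2\Kval$, which gives $T_i-T_j\le\cupp(i-j)+2\cupp\Kval$.

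This has a factor $2$ too many, and the boundary bookkeeping is the main obstacle. To get exactly $\cupp\Kval$, I will instead use the blocks contained in $[j,i)$ for the lower bound and the covering blocks for the upper bound, and accept a slack. If the factor $2$ persists, I will redefine the grid blocks via the two-sided events on both the grid $\{1+k\Kval\}$ and the covering argument, using $T_i-T_j\le T_{b'}-T_{a'}$ with $a'\le j$, $b'\ge i$ grid-adjacent. The covering interval has length less than $(i-j)+2\Kval$, but this can be sharpened to $(i-j)+\Kval$ plus one block by absorbing one partial block into the rounding $\lceil (i-j)/\Kval\rceil\Kval\le (i-j)+\Kval$ when the grid is anchored at $j$. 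That anchoring requires all starting points, hence $n$ blocks and per-block failure $\varepsilon/n$, which is again exactly what $\Kval$ allows.

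I will therefore anchor at every $j\in\{1,\dots,n\}$. On the event that every block $[j+k\Kval, j+(k+1)\Kval)$ is controlled, I need only the $n$ events for blocks starting at each index, since blocks starting at $j+k\Kval$ are themselves among the $n$ starting indices. Then, with $m=\lceil (i-j)/\Kval\rceil$,
\[
T_i-T_j\le T_{j+m\Kval}-T_j\le \cupp m\Kval\le \cupp(i-j)+\cupp\Kval,
\]
extending $T$ beyond $n$ if necessary; the increments exist as i.i.d. variables. Similarly, with $m'=\lfloor (i-j)/\Kval\rfloor$,
\[
T_i-T_j\ge \cinf m'\Kval\ge \cinf(i-j)-\cinf\Kval.
\]
The union bound over $n$ starting indices, each failing with probability at most $2e^{-c\Kval}\le\varepsilon/n$, gives $\P(\Abar)\ge1-\varepsilon$.
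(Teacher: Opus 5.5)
Your final argument is correct and is essentially the paper's proof: Chernov on length-$K$ blocks anchored at every starting index $j$, a union bound over $O(n)$ indices, and floor/ceiling block counts for the lower and upper bounds. Your $\lceil (i-j)/K\rceil$ covering blocks are the paper's full blocks plus a residual dominated by one extra block, and your two-sided per-index budget $2e^{-cK}\le \varepsilon/n$ is equivalent to the paper's two separate $\varepsilon/2$ budgets. You should delete the exploratory fixed-grid paragraphs, which lose a factor $2$ and are superseded by the anchored argument.
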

\begin{proof}
  We note $\Delta_k := T_{k+1}-T_k$ such that
\[T_i-T_j = \sum_{k=j}^{i-1}\Delta_k,\]
where $\Delta_k \overset{i.i.d}{\sim} \mathcal{E}(1)$.
    Let an integer $\Kval  >0$, to be further fixed.

    \noindent
    \textbf{Lower-bound.}
    The idea is to decompose $\sum_{k=j}^{i-1}\Delta_k$ into $\floor{(i-j)/\Kval }$ blocks of size $\Kval $ starting at $j,j+\Kval ,j+2\Kval ,\dots$, plus a
residual block of size $(i-j)\bmod \Kval  < \Kval $.
Chernov is applied to each complete block; the residual block is
non-negative ($\Delta_k \ge 0$ a.s.) and is simply dropped, at the cost
of the term $-\cinf \Kval $ in the final bound.

    We have $\sum_{k=j}^{j-1+\Kval } \Delta_k = T_{j+\Kval } - T_j$ which follows the same law as $T_{\Kval } \sim \Gamma(\Kval ,1)$. Therefore, for any $\cinf \in (0,1)$, applying Lemma~\ref{lem:chernov}~$(i)$,
    \begin{equation}\label{eq:hoeffding_based:1}
        \sum_{k=j}^{j-1+\Kval } \Delta_k  \le \cinf \Kval ,
    \end{equation}
with probability at most $e^{-\Kval (\cinf-1-\log\cinf)}$.
Choosing $\Kval  = \ceil{(\cinf-1-\log\cinf)^{-1}\log(2n/\varepsilon)}$,
this probability is at most $\varepsilon/(2n)$.
By a union bound over $j \in \{1,\dots,n-\Kval \}$, with probability
$1-\varepsilon/2$, \eqref{eq:hoeffding_based:1} fails for all such $j$, namely

        \begin{equation}\label{eq:hoeffding_based:2}
        \sum_{k=j}^{j-1+\Kval } \Delta_k  > \cinf \Kval .
    \end{equation}
Now, consider the following decomposition
\[
 \sum_{k=j}^{i-1}\Delta_k
  = \underbrace{\sum_{s=0}^{\floor{\frac{i-j}{\Kval }}-1}
       \sum_{k=j+s\Kval }^{j+(s+1)\Kval -1}\Delta_k}_{\floor{(i-j)/\Kval }
       \text{ complete blocks}}
   + \underbrace{\sum_{k=j+\floor{\frac{i-j}{\Kval }}\Kval }^{i-1}
     \Delta_k}_{\text{residual block}}.\]

For $s \in \left\{0,\cdots,\floor{\frac{i-j}{\Kval }}-1\right\}$, we have $j+s\Kval  \in\{j,\cdots,i-\Kval \} \subset \{1,\cdots,n-\Kval  \}$. So, \eqref{eq:hoeffding_based:2} gives that with probability $1-\varepsilon/2$, for any such $s$ we have
$\sum_{k=j+s\Kval }^{j+(s+1)\Kval -1}\Delta_k > \cinf \Kval $.
The residual block can be dropped, at it is nonnegative. Then,

\begin{equation}\label{eq:hoeffding_based:3}
\begin{aligned}
  \sum_{k=j}^{i-1}\Delta_k
  &=\sum_{s=0}^{\floor{\frac{i-j}{\Kval }}-1}
       \sum_{k=j+s\Kval }^{j+(s+1)\Kval -1}\Delta_k
   +\sum_{k=j+\floor{\frac{i-j}{\Kval }}\Kval }^{i-1}
     \Delta_k \\
  &\ge \sum_{s=0}^{\floor{\frac{i-j}{\Kval }}-1} \cinf \Kval \\
  &= \cinf \Kval  \floor{\frac{i-j}{\Kval }} \\
  &\ge \cinf \Kval  \bpar{\frac{i-j}{\Kval }-1}
   \qquad\text{($\floor{x}\ge x-1$)}\\
  &= \cinf(i-j) - \cinf \Kval .
\end{aligned}
\end{equation}

    \noindent
    \textbf{Upper-bound.} As for the lower-bound, the strategy for the upper-bound exploits the following decomposition
\[
 \sum_{k=j}^{i-1}\Delta_k
  = \underbrace{\sum_{s=0}^{\floor{\frac{i-j}{\Kval }}-1}
       \sum_{k=j+s\Kval }^{j+(s+1)\Kval -1}\Delta_k}_{\floor{(i-j)/\Kval }
       \text{ complete blocks}}
   + \underbrace{\sum_{k=j+\floor{\frac{i-j}{\Kval }}\Kval }^{i-1}
     \Delta_k}_{\text{residual block}}.\]
However, a difference is that the residual block cannot be dropped by a non-negativity argument, because here we want to obtain an upper-bound. A simple strategy is to bound a residual block by a complete block of size $K$, namely 
\[\sum_{k=j+\floor{\frac{i-j}{\Kval }}\Kval }^{i-1}
     \Delta_k \le \sum_{k=j+\floor{\frac{i-j}{\Kval }}\Kval }^{j+\bpar{\floor{\frac{i-j}{\Kval }}+1}\Kval-1}
     \Delta_k.\]
However, some terms in the sum of the right-hand side are not defined if $j+\bpar{\floor{\frac{i-j}{\Kval }}\Kval+1}-1 \ge i$. So, for the purpose of the analysis, we introduce
$\{\Delta_k\}_{n \le k \le n+\Kval -1}\overset{\mathrm{i.i.d.}}{\sim}
\mathcal{E}(1)$, independent of $\{\Delta_k\}_{k \le n-1}$, such that we can write
\begin{equation}\label{eq:hoeffding_upper:0}
    \begin{aligned}
         \sum_{k=j}^{i-1}\Delta_k
  &=\sum_{s=0}^{\floor{\frac{i-j}{\Kval }}-1}
       \sum_{k=j+s\Kval }^{j+(s+1)\Kval -1}\Delta_k
   +\sum_{k=j+\floor{\frac{i-j}{\Kval }}\Kval }^{i-1}
     \Delta_k \\
     &\le \sum_{s=0}^{\floor{\frac{i-j}{\Kval }}-1}
       \sum_{k=j+s\Kval }^{j+(s+1)\Kval -1}\Delta_k
   +\sum_{k=j+\floor{\frac{i-j}{\Kval }}\Kval }^{j+\bpar{\floor{\frac{i-j}{\Kval }}+1}\Kval-1}
     \Delta_k\\
     &= \sum_{s=0}^{\floor{\frac{i-j}{\Kval }}}
       \sum_{k=j+s\Kval }^{j+(s+1)\Kval -1}\Delta_k
    \end{aligned}
\end{equation}


  Then, applying Lemma~\ref{lem:chernov}~$(ii)$, we have that for any $\cupp > 1$ and for $j \in \{1,\cdots,n-1 \}$
    \begin{equation}\label{eq:hoeffding_upper:1}
        \sum_{k=j}^{j-1+\Kval } \Delta_k  \ge \cupp \Kval ,
    \end{equation}
    with probability at most
    \[e^{-\Kval (\cupp-1-\log(\cupp))}.\] Choosing $\cupp$ such that $\cupp- \log(\cupp) = \cinf - \log(\cinf)$, we obtain
    \[\ceil{(\cupp-1-\log(\cupp))^{-1}\log\bpar{2\frac{n}{\varepsilon}}}  = \ceil{(\cinf -1-\log(\cinf ))^{-1}\log\bpar{2\frac{n}{\varepsilon}}} =: \Kval ,\] and it follows that this probability is at most $\varepsilon/(2n)$.
   By a union bound over $j \in \{1,\dots,n-1 \}$, with probability $1-\frac\varepsilon2$, we have 
           \begin{equation}\label{eq:hoeffding_upper:3}
            \sum_{k=j}^{j-1+\Kval } \Delta_k \le \cupp \Kval
        \end{equation}
So, from \eqref{eq:hoeffding_upper:0}, we deduce that with probability $1-\varepsilon/2$, we have
\begin{equation}\label{eq:hoeffding_upper:4}
\begin{aligned}
  \sum_{k=j}^{i-1}\Delta_k
  &\le  \sum_{s=0}^{\floor{\frac{i-j}{\Kval }}}
       \sum_{k=j+s\Kval }^{j+(s+1)\Kval -1}\Delta_k \le \sum_{s=0}^{\floor{\frac{i-j}{\Kval }}}
      \cupp \Kval
   = \cupp \Kval \bpar{\floor{\frac{i-j}{\Kval }}+1}
   \le \cupp(i-j) + \cupp \Kval .
\end{aligned}
\end{equation}

    \noindent
    \textbf{Conclusion.} 
    The claim follows by combining \eqref{eq:hoeffding_based:3} and \eqref{eq:hoeffding_upper:4}, each holding under probability $1-\frac\varepsilon2$.
\end{proof}

\begin{proposition}\label{prop:maj_H_1}
Let  $n\in \N^\ast$, $\cinf  , \varepsilon \in (0,1)$, $\alphaval \in (0,1]$ and   $\Kval  = \ceil{(\cinf -1-\log(\cinf ))^{-1}\log\bpar{2\frac{n}{\varepsilon}}}$. With probability $1-\varepsilon$, we have
\[\forall i \in \{1,\dots,n \},~\sum_{j=1}^{i} e^{-\alphaval (T_i-T_j)} \le\min\!\left\{
i,\,
\frac{2}{\alphaval \cinf  }+\Kval 
\right\}.\]
\end{proposition}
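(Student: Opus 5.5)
Since every summand satisfies $e^{-\alphaval(T_i-T_j)}\le 1$, the bound $\sum_{j=1}^i e^{-\alphaval(T_i-T_j)}\le i$ holds deterministically, so only the second term of the minimum requires work. We place ourselves on the set $\Abar$ of Proposition~\ref{prop:min_gamma}, which has $\mathbb{P}(\Abar)\ge 1-\varepsilon$, and use only the lower half of its two-sided control: for all $1\le j\le i\le n$,
\[
T_i-T_j\ \ge\ \cinf(i-j)-\cinf\Kval .
\]
The simple substitution $e^{-\alphaval(T_i-T_j)}\le e^{\alphaval\cinf\Kval}e^{-\alphaval\cinf(i-j)}$ would produce a multiplicative factor $e^{\alphaval\cinf\Kval}$, not the additive $\Kval$ in the statement. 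We therefore split the sum according to the gap $d:=i-j\in\{0,\dots,i-1\}$.

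For the $\Kval$ terms with $d<\Kval$ (or fewer if $i\le\Kval$), we bound each summand by $1$, which contributes at most $\Kval$. For $d\ge \Kval$, the lower bound from $\Abar$ gives $T_i-T_j\ge \cinf(d-\Kval)\ge 0$. With the reindexing $m=d-\Kval\ge 0$, the remaining sum is therefore at most
\[
\sum_{m\ge 0} e^{-\alphaval\cinf m}=\frac{1}{1-e^{-\alphaval\cinf}} .
\]
It remains to show $\frac{1}{1-e^{-x}}\le \frac{2}{x}$ for $x=\alphaval\cinf\in(0,1]$, that is, $1-e^{-x}\ge x/2$ on $(0,1]$. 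This holds because $1-e^{-x}$ is concave, vanishes at $0$, and at $x=1$ takes the value $1-e^{-1}\approx 0.63\ge 1/2$, so it lies above the chord $x/2$ on $[0,1]$. This is the only place where the assumption $\alphaval\le 1$ is used, together with $\cinf<1$.

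Adding the two parts gives $\Kval+\frac{2}{\alphaval\cinf}$ simultaneously for all $i$ on $\Abar$, and combining with the trivial bound $i$ yields the minimum. No step is a serious obstacle. The only point needing care is to split the sum additively at gap $\Kval$ rather than absorbing the $-\cinf\Kval$ slack multiplicatively, since only the additive split gives the form $\frac{2}{\alphaval\cinf}+\Kval$ stated in the proposition.
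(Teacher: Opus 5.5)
Your proof is correct and follows essentially the same route as the paper. On the event of Proposition~\ref{prop:min_gamma}, you bound the $\Kval$ terms closest to $i$ by $1$, apply $T_i-T_j\ge\cinf(i-j)-\cinf\Kval$ to the remaining terms to get a geometric series bounded by $(1-e^{-\alphaval\cinf})^{-1}\le 2/(\alphaval\cinf)$, and combine with the trivial bound $i$; the only cosmetic difference is that you state the bound $i$ deterministically for all $i$ rather than treating $i\le\Kval$ as a separate case.
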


\begin{proof}
    We bound 
    \[H_1^i := \sum_{j=1}^i e^{-\alphaval (T_i-T_j)}.\]
We note that as long as $i>\Kval $, the $i-\Kval $ first terms of $H_1^i$ can be controlled using the lower-bound on $T_i-T_j$ from
Proposition~\ref{prop:min_gamma}. It remains the last terms, whose amount is at most $\Kval $; these terms can be bounded by $1$.

    Let $i \le \Kval $.  We have
    \begin{equation}
        \sum_{j=1}^i e^{-\alphaval (T_i-T_j)} \le i  \le\min\!\left\{
i,\,
\frac{2}{\alphaval \cinf  }+\Kval 
\right\}.
    \end{equation}

    Let $i > \Kval $. With probability $1-\varepsilon$, we have
\begin{equation}
\begin{aligned}
\sum_{j=1}^{i} e^{-\alphaval (T_i-T_j)}
&= \sum_{j=1}^{i-\Kval } e^{-\alphaval (T_i-T_j)}
 + \sum_{j=i-\Kval +1}^{i} e^{-\alphaval (T_i-T_j)}\\
&\le \sum_{j=1}^{i-\Kval } e^{-\alphaval (T_i-T_j)} + \Kval \\
&\overset{(i)}{\le} \sum_{j=1}^{i-\Kval }
e^{-\alphaval\left(\cinf  (i-j)-\cinf \Kval   \right)}
+ \Kval \\
&\overset{(ii)}{=}
\sum_{\ell=0}^{i-\Kval -1} e^{-\alphaval\cinf\ell}
+ \Kval \\
&\overset{(iii)}{\le}
\min\!\left\{
i-\Kval ,\,
\frac{1}{1-e^{-\alphaval \cinf}}
\right\}
+ \Kval \\
&\overset{(iv)}{\le}
\min\!\left\{
i,\,
\frac{2}{\alphaval \cinf}+\Kval 
\right\}.
\end{aligned}
\end{equation}

For step $(i)$, we apply $T_i-T_j \ge \cinf(i-j)-\cinf \Kval $. This is due to Proposition~\ref{prop:min_gamma}, valid with probability $1-\varepsilon$, for any $j \le i-\Kval $ since
$i-j \ge \Kval $. For step $(ii)$ we set the change of variable $\ell=i-\Kval -j$,
and simplify $e^{\alphaval\cinf \Kval }e^{-\alphaval\cinf \Kval }= 1$.
$(iii)$ uses 
$\sum_{\ell=0}^{i-\Kval -1} e^{-\alphaval\cinf\ell}
= \frac{1- e^{-\alphaval \cinf (i-\Kval)}}{1-e^{-\alphaval \cinf}}
\le \frac{1}{1-e^{-\alphaval \cinf}}$. 
$(iv)$ uses $e^{-t}\le 1-t/2$ for $t=\alphaval\cinf\in(0,1]$,
giving $\frac{1}{1-e^{-\alphaval\cinf}}\le\frac{2}{\alphaval\cinf}$.
\end{proof}

\begin{proposition}\label{prop:maj_H_2}
Let  $n\in \N^\ast$, $\cinf  , \varepsilon \in (0,1)$, $\alphaval \in (0,1]$ and   $\Kval  = \ceil{(\cinf -1-\log(\cinf ))^{-1}\log\bpar{2\frac{n}{\varepsilon}}}$. With probability $1-\varepsilon$, we have

\[
\forall i \in \{1,\dots,n \},~
\sum_{j=1}^i (i-j)e^{-\alphaval (T_i-T_j)}
\le
\min\!\left\{
\frac{i^2}{2},\,
\frac{4}{(\alphaval\cinf)^2}
+
\frac{2\Kval }{\alphaval\cinf}
+
  \frac{\Kval ^2}2
\right\}.
\]
\end{proposition}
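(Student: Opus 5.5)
We follow the same template as the proof of Proposition~\ref{prop:maj_H_1}, working on the set $\Abar$ of Proposition~\ref{prop:min_gamma}, which has probability at least $1-\varepsilon$. Write $H_2^i := \sum_{j=1}^i (i-j)e^{-\alphaval (T_i-T_j)}$.

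The first branch of the minimum is deterministic. Each exponential factor is at most $1$, so
\[
H_2^i \le \sum_{j=1}^i (i-j) = \frac{i(i-1)}{2} \le \frac{i^2}{2}.
\]
In particular, for $i\le \Kval$ we also get $H_2^i\le \Kval^2/2$, which is below the second branch of the minimum.

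For $i>\Kval$ we split the sum at $j=i-\Kval$.

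The \emph{recent terms} are those with $j\in\{i-\Kval+1,\dots,i\}$. Here we again use $e^{-\alphaval(T_i-T_j)}\le 1$. These terms contribute at most $\sum_{m=0}^{\Kval-1} m\le \Kval^2/2$.

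The \emph{old terms} are those with $j\le i-\Kval$. On $\Abar$, Proposition~\ref{prop:min_gamma} gives
\[
T_i-T_j\ge \cinf(i-j)-\cinf\Kval .
\]
We substitute $\ell=i-j-\Kval\ge 0$, so that $i-j=\ell+\Kval$. This yields
\[
\sum_{j=1}^{i-\Kval}(i-j)e^{-\alphaval(T_i-T_j)} \le \sum_{\ell\ge 0} (\ell+\Kval)\, r^{\ell},
\qquad r:=e^{-\alphaval\cinf}.
\]
Recall the standard sums $\sum_{\ell\ge0} r^\ell=\frac{1}{1-r}$ and $\sum_{\ell\ge0}\ell r^\ell=\frac{r}{(1-r)^2}\le \frac{1}{(1-r)^2}$. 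We use $\frac{1}{1-r}\le \frac{2}{\alphaval\cinf}$, exactly as in step $(iv)$ of Proposition~\ref{prop:maj_H_1}, which is valid since $\alphaval\cinf\in(0,1]$. Then the old terms are bounded by
\[
\frac{4}{(\alphaval\cinf)^2}+\frac{2\Kval}{\alphaval\cinf}.
\]
Adding the recent terms gives the second branch of the minimum, and combining with the first branch gives the claim for all $i$ simultaneously on $\Abar$.

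No real obstacle is expected. The two points needing care are the following. First, after the shift $\ell=i-j-\Kval$, the factor $i-j$ must be written as $\ell+\Kval$, not $\ell$; this is what produces the cross term $2\Kval/(\alphaval\cinf)$. Second, the lower bound on $T_i-T_j$ must only be invoked for pairs with $i-j\ge\Kval$, which is exactly the range of the old terms.
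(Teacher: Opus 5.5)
Your proposal is correct and follows essentially the same argument as the paper: split at $j=i-K$, bound the recent terms crudely by $K^2/2$, use the lower bound $T_i-T_j\ge c_{inf}(i-j)-c_{inf}K$ with the shift $\ell=i-j-K$ on the old terms, and finish with geometric sums and $1-e^{-\alpha c_{inf}}\ge \alpha c_{inf}/2$. The only small difference is that you obtain the $i^2/2$ branch deterministically for every $i$. The paper instead carries a minimum through the finite and infinite partial sums and simplifies $(i^2-i+K)/2\le i^2/2$ when $i>K$. Your version of that step is slightly cleaner.
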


\begin{proof}
    We bound 
    \[
    H_2^i := \sum_{j=1}^i (i-j)e^{-\alphaval (T_i-T_j)}.
    \]
Compared with $H_1^i$, the weight $(i-j)$ adds a linear factor. 
When treating $H_2^i$, the main difference is that the sum $\sum_\ell\ell\,q^\ell$ is handled via
$\sum_{\ell=0}^\infty\ell q^\ell = q/(1-q)^2$.
   
    If $i \le \Kval $,
    \[
    H_2^i \le \sum_{j=1}^i (i-j)= \frac{i(i-1)}{2} \le \frac{i^2}{2}\le \min\!\left\{
\frac{i^2}{2},\,
\frac{4}{(\alphaval\cinf)^2}
+
\frac{2\Kval }{\alphaval\cinf}
+
 \frac{\Kval ^2}2
\right\}.
    \]

    If $i > \Kval $, with probability $1-\varepsilon$, we have
\begin{equation}
\begin{aligned}
H_2^i
&=
\sum_{j=1}^{i-\Kval } (i-j)e^{-\alphaval (T_i-T_j)}
+
\sum_{j=i-\Kval +1}^{i}(i-j)e^{-\alphaval (T_i-T_j)}\\
&\overset{(i)}\le
\sum_{j=1}^{i-\Kval }
(i-j)e^{-\alphaval(\cinf(i-j)-\cinf \Kval )}
+
\sum_{j=i-\Kval +1}^{i}(i-j)\\
&\overset{(ii)}{\le}
\sum_{\ell=0}^{i-\Kval -1}
\bigl(\ell+\Kval \bigr)q^\ell
+ \frac{\Kval ^2}2.
\end{aligned}
\end{equation}
Step $(i)$ uses $T_i-T_j \ge \cinf(i-j)-\cinf \Kval $ , holding with probability $1-\varepsilon$, thanks to Proposition~\ref{prop:min_gamma}. It also uses $e^{-\alphaval (T_i-T_j)} \le 1$. In step $(ii)$ we set $q := e^{-\alphaval\cinf}$ and the change of variable $\ell = i-j-K$.

Thus,
\begin{equation}
\begin{aligned}
H_2^i
&\le
\sum_{\ell=0}^{i-\Kval -1}\ell q^\ell
+
\Kval \sum_{\ell=0}^{i-\Kval -1} q^\ell
+ \frac{\Kval ^2}2.
\end{aligned}
\end{equation}

Now,
\[
\sum_{\ell=0}^{m-1}\ell q^\ell
\le
\min\!\left\{
\sum_{\ell=0}^{m-1}\ell,\,
\sum_{\ell=0}^{\infty}\ell q^\ell
\right\}
=
\min\!\left\{
\frac{m(m-1)}{2},\,
\frac{q}{(1-q)^2}
\right\},
\]
and
\[
\sum_{\ell=0}^{m-1} q^\ell
\le
\min\!\left\{
m,\,
\frac{1}{1-q}
\right\}.
\]

Applying this with \(m=i-\Kval \), we obtain
\begin{equation}
\begin{aligned}
H_2^i
&\le
\min\!\left\{
\frac{(i-\Kval )(i-\Kval -1)}{2},\,
\frac{q}{(1-q)^2}
\right\}\\
&\qquad
+
\Kval \min\!\left\{
i-\Kval ,\,
\frac{1}{1-q}
\right\}
+ \frac{\Kval ^2}2.
\end{aligned}
\end{equation}

Using \(\min(a,b)+\min(c,d)\le \min(a+c,b+d)\), we get
\begin{equation}
\begin{aligned}
H_2^i
&\le
\min\!\Bigg\{
\frac{(i-\Kval )(i-\Kval -1)}{2}
+
\Kval (i-\Kval )
+
\frac{\Kval ^2}2,\\
&\hspace{2.5cm}
\frac{q}{(1-q)^2}
+
\frac{\Kval }{1-q}
+
\frac{\Kval ^2}2
\Bigg\}.
\end{aligned}
\end{equation}

 The first term is equal to $\frac{i^2-i+K}{2}$. Since \(i>\Kval \), it follows that $\frac{i^2-i+K}{2} \leq \frac{i^2}{2}$. 
Then,
\begin{equation}
\begin{aligned}
H_2^i
\le
\min\!\left\{
\frac{i^2}{2},\,
\frac{q}{(1-q)^2}
+
\frac{\Kval }{1-q}
+
\frac{\Kval ^2}2
\right\}.
\end{aligned}
\end{equation}

As moreover \(\alphaval\cinf\le 1\), then
\[
1-q=1-e^{-\alphaval\cinf}\ge \frac{\alphaval\cinf}{2},
\]
hence
\[
\frac{1}{1-q}\le \frac{2}{\alphaval\cinf},
\qquad
\frac{q}{(1-q)^2}\le \frac{1}{(1-q)^2}\le \frac{4}{(\alphaval\cinf)^2}.
\]
Therefore,
\begin{equation}
\begin{aligned}
H_2^i
\le
\min\!\left\{
\frac{i^2}{2},\,
\frac{4}{(\alphaval\cinf)^2}
+
\frac{2\Kval }{\alphaval\cinf}
+
\frac{\Kval ^2}2
\right\}.
\end{aligned}
\end{equation}
\end{proof}

The following proposition differs from Propositions~\ref{prop:maj_H_1}
and~\ref{prop:maj_H_2} for two reasons. First, because the weights $(T_i-T_j)$ are random, we need both the upper
bound ($T_i-T_j\le\cupp(i-j)+\cupp \Kval $) and the lower-bound
($T_i-T_j\ge\cinf(i-j)-\cinf \Kval $) from Proposition~\ref{prop:min_gamma}. The upper-bound controls the weight, the lower
bound gives exponential decay. Then, a difficulty is that for small $i$, there is no direct upper-bound of $(T_i-T_j)$ that depends on $i$, which will lead to a crude upper-bound in this case that does not depend on $i$. Fortunately, it will be sufficient to prove our main result (Theorem~\ref{thm:hess_lip}).


\begin{proposition}\label{prop:maj_H_0}
Let $\cinf  , \varepsilon \in (0,1)$, $\cupp$ and $\Kval $ defined as in Proposition~\ref{prop:min_gamma}. With probability $1-\varepsilon$, we have
\[\forall i \in \{1,\dots,\Kval  \},~ \sum_{j=1}^i (T_i-T_j)e^{-\alphaval (T_i-T_j)} \le \cupp \Kval ^2,\]
and

\[\forall i \in \{\Kval +1,\dots,n \},~
\sum_{j=1}^i (T_i-T_j)e^{-\alphaval (T_i-T_j)}
\le
\min\!\left\{
\cupp i^2,\,
\frac{4\cupp}{(\alphaval\cinf)^2}
+
\frac{4\cupp \Kval }{\alphaval\cinf}
+
\cupp \Kval ^2
\right\}\]

\end{proposition}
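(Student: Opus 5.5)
We work throughout on the event $\Abar$ of Proposition~\ref{prop:min_gamma}, which has probability at least $1-\varepsilon$. On this event, for all $1\le j\le i\le n$,
\[
\cinf(i-j)-\cinf\Kval \;\le\; T_i-T_j \;\le\; \cupp(i-j)+\cupp\Kval .
\]
We write $H_0^i=\sum_{j=1}^i (T_i-T_j)e^{-\alphaval(T_i-T_j)}$. The argument mirrors Propositions~\ref{prop:maj_H_1} and \ref{prop:maj_H_2}, except that the random weight $T_i-T_j$ is now controlled by the upper bound, while the exponential factor is controlled by the lower bound.

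\textbf{Case $i\le\Kval$.} We use $e^{-\alphaval(T_i-T_j)}\le 1$ together with the upper bound $T_i-T_j\le \cupp(i-j)+\cupp\Kval$. This gives
\[
H_0^i\le \sum_{j=1}^i \cupp\bigl((i-j)+\Kval\bigr)\le \cupp\Bigl(\frac{i(i-1)}{2}+i\Kval\Bigr).
\]
Since $i\le\Kval$, we have $\frac{i(i-1)}{2}+i\Kval\le \frac{\Kval^2}{2}+\Kval^2$, so this is $O(\cupp\Kval^2)$. Getting exactly $\cupp\Kval^2$ needs a little more care. One option is to bound $T_i-T_j\le T_i-T_1\le T_{\Kval}$ and use the single-block Chernov event, which gives $T_{\Kval+1}-T_1\le\cupp\Kval$, so each of the at most $\Kval$ terms is $\le \cupp\Kval$. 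Concretely, I would note that on $\Abar$ the block estimate $\sum_{k=j}^{j+\Kval-1}\Delta_k\le\cupp\Kval$ (equation \eqref{eq:hoeffding_upper:3}) holds for $j=1$. Hence $T_i-T_j\le T_{1+\Kval}-T_1\le\cupp\Kval$ for $j\le i\le\Kval$, and summing over $i$ terms gives $H_0^i\le i\cupp\Kval\le\cupp\Kval^2$.

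\textbf{Case $i>\Kval$.} We split the sum at $j=i-\Kval$.

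For the last $\Kval$ indices $j\in\{i-\Kval+1,\dots,i\}$, we drop the exponential and use the same block estimate started at $j$. This gives $T_i-T_j\le\cupp\Kval$, contributing at most $\cupp\Kval^2$.

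For $j\le i-\Kval$, we bound $(T_i-T_j)\le \cupp(i-j+\Kval)$ and $e^{-\alphaval(T_i-T_j)}\le e^{-\alphaval\cinf(i-j-\Kval)}$. With $\ell=i-j-\Kval$ and $q=e^{-\alphaval\cinf}$ this yields
\[
\cupp\sum_{\ell=0}^{i-\Kval-1}(\ell+2\Kval)q^\ell \;\le\; \cupp\Bigl(\frac{q}{(1-q)^2}+\frac{2\Kval}{1-q}\Bigr).
\]
Using $1-q\ge \alphaval\cinf/2$, as in Proposition~\ref{prop:maj_H_2}, this is at most $\frac{4\cupp}{(\alphaval\cinf)^2}+\frac{4\cupp\Kval}{\alphaval\cinf}$. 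Adding the tail contribution gives the second argument of the min.

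For the first argument $\cupp i^2$, we instead drop all exponentials and use $T_i-T_j\le\cupp(i-j)+\cupp\Kval\le 2\cupp i$ when $i>\Kval$. Summing over $j$ gives a crude bound of $2\cupp i^2$. To sharpen it to $\cupp i^2$, I would combine $\sum_j(i-j)\le i^2/2$ with $\sum_j\Kval\le i\Kval<i^2/2$. Then $\min(a,b)+\min(c,d)\le\min(a+c,b+d)$ assembles the two bounds, exactly as in Proposition~\ref{prop:maj_H_2}.

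\textbf{Expected obstacle.} The main point to check is that the short-range bound $T_i-T_j\le\cupp\Kval$ for $i-j<\Kval$ really holds on $\Abar$. The upper bound of Proposition~\ref{prop:min_gamma} gives this via a single padded block, but only if the union bound in its proof covered all starting points $j\le n-1$, including those where auxiliary variables $\Delta_k$ with $k\ge n$ were introduced. That proof indeed covers them, so this case should go through; everything else is the geometric-series bookkeeping above.
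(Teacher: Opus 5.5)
Your treatment of the case $i\le\Kval$, and of the second argument of the min for $i>\Kval$, follows the paper's proof. You split at $j=i-\Kval$, bound the $\Kval$ near-diagonal terms with the block estimate $T_i-T_j\le\cupp\Kval$, and use the two-sided bounds plus a geometric series in $q=e^{-\alphaval\cinf}$ for the remaining terms. The gap is in the first argument $\cupp i^2$. You bound every weight by $\cupp(i-j+\Kval)$, which gives $\cupp\bigl(\tfrac{i(i-1)}{2}+i\Kval\bigr)$. You then claim $i\Kval<i^2/2$, but this requires $i>2\Kval$ and fails for $\Kval<i\le 2\Kval$. This is a real loss, not slack in the write-up: $\tfrac{i(i-1)}{2}+i\Kval\le i^2$ holds if and only if $\Kval\le\tfrac{i+1}{2}$. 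For example, $\Kval=3$ and $i=4$ give $18\cupp>16\cupp$. The inequality $\min(a,b)+\min(c,d)\le\min(a+c,b+d)$ cannot repair this, because the first-coordinate bound you feed into it is already too large.

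The missing step is to keep the split at $j=i-\Kval$ for the first argument too. The $\Kval$ near-diagonal terms must each be bounded by $\cupp\Kval$ via the block estimate, not by $\cupp(i-j+\Kval)$. Drop the exponential only for $j\le i-\Kval$; this part equals $\cupp\sum_{\ell=0}^{i-\Kval-1}(\ell+2\Kval)=\cupp\bigl(\tfrac{(i-\Kval)(i-\Kval-1)}{2}+2\Kval(i-\Kval)\bigr)$. Adding $\cupp\Kval^2$ gives $\cupp\,\tfrac{i^2-i+2i\Kval+\Kval-\Kval^2}{2}$. This is at most $\cupp i^2$, since the difference is $\tfrac{\cupp}{2}\bigl((i-\Kval)^2+(i-\Kval)\bigr)\ge 0$. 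Pairing this termwise with the geometric-series bounds on $\sum_\ell \ell q^\ell$ and $\sum_\ell q^\ell$ through the min inequality gives both arguments at once, which is the paper's route. Your weaker bound of at most $\tfrac32\cupp i^2$ would still be enough for the combination step of Section~\ref{app:combine}, because $(4+4C_n+C_n^2)/\cinf^2\ge 4$. It does not, however, prove the proposition as stated.
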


\begin{proof}
    We bound 
\[
H_0^i :=\sum_{j=1}^i (T_i-T_j)e^{-\alphaval (T_i-T_j)}.
\]
We recall $\cinf < 1 < \cupp$ are chosen such that
\[
\Kval  = \ceil{(\cupp-1-\log(\cupp))^{-1}\log\!\left(\frac{2n}{\varepsilon}\right)}
= \ceil{(\cinf-1-\log(\cinf))^{-1}\log\!\left(\frac{2n}{\varepsilon}\right)}.
\]

If $i \le \Kval $, with probability $1-\varepsilon$ we have
\[
H_0^i \le \sum_{j=1}^i (T_i-T_j) \overset{(i)}{\le} \sum_{j=1}^i \cupp \Kval  \le \cupp i \Kval  \le \cupp \Kval ^2.
\]
$(i)$ uses that as $i-j\le \Kval $, we have $T_i-T_j \le T_{j+\Kval } - T_j \le \cupp \Kval $, the last inequality being due to the Chernov inequality (see \eqref{eq:hoeffding_upper:3}).

If $i > \Kval $, we write
\begin{equation}
\begin{aligned}
\sum_{j=1}^i (T_i-T_j)e^{-\alphaval (T_i-T_j)}
&=
\sum_{j=1}^{i-\Kval } (T_i-T_j)e^{-\alphaval (T_i-T_j)}
+
\sum_{j=i-\Kval +1}^i (T_i-T_j)e^{-\alphaval (T_i-T_j)}.
\end{aligned}
\end{equation}

For the second sum, with probability $1-\varepsilon$,
\[
\sum_{j=i-\Kval +1}^i (T_i-T_j)e^{-\alphaval (T_i-T_j)}
\le
\sum_{j=i-\Kval +1}^i (T_i-T_j)
\overset{(i)}\le
\cupp \Kval ^2.
\]
Again, $(i)$ uses that as $i-j\le \Kval $, we have $T_i-T_j \le T_{j+\Kval } - T_j \le \cupp \Kval $.

Then, with probability $1-\varepsilon$, we have by Proposition~\ref{prop:min_gamma}, 
\[
T_i-T_j \ge \cinf(i-j)-\cinf \Kval ,
\]
and
\[
T_i-T_j \le \cupp(i-j)+\cupp \Kval .
\]
We use both inequalities simultaneously, the upper-bound controls
the factor $(T_i-T_j)$, the lower-bound gives exponential decay in
$e^{-\alphaval(T_i-T_j)}$.
Let $q:=e^{-\alphaval \cinf}$.
Therefore,
\begin{equation}
\begin{aligned}
H_0^i
&\le
\sum_{j=1}^{i-\Kval }
\bigl(\cupp(i-j)+\cupp \Kval \bigr)
e^{-\alphaval(\cinf(i-j)-\cinf \Kval )}
+ \cupp \Kval ^2\\
&\overset{(i)}{\le}
\sum_{\ell=0}^{i-\Kval -1}
\bigl(\cupp \ell+2\cupp \Kval \bigr)q^\ell
+ \cupp \Kval ^2.
\end{aligned}
\end{equation}
In (i) we set the change $\ell=i-j-K$.
Then,
\begin{equation}
\begin{aligned}
H_0^i
&\le
\cupp\sum_{\ell=0}^{i-\Kval -1}\ell q^\ell
+
2\cupp \Kval \sum_{\ell=0}^{i-\Kval -1} q^\ell
+ \cupp \Kval ^2.
\end{aligned}
\end{equation}

Now,
\[
\sum_{\ell=0}^{m-1}\ell q^\ell
\le
\min\!\left\{
\sum_{\ell=0}^{m-1}\ell,\,
\sum_{\ell=0}^{\infty}\ell q^\ell
\right\}
=
\min\!\left\{
\frac{m(m-1)}{2},\,
\frac{q}{(1-q)^2}
\right\},
\]
and
\[
\sum_{\ell=0}^{m-1} q^\ell
\le
\min\!\left\{
m,\,
\frac{1}{1-q}
\right\}.
\]

Applying this with \(m=i-\Kval \), we obtain
\begin{equation}
\begin{aligned}
H_0^i
&\le
\cupp\min\!\left\{
\frac{(i-\Kval )(i-\Kval -1)}{2},\,
\frac{q}{(1-q)^2}
\right\}\\
&\qquad
+
2\cupp \Kval \min\!\left\{
i-\Kval ,\,
\frac{1}{1-q}
\right\}
+
\cupp \Kval ^2.
\end{aligned}
\end{equation}

Using \(\min(a,b)+\min(c,d)\le \min(a+c,b+d)\), we get
\begin{equation}
\begin{aligned}
H_0^i
&\le
\min\!\Bigg\{
\cupp\frac{(i-\Kval )(i-\Kval -1)}{2}
+
2\cupp \Kval (i-\Kval )
+
\cupp \Kval ^2,\\
&\hspace{2.5cm}
\cupp\frac{q}{(1-q)^2}
+
\frac{2\cupp \Kval }{1-q}
+
\cupp \Kval ^2
\Bigg\}.
\end{aligned}
\end{equation}
The first component in the $\min$ is equal to $\cupp \frac{i^2-i+2iK+K-K^2}{2}$. Then,
\begin{align*}
    \cupp \frac{i^2-i+2iK+K-K^2}{2} &= \cupp \frac{i^2-i+(i+1/2)^2 - (K-i-1/2)^2}{2}\\
    &\le  \cupp \frac{i^2-i+(i+1/2)^2 - (1/2)^2}{2}, \textit{ as }K<i\\
    &= \cupp.
\end{align*}


So, we deduce
\begin{equation}
\begin{aligned}
H_0^i
\le
\min\!\left\{
\cupp i^2,\,
\cupp\frac{q}{(1-q)^2}
+
\frac{2\cupp \Kval }{1-q}
+
\cupp \Kval ^2
\right\}.
\end{aligned}
\end{equation}

As moreover \(\alphaval\cinf\le 1\), then
\[
1-q=1-e^{-\alphaval\cinf}\ge \frac{\alphaval\cinf}{2},
\]
hence
\[
\frac{1}{1-q}\le \frac{2}{\alphaval\cinf},
\qquad
\frac{q}{(1-q)^2}\le \frac{1}{(1-q)^2}\le \frac{4}{(\alphaval\cinf)^2}.
\]
Therefore,
\begin{equation}
\begin{aligned}
H_0^i
\le
\min\!\left\{
\cupp i^2,\,
\frac{4\cupp}{(\alphaval\cinf)^2}
+
\frac{4\cupp \Kval }{\alphaval\cinf}
+
\cupp \Kval ^2
\right\}.
\end{aligned}
\end{equation}
\end{proof}

\subsection{lower-bounding expectations}\label{app:lower}
The three lemmas below compute or lower-bound the terms $\mathbb{E}[H_k^i]$, recalling
$$H_0^i :=\sum_{j=1}^i (T_i-T_j)e^{-\alphaval (T_i-T_j)}, \; H_1^i := \sum_{j=1}^i e^{-\alphaval (T_i-T_j)}, \;H_2^i := \sum_{j=1}^i (i-j)e^{-\alphaval (T_i-T_j)}.$$
Since $T_i-T_j\sim\Gamma(i-j,1)$, the Laplace transform of the Gamma
distribution gives exact closed-form expressions, which are then
lower-bounded using elementary inequalities.
\begin{lemma}\label{lem:exact_exp}
    We have
    \begin{itemize}
        \item $\E{H_0^i} =\frac{1-i(1+\alphaval)^{-(i-1)} + (i-1)(1+\alphaval)^{-i}}{\alphaval^2} $
        \item $\E{H_2^i} = (1+\alphaval)\frac{1-i(1+\alphaval)^{-(i-1)} + (i-1)(1+\alphaval)^{-i}}{\alphaval^2}$
    \end{itemize}
\end{lemma}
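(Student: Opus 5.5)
The plan is to use linearity of expectation together with the fact that $T_i-T_j\sim\Gamma(i-j,1)$, so that every summand has a closed-form expectation. Writing $q:=(1+\alphaval)^{-1}$, the Laplace transform of the Gamma law gives $\E{e^{-\alphaval(T_i-T_j)}}=q^{\,i-j}$. Lemma~\ref{lem:gamma_law_deriv} then gives $\E{(T_i-T_j)e^{-\alphaval(T_i-T_j)}}=(i-j)q^{\,i-j+1}$. For $j=i$ the summand is $0$, which is consistent with both formulas. With the change of variable $\ell=i-j\in\{0,\dots,i-1\}$ this yields
\[
\E{H_2^i}=\sum_{\ell=0}^{i-1}\ell q^{\ell},\qquad \E{H_0^i}=q\sum_{\ell=0}^{i-1}\ell q^{\ell}.
\]
In particular $\E{H_2^i}=(1+\alphaval)\E{H_0^i}$, so the second bullet follows from the first. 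It therefore suffices to compute the finite sum $S_i:=\sum_{\ell=0}^{i-1}\ell q^\ell$.

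For the finite arithmetico-geometric sum I will use the standard identity
\[
\sum_{\ell=0}^{m-1}\ell q^\ell=\frac{q-mq^{m}+(m-1)q^{m+1}}{(1-q)^2},
\]
obtained by differentiating $\sum_{\ell=0}^{m-1}q^\ell=\frac{1-q^m}{1-q}$ and multiplying by $q$. Taking $m=i$ and noting that $1-q=\alphaval q$, so that $(1-q)^2=\alphaval^2q^2$, I get
\[
S_i=\frac{q^{-1}-iq^{i-2}+(i-1)q^{i-1}}{\alphaval^2}.
\]
Multiplying by $q$ gives
\[
\E{H_0^i}=\frac{1-iq^{i-1}+(i-1)q^{i}}{\alphaval^2},
\]
which is exactly the first claim once $q^{k}$ is written as $(1+\alphaval)^{-k}$. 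Multiplying this by $(1+\alphaval)$ then gives the formula for $\E{H_2^i}$.

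The main point requiring care is the bookkeeping of the powers of $q$ when dividing by $(1-q)^2=\alphaval^2q^2$, including the extra factor $q$ coming from Lemma~\ref{lem:gamma_law_deriv}, which is easy to misplace. As a sanity check I will verify the case $i=1$, where both sides vanish, and the case $i=2$, where $\E{H_0^2}=q^2$. I will also check that the right-hand side behaves like $i^2/2$ as $\alphaval\to0$, matching $\sum_{\ell}\ell$.
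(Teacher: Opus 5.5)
Your proposal is correct and is essentially the paper's proof: linearity of expectation, the Gamma Laplace transform (Lemma~\ref{lem:exp_stopping_time}) and Lemma~\ref{lem:gamma_law_deriv} for the summands, the arithmetico-geometric sum of Lemma~\ref{lem:sum_deriv}, and the relation $\E{H_0^i}=(1+\alphaval)^{-1}\E{H_2^i}$. The only difference is the order: the paper computes $\E{H_2^i}$ first and deduces $\E{H_0^i}$, whereas you do the reverse. One minor correction to your sanity check: as $\alphaval\to 0$ the right-hand side tends to $i(i-1)/2=\sum_{\ell=0}^{i-1}\ell$, not $i^2/2$.
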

\begin{proof}
Applying Lemma \ref{lem:exp_stopping_time}, we have
    \[\E{H_2^i} =\sum_{j=1}^i (i-j)\E{e^{-\alphaval (T_i-T_j)}} = \sum_{j=1}^i (i-j)(1+\alphaval)^{-(i-j)} = \sum_{k=1}^{i-1}k(1+\alphaval)^{-k}. \]
    By Lemma~\ref{lem:sum_deriv} applied to $\gamma = (1+\alphaval)^{-1}$, we have
    \begin{align*}
        \E{H_2^i} &=(1+\alphaval)^{-1}\frac{1-i(1+\alphaval)^{-(i-1)} + (i-1)(1+\alphaval)^{-i}}{(1-(1+\alphaval)^{-1})^2} \\
        &= (1+\alphaval)\frac{1-i(1+\alphaval)^{-(i-1)} + (i-1)(1+\alphaval)^{-i}}{\alphaval^2}.
    \end{align*}
     
 By Lemma \ref{lem:gamma_law_deriv}, since $T_i - T_j \sim \Gamma(i-j, 1)$ and setting $k = i-j$:
\begin{align*}
\E{H_i^0} &= \sum_{j=1}^i \E{(T_i-T_j)e^{-\alpha(T_i-T_j)}} = \sum_{k=1}^{i-1} k(1+\alpha)^{-(k+1)} \\
&= (1+\alphaval)^{-1}\sum_{k=1}^{i-1} k(1+\alphaval)^{-k} =(1+\alphaval)^{-1} \E{H_i^2}\\
&= \frac{1-i(1+\alpha)^{-(i-1)}+(i-1)(1+\alpha)^{-i}}{\alpha^2}.
\end{align*}
    
\end{proof}
\begin{lemma}\label{lem:exp_low_bound_H_1}
    \[\E{H_1^i}
\ge
(1-e^{-1})\min\!\left\{i,\frac1\alphaval\right\}.
\]
\end{lemma}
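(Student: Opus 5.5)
We compute $\E{H_1^i}$ in closed form and then lower-bound it. Since $T_i-T_j\sim\Gamma(i-j,1)$, the Laplace transform (Lemma~\ref{lem:exp_stopping_time}) gives $\E{e^{-\alphaval(T_i-T_j)}}=(1+\alphaval)^{-(i-j)}$. Summing the geometric series with $q:=(1+\alphaval)^{-1}$ yields
\[
\E{H_1^i}=\sum_{k=0}^{i-1}q^k=\frac{1-q^i}{1-q}=\frac{1+\alphaval}{\alphaval}\bigl(1-(1+\alphaval)^{-i}\bigr).
\]
It then suffices to show $\frac{1+\alphaval}{\alphaval}\bigl(1-(1+\alphaval)^{-i}\bigr)\ge(1-e^{-1})\min\{i,1/\alphaval\}$.

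First, since $\frac{1+\alphaval}{\alphaval}\ge\frac1\alphaval$, it is enough to lower-bound $\frac{1}{\alphaval}\bigl(1-(1+\alphaval)^{-i}\bigr)$. Next, bound $(1+\alphaval)^{-i}$ by an exponential. One has $(1+\alphaval)^{-i}=e^{-i\log(1+\alphaval)}$, and $\log(1+\alphaval)\ge\alphaval/(1+\alphaval)$. Rather than working with that, the cleaner route is to keep the factor $\frac{1+\alphaval}{\alphaval}$ and set $x:=i\log(1+\alphaval)$. We then want
\[
\frac{1+\alphaval}{\alphaval}(1-e^{-x})\ge(1-e^{-1})\min\{i,1/\alphaval\}.
\]

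We split into two cases according to whether $x\ge1$ or $x<1$.

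\emph{Case $x\ge 1$.} Here $1-e^{-x}\ge1-e^{-1}$, so
\[
\E{H_1^i}\ge(1-e^{-1})\frac{1+\alphaval}{\alphaval}\ge(1-e^{-1})\frac1\alphaval\ge(1-e^{-1})\min\{i,1/\alphaval\}.
\]

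\emph{Case $x<1$.} By concavity of $u\mapsto1-e^{-u}$ on $[0,1]$, we have $1-e^{-x}\ge(1-e^{-1})x$. Using $\log(1+\alphaval)\ge\frac{\alphaval}{1+\alphaval}$, this gives
\[
\E{H_1^i}\ge(1-e^{-1})\frac{1+\alphaval}{\alphaval}\,i\log(1+\alphaval)\ge(1-e^{-1})\,i\ge(1-e^{-1})\min\{i,1/\alphaval\}.
\]

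The only mildly delicate point is the choice of variable. With $x=i\log(1+\alphaval)$, the prefactor $\frac{1+\alphaval}{\alphaval}$ exactly compensates for $\log(1+\alphaval)\ge\alphaval/(1+\alphaval)$. A naive use of $i\alphaval$ instead would lose a factor of $(1+\alphaval)$. The rest is routine, and no assumption on the range of $\alphaval$ beyond $\alphaval>0$ is needed.
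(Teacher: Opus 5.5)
Your proof is correct and is essentially the paper's argument: you evaluate $\mathbb{E}[H_1^i]$ as a geometric sum via the Gamma Laplace transform, then split into two cases, using monotonicity of $1-e^{-t}$ in the large case and its concavity on $[0,1]$ in the small case. The only difference is cosmetic. The paper first bounds $(1+\alpha)^{-j}\ge e^{-\alpha j}$ and compares with $\int_0^i e^{-\alpha t}\,dt$ to get $(1-e^{-\alpha i})/\alpha$, then splits on $\alpha i$ rather than on $i\log(1+\alpha)$; this also shows that working with $\alpha i$ loses no factor $(1+\alpha)$, provided the comparison is made term by term before summing.
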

\begin{proof}
Applying Lemma \ref{lem:exp_stopping_time} and bounding the discrete sum by an integral, we have:
\[\E{H_1^i} = \sum_{j=0}^{i-1} (1+\alphaval)^{-j} \ge \sum_{j=0}^{i-1}e^{-\alphaval j} \ge \int_0^i e^{-\alphaval t}dt=\frac{1-e^{-\alphaval i}}{\alphaval}.\]
We also applied the standard inequality $\log(1+x)\le x$ to $x = \alphaval$, which implies 
$(1+\alphaval)^{-1}\ge e^{-\alphaval}$, then $(1+\alphaval)^{-j}\ge e^{-\alphaval j}$. 
Now, we split the proof between the case $i\le\alphaval^{-1}$ and $i>\alphaval^{-1}$.

\noindent
\textbf{Case $i\le \alphaval^{-1}$.} Then $\alphaval i\in[0,1]$.
By concavity of $t\mapsto 1-e^{-t}$ on $[0,1]$, we have $1-e^{-t}\ge(1-e^{-1})t$. Applying it to $t=\alphaval i$ we deduce

\[\frac{1-e^{-\alphaval i}}{\alphaval}\ge(1-e^{-1})i.\]

\noindent\textbf{Case $i>\alphaval^{-1}$.} We have $\alphaval i > 1$, which implies $1-e^{-\alphaval i}>1-e^{-1}$. Hence
\[\frac{1-e^{-\alphaval i}}{\alphaval}>\frac{1-e^{-1}}{\alphaval}\]

\noindent
From the two cases, we deduce
\[\E{H_1^i}\ge(1-e^{-1})\min\!\left\{i,\frac{1}{\alphaval}\right\}.\]

\end{proof}

\begin{lemma}\label{lem:exp_low_bound_H_0}
We have
\[
\mathbb E[H_0^i]
\ge
\frac{e^{-1}}{32}
\min\!\left\{i^2,\frac1{\alphaval^2}\right\}.
\]
\end{lemma}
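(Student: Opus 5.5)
We start from the exact formula of Lemma~\ref{lem:exact_exp}, or more conveniently from its unsummed form
\[
\E{H_0^i} = \sum_{k=1}^{i-1} k(1+\alphaval)^{-(k+1)} = (1+\alphaval)^{-1}\sum_{k=1}^{i-1} k(1+\alphaval)^{-k}.
\]
Since $\alphaval\le 1$, we have $(1+\alphaval)^{-1}\ge 1/2$. Together with $\log(1+\alphaval)\le\alphaval$, this gives $(1+\alphaval)^{-k}\ge e^{-\alphaval k}$, and hence
\[
\E{H_0^i}\ge \tfrac12\sum_{k=1}^{i-1} k e^{-\alphaval k}.
\]
We then split into two cases, as in the proof of Lemma~\ref{lem:exp_low_bound_H_1}.

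\textbf{Case $i\le \alphaval^{-1}$.} For $k\le i-1$ we have $\alphaval k\le 1$, so $e^{-\alphaval k}\ge e^{-1}$. This gives
\[
\E{H_0^i}\ge \tfrac{e^{-1}}{2}\cdot\tfrac{i(i-1)}{2}.
\]
For $i\ge 2$ we have $i(i-1)\ge i^2/2$, which yields $\E{H_0^i}\ge \tfrac{e^{-1}}{8}i^2$, well within the claimed bound. The case $i=1$ is degenerate: the sum is empty, so $\E{H_0^1}=0$, whereas the right-hand side equals $\frac{e^{-1}}{32}\min\{1,\alphaval^{-2}\}>0$. So the lemma as literally stated fails at $i=1$. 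We will note that it should be read for $i\ge 2$, or with $i^2$ replaced by $(i-1)^2$. This is harmless for the use in Lemma~\ref{lem:hess_lip_4}, where it is applied with index $i+1\ge \Kval+1\ge 2$.

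\textbf{Case $i>\alphaval^{-1}$.} We keep only the indices $k\in[\lceil 1/(2\alphaval)\rceil,\ \lfloor 1/\alphaval\rfloor]$, which lie in $\{1,\dots,i-1\}$ when $i-1\ge \lfloor 1/\alphaval\rfloor$. On this range $k\ge 1/(2\alphaval)$ and $e^{-\alphaval k}\ge e^{-1}$. The number of such integers is at least about $1/(2\alphaval)-1$, which is why we need some slack. We handle it as follows.
\begin{itemize}
\item If $\alphaval\le 1/4$, this count is at least $1/(4\alphaval)$. Hence the sum is at least $\frac{1}{4\alphaval}\cdot\frac{1}{2\alphaval}e^{-1}$, and
\[
\E{H_0^i}\ge \tfrac12\cdot\tfrac{e^{-1}}{8\alphaval^2}=\tfrac{e^{-1}}{16\alphaval^2}.
\]
\item If $\alphaval\in(1/4,1]$, we have $1/\alphaval^2\le 16$. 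Since $i\ge 2$, it suffices to keep the single term $k=1$, giving $\E{H_0^i}\ge (1+\alphaval)^{-2}\ge 1/4\ge \frac{e^{-1}}{32}\cdot 16$.
\end{itemize}

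\textbf{Main obstacle.} The main difficulty is purely bookkeeping: handling the integer rounding of the window of $k$'s together with the edge effect $i-1$ versus $i$. The constant $1/32$ is loose enough that the crude splitting above should work. If the window count becomes awkward, an alternative is to compare the sum with the integral $\int_0^{i-1} t e^{-\alphaval t}\,dt$, since $t\mapsto te^{-\alphaval t}$ is unimodal; this costs at most one maximal term, $1/(e\alphaval)$.
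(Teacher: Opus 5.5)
Your proof is correct for $i\ge 2$, but it follows a different route from the paper's.

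\textbf{The paper's route.} The paper starts from the closed form of Lemma~\ref{lem:exact_exp} and writes $\mathbb{E}[H_0^i]=(1-r^{i-1}(1+y))/\alpha^2$ with $r=(1+\alpha)^{-1}$ and $y=\alpha(i-1)/(1+\alpha)$. It then uses $r^{i-1}\le e^{-y}$ and recognizes $1-(1+y)e^{-y}=\int_0^y te^{-t}\,dt\ge \frac{e^{-1}}{2}\min\{y^2,1\}$. With $y\ge \alpha(i-1)/2$ this yields the intermediate bound $\frac{e^{-1}}{8}\min\{(i-1)^2,\alpha^{-2}\}$. The final step $(i-1)^2\ge i^2/4$ for $i\ge 2$ finishes.

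\textbf{Your route.} You never evaluate the sum. You bound it below termwise by $\frac12\sum_{k}k e^{-\alpha k}$ and split on $i$ versus $\alpha^{-1}$. For $i>\alpha^{-1}$ you count a window of indices on which $k\ge 1/(2\alpha)$ and $e^{-\alpha k}\ge e^{-1}$. You treat $\alpha\in(1/4,1]$ separately, keeping only the $k=1$ term, to absorb the integer rounding. All the counting checks out: $i-1\ge\lfloor 1/\alpha\rfloor$, the window has at least $\lfloor 1/(2\alpha)\rfloor\ge 1/(4\alpha)$ integers when $\alpha\le 1/4$, and $(1+\alpha)^{-2}\ge 1/4\ge e^{-1}/2$.

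\textbf{Comparison.} Your argument is more elementary: it needs neither the closed form nor the integral identity, and it lands inside the $1/32$ constant with room to spare. The price is the ad hoc sub-case split on $\alpha$. The paper's argument avoids rounding altogether because the sum is already computed. Its intermediate bound in terms of $(i-1)^2$ is the clean statement valid for every $i\ge 1$, which coincides with the fix you suggest.

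\textbf{The case $i=1$.} Your remark is right: $H_0^1=0$, so the lemma as literally stated fails there. The paper's own proof also only covers $i\ge 2$, since its last step invokes $i\ge 2$. As you say, this is harmless because the lemma is only applied with index at least $K+1\ge 2$.
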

\begin{proof}
Writing \(r=(1+\alphaval)^{-1}\), Lemma~\ref{lem:exact_exp} gives
\[
\mathbb E[H_0^i]
=
\frac{1-ir^{\,i-1}+(i-1)r^i}{\alphaval^2} = \frac{1-r^{\,i-1}(i-r(i-1))}{\alphaval^2}.
\]
Since
\[
i-(i-1)r= 1 + (i-1)(1-r) = 1+\frac{\alphaval(i-1)}{1+\alphaval},
\]
noting $y:=\frac{\alphaval(i-1)}{1+\alphaval}$, it follows that
\[
\mathbb E[H_0^i]
=
\frac{1-r^{\,i-1}(1+y)}{\alphaval^2}.
\]
We use $\log(1+x)\ge x/(1+x)$ at $x =\alphaval$, which induces

\[
r^{\,i-1}=e^{-(i-1)\log(1+\alphaval)}\le e^{-(i-1)\frac{\alphaval}{1+\alphaval}} =  e^{-y},
\]
and hence
\[
\mathbb E[H_0^i]
\ge
\frac{1-(1+y)e^{-y}}{\alphaval^2}.
\]
Now, we note the following integration by part property: 
$\int_0^y te^{-t}\,dt=[-te^{-t}]_0^y+\int_0^y e^{-t}\,dt
=-ye^{-y}+(1-e^{-y})=1-(1+y)e^{-y}$. We lower-bound this integral form, depending on the value of $y$. \begin{itemize}
  \item $y\le 1$:
  $\int_0^y te^{-t}\,dt\ge e^{-1}\!\int_0^y t\,dt=\frac{e^{-1}}{2}y^2$.
  \item $y\ge 1$:
  $\int_0^y te^{-t}\,dt\ge\int_0^1 te^{-t}\,dt\ge\frac{e^{-1}}{2}$.
\end{itemize}
From this we deduce
\[
\mathbb E[H_0^i]
\ge
\frac{e^{-1}}{2\alphaval^2}\min\{y^2,1\}.
\]
As \(0<\alphaval\le 1\), then \(y\ge \alphaval(i-1)/2\), so
$\min\{y^2,1\}\ge\min\{\alphaval^2(i-1)^2/4,1\}$. This implies
\[
\mathbb E[H_0^i]
\ge
\frac{e^{-1}}8
\min\!\left\{(i-1)^2,\frac1{\alphaval^2}\right\}.
\]
For \(i\ge 2\), we have $(i-1)\ge i/2$ so $(i-1)^2\ge i^2/4$. We then conclude
\[
\mathbb E[H_0^i]
\ge
\frac{e^{-1}}{32}
\min\!\left\{i^2,\frac1{\alphaval^2}\right\}.
\]
\end{proof}
\begin{lemma}\label{lem:exp_low_bound_H_2}
We have
\[
\mathbb E[H_2^i]
\ge
\frac{e^{-1}}{32}
\min\!\left\{i^2,\frac1{\alphaval^2}\right\}.
\]
\end{lemma}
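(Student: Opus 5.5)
The plan is to reduce directly to Lemma~\ref{lem:exp_low_bound_H_0}, using the identity already established in Lemma~\ref{lem:exact_exp}, namely
\[
\E{H_2^i} = (1+\alphaval)\,\E{H_0^i}.
\]
This identity comes from the term-by-term comparison $\E{(T_i-T_j)e^{-\alphaval(T_i-T_j)}} = (1+\alphaval)^{-1}(i-j)(1+\alphaval)^{-(i-j)}$, which follows from Lemma~\ref{lem:gamma_law_deriv}.

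Since $\alphaval>0$, we have $1+\alphaval\ge 1$, and $\E{H_0^i}\ge 0$ because it is a sum of nonnegative terms. Hence $\E{H_2^i}\ge \E{H_0^i}$, and the claimed bound $\frac{e^{-1}}{32}\min\{i^2,\alphaval^{-2}\}$ is inherited immediately from Lemma~\ref{lem:exp_low_bound_H_0}.

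The only point needing care is the case $i=1$. There $H_2^1=0$ while the right-hand side equals $\frac{e^{-1}}{32}\min\{1,\alphaval^{-2}\}>0$, so the inequality as stated fails. The proof of Lemma~\ref{lem:exp_low_bound_H_0} also silently uses $i\ge 2$, through the step $(i-1)^2\ge i^2/4$. I would therefore state and prove the bound for $i\ge 2$ and remark that $i=1$ is excluded, exactly as in Lemma~\ref{lem:exp_low_bound_H_0}.

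For the downstream use this restriction is harmless. When $i=1$ the sum $H_2^1$ vanishes, so assertion (3) of Theorem~\ref{thm:high_proba_bound} holds trivially, and the lower bound on the expectation is never needed there. This edge-case bookkeeping is the only obstacle I anticipate; everything else is a one-line reduction.
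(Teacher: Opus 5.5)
Your proof is correct and matches the paper's: the paper also deduces the bound from Lemma~\ref{lem:exp_low_bound_H_0} via the termwise comparison $\sum_{k=1}^{i-1}k(1+\alphaval)^{-(k+1)}\le\sum_{k=1}^{i-1}k(1+\alphaval)^{-k}$, which is exactly your observation $\E{H_0^i}\le(1+\alphaval)\E{H_0^i}=\E{H_2^i}$. Your remark on $i=1$ is also right: $H_2^1=H_0^1=0$, so both lemmas as stated need $i\ge 2$ (the proof of Lemma~\ref{lem:exp_low_bound_H_0} tacitly assumes this), and the restriction is harmless downstream, since assertion (3) is trivial at $i=1$ and the $H_0$ bound is only used for indices at least $\Kval+1\ge 2$.
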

\begin{proof}
    The claim can be deduced from Lemma~\ref{lem:exp_low_bound_H_0}. Indeed, recall
    \[
\mathbb E[H_0^i]
=\sum_{k=1}^{i-1}\frac{k}{(1+\alphaval)^{k+1}} \le \sum_{k=1}^{i-1}\frac{k}{(1+\alphaval)^{k}} = \E{H_2^i}, 
\]
where we used $\alphaval  > 0$.
\end{proof}
\subsection{Combining lower and upper-bounds}\label{app:combine}

We conclude the proof of Theorem~\ref{thm:high_proba_bound} in this section.

We define
\begin{equation}\label{eq:c_n_def}
    C_n :=  \ceil{(\cinf -1-\log(\cinf ))^{-1}\log\bpar{2\frac{n}{\varepsilon}}}\cinf\alphaval = \Kval \cinf\alphaval.
\end{equation}

In the proof, we will replace $\Kval $ with $C_n/(\alphaval \cinf).$Therefore, $C_n$ measures the cost of replacing $\Kval$ by $(\alphaval \cinf)^{-1}$ in the upper-bounds of Section \ref{app:upper}. Note that for our convergence result, our final choice of $\alphaval$ will be of order $1/n^{1/7}$, such that $C_n$ will be of order $\log(\frac{n}{\varepsilon})n^{-1/7}$. Therefore, we will have $C_n \to 0$ as $n \to +\infty$, and that $C_n$ is uniformly bounded on $n \in \N^\ast$.

The proof for the 3 sums are similar, it combines the upper-bounds from Section~\ref{app:upper} and the lower-bounds from Section~\ref{app:lower}.

\noindent
\textbf{(1st step.)} 
  From Proposition~\ref{prop:maj_H_0}, with probability $1-\varepsilon$, we have
  
\[\forall i \in \{\Kval +1,\dots,n \},~
H_0^i
\le
\min\!\left\{
\cupp i^2,\,
\frac{4\cupp}{(\alphaval\cinf)^2}
+
\frac{4\cupp \Kval }{\alphaval\cinf}
+
 \cupp \Kval ^2
\right\}.
\]
Using the definition \eqref{eq:c_n_def} of $C_n$, this becomes
\[\forall i \in \{\Kval +1,\dots,n \},~
H_0^i
\le
\min\!\left\{
\cupp i^2,\,
\bpar{4 + 4 C_n + C_n^2}\frac{\cupp}{(\alphaval\cinf)^2}
\right\}.
\]
As $ (4 + 4C_n + C_n)/\cinf^2 \ge 1$, it implies that with probability $1-\varepsilon$, $\forall i \in \{\Kval +1,\dots,n \}$, we have

   \begin{equation}\label{eq:thm:H_0:1}
 H_0^i \le \frac{\cupp}{c_{inf}^2}\bpar{4 + 4 C_n + C_n^2}\min\left\{ i^2, \frac{1}{\alphaval^2} \right\},
  \end{equation}
  Now, from Lemma~\ref{lem:exp_low_bound_H_0}, we have
 \begin{equation}\label{eq:thm:H_0:2}
 \mathbb E[H_0^i]
\ge
\frac{e^{-1}}{32}
\min\!\left\{i^2,\frac1{\alphaval^2}\right\}.
 \end{equation}
Combining \eqref{eq:thm:H_0:1} and \eqref{eq:thm:H_0:2}, we deduce that with probability $1-\varepsilon$, $\forall i \in \{\Kval +1,\dots,n \}$, we have
\[ \sum_{j=1}^i (T_i-T_j)e^{-\alphaval (T_i-T_j)} \le 32(4+4C_n+C_n^2)\frac{\cupp e}{\cinf^2} \E{\sum_{j=1}^i (T_i-T_j)e^{-\alphaval (T_i-T_j)}}.\]

\noindent
\textbf{(2nd step.)} 
    From Proposition~\ref{prop:maj_H_1}, with probability $1-\varepsilon$
   \[\forall i \in \{1,\dots,n \},~H_1^i\le\min\!\left\{
i,\,
\frac{2}{\alphaval \cinf  }+\Kval 
\right\}.\]
Using the definition \eqref{eq:c_n_def} of $C_n$, we get that with probability $1-\varepsilon$, $\forall i \in \{1,\dots,n \},$
\begin{equation}\label{eq:thm:H_1:1}
\forall i \in \{1,\dots,n \},~H_1^i\le\min\!\left\{
i,\,
\frac{2+C_n}{\alphaval \cinf  }
\right\}.
\end{equation}
Now, from Lemma~\ref{lem:exp_low_bound_H_1}, for any $i \in \{1,\dots,n \},$
\begin{equation}\label{eq:thm:H_1:2}
    (1-e^{-1})^{-1}\E{H_1^i}
\ge
\min\!\left\{i,\frac1\alphaval\right\}.
\end{equation}
Combining \eqref{eq:thm:H_1:1} and \eqref{eq:thm:H_1:2}, we deduce that with probability $1-\varepsilon$, 
\[\forall i \in \{1,\dots,n \},~ H_1^i \le \frac{2+C_n}{\cinf}(1-e^{-1})^{-1}\E{H_1^i}.\]

\noindent
\textbf{(3d step.)} By Proposition~\ref{prop:maj_H_2}, with probability $1-\varepsilon$,
\[
\forall i \in \{1,\dots,n \},~
H_2^i
\le
\min\!\left\{
\frac{i^2}{2},\,
\frac{4}{(\alphaval\cinf)^2}
+
\frac{2\Kval }{\alphaval\cinf}
+
 \Kval ^2
\right\}.
\]
Using the definition \eqref{eq:c_n_def} of $C_n$, we get that with probability $1-\varepsilon$, $\forall i \in \{1,\dots,n \},$

\[
H_2^i
\le
\min\!\left\{
\frac{i^2}{2},\,
\frac{4+2C_n+C_n^2}{(\alphaval\cinf)^2}
\right\}.
\]
Because $1/2 < 4+2C_n+C_n^2$, we deduce that with probability $1-\varepsilon$, $\forall i \in \{1,\dots,n \},$
\begin{equation}\label{eq:thm:H_2:1}
    H_2^i \le \frac{(4+2C_n+C_n^2)}{c_{inf}^2}\min \left\{i^2,\frac{1}{\alphaval^2}\right\}.
\end{equation}
Now, from Lemma~\ref{lem:exp_low_bound_H_2}, for any $i \in \{1,\dots,n\}$,
\begin{equation}\label{eq:thm:H_2:2}
\mathbb E[H_2^i]
\ge
\frac{e^{-1}}{32}
\min\!\left\{i^2,\frac1{\alphaval^2}\right\}.
\end{equation}
Combining \eqref{eq:thm:H_2:1} and \eqref{eq:thm:H_2:2}, we deduce that with probability $1-\varepsilon$, 
\[\forall i \in \{1,\dots,n \},~ H_2^i \le 32(4+2C_n+C_n^2)\frac{e}{\cinf^2}\E{H_2^i}.  \]

\noindent
\textbf{Conclusion.}
We conclude by defining
\[\Ccont :=\max \left\{ 32(4+4C_n+C_n^2)\frac{\cupp e}{\cinf^2}, \frac{2+C_n}{\cinf}(1-e^{-1})^{-1},  32(4+2C_n+C_n^2)\frac{e}{\cinf^2}\right\}.\]
It follows that
\[\Ccont = 32(4+4C_n+C_n^2)\frac{\cupp e}{\cinf^2}. \]

\section{Additional technical Lemmas used to prove Theorem~\ref{thm:hess_lip}}
In this section, we state and prove lemmas used to prove Theorem~\ref{thm:hess_lip}, but not explicitly stated in Section~\ref{sec:proof}. We start with some elementary results.
\begin{lemma}\label{lem:sum_geometric}
    Let $\alphaval > 0$, $i \ge 1$. We have for $c>0$
    \begin{equation*}
        \sum_{j=1}^i (1+c\alphaval)^{-(i-j)} = \frac{1+c\alphaval - (1+c\alphaval)^{-i+1}}{c\alphaval}, 
    \end{equation*}
\end{lemma}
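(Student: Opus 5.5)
This is a finite geometric sum, so I will reindex it and apply the closed-form formula. Set $q := (1+c\alphaval)^{-1}$, which lies in $(0,1)$ because $c\alphaval>0$. With $\ell := i-j$, the index $j$ running from $1$ to $i$ becomes $\ell$ running from $0$ to $i-1$, so
\[
\sum_{j=1}^i (1+c\alphaval)^{-(i-j)} = \sum_{\ell=0}^{i-1} q^\ell = \frac{1-q^{i}}{1-q}.
\]

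Next I simplify the denominator. Since $1-q = 1-\frac{1}{1+c\alphaval} = \frac{c\alphaval}{1+c\alphaval}$, we get
\[
\frac{1-q^i}{1-q} = \frac{(1+c\alphaval)\bigl(1-(1+c\alphaval)^{-i}\bigr)}{c\alphaval} = \frac{1+c\alphaval-(1+c\alphaval)^{-i+1}}{c\alphaval},
\]
which is the claimed formula. The only care needed is to distribute the factor $(1+c\alphaval)$ correctly over $q^i$, giving $(1+c\alphaval)\cdot(1+c\alphaval)^{-i}=(1+c\alphaval)^{-i+1}$.

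There is no real obstacle. The one point to check is the edge case $i=1$: the left-hand side equals $1$, and the right-hand side equals $\frac{1+c\alphaval-1}{c\alphaval}=1$, so the formula holds there as well.
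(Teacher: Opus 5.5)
Your proof is correct and follows essentially the same route as the paper: reindex to $\sum_{k=0}^{i-1}(1+c\alpha)^{-k}$, apply the finite geometric sum formula, and simplify using $1-(1+c\alpha)^{-1} = c\alpha/(1+c\alpha)$. Your check of the case $i=1$ is a harmless extra that the paper omits.
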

\begin{proof}
    \begin{align*}
         \sum_{j=1}^i (1+c\alphaval)^{-(i-j)}  &= \sum_{k=0}^{i-1} (1+c\alphaval)^{-k}\\
         &= \frac{1-(1+c\alphaval)^{-i}}{1 - (1+c\alphaval)^{-1}}\\
         &= \frac{1+c\alphaval - (1+c\alphaval)^{-i+1}}{c\alphaval}
    \end{align*}
\end{proof}

\begin{lemma}\label{lem:sum_deriv}
Let $\gamma \neq 1$, $n \in \mathbb{N}^\ast$. We have
    \begin{equation}\label{eq:lem_sum_deriv}
     \sum_{k=1}^n k\gamma^k = \gamma\frac{1-(n+1)\gamma^{n} + n\gamma^{n+1}}{(1-\gamma)^2}.   
    \end{equation}
\end{lemma}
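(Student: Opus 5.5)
The cleanest route is the classical trick of differentiating a finite geometric sum with respect to its ratio. First, for $\gamma\neq 1$ the geometric sum gives
\[
S(\gamma) := \sum_{k=0}^{n}\gamma^k = \frac{1-\gamma^{n+1}}{1-\gamma}.
\]
Second, I differentiate both sides with respect to $\gamma$ on a neighbourhood avoiding $1$; both sides are rational functions, so this is legitimate. The left side becomes $\sum_{k=1}^n k\gamma^{k-1}$. By the quotient rule the right side becomes
\[
\frac{-(n+1)\gamma^{n}(1-\gamma)+(1-\gamma^{n+1})}{(1-\gamma)^2}.
\]
Expanding the numerator gives
\[
1-(n+1)\gamma^n+(n+1)\gamma^{n+1}-\gamma^{n+1} = 1-(n+1)\gamma^n+n\gamma^{n+1}.
\]
Finally, I multiply both sides by $\gamma$, which yields \eqref{eq:lem_sum_deriv}.

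As a fallback that avoids calculus, I would multiply $\sum_{k=1}^n k\gamma^k$ by $(1-\gamma)$. This telescopes to $\sum_{k=1}^n \gamma^k - n\gamma^{n+1}$, and I would then divide by $(1-\gamma)$ once more using the geometric sum.

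There is no real obstacle here. The only step needing care is the algebraic simplification of the numerator after the quotient rule, specifically collecting the $\gamma^{n+1}$ terms to get the coefficient $n$. I would sanity-check the result at $n=1$: the formula gives $\gamma(1-2\gamma+\gamma^2)/(1-\gamma)^2=\gamma$, which matches.
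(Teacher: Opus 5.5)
Your proof is correct and follows the same route as the paper: differentiate the closed form of a finite geometric sum and multiply by $\gamma$. The paper starts the sum at $k=1$ (so $\varphi(\gamma)=\frac{\gamma-\gamma^{n+1}}{1-\gamma}$) rather than $k=0$, but the constant term has zero derivative, so this changes nothing.
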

\begin{proof}
    Define $\varphi(\gamma) := \sum_{k=1}^n \gamma^k = \frac{\gamma - \gamma^{n+1}}{1-\gamma}$. We have
    \begin{equation*}
        \varphi'(y) = \sum_{k=1}^n k\gamma^{k-1} = \frac{(1-(n+1)\gamma^{n})(1-\gamma) + \gamma - \gamma^{n+1}}{(1-\gamma)^2} = \frac{1-(n+1)\gamma^{n} + n\gamma^{n+1}}{(1-\gamma)^2}.
    \end{equation*}
    Multiply by $\gamma$ allows to get \eqref{eq:lem_sum_deriv}. 

    
\end{proof}


\begin{lemma}\label{lem:exp_stopping_time}
    If $i\le j$, we have
    \begin{equation*}
        \mathbb{E}\left[ e^{-\alphaval(T_j - T_i)} \right] = (1+\alphaval)^{-j+i}.
    \end{equation*}
\end{lemma}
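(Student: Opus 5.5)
The statement to prove is Lemma~\ref{lem:exp_stopping_time}: for $i\le j$, $\E{e^{-\alphaval(T_j-T_i)}}=(1+\alphaval)^{-(j-i)}$.

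The plan is to write the increment as a sum of independent exponential gaps and factorize the expectation. By construction $T_j-T_i=\sum_{k=i}^{j-1}\Delta_k$ with $\Delta_k:=T_{k+1}-T_k\overset{i.i.d}{\sim}\mathcal{E}(1)$. This is the same decomposition used in the proof of Proposition~\ref{prop:min_gamma}. If $i=j$ the sum is empty, so both sides equal $1$ and there is nothing to prove. Otherwise independence of the $\Delta_k$ gives
\[
\E{e^{-\alphaval(T_j-T_i)}}=\prod_{k=i}^{j-1}\E{e^{-\alphaval\Delta_k}} .
\]

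Each factor is computed directly from the exponential density:
\[
\E{e^{-\alphaval\Delta_k}}=\int_0^{\infty}e^{-\alphaval u}e^{-u}\,du=\frac{1}{1+\alphaval}.
\]
The integral converges because $\alphaval>0$; in fact $\alphaval>-1$ suffices. Multiplying the $j-i$ identical factors gives $(1+\alphaval)^{-(j-i)}$, which is the claim. Equivalently, one can note that $T_j-T_i\sim\Gamma(j-i,1)$ and use the Laplace transform of the Gamma law, as in the proof of Lemma~\ref{lem:gamma_law_deriv}.

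No step is a genuine obstacle. The only points needing care are the empty-product case $i=j$ and noting that the integral converges for the admissible values of $\alphaval$.
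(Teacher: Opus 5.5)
Your proof is correct and matches the paper's argument: both handle $i=j$ trivially, write $e^{-\alpha(T_j-T_i)}$ as a product over the i.i.d.\ $\mathcal{E}(1)$ increments $T_{k+1}-T_k$, factor the expectation by independence, and use $\mathbb{E}[e^{-\alpha\Delta}]=(1+\alpha)^{-1}$ for each factor. Your explicit evaluation of that factor via the exponential density, with the remark on convergence, just spells out a step the paper states without computation.
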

\begin{proof}
It is trivially true if $i = j$. If $i<j$, we have
    \begin{align*}
        e^{-\alphaval(T_j-T_i)} &=\prod_{k=0}^{j-i-1} e^{-\alphaval(T_{k+i+1}-T_{k+i})} 
    \end{align*}
    By independence of the $T_{k+1}-T_k$ for all $k$, we have
        \begin{align*}
        \mathbb{E}\left[\prod_{k=0}^{j-i-1} e^{-\alphaval(T_{k+i+1}-T_{k+i})} \right] &=\prod_{k=0}^{j-i-1} \mathbb{E}\left[e^{-\alphaval(T_{i+j+1}-T_{i+j})}\right].
    \end{align*}
Recall $T_{k+1}-T_k$ for all $k$ follows an exponential law with parameter $1$, such that we have $\mathbb{E}\left[e^{-\alphaval(T_{i+j+1}-T_{i+j})}\right] = (1+\alphaval)^{-1}$. So,
    \begin{align*}
    \prod_{k=0}^{j-i-1} \mathbb{E}\left[e^{-\alphaval(T_{i+j+1}-T_{i+j})}\right]&=\prod_{k=0}^{j-i-1} (1+\alphaval)^{-1}\\
         &=(1+\alphaval)^{-j+i}.
    \end{align*}
\end{proof}

\begin{lemma}\label{prop:calc_sto_comput_1}
For $n \ge 1$, $\alphaval > 0$, we have
\begin{enumerate}[label=(\roman*)]
    \item The following upper-bound \begin{equation*}
        \mathbb{E}\left[ \int_0^{T_n} \bpar{\int_0^t \alphaval e^{\alphaval(s-t)}dN_s}^2 dN_t\right]       \le     \bpar{1+\frac{3}{2}\alphaval}(1+2\alphaval)n.
    \end{equation*}
    \item  The following equality
    \begin{equation*}
        \begin{aligned}
             & \mathbb{E}\left[ \int_0^{T_n} \bpar{\int_0^t \alphaval e^{\alphaval(s-t)}dN_s}^2 dN_t\right] \\
             &=\frac{(2+\alphaval)(1+2\alphaval)}{2}n 
         + \frac{(1+2\alphaval)(1+\frac{3}{2}\alphaval)}{2\alphaval}(1- (1+2\alphaval)^{-n})\\
         &- \frac{2(1+\alphaval)(1+2\alphaval)}{\alphaval}(1-(1+\alphaval)^{-n})
        \end{aligned}
    \end{equation*}
\end{enumerate}

\end{lemma}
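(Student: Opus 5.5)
We will write the double Poisson integral as a finite sum and compute its expectation term by term, using only Lemma~\ref{lem:exp_stopping_time} and geometric-series identities. Since $N$ jumps exactly at $T_1,\dots,T_n$ on $[0,T_n]$, we have
\[
\int_0^{T_n}\bpar{\int_0^t \alphaval e^{\alphaval(s-t)}dN_s}^2dN_t=\alphaval^2\sum_{i=1}^n\bpar{\sum_{j=1}^i e^{-\alphaval(T_i-T_j)}}^2 .
\]
Expanding the square gives, for each $i$, a sum over $j,j'\le i$. By symmetry, we write it as the diagonal terms $e^{-2\alphaval(T_i-T_j)}$ plus twice the off-diagonal terms with $j<j'$. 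For $j<j'\le i$ we factor
\[
e^{-\alphaval(T_i-T_j)}e^{-\alphaval(T_i-T_{j'})}=e^{-2\alphaval(T_i-T_{j'})}\,e^{-\alphaval(T_{j'}-T_j)},
\]
and the two factors are independent because they depend on disjoint increments. Lemma~\ref{lem:exp_stopping_time}, applied with parameter $2\alphaval$ and with $\alphaval$, then gives the expectation $(1+2\alphaval)^{-(i-j')}(1+\alphaval)^{-(j'-j)}$.

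The expectation of the $i$-th summand is therefore
\[
\alphaval^2\Big[\sum_{m=0}^{i-1}(1+2\alphaval)^{-m}+2\sum_{m=0}^{i-2}(1+2\alphaval)^{-m}\sum_{d=1}^{i-1-m}(1+\alphaval)^{-d}\Big],
\]
where $m=i-j'$ and $d=j'-j$.

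For the upper bound (i), we drop the finite truncations and bound every geometric sum by its infinite version. This gives $\sum_m(1+2\alphaval)^{-m}\le \frac{1+2\alphaval}{2\alphaval}$ and $\sum_{d\ge1}(1+\alphaval)^{-d}\le \frac1\alphaval$. Each summand is then at most
\[
\alphaval^2\frac{1+2\alphaval}{2\alphaval}\bpar{1+\frac{2}{\alphaval}}=\frac{(1+2\alphaval)(2+\alphaval)}{2},
\]
and we compare this with the target constant $(1+\frac32\alphaval)(1+2\alphaval)$. Since $\frac{2+\alphaval}{2}=1+\frac{\alphaval}{2}\le 1+\frac32\alphaval$, summing over $i$ gives (i).

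For the equality (ii), we keep the sums finite. The inner sum is $\sum_{d=1}^{i-1-m}(1+\alphaval)^{-d}=\frac{1-(1+\alphaval)^{-(i-1-m)}}{\alphaval}$. Multiplying by $(1+2\alphaval)^{-m}$ produces a pure geometric sum in ratio $(1+2\alphaval)^{-1}$, together with the cross term $(1+\alphaval)^{-(i-1)}\sum_m\bpar{\frac{1+\alphaval}{1+2\alphaval}}^m$. All of these are closed-form via Lemma~\ref{lem:sum_geometric}. Summing over $i=1,\dots,n$ then produces a term linear in $n$ plus boundary terms in $(1+2\alphaval)^{-n}$ and $(1+\alphaval)^{-n}$, obtained again from geometric sums.

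The main obstacle is purely algebraic: after the cross term simplifies, one must check that the coefficients come out as the stated $\frac{(2+\alphaval)(1+2\alphaval)}{2}$, $\frac{(1+2\alphaval)(1+\frac32\alphaval)}{2\alphaval}$ and $-\frac{2(1+\alphaval)(1+2\alphaval)}{\alphaval}$. To simplify the cross term, we use
\[
(1+\alphaval)^{-(i-1)}\sum_{m=0}^{i-2}\bpar{\frac{1+\alphaval}{1+2\alphaval}}^m=\frac{1+2\alphaval}{\alphaval}\Big[(1+\alphaval)^{-(i-1)}-(1+2\alphaval)^{-(i-1)}\Big].
\]
This rewrites everything as differences of pure powers, after which the summation over $i$ is routine. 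As a sanity check on the final expression, we verify it at $n=1$, where both sides should equal $\alphaval^2$.
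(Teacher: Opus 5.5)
Your proposal is correct. Part (i) is essentially the paper's argument: the sum representation, the diagonal/off-diagonal split, the factorization into independent increments, and bounding the geometric sums by infinite series. Your version is in fact slightly sharper. You get $\frac{(2+\alpha)(1+2\alpha)}{2}=(1+\frac{\alpha}{2})(1+2\alpha)$ per summand, whereas the paper bounds the inner sum $\sum_{j<l}(1+\alpha)^{-(l-j)}$ by $\frac{1+\alpha}{\alpha}$ instead of $\frac{1}{\alpha}$ and so lands on $(1+\frac32\alpha)(1+2\alpha)$.

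Part (ii) is where the routes genuinely differ. The paper sets $Y_k=\bpar{\int_{[0,T_k]}\alpha e^{\alpha(s-T_k)}dN_s}^2$ and uses $Y_{k+1}=e^{-2\alpha(T_{k+1}-T_k)}Y_k+2\alpha e^{-\alpha(T_{k+1}-T_k)}\sqrt{Y_k}+\alpha^2$. Together with independence of the new increment, this gives the linear recurrence $u_{k+1}=(1+2\alpha)^{-1}u_k-2\alpha(1+\alpha)^{-k}+\alpha(2+\alpha)$ for $u_k=\E{Y_k}$, which the paper then solves. You instead evaluate exactly the same pair sum you bounded in (i).

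The algebra you called routine does close. With your cross-term identity one gets
\[
u_i=\frac{(2+\alpha)(1+2\alpha)}{2}+(1+2\alpha)\bpar{1+\tfrac32\alpha}(1+2\alpha)^{-i}-2(1+\alpha)(1+2\alpha)(1+\alpha)^{-i},
\]
which is exactly the paper's closed form. Summing the two geometric series over $i$ gives the stated expression, and $u_1=\alpha^2$ checks out.

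The two routes buy different things. The recurrence needs only one-step moments and avoids the bookkeeping over pairs $(j,j')$. Your expansion treats (i) and (ii) in a single computation. Since every term is nonnegative and the index ranges grow with $i$, it also shows directly that $u_i$ increases to $\frac{(2+\alpha)(1+2\alpha)}{2}$, so your constant in (i) is the asymptotically sharp one.
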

\begin{proof}
\textbf{Point (i) \:}
    We have 
    \begin{align}
        \int_0^{T_n} \bpar{\int_0^t \alphaval e^{\alphaval(s-t)}dN_s}^2 dN_t &= \sum_{i=1}^n \bpar{\sum_{j=1}^i \alphaval e^{\alphaval(T_j-T_i)}}^2\nonumber\\
        &= \alphaval^2\sum_{i=1}^n \sum_{j=1}^i  e^{-2\alphaval(T_i-T_j)} + 2 \alphaval^2 \sum_{i=1}^n \sum_{1 \le j < l \le i}  e^{-\alphaval(T_i-T_j)}e^{-\alphaval(T_i-T_l)}\label{eq:calc_sto_poiss_1}.
    \end{align}
    We focus on the first term in \eqref{eq:calc_sto_poiss_1}. We have by using Lemma~\ref{lem:exp_stopping_time} and Lemma~\ref{lem:sum_geometric} ($c =2$)
    \begin{align*}
         \mathbb{E}\left[ \alphaval^2\sum_{i=1}^n \sum_{j=1}^i  e^{-2\alphaval(T_i-T_j)}\right]&=  \alphaval^2\sum_{i=1}^n \sum_{j=1}^i   \mathbb{E}\left[ e^{-2\alphaval(T_i-T_j)}\right]
   \\
   &= \alphaval^2\sum_{i=1}^n \sum_{j=1}^i (1+2\alphaval)^{-(i-j)}  \\
   &= \alphaval^2 \sum_{i=1}^n \frac{1+2\alphaval - (1+2\alphaval)^{-i+1}}{2\alphaval}
    \end{align*}
    Omitting the non positive term, we have
    \begin{equation*}
         \mathbb{E}\left[ \alphaval^2\sum_{i=1}^n \sum_{j=1}^i  e^{-2\alphaval(T_i-T_j)}\right] \le \alphaval n \frac{1+2\alphaval}{2}.
    \end{equation*}
    We focus on the second term of \eqref{eq:calc_sto_poiss_1}. We have
    \begin{align*}
        2 \alphaval^2 \sum_{i=1}^n \sum_{1 \le j < l \le i}  e^{-\alphaval(T_i-T_j)}e^{-\alphaval(T_i-T_l)} &= 2\alphaval^2 \sum_{i=1}^n \sum_{1 \le j < l \le i}  e^{-2\alphaval(T_i-T_l)}e^{-\alphaval(T_l-T_j)}
    \end{align*}
    We take expectation and we use Lemma~\ref{lem:exp_stopping_time}
     \begin{align*}
        2 \alphaval^2 \mathbb{E}\left[\sum_i^n \sum_{1 \le j < l \le i}  e^{-\alphaval(T_i-T_j)}e^{-\alphaval(T_i-T_l)}\right]
        &=2\alphaval^2 \sum_{i=1}^n \sum_{1 \le j < l \le i}  \mathbb{E}\left[ e^{-2\alphaval(T_i-T_l)}e^{-\alphaval(T_l-T_j)} \right]\\
        &=2\alphaval^2 \sum_{i=1}^n \sum_{1 \le j < l \le i}  \mathbb{E}\left[ e^{-2\alphaval(T_i-T_l)}\right] \mathbb{E}\left[e^{-\alphaval(T_l-T_j)} \right]\\
        &=2\alphaval^2  \sum_{i=1}^n \sum_{1 \le j < l \le i} (1+2\alphaval)^{-(i-l)}(1+\alphaval)^{-(l-j)}\\
        &= 2\alphaval^2  \sum_{i=1}^n \sum_{l = 1}^i \sum_{j = 1}^{l-1} (1+2\alphaval)^{-(l-j)}  (1+\alphaval)^{-(i-j)}
        .
    \end{align*}
    Now, using twice Lemma~\ref{lem:sum_geometric} with $c =1$ and $c=2$, and omitting non negative terms
    \begin{align*}
         2\alphaval^2  \sum_{i=1}^n \sum_{l = 1}^i \sum_{j = 1}^{l-1} (1+2\alphaval)^{-(i-l)}  (1+\alphaval)^{-(i-j)} &= 2\alphaval^2  \sum_{i=1}^n \sum_{l = 1}^i (1+2\alphaval)^{-(i-l)} \sum_{j = 1}^{l-1}  (1+\alphaval)^{-(l-j)}\\
         &= 2\alphaval^2  \sum_{i=1}^n \sum_{l = 1}^i (1+2\alphaval)^{-(i-l)}\bpar{\frac{1+\alphaval - (1+\alphaval)^{-i+1}}{\alphaval}}\\
         &\le 2\alphaval(1+\alphaval)\sum_{i=1}^n \sum_{l = 1}^i (1+2\alphaval)^{-(i-l)}\\
         &=2\alphaval(1+\alphaval)\sum_{i=1}^n \bpar{\frac{1+2\alphaval - (1+2\alphaval)^{-i+1}}{2\alphaval}}\\
         &\le (1+\alphaval)(1+2\alphaval)n
    \end{align*}
    We then deduce that
    \begin{align*}
         2 \alphaval^2 \mathbb{E}\left[\sum_i^n \sum_{1 \le j < l \le i}  e^{-\alphaval(T_i-T_j)}e^{-\alphaval(T_i-T_l)}\right]  \le  (1+\alphaval)(1+2\alphaval)n.
    \end{align*}
    Finally, we have the bound
     \begin{align*}
        \mathbb{E}\left[ \int_0^{T_n} \bpar{\int_0^t \alphaval e^{\alphaval(s-t)}dN_s}^2 dN_t\right]&\le   \alphaval  \frac{1+2\alphaval}{2}n + (1+\alphaval)(1+2\alphaval)n \\
        &=\bpar{1+\frac{3}{2}\alphaval}(1+2\alphaval)n.
    \end{align*}
\textbf{Point (ii) \:}
    Let $Y_k = \bpar{\int_{[0,T_k]}\alphaval e^{\alphaval(s-T_k)}dN_s}^2$. We have
    \begin{align*}
        Y_{k+1} &= \bpar{\int_{[0,T_{k+1}]}\alphaval e^{\alphaval(s-T_{k+1})}dN_s}^2\\
        &=\bpar{e^{\alphaval(T_k-T_{k+1})}\int_{[0,T_{k}]}\alphaval e^{\alphaval(s-T_{k})}dN_s + \int_{]T_k,T_{k+1}]}\alphaval e^{\alphaval(s-T_{k+1})}dN_s}^2.
    \end{align*}
    We have 
    $\int_{]T_k,T_{k+1}]}\alphaval e^{\alphaval(s-T_{k+1})}dN_s = \alphaval$. So
    \begin{align*}
        Y_{k+1} &=e^{2\alphaval(T_k-T_{k+1})}Y_k + 2\alphaval e^{\alphaval(T_k-T_{k+1})}\int_{[0,T_{k}]}\alphaval e^{\alphaval(s-T_{k})}dN_s +  \alphaval^2.
    \end{align*}
    We take the expectation
        \begin{align*}
        \E{Y_{k+1}} &=\E{e^{2\alphaval(T_k-T_{k+1})}Y_k + 2\alphaval e^{\alphaval(T_k-T_{k+1})}\int_{[0,T_{k}]}\alphaval e^{\alphaval(s-T_{k})}dN_s +  \alphaval^2}\\
        &=\E{e^{2\alphaval(T_k-T_{k+1})}Y_k} + 2\alphaval \E{e^{\alphaval(T_k-T_{k+1})}\int_{[0,T_{k}]}\alphaval e^{\alphaval(s-T_{k})}dN_s} +  \alphaval^2\\
        &=\E{e^{2\alphaval(T_k-T_{k+1})}}\E{Y_k} + 2\alphaval \E{e^{\alphaval(T_k-T_{k+1})}}\E{\int_{[0,T_{k}]}\alphaval e^{\alphaval(s-T_{k})}dN_s} +  \alphaval^2\\
        &=(1+2\alphaval)^{-1}\E{Y_k} + 2\alphaval(1+\alphaval)^{-1}(1+\alphaval - (1+\alphaval)^{-k+1}) +  \alphaval^2\\
         &=(1+2\alphaval)^{-1}\E{Y_k} + 2\alphaval(1 - (1+\alphaval)^{-k}) +  \alphaval^2\\
         &=(1+2\alphaval)^{-1}\E{Y_k} -2\alphaval(1+\alphaval)^{-k} +\alphaval(2+\alphaval).
    \end{align*}
    Noting $u_k := \E{Y_k}$, we get a recursive formula of the form
    \begin{equation*}
        u_{k+1} = a u_k + b_k + c,
    \end{equation*}
    whose solution is given by
    \begin{equation}\label{eq:last_calc_0}
        u_k = a^k\sum_{i=0}^{k-1}\frac{b_i + c}{a^{i+1}} = \sum_{i=0}^{k-1}(b_i +c) a^{k-i-1}.
    \end{equation}
    We compute \eqref{eq:last_calc_0} in two steps. We start with
    \begin{equation}
        \begin{aligned}\label{eq:last_calc_1}
              \sum_{i=0}^{k-1}c a^{k-i-1}
        &= c\sum_{j=0}^{k-1}a^j \\
        &= c\frac{1-a^k}{1-a}\\
        &= \alphaval(2+\alphaval)\frac{1-(1+2\alphaval)^{-k}}{1-(1+2\alphaval)^{-1}}\\& = \frac{(2+\alphaval)(1+2\alphaval)}{2} - \frac{2+\alphaval}{2}(1+2\alphaval)^{-k+1}.
        \end{aligned}
    \end{equation}
  
    Then, we have
    \begin{equation}
        \begin{aligned}\label{eq:last_calc_2}
                      a^{k-1}\sum_{i=0}^{k-1}\frac{b_i}{a^{i}} &= -2\alphaval a^{k-1}\sum_{i=0}^{k-1} \bpar{\frac{1+2\alphaval}{1+\alphaval}}^i\\
        &=-2\alphaval a^{k-1}\bpar{1-\bpar{\frac{1+2\alphaval}{1+\alphaval}}^k}\bpar{1-\bpar{\frac{1+2\alphaval}{1+\alphaval}}}^{-1}\\
        &=2(1+\alphaval)\bpar{ a^{k-1} - a^{-1}(1+\alphaval)^{-k}}\\
        &=2(1+\alphaval)(1+2\alphaval)( (1+2\alphaval)^{-k} -(1+\alphaval)^{-k})\\
        &=2(1+\alphaval)(1+2\alphaval) (1+2\alphaval)^{-k} -2(1+\alphaval)(1+2\alphaval)(1+\alphaval)^{-k}.
        \end{aligned}
    \end{equation}
   
    Injecting \eqref{eq:last_calc_1} and \eqref{eq:last_calc_2} into \eqref{eq:last_calc_0}, we get the following expression for $u_k$
    \begin{align*}
        u_k &=\frac{(2+\alphaval)(1+2\alphaval)}{2} - \frac{2+\alphaval}{2}(1+2\alphaval)^{-k+1} +2(1+\alphaval)(1+2\alphaval) (1+2\alphaval)^{-k} \\
        &-2(1+\alphaval)(1+2\alphaval)(1+\alphaval)^{-k}\\
        &=\frac{(2+\alphaval)(1+2\alphaval)}{2} + \bpar{-\frac{(2+\alphaval)(1+2\alphaval)}{2}+2(1+\alphaval)(1+2\alphaval)} (1+2\alphaval)^{-k} \\
        &-2(1+\alphaval)(1+2\alphaval)(1+\alphaval)^{-k}\\
        &=\frac{(2+\alphaval)(1+2\alphaval)}{2} + (1+2\alphaval)(1+\frac{3}{2}\alphaval)(1+2\alphaval)^{-k} -2(1+\alphaval)(1+2\alphaval)(1+\alphaval)^{-k}.
    \end{align*}
    It remains to sum over $k=0,\dots,n$, to get
    \begin{align*}
        \sum_{k=1}^n u_k &= \frac{(2+\alphaval)(1+2\alphaval)}{2}n + \frac{(1+2\alphaval)(1+\frac{3}{2}\alphaval)}{2\alphaval}(1- (1+2\alphaval)^{-n}) \\
        &- \frac{2(1+\alphaval)(1+2\alphaval)}{\alphaval}(1-(1+\alphaval)^{-n}).
    \end{align*}
    To conclude, remark that 
    \begin{equation*}
        \mathbb{E}\left[ \int_0^{T_n} \bpar{\int_0^t \alphaval e^{\alphaval(s-t)}dN_s}^2 dN_t\right]      = \sum_{k=1}^n \E{\bpar{\int_{[0,T_k]} \alphaval e^{\alphaval(s-T_k)}dN_s}^2 } = \sum_{k=1}^n \E{Y_k} = \sum_{k=1}^n u_k.
    \end{equation*}
    \end{proof}
    \begin{lemma}\label{lem:cupp}
Let $\tilde \varepsilon \in (0,1/2]$, and $\cupp \ge 1$ such that it satisfies $1-\tilde \varepsilon-\log(1-\tilde \varepsilon) = \cupp - \log(\cupp)$. Then, we have $\cupp \le 1+2\tilde \varepsilon$.    
\end{lemma}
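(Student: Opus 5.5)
We need to show: for $\tilde\varepsilon\in(0,1/2]$ and $\cupp\ge 1$ with $\cupp-\log\cupp = 1-\tilde\varepsilon-\log(1-\tilde\varepsilon)$, we have $\cupp\le 1+2\tilde\varepsilon$. The plan is a monotonicity argument on the function $g(x):=x-\log x$. Since $g'(x)=1-1/x$, $g$ is strictly increasing on $[1,+\infty)$ and strictly decreasing on $(0,1]$. It therefore suffices to show
\[
g(1+2\tilde\varepsilon)\ \ge\ g(1-\tilde\varepsilon)=g(\cupp).
\]
Indeed, both $\cupp$ and $1+2\tilde\varepsilon$ lie in $[1,+\infty)$, where $g$ is increasing, so the inequality gives $\cupp\le 1+2\tilde\varepsilon$.

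Written out, the target inequality is
\[
1+2\tilde\varepsilon-\log(1+2\tilde\varepsilon)\ \ge\ 1-\tilde\varepsilon-\log(1-\tilde\varepsilon),
\]
that is,
\[
h(\tilde\varepsilon):=3\tilde\varepsilon-\log(1+2\tilde\varepsilon)+\log(1-\tilde\varepsilon)\ \ge 0\quad\text{on }(0,1/2].
\]
We have $h(0)=0$ and
\[
h'(x)=3-\frac{2}{1+2x}-\frac{1}{1-x}.
\]
At $x=0$ this vanishes, and $h'(1/2)=3-1-2=0$. Putting everything over the common denominator, the numerator is
\[
3(1+2x)(1-x)-2(1-x)-(1+2x)=3+3x-6x^2-2+2x-1-2x=3x-6x^2=3x(1-2x),
\]
which is $\ge 0$ on $[0,1/2]$. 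Hence $h'\ge0$ there, $h$ is nondecreasing, and $h(\tilde\varepsilon)\ge h(0)=0$. Applying the monotonicity of $g$ on $[1,\infty)$ then concludes.

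The only point needing care is the sign computation of $h'$, which is elementary. It is also worth noting why the restriction $\tilde\varepsilon\le 1/2$ matters: past $1/2$ the factor $1-2x$ turns $h'$ negative, so the argument (and the constant 2) would need adjusting. A minor side remark is that $\cupp$ is well defined as the unique root $\ge1$, since $g$ is increasing from $g(1)=1$ to $+\infty$ on $[1,\infty)$ and $g(1-\tilde\varepsilon)>1$.
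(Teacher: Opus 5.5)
Your proof is correct and follows the same reduction as the paper: strict monotonicity of $t\mapsto t-\log t$ on $[1,+\infty)$ reduces the claim to $g(1+2\tilde\varepsilon)\ge g(1-\tilde\varepsilon)$, and your sign analysis of $h'(x)=\frac{3x(1-2x)}{(1+2x)(1-x)}$ is accurate. The only difference is how that scalar inequality is checked. The paper bounds both sides through the integral forms $\int_0^{c_0\tilde\varepsilon}\frac{t}{1+t}\,dt$ and $\int_0^{\tilde\varepsilon}\frac{t}{1-t}\,dt$, which gives $\cupp\le 1+c_0\tilde\varepsilon$ whenever $\tilde\varepsilon\le 1-1/c_0$ for any $c_0>1$, whereas you handle $c_0=2$ exactly by differentiation. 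You also rightly use strict monotonicity, which the final step actually needs; the paper only states non-decreasing.
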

\begin{proof}
     We denote $h(t) = t-\log(t)$. By assumption, we have $h(\cupp) = h(1- \tilde \tilde \varepsilon)$. We have that $h$ is non-decreasing on $[1,+\infty)$. Then, if we show $h(1+c_0\tilde \varepsilon) \ge h(1- \tilde \varepsilon)$ for some $c_0 \ge 1$, it will induce $\cupp \le 1+c_0 \tilde \varepsilon$. First,
\[
h(1+c_0\tilde \varepsilon)-1
=
c_0\tilde \varepsilon-\log(1+c_0\tilde \varepsilon)
=
\int_0^{c_0\tilde \varepsilon}\frac{t}{1+t}\,dt.
\]
Since $t\mapsto (1+t)^{-1}$ is decreasing, for $t\in[0,c_0\tilde \varepsilon]$,
$
\frac{1}{1+t}\ge \frac{1}{1+c_0\tilde \varepsilon}$,
therefore,
\[
h(1+c_0\tilde \varepsilon)-1
\ge
\frac{1}{1+c_0\tilde \varepsilon}\int_0^{c_0\tilde \varepsilon} t\,dt
=
\frac{c_0^2\tilde \varepsilon^2}{2(1+c_0\tilde \varepsilon)}.
\]

On the other hand,
\[
h(1- \tilde \varepsilon)-1
=
-\tilde \varepsilon-\log(1- \tilde \varepsilon)
=
\int_0^{\tilde \varepsilon} \frac{t}{1-t}\,dt.
\]
Since $t\mapsto (1-t)^{-1}$ is increasing on $[0,\tilde \varepsilon]$,
$\frac{1}{1-t}\le \frac{1}{1- \tilde \varepsilon}$,
thus
\[
h(1- \tilde \varepsilon)-1
\le
\frac{1}{1- \tilde \varepsilon}\int_0^{\tilde \varepsilon} t\,dt
=
\frac{\tilde \varepsilon^2}{2(1- \tilde \varepsilon)}.
\]

Consequently, it is enough that
$\frac{c_0^2\tilde \varepsilon^2}{2(1+c_0\tilde \varepsilon)}
\ge
\frac{\tilde \varepsilon^2}{2(1- \tilde \varepsilon)}$,
or equivalently $
c_0^2(1- \tilde \varepsilon)\ge 1+c_0\tilde \varepsilon$.
This is equivalent to
\[
\tilde \varepsilon\le \frac{c_0^2-1}{c_0^2+c_0}
=
1-\frac{1}{c_0}.
\]
Therefore, for any $c_0>1$, choosing
$a:=1-\frac1{c_0}$
gives, for all $\tilde \varepsilon\in(0,a]$, $
h(1+c_0\tilde \varepsilon)\ge h(1- \tilde \varepsilon)=h(\cupp)$,
from which we deduce
\[\cupp\le 1+c_0\tilde \varepsilon.\]
The result is the particular case $c_0 = 2 \Rightarrow \tilde \varepsilon \in \Big(0,\frac{1}{2}\Big]$. 
\end{proof}


\end{document}